\newtheorem{thm}{Theorem}[section]
\newtheorem{lem}[thm]{Lemma}
\newtheorem{prop}[thm]{Proposition}
\newtheorem{ques}[thm]{Question}
\newtheorem{cor}[thm]{Corollary}
\newtheorem{de}[thm]{Definition}
\newtheorem{rem}[thm]{Remark}
\def \N {\mathbb N}
\def \Z {\mathbb Z}
\def \R {\mathbb R}
\def \E {\mathbb E}
\numberwithin{equation}{section}
\begin{document}

\title{Local entropy theory for a \\ countable discrete amenable group action}

\author{Wen Huang, Xiangdong Ye and Guohua Zhang}

\address{Department of Mathematics, University of Science and
Technology of China, Hefei, Anhui 230026, China}

\email{wenh@mail.ustc.edu.cn, yexd@ustc.edu.cn}

\address{School of Mathematical Sciences and LMNS, Fudan University, Shanghai 200433, China}

\email{chiaths.zhang@gmail.com}

%\subjclass[2000]{Primary: 37A05, 37A35.}

\begin{abstract}
The local properties of entropy for a countable discrete amenable
group action are studied. For such an action, a local variational
principle for a given finite open cover is established, from which
the variational relation between the topological and
measure-theoretic entropy tuples is deduced. While doing this it is
shown that two kinds of measure-theoretic entropy for finite Borel
covers are coincide. Moreover, two special classes of such an
action: systems with uniformly positive entropy and completely
positive entropy are investigated.
\end{abstract}

\maketitle

\markboth{Local Entropy Theory for a Countable Discrete Amenable
Group Action}{Wen Huang, Xiangdong Ye and Guohua Zhang}

%\tableofcontents

\section{Introduction}

Rohlin and Sinai \cite{RS} introduced the notion of completely
positive entropy (c.p.e.) for $\Z$-actions on a Lebesgue space. It
is also known as $K$-actions of $\Z$. $K$-actions played an
important role in the classic ergodic theory. In 1992, Blanchard
introduced the notions of uniformly positive entropy (u.p.e.) and
c.p.e. as topological analogues of the $K$-actions in topological
dynamics of $\Z$-actions \cite{B1}. By localizing the concepts of
u.p.e. and c.p.e., he defined the notion of entropy pairs, and used
it to show that a u.p.e. system is disjoint from all minimal zero
entropy systems \cite{B2} and to obtain the maximal zero entropy
factor for any topological dynamical system of $\Z$-actions (namely
the topological Pinsker factor) \cite{BL}. From then on, the local
entropy theory of $\Z$-actions have been made great achievements
\cite{B1, B2, BGH, BHMMR, BL, DYZ, G, HMRY, HY, HYZ1, HYZ2, KL, R,
YZ}, see also the relevant chapters in \cite{G1} and the survey
papers \cite{GW4, GY}. A key point in the local entropy theory of
$\Z$-actions is the local variational principle for finite open
covers.

Note that for each dynamical system $(X, T)$ of $\Z$-actions (or
call it TDS), there always exist $T$-invariant Borel probability
measures on $X$ so that the classic ergodic theory involves the
study of the entropy theory of $(X, T)$. Whereas, there are
some groups $G$ such that there exists no any invariant Borel
probability measures on some compact metric space with $G$-actions,
for example the rank two free group $F_2$. It is well known that,
for a dynamical system of group actions, the amenability of the
group ensures the existence of invariant Borel probability measures,
which includes all finite groups, solvable groups and compact
groups.

Comparing to dynamical systems of $\Z$-actions, the level of
development of dynamical systems of an amenable group action lagged
behind. However, this situation is rapidly changing in recent years.
A turning point occurred with Ornstein and Weiss's pioneering paper
\cite{OW} in 1987 which laid a foundation of an amenable group
action. In 2000, Rudolph and Weiss \cite{RW} showed that $K$-actions
for a countable discrete amenable group is mixing of all orders (an
open important question for years) by using methods from orbit
equivalence. Inspired by this, Danilenko \cite{D} pushed further the
idea used by Rudolph and Weiss providing new short proofs of results
in \cite{GTW, OW, RW, WZ}. Meanwhile, based on the result of
\cite{RW} and with the help of the results from \cite{CFW}, Dooley
and Golodets in \cite{DG(2002)} proved that every free ergodic
actions of a countable discrete amenable group with c.p.e. has a
countable Lebesgue spectrum. Another long standing open problem is
the generalization of pointwise convergence results, even such basic
theorems as the $L^1$-pointwise ergodic theorem and the
Shannon-McMillan-Breiman (SMB) Theorem for general amenable groups,
for related results see for example \cite{E(1974), Ki(1975),
OW(1992)}. In \cite{L} Lindenstrauss gave a satisfactory answer to
the question by proving the pointwise ergodic theorem for general
locally compact amenable groups along F\o lner sequences obeying
some restrictions (such sequences must exist for all amenable
groups) and obtaining a generalization of the SMB Theorem to all
countable discrete amenable groups (see also the survey \cite{We}
written by Weiss). Moreover, using the tools built in \cite{L}
Lindenstrauss also proved other pointwise results, for example
\cite{OW(1992)} and so on.

\medskip

Along with the development of the local entropy theory for
$\Z$-actions, a natural question arises: to what extends the theory
can be generalized to an amenable group action? In \cite{KL} Kerr
and Li studied the local entropy theory of an amenable group action
for topological dynamics via independence. In this paper we try to
study systematically the local properties of entropy for actions of
a countable discrete amenable group both in topological and measure
theoretical settings.

\medskip

First, we shall establish a local variational principle for a given
finite open cover of a countable discrete amenable group action.
Note that the classical variational principle of a countable
discrete amenable group action (see \cite{OP2, ST}) can be deduced
from our result by proceeding some simple arguments. In the way to
build the local variational principle, we also introduce two kinds
of measure-theoretic entropy for finite Borel covers following the
ideas of \cite{R}, prove the upper semi-continuity (u.s.c.) of them
(when considering a finite open cover) on the set of invariant
measures, and show that they are coincide. We note that completely
different from the case of $\Z$-actions, in our proving of the
u.s.c. we need a deep convergence lemma related to a countable
discrete amenable group; and in our proving of the equivalence of
these two kinds of entropy, we need the result that they are
equivalent for $\Z$-actions, and Danilenko's orbital approach method
(since we can't obtain a universal Rohlin Lemma and a result similar
to Glasner-Weiss Theorem \cite{GW4} in this setting).
%Thus, up to now, they stay still as open questions which maybe useful for the
%solving of other related questions.
Meanwhile, inspired by \cite[Lemma 5.11]{We} we shall give a local
version of the well-known Katok's result \cite[Theorem
I.I]{Kat(1980.b)} for a countable discrete amenable group action.

Then we introduce entropy tuples in both topological and
measure-theoretic settings.
% (the topological ones are the same as
%Kerr and Li's, see \cite{KL}).
The set of measure-theoretic entropy tuples for an invariant measure
is characterized, the variational relation between these two kinds
of entropy tuples is obtained as an application of the local
variational principle for a given finite open cover. Based on the
ideas of topological entropy pairs, we discuss two classes of
dynamical systems: having u.p.e. and having c.p.e. Precisely
speaking, for a countable discrete amenable group action, it is
proved: u.p.e. and c.p.e. are both preserved under a finite
production; u.p.e. implies c.p.e.; c.p.e. implies the existence of
an invariant measure with full support; u.p.e. implies mild mixing;
and minimal topological $K$ implies strong mixing if the group
considered is commutative.

We note that when we finished our writing of the paper, we received
a preprint by Kerr and Li \cite{KL1}, where the authors investigated
the local entropy theory of an amenable group action for
measure-preserving systems via independence. They obtained the
variational relation between these two kinds of entropy tuples
defined by them, and stated the local variational principle for a
given finite open cover as an open question, see \cite[Question
2.10]{KL1}. Moreover, the results obtained in this paper have been
applied to consider the co-induction of dynamical systems in
\cite{DZ}.

The paper is organized as following. In section 2, we introduce the
terminology from \cite{OW, WZ} that we shall use, and obtain some
convergence lemmas which play key roles in the following sections.
In section 3, for a countable discrete amenable group action we
introduce the entropy theory of it, including two kinds of
measure-theoretic entropy for a finite Borel cover, and establish some
basic properties of them, such as u.s.c., affinity and so on. Then
in section 4 we prove the equivalence of those two kinds of entropy
introduced for a finite Borel cover, and give a local version of the
well-known Katok's result \cite[Theorem I.I]{Kat(1980.b)} for a
countable discrete amenable group action. In section 5, we aim to
establish the local variational principle for a finite open cover. In
section 6, we introduce entropy tuples in both topological and
measure-theoretic settings and establish the variational relation
between them. Based on the ideas of
topological entropy pairs, in section 7 we discuss two special
classes of dynamical systems: having u.p.e. and having c.p.e., respectively.

\section{Backgrounds of a countable discrete amenable group}

Let $G$ be a countable discrete infinite group and $F (G)$ the
set of all finite non-empty subsets of $G$. $G$ is called {\it amenable}, if
for each $K\in F (G)$ and $\delta>0$ there exists $F\in F (G)$
such that
\begin{equation*}
\frac{|F\Delta KF|}{|F|}<\delta,
\end{equation*}
where $|\cdot|$ is the counting measure, $K F= \{k f: k\in K, f\in
F\}$ and $F\Delta KF= (F\setminus K F)\cup (K F\setminus F)$.
%We call $F$ satisfying \eqref{kdinva} is {\it
%$(K,\delta)$-invariant}.
Let $K\in F (G)$ and $\delta>0$. Set $K^{- 1}= \{k^{- 1}: k\in K\}$.
$A\in F (G)$ is {\it $(K,\delta)$-invariant} if
\begin{equation*}
\frac{|B (A, K)|}{|A|}<\delta,
\end{equation*}
where $B (A, K)\doteq \{g\in G: Kg\cap A\neq \emptyset \text{ and }
Kg\cap (G\setminus A)\neq \emptyset\}= K^{- 1} A\cap K^{- 1}
(G\setminus A)$. A sequence $\{ F_n\}_{n\in \mathbb{N}}\subseteq F
(G)$ is called a {\it F\o lner sequence}, if for each $K\in F (G)$
and $\delta>0$, $F_n$ is $(K,\delta)$-invariant when $n$ is large
enough. It is not hard to obtain the following asymptotic invariance
property that $G$ is amenable if and only if  $G$ has a F\o lner
sequence $\{ F_n \}_{n\in \mathbb{N}}$. For example, for $\Z$ we may
take F\o lner sequence $F_n=\{ 0,1,\cdots,n-1\}$, or for that matter
$\{ a_n,a_n+1,\cdots,a_n+n- 1 \}$ for any sequence $\{a_n\}_{n\in
\mathbb{N}}\subseteq \Z$.

Throughout the paper, any amenable group considered is assumed to be
a countable discrete amenable infinite group, and $G$ will always be
such a group with the unit $e_G$.

\subsection{Quasi-tiling for an amenable group}

The following terminology and results are due to Ornstein and Weiss
\cite{OW} (see also \cite{RW, WZ}). Let $\{A_1, \cdots,
A_k\}\subseteq F (G)$ and $\epsilon\in (0, 1)$. Subsets $A_1,
\cdots, A_k$ are {\it $\epsilon$-disjoint} if there are $\{B_1,
\cdots, B_k\}\subseteq F (G)$ such that
\begin{enumerate}

\item $B_i\subseteq A_i$ and $\frac{|B_i|}{|A_i|}>1-\epsilon$ for $i=1, \cdots,
k$,

\item $B_i\cap B_j= \emptyset$ if $1\le i\neq j\le k$.
\end{enumerate}
For $\alpha\in (0,1]$, we say that $\{A_1, \cdots, A_k\}$ {\it
$\alpha$-covers} $A\in F (G)$ if
$$\frac{|A\cap (\bigcup_{i=1}^k A_i)|}{|A|}\ge \alpha.$$
For $\delta\in [0,1)$, $\{ A_1, \cdots,A_k\}$ is called a {\it
$\delta$-even cover} of $A\in F (G)$ if
\begin{enumerate}

\item $A_i\subseteq A$ for $i=1,\cdots,k$,

\item there is $M\in \N$ such that $\sum_{i=1}^k 1_{A_i}(g)\le M$
for each $g\in G$ and $\sum_{i=1}^k |A_i|\ge (1-\delta) M |A|$.
\end{enumerate}
We say that $A_1, \cdots, A_k$ {\it $\epsilon$-quasi-tile} $A\in F
(G)$ if there exists $\{C_1, \cdots, C_k\}\subseteq F (G)$ such that
\begin{enumerate}

\item for $i= 1, \cdots, k$, $A_i C_i\subseteq A$ and $\{A_i c: c\in C_i\}$
forms an $\epsilon$-disjoint family,

\item $A_i C_i\cap A_j C_j= \emptyset$ if $1\le i\neq j\le k$,

\item $\{A_i C_i: i= 1, \cdots, k\}$ forms a $(1- \epsilon)$-cover
of $A$.
\end{enumerate}
The subsets $C_1, \cdots, C_k$ are called the {\it tiling centers}.

The following lemmas are proved in \cite[\S1.2]{OW}.

\begin{lem} \label{ow-lemma1}
Let $\delta\in [0, 1), e_G\in S\in F (G)$ and $A\in F (G)$ satisfy
that $A$ is $(S S^{- 1}, \delta)$-invariant. Then the right
translates of $S$ that lie in $A$, $\{S g: g\in G, S g\subseteq
A\}$, form a $\delta$-even cover of $A$.
\end{lem}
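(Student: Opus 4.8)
The plan is to verify directly the two defining conditions of a $\delta$-even cover for the family $\{Sg : g\in T\}$, where $T\doteq \{g\in G : Sg\subseteq A\}$, taking the multiplicity constant to be $M\doteq |S|$. Condition (1), that $Sg\subseteq A$ for each member, holds by the very definition of $T$, so all the content lies in condition (2).

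For the multiplicity bound I would fix an arbitrary $h\in G$ and count the translates containing it: the relation $h\in Sg$ is equivalent to $g\in S^{-1}h$, so the number of $g\in G$ (a fortiori of $g\in T$) with $h\in Sg$ is at most $|S^{-1}h|=|S|=M$. This gives $\sum_{g\in T}1_{Sg}(h)\le M$ for every $h$.

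The substance is the lower bound $\sum_{g\in T}|Sg|\ge (1-\delta)M|A|$. Since right translation is a bijection, $|Sg|=|S|=M$, so $\sum_{g\in T}|Sg|=|T|\cdot M$ and the inequality reduces to $|T|\ge (1-\delta)|A|$, i.e.\ to bounding the ``bad'' set $A\setminus T$. Here I would use $e_G\in S$ twice. First, $e_G\in S$ forces $g\in Sg$, so $Sg\subseteq A$ implies $g\in A$; hence $T\subseteq A$ and it is meaningful to measure $A\setminus T$ inside $A$. Second, for $g\in A\setminus T$ the translate $Sg$ meets $A$ (it contains $g\in A$) and also meets $G\setminus A$ (since $Sg\not\subseteq A$), so $g\in B(A,S)$ by definition; thus $A\setminus T\subseteq B(A,S)$.

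Finally I would pass from $S$ to $SS^{-1}$: the boundary set $B(A,K)$ is monotone in $K$, and $e_G\in S$ gives $S=Se_G\subseteq SS^{-1}$, whence $B(A,S)\subseteq B(A,SS^{-1})$. The $(SS^{-1},\delta)$-invariance hypothesis then yields $|A\setminus T|\le |B(A,SS^{-1})|<\delta|A|$, so $|T|>(1-\delta)|A|$ and condition (2) follows. I expect the only real obstacle to be making the invariance hypothesis bite: one must recognize that the relevant boundary is exactly $B(A,S)$, and that the assumption $e_G\in S$ simultaneously places $T$ inside $A$ and supplies the inclusion $S\subseteq SS^{-1}$ needed to invoke the stated $(SS^{-1},\delta)$-invariance.
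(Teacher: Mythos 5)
Your proof is correct and complete: the paper itself gives no argument for this lemma (it defers to \cite[\S1.2]{OW}), and what you have written is precisely the standard verification behind that citation. Taking $M=|S|$, the multiplicity bound via $h\in Sg \Leftrightarrow g\in S^{-1}h$, and the chain $A\setminus T\subseteq B(A,S)\subseteq B(A,SS^{-1})$ (both inclusions using $e_G\in S$, the second via $S\subseteq SS^{-1}$ and monotonicity of $B(A,\cdot)$) together with the $(SS^{-1},\delta)$-invariance hypothesis give exactly $|T|>(1-\delta)|A|$, which is what condition (2) of a $\delta$-even cover requires.
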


\begin{lem} \label{ow-lemma2}
Let $\delta\in [0, 1)$ and $\mathcal{A}\subseteq F (G)$ a
$\delta$-even cover of $A\in F (G)$. Then for each $\epsilon\in (0,
1)$ there is an $\epsilon$-disjoint sub-collection of $\mathcal{A}$
which $\epsilon (1- \delta)$-covers $A$.
\end{lem}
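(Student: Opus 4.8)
The plan is to build the desired subcollection greedily and then exploit the multiplicity bound built into the notion of a $\delta$-even cover to estimate how much of $A$ it covers. Write $\mathcal{A}= \{A_1, \cdots, A_k\}$ and fix $\epsilon\in (0,1)$. I would select sets one at a time: starting from the empty family, and having already chosen some members with union $U$, I adjoin any $A_i$ still satisfying $|A_i\setminus U|> (1-\epsilon)|A_i|$, repeating until no member of $\mathcal{A}$ passes this test. Since $\mathcal{A}$ is finite the procedure halts, and because $\epsilon>0$ and the $A_i$ are nonempty it selects at least one set.

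Let $\{A_{i_1}, \cdots, A_{i_m}\}$ be the selected subfamily and $U$ its union. First I would verify that it is $\epsilon$-disjoint: putting $B_{i_j}= A_{i_j}\setminus (A_{i_1}\cup \cdots \cup A_{i_{j-1}})$, the selection rule guarantees $|B_{i_j}|> (1-\epsilon)|A_{i_j}|$, and these witnesses are pairwise disjoint since each $B_{i_j}$ avoids all earlier chosen sets. This gives the first required property essentially for free from the way the greedy test is phrased.

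The covering estimate is the crux. When the procedure stops, every $A_i\in \mathcal{A}$ fails the test, i.e. $|A_i\setminus U|\le (1-\epsilon)|A_i|$, equivalently $|A_i\cap U|\ge \epsilon|A_i|$. Summing over $i$ and using $\sum_{i=1}^k |A_i|\ge (1-\delta)M|A|$ yields $\sum_{i=1}^k |A_i\cap U|\ge \epsilon(1-\delta)M|A|$. On the other hand, interchanging the order of summation and invoking the multiplicity bound $\sum_{i=1}^k 1_{A_i}(g)\le M$ gives $\sum_{i=1}^k |A_i\cap U|= \sum_{g\in U}\sum_{i=1}^k 1_{A_i}(g)\le M|U|$. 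Combining the two estimates and cancelling $M$ produces $|U|\ge \epsilon(1-\delta)|A|$; since $U\subseteq A$, this is exactly the assertion that the subfamily $\epsilon(1-\delta)$-covers $A$.

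The main obstacle is choosing the stopping threshold so that both conclusions fall out of a single quantity. The constant $1-\epsilon$ in the greedy test is what renders the kept sets $\epsilon$-disjoint, while that same threshold, read through its complement at the moment the process terminates, is what forces every member of $\mathcal{A}$ to overlap $U$ in at least an $\epsilon$-fraction; it is only by feeding this overlap into the even-cover double-counting that the factor $\epsilon(1-\delta)$ emerges. Once this matching is set up correctly, everything else is bookkeeping.
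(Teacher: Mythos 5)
Your proof is correct. The paper does not supply its own argument for this lemma---it cites Ornstein--Weiss \cite[\S 1.2]{OW}---and your approach (greedily adjoining any $A_i$ with more than a $(1-\epsilon)$-fraction outside the current union, which makes the successive differences the witnesses for $\epsilon$-disjointness, and then combining the stopping condition $|A_i\cap U|\ge\epsilon|A_i|$ with the multiplicity bound $\sum_i 1_{A_i}\le M$ and $\sum_i|A_i|\ge(1-\delta)M|A|$ to get $|U|\ge\epsilon(1-\delta)|A|$) is precisely the classical proof given there.
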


Then we can claim the following proposition (see \cite{OW} or
\cite[Theorem 2.6]{WZ}).

\begin{prop} \label{ow-prop}
Let $\{ F_n\}_{n\in \mathbb{N}}$ with $e_G\in F_1\subseteq
F_2\subseteq \cdots$ and $\{F_n'\}_{n\in \mathbb{N}}$ be two F\o
lner sequences of $G$. Then for any $\epsilon\in (0, \frac{1}{4})$
and $N\in \mathbb{N}$, there exist integers $n_1, \cdots, n_k$
with $N\le n_1< \cdots< n_k$ such that $F_{n_1}, \cdots, F_{n_k}$
$\epsilon$-quasi-tile $F_m'$ when $m$ is large enough.
\end{prop}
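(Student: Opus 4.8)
The plan is to decouple the construction of the tiles from the set being tiled: I would first select the indices $n_1<\dots<n_k$ depending only on $\epsilon$ and $N$, and then show that $F_{n_1},\dots,F_{n_k}$ $\epsilon$-quasi-tile \emph{every} sufficiently invariant set. Since $\{F_m'\}$ is a F\o lner sequence, $F_m'$ is $(F_{n_k}F_{n_k}^{-1},\delta)$-invariant once $m$ is large, and by the monotonicity of $(K,\delta)$-invariance in $K$ it is then $(F_{n_i}F_{n_i}^{-1},\delta)$-invariant for every $i$; this yields the stated conclusion for $F_m'$. To choose the indices I proceed greedily with a scale separation: set $n_1=\max\{N,1\}$ and, having fixed $n_1<\dots<n_i$, pick $n_{i+1}>n_i$ so large that $F_{n_{i+1}}$ is $(F_{n_i}F_{n_i}^{-1},\delta)$-invariant, where $\delta$ is a small constant (with $\delta\ll\epsilon$) to be pinned down at the end. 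Because $\{F_n\}$ is increasing, each $F_{n_j}$ is then $(F_{n_i}F_{n_i}^{-1},\delta)$-invariant simultaneously for all $i<j$.

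The core is an inductive covering argument that places the tiles from the largest, $F_{n_k}$, down to the smallest, $F_{n_1}$, on a fixed sufficiently invariant set $A$. I would maintain tiling centers $C_k,\dots,C_{i+1}$ such that the sets $F_{n_j}C_j$ $(j>i)$ are pairwise disjoint, each family $\{F_{n_j}c:c\in C_j\}$ is $\epsilon$-disjoint, and the covered set $U_{i+1}=\bigcup_{j>i}F_{n_j}C_j$ satisfies $|A\setminus U_{i+1}|\le(1-\lambda)^{k-i}|A|$ for a fixed $\lambda>0$. If $|U_{i+1}|\ge(1-\epsilon)|A|$ the process stops; otherwise I apply Lemma \ref{ow-lemma1} to get a $\delta$-even cover of $A$ by the translates $\{F_{n_i}g:F_{n_i}g\subseteq A\}$ (legitimate since $e_G\in F_{n_i}$ and $A$ is $(F_{n_i}F_{n_i}^{-1},\delta)$-invariant), discard those translates meeting $U_{i+1}$, and argue that the survivors, which all lie in $A\setminus U_{i+1}$, form a $\delta'$-even cover of $A\setminus U_{i+1}$ with $\delta'$ still small. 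Feeding this cover into Lemma \ref{ow-lemma2} produces an $\epsilon$-disjoint subfamily $\{F_{n_i}c:c\in C_i\}\subseteq A\setminus U_{i+1}$ that $\epsilon(1-\delta')$-covers $A\setminus U_{i+1}$; it is automatically disjoint from $U_{i+1}$, so the cross-scale disjointness holds, and it shrinks the uncovered part by the fixed factor $1-\lambda$ with $\lambda=\epsilon(1-\delta')$.

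The hard part is the claim that the surviving translates still form a good even cover of $A\setminus U_{i+1}$: a priori $A\setminus U_{i+1}$ is not invariant, so Lemma \ref{ow-lemma1} cannot be applied to it directly, and one must show that discarding the translates meeting $U_{i+1}$ costs almost nothing near the target covering level. I would estimate the mass lost over $A\setminus U_{i+1}$ by the translates straddling the boundary of $U_{i+1}$, bounding it by $M\,|F_{n_i}F_{n_i}^{-1}U_{i+1}\setminus U_{i+1}|$. This is exactly where the largest-first order and the scale separation pay off: $U_{i+1}$ is built from translates of the tiles $F_{n_j}$ with $j>i$, each $(F_{n_i}F_{n_i}^{-1},\delta)$-invariant, so thickening by $F_{n_i}F_{n_i}^{-1}$ enlarges each constituent translate by only an $O(\delta)$ fraction of its size; summing over the constituents (and absorbing the harmless $(1-\epsilon)^{-1}$ coming from their $\epsilon$-disjointness into the constant) gives a bound of order $\delta|A|$ for the lost mass. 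Hence $\delta'=O(\delta/\epsilon)$ stays uniformly small and $\lambda=\epsilon(1-\delta')$ stays uniformly positive throughout the induction, independently of how much has already been covered.

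It remains to close the bookkeeping. Running the induction from $i=k$ down to $i=1$ gives $|A\setminus U_1|\le(1-\lambda)^k|A|$, so I would fix $\delta$ small enough that $\lambda\ge\epsilon/2$ and then take $k$ large enough that $(1-\lambda)^k<\epsilon$; this makes $F_{n_1}C_1,\dots,F_{n_k}C_k$ a $(1-\epsilon)$-cover of $A$ and verifies all three requirements in the definition of $\epsilon$-quasi-tiling. Choosing the indices $n_1<\dots<n_k$ with the invariance demanded above, and applying the construction with $A=F_m'$ for $m$ large, completes the proof.
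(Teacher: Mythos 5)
Your proof is correct, and it is a genuine variant of the paper's argument rather than a reproduction of it, even though both follow the same Ornstein--Weiss scheme: the same greedy choice of nested scales, the same two Lemmas \ref{ow-lemma1} and \ref{ow-lemma2}, placement from the largest tile down, and geometric decay of the uncovered part. The difference is in how the inductive step is justified. The paper's induction hypothesis is that the residual set is itself almost invariant: it proves by a boundary estimate that $A_{k-1}=F_m'\setminus F_{n_k}C_k$ is $(F_{n_{k-1}}F_{n_{k-1}}^{-1},6\delta)$-invariant and then applies Lemma \ref{ow-lemma1} to the residual set directly; the price is that the invariance constant degrades by a factor of $6$ at each stage (hence the hypothesis $6^k\delta<\frac{\epsilon}{2}$), and the argument also needs the auxiliary size condition $|F_{n_i}|/|F_{n_{i+1}}|<\delta$ together with the normalization of shrinking $C_k$ so that the residual stays a definite fraction of $F_m'$. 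You never re-prove invariance of residual sets: Lemma \ref{ow-lemma1} is applied only to $F_m'$ itself (valid at every scale by monotonicity of $(K,\delta)$-invariance in $K$), and the resulting even cover is pruned of the translates meeting $U_{i+1}$; the loss is controlled by $M\,|F_{n_i}F_{n_i}^{-1}U_{i+1}\setminus U_{i+1}|\le \frac{M\delta}{1-\epsilon}|F_m'|$, precisely because $U_{i+1}$ is an $\epsilon$-disjoint union of translates of larger tiles, each $(F_{n_i}F_{n_i}^{-1},\delta)$-invariant, and your stopping rule keeps the residual above $\epsilon|F_m'|$, making the per-stage relative error $\delta'=O(\delta/\epsilon)$ uniform over all stages. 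So the paper's route keeps the two lemmas applicable verbatim at every stage at the cost of exponentially degrading constants and extra normalizations, while yours has stage-independent constants (only $\delta\ll\epsilon^2$ is needed, not $\delta\ll 6^{-k}\epsilon$) and dispenses with the size condition, at the cost of verifying that the even-cover property survives pruning --- which you correctly isolate as the crux and correctly resolve. Two cosmetic points for a final write-up: the invariance of $F_{n_j}$ under $F_{n_i}F_{n_i}^{-1}$ for all $i<j$ follows not from $\{F_n\}$ being increasing alone, but from your choice of $n_j$ (invariance under $F_{n_{j-1}}F_{n_{j-1}}^{-1}$) combined with $F_{n_i}F_{n_i}^{-1}\subseteq F_{n_{j-1}}F_{n_{j-1}}^{-1}$ and, once more, monotonicity in $K$; and if your process stops early, the remaining center sets $C_i$ are empty, whereas the paper's definition formally requires $C_i\in F(G)$ (non-empty) --- harmless, since every use of the proposition (e.g.\ in Lemma \ref{convergent}) is insensitive to empty center sets, but worth a remark.
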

\begin{proof} We follow the arguments in the proof of \cite[Theorem 2.6]{WZ}. Fix
$\epsilon\in (0, \frac{1}{4})$ and $N\in \mathbb{N}$.

Let $k\in \mathbb{N}$ and $\delta> 0$ such that $(1-
\frac{\epsilon}{2})^k< \epsilon$ and $6^k \delta<
\frac{\epsilon}{2}$. We can choose integers $n_1, \cdots, n_k$ with
$N\le n_1< \cdots< n_k$ such that $F_{n_{i+ 1}}$ is $(F_{n_i}
F_{n_i}^{- 1}, \delta)$-invariant and $\frac{|F_{n_i}|}{|F_{n_{i+
1}}|}< \delta$, $i=1, \cdots, k- 1$.

Now for each enough large $m$, $F_m'$ is $(F_{n_k} F_{n_k}^{- 1},
\delta)$-invariant and $\frac{|F_{n_k}|}{|F_m'|}< \delta$, thus by
Lemma \ref{ow-lemma1} the right translates of $F_{n_k}$ that lie
in $F_m'$ form a $\delta$-even cover of $F_m'$, and so by Lemma
\ref{ow-lemma2} there exists $C_k\in F (G)$ such that $F_{n_k}
C_k\subseteq F_m'$ and the family $\{F_{n_k} c: c\in C_k\}$ is
$\epsilon$-disjoint and $\epsilon (1- \delta)$-covers $F_m'$. Let
$c_k\in C_k$. Without loss of generality assume
that $|F_{n_k} C_k\setminus F_{n_k} c_k|< \epsilon (1- \delta)
|F_m'|$ (if necessity we may take a subset of $C_k$ to replace
with $C_k$). Then $(1- \epsilon) |F_{n_k}| |C_k|< |F_m'|$ and
\begin{equation} \label{estimate 1}
1- \epsilon (1- \delta)\ge \frac{|F_m'\setminus F_{n_k}
C_k|}{|F_m'|}= 1-\frac{|F_{n_k} C_k\setminus
F_{n_k}c_k|+|F_{n_k}c_k|}{|F_m'|}\ge 1-\epsilon(1-\delta)-\delta.
\end{equation}
Set $A_{k- 1}= F_m'\setminus F_{n_k} C_k$, $K_{k- 1}= F_{n_{k- 1}}
F_{n_{k- 1}}^{- 1}$. We have
\begin{eqnarray*}
B (A_{k- 1}, K_{k- 1})& =& K_{k- 1}^{- 1} (F_m'\setminus F_{n_k}
C_k)\cap K_{k- 1}^{- 1}
((G\setminus F_m')\cup F_{n_k} C_k) \\
&\subseteq & B (F_m', K_{k- 1})\cup \bigcup_{c\in C_k} B (F_{n_k} c,
K_{k- 1}) \\
&\subseteq & B (F_m', F_{n_k} F_{n_k}^{- 1})\cup \bigcup_{c\in C_k}
B (F_{n_k}, K_{k- 1}) c\ (\text{as}\ K_{k- 1}\subseteq F_{n_k}
F_{n_k}^{- 1}),
\end{eqnarray*}
which implies
\begin{eqnarray*}
\frac{|B (A_{k- 1}, K_{k- 1})|}{|A_{k- 1}|}
&\le & \frac{|B (F_m',
F_{n_k} F_{n_k}^{- 1})|}{|A_{k- 1}|}+ |C_k| \frac{|B (F_{n_k},
K_{k- 1})|}{|A_{k- 1}|} \\
&< & \frac{\delta}{|F_m'\setminus F_{n_k} C_k|} (|F_m'|+ |C_k|
|F_{n_k}|) \\
&< & \delta \left(1+ \frac{1}{1- \epsilon}\right)\frac{|F_m'|
}{|F_m'\setminus F_{n_k} C_k|}\ (\text{as}\ (1- \epsilon) |F_{n_k}|
|C_k|<
|F_m'|) \\
&\le & \delta \left(1+\frac{1}{1-\epsilon}\right) \frac{1}{1-
\epsilon (1-
\delta)-\delta}\ \ \ \ (\text{by \eqref{estimate 1}})\\
&<& 6 \delta\ \left(\text{as}\ \epsilon\in \left(0,
\frac{1}{4}\right)\right).
\end{eqnarray*}
That is, $A_{k-1}$ is $(F_{n_{k- 1}} F_{n_{k- 1}}^{- 1}, 6
\delta)$-invariant. Moreover, using \eqref{estimate 1} one has
\begin{equation*}
\frac{|F_{n_{k- 1}}|}{|A_{k-1}|}= \frac{|F_{n_{k-
1}}|}{|F_{n_k}|}\cdot \frac{|F_{n_k}|}{|F_m'|}\cdot
\frac{|F_m'|}{|F_m'\setminus F_{n_k} C_k|}< \frac{\delta^2}{1-
\epsilon (1- \delta)-\delta}<\delta.
\end{equation*}

By the same reasoning there exists $C_{k- 1}\in F (G)$ such that
$F_{n_{k- 1}} C_{k- 1}\subseteq A_{k-1}$, the family $\{F_{n_{k- 1}}
c: c\in C_{k- 1}\}$ is $\epsilon$-disjoint and $\epsilon (1- 6
\delta)$-covers $A_{k-1}$ and
\begin{equation} \label{07-03-05-01}
1-\epsilon (1- 6 \delta)\ge \frac{|A_{k-1}\setminus F_{n_{k- 1}}
C_{k- 1}|}{|A_{k-1}|}\ge 1-\epsilon (1- 6 \delta)-6\delta.
\end{equation}
Moreover, by \eqref{estimate 1} and \eqref{07-03-05-01} we have
\begin{eqnarray*}
\frac{|A_{k-1}\setminus F_{n_{k- 1}} C_{k- 1}|}{|F_m'|}&= &
\frac{|A_{k-1}\setminus F_{n_{k- 1}} C_{k- 1}|}{|A_{k-1}|}\cdot
\frac{|F_m'\setminus
F_{n_k} C_k|}{|F_m'|} \\
&\le& (1- \epsilon (1- 6 \delta)) (1- \epsilon (1-\delta))< \left(1-
\frac{\epsilon}{2}\right)^2.
\end{eqnarray*}

Inductively, we get $\{C_k, \cdots, C_1\}\subseteq F (G)$ such that
if $1\le i\neq j\le k$ then $F_{n_i} C_i\cap F_{n_j} C_j=
\emptyset$, and if $i= 1, \cdots, k$ then $F_{n_i} C_i\subseteq
F_m'$ and the family $\{F_{n_i} c: c\in C_i\}$ is
$\epsilon$-disjoint. Moreover,
\begin{equation*}
\frac{|F_m'\setminus \bigcup_{i= 1}^k F_{n_i} C_i|}{|F_m'|}<
\left(1- \frac{\epsilon}{2}\right)^k< \epsilon.
\end{equation*}
Thus, $\{F_{n_i} C_i: i= 1, \cdots, k\}$ forms a $(1-
\epsilon)$-cover of $F_m'$. This ends the proof.
\end{proof}

\subsection{Convergence key lemmas}

Let $f: F (G)\rightarrow \R$ be a function. We say that f is
\begin{enumerate}

\item
{\it monotone}, if $f(E)\le f(F)$ for any $E, F\in F(G)$ satisfying
$E\subseteq F$;

\item {\it non-negative}, if $f(F)\ge 0$ for any $F\in F(G)$;

\item
{\it $G$-invariant}, if $f (F g)= f (F)$ for any $F\in F(G)$ and
$g\in G$;

\item
{\it sub-additive}, if $f(E\cup F)\le f(E)+f(F)$ for any $E, F\in
F(G)$.
\end{enumerate}

The following lemma is proved in \cite[Theorem 6.1]{LW}.
\begin{lem} \label{convergent}
Let $f: F(G)\rightarrow \R$ be a monotone non-negative
$G$-invariant sub-additive (m.n.i.s.a.) function. Then
for any F\o lner sequence $\{F_n\}_{n\in \mathbb{N}}$ of $G$, the
sequence $\{\frac{f(F_n)}{|F_n|}\}_{n\in \N}$ converges and the
value of the limit is independent of the selection of the F\o lner
sequence $\{F_n\}_{n\in \mathbb{N}}$.
\end{lem}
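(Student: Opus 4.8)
The plan is to identify the limit intrinsically, as
$$L = \sup_{K\in F(G),\,\delta>0}\ \inf\Big\{\tfrac{f(A)}{|A|} : A \text{ is } (K,\delta)\text{-invariant}\Big\},$$
and then show every F\o lner average converges to $L$. First I would record the boundedness $0\le f(A)/|A|\le f(\{e_G\})$, obtained by writing $A$ as the union of its singletons and applying sub-additivity together with $G$-invariance. In particular $L$ is finite, and every $\liminf$ or $\limsup$ below lies in $[0,f(\{e_G\})]$.

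The lower bound is the easy half. For fixed $K,\delta$, every F\o lner sequence is eventually $(K,\delta)$-invariant, so $f(F_n)/|F_n|\ge \inf\{f(A)/|A| : A\ (K,\delta)\text{-invariant}\}$ for large $n$; letting the invariance strengthen yields $\liminf_n f(F_n)/|F_n|\ge L$ for \emph{every} F\o lner sequence.

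The upper bound is where the work lies. The core estimate is that if a nested F\o lner sequence $\{T_j\}$ (with $e_G\in T_1\subseteq T_2\subseteq\cdots$) quasi-tiles a set $A$ via $T_{n_1},\dots,T_{n_k}$ with tiling centers $C_1,\dots,C_k$, then sub-additivity applied to the pairwise disjoint pieces $T_{n_i}C_i=\bigcup_{c\in C_i}T_{n_i}c$ and the remainder $A\setminus\bigcup_i T_{n_i}C_i$, combined with $G$-invariance and the singleton bound on the remainder, gives
$$\frac{f(A)}{|A|}\le \frac{1}{1-\epsilon}\,\max_{1\le i\le k}\frac{f(T_{n_i})}{|T_{n_i}|}+\epsilon\, f(\{e_G\}),$$
where the factor $\tfrac{1}{1-\epsilon}$ comes from $\epsilon$-disjointness (which forces $\sum_i|C_i||T_{n_i}|\le|A|/(1-\epsilon)$) and $|A\setminus\bigcup_i T_{n_i}C_i|<\epsilon|A|$ from the $(1-\epsilon)$-cover condition. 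To render the maximum harmless I must feed this with tiles whose ratios are close to $L$, so the real task is to \emph{construct} a nested F\o lner sequence $\{T_j\}$ with $f(T_j)/|T_j|\to L$. I would build it recursively: given $T_{j-1}$, choose a highly invariant $E_j$ with $f(E_j)/|E_j|$ within $\eta_j$ of $L$, take a right translate $E_j'$ of $E_j$ lying far from $T_{j-1}$ (using that $G$ is infinite), and set $T_j=T_{j-1}\cup E_j'$. Right translation preserves both the $G$-invariance of $f$ and $(K,\delta)$-invariance, and taking $E_j$ large makes the contribution of the fixed set $T_{j-1}$ to $f(T_j)/|T_j|$ and to the invariance deficit negligible; thus $\{T_j\}$ is nested, F\o lner, and satisfies $f(T_j)/|T_j|\to L$.

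With such $\{T_j\}$ in hand I would invoke Proposition \ref{ow-prop}: for each $\epsilon\in(0,\tfrac14)$ and large $N$, tiles $T_{n_1},\dots,T_{n_k}$ of index $\ge N$ (hence of ratio $\le L+\eta$) $\epsilon$-quasi-tile $F_m'$ for all large $m$. Substituting into the core estimate and passing to $\limsup_m$, then $N\to\infty$ and $\epsilon\to0$, yields $\limsup_m f(F_m')/|F_m'|\le L$ for an arbitrary F\o lner sequence $\{F_m'\}$. Together with the lower bound this proves $f(F_m')/|F_m'|\to L$ for every F\o lner sequence, and since $L$ was defined with no reference to a sequence, independence of the choice is automatic. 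The main obstacle, I expect, is exactly that sub-additivity only ever bounds unions from above, so the limit cannot be pinned down by tiling alone; the remedy is to split the two directions — an intrinsic infimum for the free lower bound — and to manufacture a nested tiling sequence realizing that infimum, where the far-translate union construction and the verification that it remains F\o lner are the delicate points.
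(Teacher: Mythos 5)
Your proposal is correct, and although it rests on the same two technical pillars as the paper---the quasi-tiling Proposition \ref{ow-prop} and the sub-additivity estimate $\frac{f(A)}{|A|}\le \epsilon f(\{e_G\})+\frac{1}{1-\epsilon}\max_i\frac{f(T_{n_i})}{|T_{n_i}|}$, which is literally the paper's inequality \eqref{07-03-05-02}---the surrounding architecture is genuinely different. The paper never identifies the limit: it proves the one-sided claim \eqref{key-convergence} (the $\limsup$ along an arbitrary F\o lner sequence is at most the $\limsup$ along a nested one), deduces convergence for nested sequences by applying this claim to a subsequence realizing the $\liminf$, and then treats a general F\o lner sequence $\{F_n'\}$ by passing to a sparse subsequence, forming the nested sequence $F_n^*=\{e_G\}\cup\bigcup_{i\le n}F_i'$, and comparing $f(F_{n+1}^*)/|F_{n+1}^*|$ with $f(F_{n+1}')/|F_{n+1}'|$ term by term. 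You instead name the limit intrinsically, $L=\sup_{K,\delta}\inf\{f(A)/|A|: A\ (K,\delta)\text{-invariant}\}$, get $\liminf\ge L$ for every F\o lner sequence directly from the definitions, and run the quasi-tiling estimate against a manufactured nested F\o lner sequence whose ratios approach $L$. What your route buys is an explicit formula for the limit and sequence-independence for free; what it costs is the construction of the realizing sequence $\{T_j\}$, a step the paper does not need (its auxiliary nested sequence $\{F_n^*\}$ comes from unions of the given sets, with no optimization involved).

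That construction has one under-justified step. The definition of $L$ hands you, for each $(K_j,\delta_j)$, an invariant set $E_j$ with $f(E_j)/|E_j|\le L+\eta_j$, but nothing says such a set can also be taken with $|E_j|$ large compared to $|T_{j-1}|$, which your recursion needs both to control the ratio of $T_j=T_{j-1}\cup E_j'$ and to make its invariance defect small. The fix is one line, in the same spirit as your far-translate idea: since $G$ is infinite, one can choose right translates $E_jg_1,\dots,E_jg_m$ that are pairwise disjoint and set $E'=E_jg_1\sqcup\cdots\sqcup E_jg_m$; sub-additivity and $G$-invariance give $f(E')\le mf(E_j)$, hence $f(E')/|E'|\le f(E_j)/|E_j|$, while $B(E_jg,K)=B(E_j,K)g$ and $B(A\cup B,K)\subseteq B(A,K)\cup B(B,K)$ give $|B(E',K_j)|\le m|B(E_j,K_j)|<\delta_j|E'|$, so $E'$ is still $(K_j,\delta_j)$-invariant and can be made as large as desired. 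With that patch your argument is complete.
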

\begin{proof} We give a proof for the completion. Since $f$ is $G$-invariant, there exists $M\in \R_+$ such that $f
(\{g\})= M$ for all $g\in G$.

Now first we claim that if $\{ F_n\}_{n\in \mathbb{N}}$ with
$e_G\in F_1\subseteq F_2\subseteq \cdots$ and $\{F_n'\}_{n\in
\mathbb{N}}$ are two F\o lner sequences of $G$ then
\begin{eqnarray} \label{key-convergence}
\limsup_{n\rightarrow +\infty} \frac{f (F_n')}{|F_n'|}\le
\limsup_{n\rightarrow +\infty} \frac{f (F_n)}{|F_n|}.
\end{eqnarray}
Let $\epsilon\in (0, \frac{1}{4})$ and $N\in \mathbb{N}$. By
Proposition \ref{ow-prop} there exist integers $n_1, \cdots, n_k$
with $N\le n_1< \cdots< n_k$ such that when $n$ is large enough
then $F_{n_1}, \cdots, F_{n_k}$ $\epsilon$-quasi-tile $F_n'$ with
tiling centers $C_1^n, \cdots, C_k^n$. Thus, when $n$ is large
enough then
\begin{equation} \label{condition}
F_n'\supseteq \bigcup_{i= 1}^k F_{n_i} C_i^n\ \text{and}\
|\bigcup_{i= 1}^k F_{n_i} C_i^n|\ge \max \{(1- \epsilon) |F_n'|, (1-
\epsilon) \sum_{i= 1}^k |C_i^n|\cdot |F_{n_i}|\},
\end{equation}
which implies
\begin{eqnarray} \label{07-03-05-02}
\frac{f (F_n')}{|F_n'|} &\le & \frac{f (F_n'\setminus \bigcup_{i=
1}^k F_{n_i} C_i^n)+ f (\bigcup_{i= 1}^k F_{n_i} C_i^n)}{|F_n'|} \nonumber \\
&\le & M \frac{|F_n'\setminus \bigcup_{i= 1}^k F_{n_i}
C_i^n|}{|F_n'|}+ \frac{f (\bigcup_{i= 1}^k F_{n_i}
C_i^n)}{|\bigcup_{i= 1}^k F_{n_i} C_i^n|} \nonumber\\
&\le & M \epsilon+ \frac{f (\bigcup_{i= 1}^k F_{n_i}
C_i^n)}{|\bigcup_{i= 1}^k F_{n_i} C_i^n|} \nonumber \\
&\le & M \epsilon+ \sum_{i= 1}^k \frac{|C_i^n| f (F_{n_i})}{(1-
\epsilon) \sum_{i= 1}^k
|C_i^n|\cdot |F_{n_i}|}\ (\text{using}\ \eqref{condition}) \nonumber \\
&\le & M \epsilon+ \frac{1}{1- \epsilon}\max_{1\le i\le k} \frac{f
(F_{n_i})}{|F_{n_i}|}\le M \epsilon+ \frac{1}{1- \epsilon}\sup_{m\ge
N} \frac{f (F_m)}{|F_m|}.
\end{eqnarray}
Now letting $\epsilon\rightarrow 0+$ and $N\rightarrow +\infty$, we
conclude the inequality \eqref{key-convergence}.

Now let $\{ H_n\}_{n\in \mathbb{N}}$ with $e_G\in H_1\subseteq
H_2\subseteq \cdots$ be a F\o lner sequence of $G$. Clearly, there
is a sub-sequence $\{H_{n_m}\}_{m\in \mathbb{N}}$ of $\{
H_n\}_{n\in \mathbb{N}}$ such that
\begin{eqnarray} \label{liminf}
\lim_{m\rightarrow +\infty} \frac{f(H_{n_m})}{|H_{n_m}|}=
\liminf_{n\rightarrow +\infty} \frac{f(H_n)}{|H_n|}.
\end{eqnarray}
Applying the above claim to F\o lner sequences $\{ H_{n_m}\}_{m\in
\mathbb{N}}$ and $\{H_n\}_{n\in \mathbb{N}}$ (see
\eqref{key-convergence}), we obtain
\begin{eqnarray*}
\limsup_{n\rightarrow +\infty} \frac{f (H_n)}{|H_n|}\le
\limsup_{m\rightarrow +\infty} \frac{f (H_{n_m})}{|H_{n_m}|}=
\liminf_{n\rightarrow +\infty} \frac{f(H_n)}{|H_n|}\ (\text{by
\eqref{liminf}}).
\end{eqnarray*}
Thus, the sequence $\{\frac{f (H_n)}{|H_n|}\}_{n\in \N}$ converges
(say $N(f)$ to be the value of the limit). Then for any F\o lner
sequence $\{ F_n\}_{n\in \mathbb{N}}$ with $e_G\in F_1\subseteq
F_2\subseteq \cdots$ of $G$, the sequence $\{\frac{f
(F_n)}{|F_n|}\}_{n\in \N}$ converges to $N(f)$ (by
\eqref{key-convergence}).

Finally, in order to complete the proof, we only need to check
that, for any given F\o lner sequence $\{F_n\}_{n\in \mathbb{N}}$
of $G$, if $\{F_n'\}_{n\in \mathbb{N}}$ is any sub-sequence of
$\{F_n\}_{n\in \mathbb{N}}$ such that the sequence $\{\frac{f
(F_n')}{|F_n'|}\}_{n\in \N}$ converges, then it converges to $N
(f)$, which implies $\{\frac{f (F_n)}{|F_n|}\}_{n\in \N}$
converges to $N (f)$. Let $\{F_n'\}_{n\in \mathbb{N}}$ be such a
sub-sequence. With no loss of generality we assume $\lim\limits_{n\rightarrow +\infty}
\frac{|F_n^*|}{|F_{n+ 1}'|}=0$ (if necessity we take a
sub-sequence of $\{ F_n'\}_{n\in \mathbb{N}}$), where $F_n^*=
\{e_G\}\cup \bigcup_{i= 1}^n F_i'$ for each $n$. It is easy to
check that $e_G\in F_1^*\subseteq F_2^*\subseteq \cdots$ forms a
F\o lner sequence of $G$ and so the sequence
$\{\frac{f(F_{n}^*)}{|F_{n}^*|}\}_{n\in \N}$ converges to $N(f)$
from the above discussion. Note that, for each $n\in \N$,
\begin{eqnarray*}
|\frac{f(F_{n+ 1}^*)}{|F_{n+ 1}^*|}- \frac{f(F_{n+ 1}')}{|F_{n+
1}'|}|&\le & \frac{f(F_n^*)}{|F_{n+ 1}^*|}+ |\frac{f(F_{n+
1}')}{|F_{n+ 1}^*|}- \frac{f(F_{n+ 1}')}{|F_{n+
1}'|}| \\
&\le & M \left(\frac{|F_n^*|}{|F_{n+1}^*|}+|F_{n+1}'|\cdot|\frac{1}
{|F_{n+1}^*|}-\frac{1}{|F_{n+1}'|}|\right) \\
&\le & M \left(\frac{|F_n^*|}{|F_{n+
1}'|}+1-\frac{1}{1+\frac{|F_n^*|}{|F_{n+ 1}'|}}\right).
\end{eqnarray*}
By letting $n\rightarrow +\infty$ one has $\lim_{n\rightarrow
+\infty} \frac{f(F_{n+ 1}^*)}{|F_{n+ 1}^*|}=\lim_{n\rightarrow
+\infty} \frac{f(F_{n+ 1}')}{|F_{n+ 1}'|}=N(f)$, that is, the
sequence $\{\frac{f(F_{n}')}{|F_{n}'|}\}_{n\in \N}$ converges also
to $N(f)$.
\end{proof}

\begin{rem}
Recall that we say a set $T$ tiles $G$ if there is a subset $C$
such that $\{T c: c\in C\}$ is a partition of $G$. It's proved
that if $G$ admits a F\o lner sequence $\{F_n\}_{n\in \N}$ of
tiling sets then for each $f$ as in Lemma \ref{convergent} the
sequence $\{\frac{f (F_n)}{|F_n|}\}_{n\in \N}$ converges to
$\inf_{n\in \N} \frac{f (F_n)}{|F_n|}$ and the value of the limit
is independent of the choice of such a F\o lner sequence, which is
stated as \cite[Theorem 5.9]{We}.
\end{rem}

The following useful lemma is an alternative version of
\eqref{07-03-05-02} in the proof of Lemma \ref{convergent}.

\begin{lem} \label{conv-lem}
Let $e_G\in F_1\subseteq F_2\subseteq \cdots$ be a F\o lner sequence
of $G$. Then for any $\epsilon\in (0, \frac{1}{4})$ and $N\in
\mathbb{N}$ there exist integers $n_1, \cdots, n_k$ with $N\le n_1<
\cdots< n_k$ such that if $f: F (G)\rightarrow \R$ a m.n.i.s.a.
function with $M = f (\{g\})$ for all $g\in G$ then
\begin{eqnarray*}
\lim_{n\rightarrow +\infty} \frac{f (F_n)}{|F_n|}\le M \epsilon+
\frac{1}{1- \epsilon}\max_{1\le i\le k} \frac{f
(F_{n_i})}{|F_{n_i}|}\le M \epsilon \left(1+ \frac{1}{1-
\epsilon}\right)+ \max_{1\le i\le k} \frac{f (F_{n_i})}{|F_{n_i}|}.
\end{eqnarray*}
\end{lem}

\section{Entropy of an amenable group action}

Let $\{ F_n\}_{n\in \mathbb{N}}$ be a F\o lner sequence of $G$ and
fix it in the section. In this section, we aim to introduce the
entropy theory of a $G$-system. By a {\it $G$-system} $(X, G)$ we
mean that $X$ is a compact metric space and $\Gamma: G\times X\rightarrow X, (g, x)\mapsto g x$ is a continuous mapping
 satisfying
\begin{enumerate}

\item $\Gamma (e_G, x)= x$ for each $x\in X$,

\item $\Gamma (g_1, \Gamma (g_2, x))= \Gamma (g_1 g_2, x)$ for each
$g_1, g_2\in G$ and $x\in X$.
\end{enumerate}
Moreover, if a non-empty compact subset $W\subseteq X$ is $G$-invariant (i.e. $g W= W$ for any $g\in G$) then $(W, G)$ is called a {\it sub-$G$-system} of $(X, G)$.

From now on, we let $(X, G)$ always be a $G$-system if there is no
any special statement. Denote by $\mathcal{B}_X$ the collection of
all Borel subsets of $X$. A cover of $X$ is a finite family of
Borel subsets of $X$, whose union is $X$. A partition of $X$ is a
cover of $X$ whose elements are pairwise disjoint. Denote by
$\mathcal{C}_X$ (resp. $\mathcal{C}_X^{o}$) the set of all covers
(resp. finite open covers) of $X$. Denote by $\mathcal{P}_X$ the
set of all partitions of $X$. Given two covers $\mathcal{U},
\mathcal{V}\in \mathcal{C}_X$, $\mathcal{U}$ is said to be {\it
finer} than $\mathcal{V}$ (denoted by $\mathcal{U}\succeq
\mathcal{V}$ or $\mathcal{V}\preceq \mathcal{U}$) if each element
of $\mathcal{U}$ is contained in some element of $\mathcal{V}$;
set $\mathcal{U}\vee \mathcal{V}= \{U\cap V: U\in \mathcal{U},
V\in \mathcal{V}\}$.

\subsection{Topological entropy}

Let $\mathcal{U}\in \mathcal{C}_X$. Set $N (\mathcal{U})$ to be
the minimum among the cardinalities of all sub-families of
$\mathcal{U}$ covering $X$ and denote by $\# (\mathcal{U})$ the cardinality of $\mathcal{U}$. Define $H (\mathcal{U})= \log N
(\mathcal{U})$. Clearly, if $\mathcal{U}, \mathcal{V}\in
\mathcal{C}_X$, then $H(\mathcal{U}\vee
\mathcal{V})\le H(\mathcal{U})+ H(\mathcal{V})$ and
$H(\mathcal{V})\ge H(\mathcal{U})$ when $\mathcal{V}\succeq
\mathcal{U}$.

Let $F\in F (G)$ and $\mathcal{U}\in \mathcal{C}_X$, set $\mathcal{U}_F= \bigvee_{g\in F} g^{-1}
\mathcal{U}$ (letting $\mathcal{U}_\emptyset= \{X\}$). It is not
hard to check that $F\in F(G)\mapsto H(\mathcal{U}_F)$ is a
m.n.i.a.s. function, and so by Lemma \ref{convergent}, the quantity
\begin{equation*}
h_{\text{top}}(G,\mathcal{U})\doteq \lim_{n\rightarrow
+\infty}\frac{1}{|F_n|}H(\mathcal{U}_{F_n})
\end{equation*}
exists and $h_{\text{top}}(G,\mathcal{U})$ is independent of the
choice of $\{ F_n\}_{n\in \mathbb{N}}$.
$h_{\text{top}}(G,\mathcal{U})$ is called the {\it topological
entropy of $\mathcal{U}$}. It is clear that
$h_{\text{top}}(G,\mathcal{U})\le H (\mathcal{U})$. Note that if
$\mathcal{U}_1, \mathcal{U}_2\in \mathcal{C}_X$, then
$h_{\text{top}}(G, \mathcal{U}_1\vee \mathcal{U}_2)\le
h_{\text{top}}(G, \mathcal{U}_1)+ h_{\text{top}}(G,
\mathcal{U}_2)$ and $h_{\text{top}}(G, \mathcal{U}_2)\ge
h_{\text{top}}(G, \mathcal{U}_1)$ when $\mathcal{U}_2\succeq
\mathcal{U}_1$. The {\it topological entropy of $(X,G)$} is
defined by
\begin{equation*}
h_{\text{top}} (G, X)=\sup_{\mathcal{U}\in \mathcal{C}_X^o}
h_{\text{top}}(G,\mathcal{U}).
\end{equation*}

\subsection{Measure-theoretic entropy}

Denote by $\mathcal{M}(X)$ the set of all Borel probability measures
on $X$. For $\mu\in \mathcal{M} (X)$, denote by $\text{supp} (\mu)$ the {\it support} of $\mu$, i.e. the smallest closed subset $W\subseteq X$ such that $\mu (W)= 1$. $\mu\in \mathcal{M} (X)$ is called {\it $G$-invariant} if $g
\mu= \mu$ for each $g\in G$; $G$-invariant $\nu\in \mathcal{M} (X)$
is called {\it ergodic} if $\nu (\bigcup_{g\in G} g A)= 0$ or $1$
for any $A\in \mathcal{B}_X$. Denote by $\mathcal{M} (X, G)$ (resp.
$\mathcal{M}^e(X,G)$) the set of all $G$-invariant (resp. ergodic
$G$-invariant) elements in $\mathcal{M} (X)$. Note that the
amenability of $G$ ensures that $\emptyset\neq \mathcal{M}^e (X, G)$
and both $\mathcal{M}(X)$ and $\mathcal{M}(X, G)$ are convex compact
metric spaces when they are endowed with the weak$^*$-topology.

Given $\alpha\in \mathcal{P}_X$, $\mu\in \mathcal{M}(X)$ and
 a sub-$\sigma$-algebra $\mathcal{A}\subseteq \mathcal{B}_X$,
 define
\begin{equation*}
H_{\mu}
(\alpha | \mathcal{A})= \sum_{A\in \alpha} \int_X
 -\E (1_A| \mathcal{A}) \log \E (1_A| \mathcal{A}) d \mu,
\end{equation*}
where $\E (1_A| \mathcal{A})$ is the expectation of $1_A$ with
respect to (w.r.t.) $\mathcal{A}$. One standard fact is
that $H_{\mu} (\alpha | \mathcal{A})$ increases w.r.t. $\alpha$
and decreases w.r.t. $\mathcal{A}$. Set $\mathcal{N}= \{\emptyset,
X\}$. Define
\begin{equation*}
H_\mu (\alpha)= H_\mu (\alpha| \mathcal{N})= \sum_{A\in \alpha}
-\mu(A) \log \mu(A).
\end{equation*}
Let $\beta \in \mathcal{P}_X$. Note that $\beta$ generates
naturally a sub-$\sigma$-algebra $\mathcal{F} (\beta)$ of
$\mathcal{B}_X$, define
\begin{equation*}
H_{\mu}(\alpha|\beta)= H_\mu (\alpha| \mathcal{F} (\beta))=
H_{\mu}(\alpha\vee \beta)- H_{\mu}(\beta).
\end{equation*}
Now let $\mu\in \mathcal{M}(X,G)$, it is not hard to see that $F\in
F(G)\mapsto H_{\mu} (\alpha_F)$ is a m.n.i.a.s. function. Thus by
Lemma \ref{convergent} we can define the {\it measure-theoretic
$\mu$-entropy of $\alpha$} as
\begin{equation}
\label{eq-p-e} h_\mu(G,\alpha)=\lim_{n\rightarrow +\infty}
\frac{1}{|F_n|}H_{\mu}(\alpha_{F_n}) \left(=\inf_{F\in F(G)}
\frac{1}{|F|}H_{\mu}(\alpha_F)\right),
\end{equation}
where the last identity is to be proved in Lemma
\ref{lem-436}~(4). In particular, $h_\mu(G,\alpha)$ is independent
of the choice of F\o lner sequence $\{ F_n\}_{n\in \mathbb{N}}$.
The {\it measure-theoretic $\mu$-entropy of $(X, G)$} is defined
by
\begin{equation} \label{hmu}
h_\mu(G, X)= \sup_{\alpha \in \mathcal{P}_X} h_\mu(G,\alpha).
\end{equation}

\subsubsection{The proof of the second identity in \eqref{eq-p-e}}

\begin{lem}\label{lem-436}
Let $\alpha\in \mathcal{P}_X$, $\mu\in \mathcal{M}(X)$, $m\in \N$
and $E, F, B, E_1, \cdots, E_k\in F (G)$. Then
\begin{description}

\item[1] $H_\mu (\alpha_{E\cup
F})+H_\mu(\alpha_{E\cap F})\le H_\mu(\alpha_E)+H_\mu(\alpha_F)$.

\item[2] If $1_E(g)=\frac{1}{m}\sum_{i=1}^k 1_{E_i}(g)$ holds
for each $g\in G$, then $H_\mu(\alpha_E)\le
\frac{1}{m}\sum_{i=1}^k H_\mu(\alpha_{E_i})$.

\item[3]
$$H_\mu(\alpha_F)\le \sum_{g\in
F}\frac{1}{|B|}H_\mu(\alpha_{Bg})+|F\setminus \{ g\in
G:B^{-1}g\subseteq F\}|\cdot \log \# (\alpha).$$

\item[4] If in addition $\mu\in \mathcal{M}(X,G)$, then
$h_\mu(G,\alpha)=\inf_{B\in F(G)}\frac{H_\mu(\alpha_B)}{|B|}$.
\end{description}
\end{lem}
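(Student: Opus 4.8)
The plan is to prove the four assertions in order, each serving as an input for the next, relying throughout on the standard facts recalled in the text (the chain rule and that $H_\mu(\,\cdot\mid\mathcal A)$ increases in the first argument and decreases in $\mathcal A$).

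For (1) I would reduce the claimed submodularity to subadditivity of conditional entropy. Writing $E\cup F$ and $E\cap F$ as disjoint unions, set $\beta=\alpha_{E\cap F}$, $\gamma_1=\alpha_{E\setminus F}$ and $\gamma_2=\alpha_{F\setminus E}$, so that $\alpha_E=\beta\vee\gamma_1$, $\alpha_F=\beta\vee\gamma_2$ and $\alpha_{E\cup F}=\beta\vee\gamma_1\vee\gamma_2$. After cancelling $H_\mu(\beta)$ the desired inequality becomes $H_\mu(\gamma_1\vee\gamma_2\mid\mathcal F(\beta))\le H_\mu(\gamma_1\mid\mathcal F(\beta))+H_\mu(\gamma_2\mid\mathcal F(\beta))$, which is the standard subadditivity of conditional entropy (a consequence of the chain rule together with the monotonicity of $H_\mu(\,\cdot\mid\mathcal A)$ in $\mathcal A$).

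For (2), a Shearer-type inequality, I would fix an enumeration $g_1,g_2,\dots$ of $G$, noting first that the hypothesis forces each $E_i\subseteq E$ and each $g\in E$ to lie in exactly $m$ of the $E_i$. For $S\subseteq E$ the chain rule gives $H_\mu(\alpha_S)=\sum_{g_j\in S}c_j(S)$ with $c_j(S)=H_\mu\bigl(g_j^{-1}\alpha\mid \bigvee_{g_i\in S,\,i<j}g_i^{-1}\alpha\bigr)$. Since $\{g_i\in S:i<j\}\subseteq\{g_i\in E:i<j\}$ and conditioning on a finer algebra decreases entropy, we have $c_j(S)\ge c_j(E)$; writing $c_j:=c_j(E)\ge0$ and summing, $\sum_{i=1}^k H_\mu(\alpha_{E_i})\ge\sum_{i=1}^k\sum_{g_j\in E_i}c_j=\sum_{g_j\in E}c_j\,|\{i:g_j\in E_i\}|=m\sum_{g_j\in E}c_j=m\,H_\mu(\alpha_E)$, which is (2).

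For (3) the key device is to truncate the translates so as to realize the exact multiplicity that (2) requires. Put $I=\{g\in G:B^{-1}g\subseteq F\}$. A direct check shows $\sum_{g\in F}1_{Bg\cap I}=|B|\cdot 1_I$, because for $h\in I$ all $|B|$ elements of $B^{-1}h$ lie in $F$. Applying (2) with the family $\{Bg\cap I\}_{g\in F}$ (here the role of $E$ is played by $I$ and $m=|B|$), and then using the monotonicity $\alpha_{Bg\cap I}\preceq\alpha_{Bg}$, yields $H_\mu(\alpha_I)\le\frac1{|B|}\sum_{g\in F}H_\mu(\alpha_{Bg})$. Since $F\subseteq I\cup(F\setminus I)$, monotonicity and subadditivity give $H_\mu(\alpha_F)\le H_\mu(\alpha_I)+H_\mu(\alpha_{F\setminus I})$, and the last term is at most $\sum_{g\in F\setminus I}H_\mu(g^{-1}\alpha)\le|F\setminus I|\log\#(\alpha)$; combining the two estimates gives exactly (3). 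For (4), the bound $h_\mu(G,\alpha)\ge\inf_{B}\frac{H_\mu(\alpha_B)}{|B|}$ is immediate, as $h_\mu(G,\alpha)$ is the limit of the numbers $\frac{H_\mu(\alpha_{F_n})}{|F_n|}$, each at least the infimum. For the reverse I would fix $B$ and feed $F=F_n$ into (3); here the added invariance $\mu\in\mathcal M(X,G)$ is decisive, since $\alpha_{Bg}=g^{-1}\alpha_B$ gives $H_\mu(\alpha_{Bg})=H_\mu(\alpha_B)$, turning (3) into $\frac{H_\mu(\alpha_{F_n})}{|F_n|}\le\frac{H_\mu(\alpha_B)}{|B|}+\frac{|F_n\setminus I_n|}{|F_n|}\log\#(\alpha)$ with $I_n=\{g:B^{-1}g\subseteq F_n\}$. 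The point to verify is that the boundary ratio tends to $0$: since $F_n\setminus I_n\subseteq\bigcup_{b\in B}\bigl(F_n\cap b(G\setminus F_n)\bigr)$ and $B$ is finite, this follows from the F\o lner property of $\{F_n\}$. Letting $n\to\infty$ gives $h_\mu(G,\alpha)\le\frac{H_\mu(\alpha_B)}{|B|}$ for every $B$, hence $h_\mu(G,\alpha)\le\inf_B\frac{H_\mu(\alpha_B)}{|B|}$. The main obstacles are the combinatorial truncation in (3) and this vanishing-boundary estimate in (4); the rest is routine entropy bookkeeping.
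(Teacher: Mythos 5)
Your proposal is correct. For parts (1), (3) and (4) you follow essentially the paper's route: your reduction of (1) to subadditivity of conditional entropy is the same chain-rule-plus-monotonicity computation; your truncated family $\{Bg\cap I\}_{g\in F}$ with multiplicity $|B|$ in (3) is exactly the paper's family $\{h\in Bg: B^{-1}h\subseteq F\}$ (these agree since $\{h\in BF:B^{-1}h\subseteq F\}=\{h\in G:B^{-1}h\subseteq F\}$); and in (4) your covering $F_n\setminus I_n\subseteq\bigcup_{b\in B}\bigl(F_n\cap b(G\setminus F_n)\bigr)$ is the paper's containment of $F_n\setminus I_n$ in $B(F_n,B')$ with $B'=B^{-1}\cup\{e_G\}$, just split over the elements of $B$. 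The genuine divergence is in part (2), the Shearer-type inequality. The paper derives (2) from the strong subadditivity (1): it partitions $E$ into the atoms of $\bigvee_{i=1}^k\{E_i,E\setminus E_i\}$, forms the increasing sets $K_i$, and telescopes, bounding each increment $H_\mu(\alpha_{K_i})-H_\mu(\alpha_{K_{i-1}})$ by $H_\mu(\alpha_{K_i\cap E_j})-H_\mu(\alpha_{K_{i-1}\cap E_j})$ via (1) whenever $K_i\setminus K_{i-1}$ meets $E_j$. You instead give the standard information-theoretic proof of Shearer's inequality: enumerate $G$, expand $H_\mu(\alpha_S)$ by the chain rule into increments $c_j(S)$, and use $c_j(S)\ge c_j(E)$ for $S\subseteq E$ together with the exact multiplicity $m$ of each point of $E$. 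Your argument is shorter and makes (2) logically independent of (1), whereas the paper's route exhibits strong subadditivity as the single engine behind the whole lemma (the property its subsequent remark attributes to Ollagnier's treatment). Both arguments are complete and yield the same conclusion.
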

\begin{proof}
1. The conclusion follows directly from the following simple
observation:
\begin{eqnarray*}
H_\mu(\alpha_{E\cup F})+ H_\mu(\alpha_{E\cap F})& =
&H_\mu(\alpha_{E})+H_\mu(\alpha_{F}|\alpha_E)+H_\mu(\alpha_{E\cap
F}) \\
&\le & H_\mu(\alpha_{E})+H_\mu(\alpha_{F}|\alpha_{E\cap
F})+H_\mu(\alpha_{E\cap F})\\
&= & H_\mu(\alpha_E)+H_\mu(\alpha_F).
\end{eqnarray*}

2. Clearly, $\bigcup_{i= 1}^k E_i= E$. Say $\{A_1, \cdots, A_n\}=
\bigvee_{i= 1}^k \{E_i, E\setminus E_i\}$ (neglecting all empty
elements). Set $K_0= \emptyset$, $K_i= \bigcup_{j= 1}^i A_j$, $i=
1, \cdots, n$. Then $\emptyset= K_0\subsetneq K_1\subsetneq \cdots
\subsetneq K_n= E$. Moreover, if for some $i= 1, \cdots, n$ and
$j= 1, \cdots, k$ with $E_j\cap (K_{i}\setminus K_{i- 1})\neq
\emptyset$ then $K_{i}\setminus K_{i- 1}\subseteq E_j$ and so
$K_i= K_{i- 1}\cup (K_i\cap E_j)$, thus $H_\mu (\alpha_{K_i})+
H_\mu (\alpha_{K_{i- 1}\cap E_j})\le H_\mu(\alpha_{K_{i- 1}})+
H_\mu(\alpha_{K_i\cap E_j})$ (using 1), i.e.
\begin{equation} \label{inequality}
H_\mu (\alpha_{K_i})- H_\mu(\alpha_{K_{i- 1}})\le
H_\mu(\alpha_{K_i\cap E_j})- H_\mu (\alpha_{K_{i- 1}\cap E_j}).
\end{equation}
Now for each $i= 1, \cdots, n$ we select $k_i\in K_{i}\setminus
K_{i- 1}$, one has
\begin{eqnarray*}
H_\mu(\alpha_E) &= & \sum_{i= 1}^n \left(\frac{1}{m} \sum_{j= 1}^k
1_{E_j}(k_i)\right) (H_\mu
(\alpha_{K_i})- H_\mu(\alpha_{K_{i- 1}}))\ (\text{by assumptions}) \\
&= & \frac{1}{m} \sum_{j= 1}^k \sum_{1\le i\le n: k_i\in E_j}
(H_\mu
(\alpha_{K_i})- H_\mu(\alpha_{K_{i- 1}})) \\
&\le & \frac{1}{m} \sum_{j= 1}^k \sum_{1\le i\le n: k_i\in E_j}
(H_\mu(\alpha_{K_i\cap E_j})- H_\mu (\alpha_{K_{i- 1}\cap
E_j})) \ (\text{using}\ \eqref{inequality}) \\
&\le & \frac{1}{m} \sum_{j= 1}^k \sum_{i= 1}^n
(H_\mu(\alpha_{K_i\cap E_j})- H_\mu (\alpha_{K_{i- 1}\cap E_j}))=
\frac{1}{m} \sum_{j=1}^k H_\mu(\alpha_{E_j}).
\end{eqnarray*}

3. Note that $1_{\{ h\in BF: B^{-1}h\subseteq F\}}(f)= \frac{1}{|B|}
\sum_{g\in F} 1_{\{h\in B g: B^{-1} h\subseteq F\}}(f)$ for each
$f\in G$. By 2, one has
\begin{align}\label{3-eq-key}
H_\mu(\alpha_{\{ h\in BF:B^{-1}h\subseteq F\}})\le \frac{1}{|B|}
\sum_{g\in F} H_\mu(\alpha_{\{ h\in Bg:B^{-1}h\subseteq F\}})\le
\frac{1}{|B|} \sum_{g\in F} H_\mu(\alpha_{Bg}),
\end{align}
which implies
\begin{eqnarray*}
H_\mu(\alpha_F)&\le & H_\mu(\alpha_{\{ h\in BF:B^{-1}h\subseteq
F\}})+H_\mu(\alpha_{F\setminus \{ h\in BF:B^{-1}h\subseteq F\}})\\
&\le & \frac{1}{|B|} \sum_{g\in F} H_\mu(\alpha_{Bg})+|F\setminus \{
h\in BF:B^{-1}h\subseteq F\}|\cdot \log \#\alpha   \ \text{(using
\eqref{3-eq-key})}\\
&= & \frac{1}{|B|} \sum_{g\in F} H_\mu(\alpha_{Bg})+|F\setminus \{
h\in G:B^{-1}h\subseteq F\}|\cdot \log \#\alpha.
\end{eqnarray*}

4. If in addition $\mu$ is $G$-invariant, then by 3, for each
$n\in \mathbb{N}$ we have
\begin{eqnarray} \label{07-03-05-03}
\frac{1}{|F_n|}H_\mu(\alpha_{F_n})&\le &\frac{1}{|F_n|}\sum_{g\in
F_n}\frac{1}{|B|}H_\mu(\alpha_{Bg})+\frac{1}{|F_n|}|F_n\setminus
\{ g\in G:B^{-1}g\subseteq F_n\}|\cdot \log \# \alpha \nonumber \\
&= & \frac{1}{|F_n|}\sum_{g\in
F_n}\frac{1}{|B|}H_\mu(g^{-1}(\alpha_{B}))+\frac{1}{|F_n|}|F_n\setminus
\{ g\in G:B^{-1}g\subseteq F_n\}|\cdot \log \#\alpha \nonumber \\
&= & \frac{1}{|B|}H_\mu(\alpha_B)+\frac{1}{|F_n|}|F_n\setminus \{
g\in G:B^{-1}g\subseteq F_n\}|\cdot \log \#\alpha.
\end{eqnarray}
Set $B'= B^{- 1}\cup \{e_G\}$. Note that for each $\delta>0$, $F_n$
is $(B', \delta)$-invariant if $n$ is large enough and
\begin{eqnarray*}
F_n\setminus \{g\in G: B^{-1} g\subseteq F_n\}= F_n\cap B
(G\setminus F_n)\subseteq (B')^{- 1} F_n\cap (B')^{- 1} (G\setminus
F_n)= B (F_n, B'),
\end{eqnarray*}
letting $n\rightarrow +\infty$ we get
\begin{equation} \label{need}
\lim_{n\rightarrow +\infty} \frac{1}{|F_n|} |F_n\setminus \{g\in G:
B^{-1} g\subseteq F_n\}|= \lim_{n\rightarrow +\infty} \frac{|B (F_n,
B')|}{|F_n|}= 0,
\end{equation}
and so $h_\mu(G,\alpha)\le \frac{1}{|B|}H_\mu(\alpha_B)$ (using
\eqref{07-03-05-03} and \eqref{need}). Since $B$ is arbitrary, 4 is
proved.
\end{proof}

\begin{rem}
In \cite{O}, Lemma \ref{lem-436} (1) is called the strong
sub-additivity of entropy. In his treatment of entropy for amenable
group actions \cite[Chapter 4]{O}, Ollaginer used the property
rather heavily.
\end{rem}

\subsubsection{Measure-theoretic entropy for covers}

Following Romagnoli's ideas \cite{R}, we define a new notion that
extends definition \eqref{eq-p-e} to covers. Let $\mu\in
\mathcal{M} (X)$ and $\mathcal{A}\subseteq \mathcal{B}_X$ be a
sub-$\sigma$-algebra. For $\mathcal{U}\in \mathcal{C}_X$, we define
$$H_{\mu}(\mathcal{U}| \mathcal{A})= \inf_{\alpha \in \mathcal{P}_X: \alpha \succeq
\mathcal{U}} H_{\mu}(\alpha| \mathcal{A})\ \text{and}\ H_\mu
(\mathcal{U})= H_\mu (\mathcal{U}| \mathcal{N}).$$

Many properties of the function $H_{\mu}(\alpha)$ are extended to
$H_{\mu}(\mathcal{U})$ from partitions to covers.

\begin{lem} \label{lem-2-2}
Let $\mu \in \mathcal{M}(X)$, $\mathcal{A}\subseteq \mathcal{B}_X$
be a sub-$\sigma$-algebra, $g\in G$ and $\mathcal{U}_1,
\mathcal{U}_2\in \mathcal{C}_X$. Then
\begin{description}

\item[1] $0\le H_{\mu}(g^{ -1} \mathcal{U}_1| g^{- 1} \mathcal{A})= H_{g \mu}
(\mathcal{U}_1| \mathcal{A})\le H (\mathcal{U}_1)$.

\item[2] If $\mathcal{U}_1\succeq \mathcal{U}_2$, then
$H_{\mu}(\mathcal{U}_1| \mathcal{A})\ge H_{\mu}(\mathcal{U}_2|
\mathcal{A})$.

\item[3] $H_{\mu}(\mathcal{U}_1\vee \mathcal{U}_2| \mathcal{A})\le
H_{\mu}(\mathcal{U}_1| \mathcal{A})+ H_{\mu}(\mathcal{U}_2|
\mathcal{A})$.
\end{description}
\end{lem}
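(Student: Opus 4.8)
The plan is to reduce each of the three assertions to the corresponding, already-standard, property of the conditional entropy $H_\mu(\alpha\mid\mathcal{A})$ of a \emph{partition} $\alpha$, and then to push it through the defining infimum $H_\mu(\mathcal{U}\mid\mathcal{A})=\inf_{\alpha\succeq\mathcal{U}}H_\mu(\alpha\mid\mathcal{A})$ by exploiting how the relation $\succeq$ behaves. The non-negativity in (1) is immediate: since $\E(1_A\mid\mathcal{A})$ takes values in $[0,1]$ and $-t\log t\ge 0$ for $t\in[0,1]$, every summand in $H_\mu(\alpha\mid\mathcal{A})$ is non-negative, hence so is the infimum.

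For (2), I would simply note that $\mathcal{U}_1\succeq\mathcal{U}_2$ forces the inclusion $\{\alpha\in\mathcal{P}_X:\alpha\succeq\mathcal{U}_1\}\subseteq\{\alpha\in\mathcal{P}_X:\alpha\succeq\mathcal{U}_2\}$ by transitivity of $\succeq$; taking the infimum of $H_\mu(\cdot\mid\mathcal{A})$ over the larger family can only make it smaller, which is exactly $H_\mu(\mathcal{U}_2\mid\mathcal{A})\le H_\mu(\mathcal{U}_1\mid\mathcal{A})$. For (3), given $\epsilon>0$ I would pick partitions $\alpha_i\succeq\mathcal{U}_i$ that are $\epsilon$-close to realizing $H_\mu(\mathcal{U}_i\mid\mathcal{A})$, $i=1,2$. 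The point is that $\alpha_1\vee\alpha_2\succeq\mathcal{U}_1\vee\mathcal{U}_2$ (an atom $A_1\cap A_2$ lies in $U_1\cap U_2$ whenever $A_i\subseteq U_i$), so the defining infimum together with the subadditivity of conditional partition entropy, $H_\mu(\alpha_1\vee\alpha_2\mid\mathcal{A})\le H_\mu(\alpha_1\mid\mathcal{A})+H_\mu(\alpha_2\mid\mathcal{A})$, gives $H_\mu(\mathcal{U}_1\vee\mathcal{U}_2\mid\mathcal{A})\le H_\mu(\mathcal{U}_1\mid\mathcal{A})+H_\mu(\mathcal{U}_2\mid\mathcal{A})+2\epsilon$; letting $\epsilon\to0$ finishes this part.

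The substantive work is the equality in (1). I would first prove it at the level of partitions, i.e. $H_\mu(g^{-1}\alpha\mid g^{-1}\mathcal{A})=H_{g\mu}(\alpha\mid\mathcal{A})$ for $\alpha\in\mathcal{P}_X$. The key identity is the equivariance of conditional expectation under the homeomorphism $x\mapsto gx$: for $A\in\mathcal{B}_X$ one has $\E_\mu(1_{g^{-1}A}\mid g^{-1}\mathcal{A})(x)=\E_{g\mu}(1_A\mid\mathcal{A})(gx)$ for $\mu$-a.e. $x$, which is checked by verifying the defining averaging property against sets $g^{-1}C$, $C\in\mathcal{A}$, using $(g\mu)(B)=\mu(g^{-1}B)$ as change of variables. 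Substituting this into the integral formula for $H_\mu(g^{-1}\alpha\mid g^{-1}\mathcal{A})$ and again changing variables $y=gx$ converts the integral against $\mu$ into one against $g\mu$, yielding $H_{g\mu}(\alpha\mid\mathcal{A})$. To upgrade to covers I would use the order-isomorphism $\alpha\mapsto g^{-1}\alpha$ of $\mathcal{P}_X$, under which $\{\alpha\succeq\mathcal{U}_1\}$ corresponds bijectively to $\{\beta\succeq g^{-1}\mathcal{U}_1\}$; taking infima on both sides turns the partition identity into $H_\mu(g^{-1}\mathcal{U}_1\mid g^{-1}\mathcal{A})=H_{g\mu}(\mathcal{U}_1\mid\mathcal{A})$.

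Finally, the upper bound $H_{g\mu}(\mathcal{U}_1\mid\mathcal{A})\le H(\mathcal{U}_1)$ would follow by exhibiting one good partition: take a subcover of $\mathcal{U}_1$ of minimal cardinality $N(\mathcal{U}_1)$ and disjointify it into a partition $\alpha\succeq\mathcal{U}_1$ with $\#\alpha\le N(\mathcal{U}_1)$. Then $H_{g\mu}(\mathcal{U}_1\mid\mathcal{A})\le H_{g\mu}(\alpha\mid\mathcal{A})\le H_{g\mu}(\alpha)\le\log\#\alpha\le\log N(\mathcal{U}_1)=H(\mathcal{U}_1)$, where the second inequality uses that $H_\mu(\alpha\mid\mathcal{A})$ decreases in $\mathcal{A}$ (so is bounded by the unconditional $H_\mu(\alpha)$) and the third is the standard bound of entropy by the logarithm of the number of atoms. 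I expect the main obstacle to be the equivariance identity and the accompanying bookkeeping of the two changes of variables in (1); the remaining arguments are formal manipulations of the infimum and of known partition inequalities.
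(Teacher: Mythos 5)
Your proof is correct, and it is exactly the routine reduction to partition-level facts that the paper has in mind: the paper states Lemma \ref{lem-2-2} without proof, as an immediate extension of the standard properties of $H_\mu(\alpha\mid\mathcal{A})$ through the defining infimum, which is precisely what you carry out (including the equivariance identity $\E_\mu(1_{g^{-1}A}\mid g^{-1}\mathcal{A})=\E_{g\mu}(1_A\mid\mathcal{A})\circ g$ and the bijection $\alpha\mapsto g^{-1}\alpha$ between the relevant families of partitions).
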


Using Lemma \ref{lem-2-2}, one gets easily that if $\mu\in
\mathcal{M}(X,G)$ then $F\in F(G)\mapsto H_{\mu}(\mathcal{U}_F)$ is
a m.n.i.s.a. function. So we may define the {\it measure-theoretic
$\mu^-$-entropy of $\mathcal{U}$} as
$$h_\mu^-(G,\mathcal{U})=\lim_{n\rightarrow +\infty}
\frac{1}{|F_n|}H_{\mu}(\mathcal{U}_{F_n})$$ and
$h_\mu^-(G,\mathcal{U})$ is independent of the choice of F\o lner
sequence $\{ F_n\}_{n\in \mathbb{N}}$ (see Lemma
\ref{convergent}). At the same time, we define the {\it
measure-theoretic $\mu$-entropy of $\mathcal{U}$} as
\[
h_\mu(G,\mathcal{U})=\inf_{\alpha\in \mathcal{P}_X: \alpha\succeq
\mathcal{U}} h_\mu(G,\alpha).
\]

We obtain directly the following easy facts.

\begin{lem} \label{basic}
Let $\mu\in \mathcal{M}(X,G)$ and $\mathcal{U}, \mathcal{V}
\in\mathcal{C}_X$. Then
\begin{description} \label{props}

\label{props3}
\item[1] $h^-_\mu(G,\mathcal{U})\le
h_\mu(G,\mathcal{U})$ and $h^-_\mu(G,\mathcal{U})\le
h_{\text{top}} (G, \mathcal{U})$.

\label{props1}
\item[2] $h_\mu(G,\mathcal{U}\vee \mathcal{V})\leq
h_\mu(G,\mathcal{U})+ h_\mu(G,\mathcal{V})$ and
$h_\mu^-(G,\mathcal{U}\vee \mathcal{V})\leq
h_\mu^-(G,\mathcal{U})+ h_\mu^-(G,\mathcal{V})$.

\label{newnewbasic}
\item[3] If $\mathcal{U} \succeq \mathcal{V}$ then $h_\mu(G,\mathcal{U})\ge
h_\mu(G,\mathcal{V})$ and $h_\mu^-(G,\mathcal{U})\ge
h_\mu^-(G,\mathcal{V})$. \label{props9}
\end{description}
\end{lem}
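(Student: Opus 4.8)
The plan is to reduce every assertion to the partition-level facts coming from the definitions and Lemma \ref{convergent}, together with the cover-level estimates of Lemma \ref{lem-2-2}, using two elementary bookkeeping observations about refinement. First I would record that for any $F\in F(G)$ one has $(\mathcal{U}\vee\mathcal{V})_F=\mathcal{U}_F\vee\mathcal{V}_F$, and that $\mathcal{U}\succeq\mathcal{V}$ forces $\mathcal{U}_F\succeq\mathcal{V}_F$; both are immediate, since $g^{-1}(\cdot)$ and $\bigvee$ preserve $\succeq$. Second, if $\alpha\succeq\mathcal{U}$ and $\beta\succeq\mathcal{V}$ are partitions, then $\alpha\vee\beta$ is a partition refining $\mathcal{U}\vee\mathcal{V}$, and in particular $\alpha_F\succeq\mathcal{U}_F$ whenever $\alpha\succeq\mathcal{U}$.

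For Part 1, the first inequality follows by fixing a partition $\alpha\succeq\mathcal{U}$ and using $\alpha_{F_n}\succeq\mathcal{U}_{F_n}$: since $H_\mu(\mathcal{U}_{F_n})=\inf_{\gamma\succeq\mathcal{U}_{F_n}}H_\mu(\gamma)$ and $\alpha_{F_n}$ is one such $\gamma$, we get $H_\mu(\mathcal{U}_{F_n})\le H_\mu(\alpha_{F_n})$; dividing by $|F_n|$, letting $n\to+\infty$, and then taking the infimum over $\alpha\succeq\mathcal{U}$ yields $h^-_\mu(G,\mathcal{U})\le h_\mu(G,\mathcal{U})$. For the second inequality I would first establish the static bound $H_\mu(\mathcal{W})\le H(\mathcal{W})$ for every cover $\mathcal{W}$: choose a minimal subcover $\{W_1,\dots,W_N\}$ with $N=N(\mathcal{W})$ and form the partition with cells $A_i=W_i\setminus\bigcup_{j<i}W_j$, which refines $\mathcal{W}$ and has at most $N$ nonempty cells, so that $H_\mu(\mathcal{W})\le H_\mu(\{A_i\})\le\log N=H(\mathcal{W})$. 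Applying this to $\mathcal{U}_{F_n}$, dividing by $|F_n|$, and passing to the limit gives $h^-_\mu(G,\mathcal{U})\le h_{\text{top}}(G,\mathcal{U})$.

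For Part 2, the $h^-_\mu$ statement is immediate from $(\mathcal{U}\vee\mathcal{V})_{F_n}=\mathcal{U}_{F_n}\vee\mathcal{V}_{F_n}$ together with the subadditivity $H_\mu(\mathcal{U}_{F_n}\vee\mathcal{V}_{F_n})\le H_\mu(\mathcal{U}_{F_n})+H_\mu(\mathcal{V}_{F_n})$ of Lemma \ref{lem-2-2}(3), after dividing by $|F_n|$ and taking limits. For the $h_\mu$ statement I would use that $\alpha\vee\beta\succeq\mathcal{U}\vee\mathcal{V}$ whenever $\alpha\succeq\mathcal{U}$ and $\beta\succeq\mathcal{V}$, combined with the partition-level subadditivity $h_\mu(G,\alpha\vee\beta)\le h_\mu(G,\alpha)+h_\mu(G,\beta)$ (itself coming from $H_\mu(\alpha_{F_n}\vee\beta_{F_n})\le H_\mu(\alpha_{F_n})+H_\mu(\beta_{F_n})$ along $\{F_n\}$), and then take the infimum over $\alpha$ and $\beta$ independently. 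Part 3 is the most formal: for $h^-_\mu$ it follows from $\mathcal{U}_{F_n}\succeq\mathcal{V}_{F_n}$ and the monotonicity $H_\mu(\mathcal{U}_{F_n})\ge H_\mu(\mathcal{V}_{F_n})$ of Lemma \ref{lem-2-2}(2); for $h_\mu$ it follows because $\mathcal{U}\succeq\mathcal{V}$ makes $\{\alpha:\alpha\succeq\mathcal{U}\}$ a subset of $\{\alpha:\alpha\succeq\mathcal{V}\}$, so the infimum defining $h_\mu(G,\mathcal{U})$ runs over a smaller family and is therefore at least $h_\mu(G,\mathcal{V})$.

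Everything here is routine, and the only step carrying any real content is the static inequality $H_\mu(\mathcal{W})\le H(\mathcal{W})$ used in Part 1, where one must manufacture a partition refining a cover out of a minimal subcover and control its entropy by its cardinality via $H_\mu(\alpha)\le\log\#(\alpha)$. The remainder is bookkeeping about the relation $\succeq$ and about infima, plus the existence of the limits guaranteed by Lemma \ref{convergent}.
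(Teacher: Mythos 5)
Your proof is correct, and since the paper states this lemma without proof (``We obtain directly the following easy facts''), your argument supplies exactly the routine deductions the authors intend: monotonicity and subadditivity of $H_\mu$ on covers from Lemma \ref{lem-2-2}, the identities $(\mathcal{U}\vee\mathcal{V})_F=\mathcal{U}_F\vee\mathcal{V}_F$ and $\alpha_F\succeq\mathcal{U}_F$, the static bound $H_\mu(\mathcal{W})\le H(\mathcal{W})$ (which is also available directly from Lemma \ref{lem-2-2}~(1)), and the existence of the limits from Lemma \ref{convergent}. Nothing further is needed.
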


\subsubsection{An alternative formula for \eqref{hmu}}

Let $\mu\in \mathcal{M}(X,G)$. Since $\mathcal{P}_X\subseteq
\mathcal{C}_X$, we have
\begin{equation} \label{07-03-05-04}
h_\mu(G, X)=\sup_{\mathcal{U}\in \mathcal{C}_X}
h_\mu^-(G,\mathcal{U})=\sup_{\mathcal{U}\in \mathcal{C}_X}
h_\mu(G,\mathcal{U}).
\end{equation}
In fact, the above extension of local measure-theoretic entropy from
partitions to covers allows us to give another alternative formula
for \eqref{hmu}.

\begin{thm} \label{ope}
Let $\mu\in \mathcal{M}(X,G)$. Then
\begin{align}\label{PU}
h_\mu(G, X)= \sup_{\mathcal{U}\in \mathcal{C}^{o}_X}
h_\mu^-(G,\mathcal{U})= \sup_{\mathcal{U}\in \mathcal{C}^{o}_X}
h_\mu (G,\mathcal{U}).
\end{align}
\end{thm}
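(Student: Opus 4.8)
The equality \eqref{07-03-05-04} already identifies $h_\mu(G,X)$ with $\sup_{\mathcal{U}\in\mathcal{C}_X}h_\mu^-(G,\mathcal{U})$ and with $\sup_{\mathcal{U}\in\mathcal{C}_X}h_\mu(G,\mathcal{U})$, where the suprema run over \emph{all} covers. So the plan is to show that restricting to finite open covers loses nothing. Since $\mathcal{C}_X^o\subseteq\mathcal{C}_X$ and $h_\mu^-(G,\mathcal{U})\le h_\mu(G,\mathcal{U})$ by Lemma \ref{basic}(1), one direction is immediate:
\begin{equation*}
\sup_{\mathcal{U}\in\mathcal{C}_X^o}h_\mu^-(G,\mathcal{U})\le\sup_{\mathcal{U}\in\mathcal{C}_X^o}h_\mu(G,\mathcal{U})\le\sup_{\mathcal{U}\in\mathcal{C}_X}h_\mu(G,\mathcal{U})=h_\mu(G,X).
\end{equation*}
Hence it remains only to prove the reverse inequality $h_\mu(G,X)\le\sup_{\mathcal{U}\in\mathcal{C}_X^o}h_\mu^-(G,\mathcal{U})$, after which all three quantities will be squeezed together. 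Because $h_\mu(G,X)=\sup_{\alpha\in\mathcal{P}_X}h_\mu(G,\alpha)$, it suffices to fix an arbitrary partition $\alpha=\{A_1,\dots,A_n\}\in\mathcal{P}_X$ and bound $h_\mu(G,\alpha)$ by $\sup_{\mathcal{U}\in\mathcal{C}_X^o}h_\mu^-(G,\mathcal{U})$.

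The construction I would use is the standard compact-approximation of $\alpha$. Fixing $\epsilon>0$, by inner regularity of $\mu$ on the compact metric space $X$ I would choose compact sets $C_i\subseteq A_i$ with $\mu(A_i\setminus C_i)<\epsilon$, set $C_0=X\setminus\bigcup_{i=1}^nC_i$ (an open set of measure $<n\epsilon$), and put $U_i=C_i\cup C_0=X\setminus\bigcup_{j\ne i}C_j$. Each $U_i$ is open, so $\mathcal{U}=\mathcal{U}(\alpha,\epsilon)=\{U_1,\dots,U_n\}\in\mathcal{C}_X^o$. The key step is a \emph{single-step estimate}: for every partition $\beta\in\mathcal{P}_X$ with $\beta\succeq\mathcal{U}$, one has $H_\mu(\alpha\mid\beta)\le\eta(\epsilon)$, where $\eta(\epsilon)\to0$ as $\epsilon\to0$ and, crucially, $\eta(\epsilon)$ does not depend on $\beta$. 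The idea is that every atom of such a $\beta$ lies in some $U_i=C_i\cup C_0$ and so meets at most one $C_j$; thus off the small set $C_0$ the partition $\alpha$ is essentially $\beta$-measurable, and the conditional entropy is governed entirely by $\mu(C_0)<n\epsilon$ together with the defect $\sum_i\mu(A_i\setminus C_i)$. Proving this bound uniformly in $\beta$ — by splitting each $\beta$-atom along $C_0$ versus its complement and estimating the conditional entropy by a quantity of the shape $-\mu(C_0)\log\mu(C_0)-(1-\mu(C_0))\log(1-\mu(C_0))+\mu(C_0)\log n$ — is what I expect to be the main obstacle.

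Granting the single-step estimate, I would propagate it to F\o lner blocks. For $F\in F(G)$ and any partition $\beta\succeq\mathcal{U}_F=\bigvee_{g\in F}g^{-1}\mathcal{U}$, subadditivity of conditional entropy together with $\beta\succeq g^{-1}\mathcal{U}$ for each $g\in F$ yields
\begin{equation*}
H_\mu(\alpha_F\mid\beta)\le\sum_{g\in F}H_\mu(g^{-1}\alpha\mid\beta)\le|F|\,\eta(\epsilon),
\end{equation*}
where the second inequality applies the single-step estimate to the partition $g^{-1}\alpha$ and cover $g^{-1}\mathcal{U}$; the bound is the same $\eta(\epsilon)$ for every $g$ because $G$-invariance of $\mu$ gives $\mu(g^{-1}C_0)=\mu(C_0)$, so the defect controlling $\eta$ is unchanged. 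Since $H_\mu(\alpha_F)\le H_\mu(\alpha_F\vee\beta)=H_\mu(\beta)+H_\mu(\alpha_F\mid\beta)$ and $\beta\succeq\mathcal{U}_F$ is arbitrary, taking the infimum over such $\beta$ gives $H_\mu(\alpha_F)\le H_\mu(\mathcal{U}_F)+|F|\,\eta(\epsilon)$.

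Finally I would apply this along the F\o lner sequence $\{F_n\}$: dividing by $|F_n|$ and letting $n\to\infty$, using \eqref{eq-p-e} on the left and the definition of $h_\mu^-$ on the right, produces
\begin{equation*}
h_\mu(G,\alpha)\le h_\mu^-(G,\mathcal{U})+\eta(\epsilon)\le\sup_{\mathcal{U}'\in\mathcal{C}_X^o}h_\mu^-(G,\mathcal{U}')+\eta(\epsilon).
\end{equation*}
Letting $\epsilon\to0$ and then taking the supremum over $\alpha\in\mathcal{P}_X$ establishes $h_\mu(G,X)\le\sup_{\mathcal{U}\in\mathcal{C}_X^o}h_\mu^-(G,\mathcal{U})$. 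Combined with the easy chain of inequalities from the first paragraph, this forces all three quantities in \eqref{PU} to coincide, which is the assertion.
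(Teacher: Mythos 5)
Your proposal is correct and is essentially the paper's own argument: the same construction of $\mathcal{U}$ from closed approximations $C_i\subseteq A_i$ with $U_i=C_i\cup C_0$, the same uniform one-step estimate for partitions finer than $g^{-1}\mathcal{U}$ (using $G$-invariance of $\mu$ so the bound is independent of $g$), the same propagation $H_\mu(\alpha_F)\le H_\mu(\mathcal{U}_F)+|F|\,\eta(\epsilon)$ over F\o lner sets, and the same final squeeze via \eqref{07-03-05-04} and Lemma \ref{basic}~(1). The only difference is how the one-step bound is justified: the paper refines $\beta$ to an intermediate partition $\beta'=\{C_1,\cdots,C_k\}$ with $C_i\subseteq g^{-1}U_i$ and invokes \cite[Lemma 4.15]{W}, whereas your direct splitting along $\{C_0, X\setminus C_0\}$, which gives
\begin{equation*}
H_\mu(\alpha\mid\beta)\le \phi(\mu(C_0))+\phi(1-\mu(C_0))+\mu(C_0)\log n ,
\end{equation*}
works equally well and closes the step you flagged as the main obstacle.
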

\begin{proof}
By \eqref{07-03-05-04}, $h_\mu(G, X)\ge \sup_{\mathcal{U}\in
\mathcal{C}^{o}_X} h_\mu (G,\mathcal{U})$. For the other
direction, let $\alpha=\{ A_1,\cdots,A_k \}\in \mathcal{P}_X$ and
$\epsilon>0$.

\medskip

\noindent{\bf Claim.} There exists $\mathcal{U}\in \mathcal{C}_X^{
o}$ such that $H_{\mu} (g^{-1}\alpha| \beta )\le \epsilon$ if
$g\in G$ and $\beta\in \mathcal{P}_X$ satisfy $\beta\succeq g^{-1}
\mathcal{U}$.

\begin{proof}[Proof of Claim]
By \cite[Lemma 4.15]{W}, there exists $\delta_1= \delta_1 (k,
\epsilon)>0$ such that if
 $\beta_i=\{ B^i_1,\cdots,B^i_k \}\in \mathcal{P}_X, i= 1, 2$ satisfy
 $\sum_{i=1}^k
\mu(B^1_i\Delta B^2_i)<\delta_1$ then $H_{\mu} (\beta_1|\beta_2)\le
\epsilon$. Since $\mu$ is regular, we can take closed subsets
$B_i\subseteq A_i$ with $\mu(A_i\setminus
B_i)<\frac{\delta_1}{2k^2}$, $i=1,\cdots,k$. Let $B_0=X\setminus
\bigcup_{i=1}^kB_i$, $U_i=B_0\cup B_i,i=1,\cdots,k$. Then
$\mu(B_0)<\frac{\delta_1}{2k}$ and $\mathcal{U}=\{ U_1,\cdots,U_k
\}\in \mathcal{C}^{o}_X$.

Let $g\in G$. If $\beta\in \mathcal{P}_X$ is finer than $g^{-1}
\mathcal{U}$, we can find $\beta' =\{ C_1,\cdots,C_k \}\in
\mathcal{P}_X$ satisfying
 $C_i\subseteq g^{-1} U_i, \ i=1,\cdots,k$ and $\beta\succeq
\beta'$, and so $H_{\mu} (g^{-1}\alpha| \beta)\le H_{\mu}
(g^{-1}\alpha| \beta')$. For each $i= 1, \cdots, k$, as
$g^{-1}U_i\supseteq C_i \supseteq X\setminus \bigcup_{l\neq i}
g^{-1}U_l=g^{-1}B_i$ and $g^{-1}A_i\supseteq g^{-1}B_i$, one has
\[
\mu(C_i\Delta g^{-1}A_i)\le \mu(g^{-1}A_i\setminus
g^{-1}B_i)+\mu(g^{-1}B_0)=\mu(A_i\setminus
B_i)+\mu(B_0)<\frac{\delta_1}{2k}+\frac{\delta_1}{2k^2}\le
\frac{\delta_1}{k}.
\]
Thus $\sum_{i=1}^k \mu(C_i\Delta g^{-1}A_i)<\delta_1$.  It follows
that $H_{\mu}(g^{-1}\alpha|\beta')\le \epsilon$ and  hence
$H_{\mu}(g^{-1}\alpha|\beta)\le \epsilon$.
\end{proof}

Let $F\in F(G)$. If $\beta\in \mathcal{P}_X$ is finer than
$\mathcal{U}_F$, then $\beta\succeq g^{-1}\mathcal{U}$ for each
$g\in F$, and so using the above Claim one has
\begin{equation*}
H_{\mu}(\alpha_F) \le H_{\mu}(\beta)+ H_{\mu}(\alpha_F|\beta) \le
H_{\mu}(\beta)+\sum_{g\in F}H_{\mu}(g^{-1}\alpha|\beta) \le
H_{\mu}(\beta)+|F|\epsilon.
\end{equation*}
Moreover, $H_\mu (\alpha_F)\le H_\mu (\mathcal{U}_F)+ |F|\epsilon$.
Now letting $F$ range over $\{ F_n \}_{n\in \mathbb{N}}$ one has
\begin{eqnarray*}
h_\mu(G,\alpha)&= & \lim_{n\rightarrow +\infty} \frac{1}{|F_n|}
H_{\mu}(\alpha_{F_n})\le \limsup_{n\rightarrow +\infty}
\frac{1}{|F_n|}H_{\mu}(\mathcal{U}_{F_n})+\epsilon
\\
&= & h^-_\mu(G,\mathcal{U})+\epsilon\le \sup_{\mathcal{V}\in
\mathcal{C}^{o}_X} h_\mu^-(G,\mathcal{V})+\epsilon.
\end{eqnarray*}
Since $\alpha$ and $\epsilon$ are arbitrary, $h_\mu(G, X)\le \sup
\limits_{\mathcal{V}\in \mathcal{C}^{o}_X} h_\mu^-(G,\mathcal{V})$
and so
\begin{equation*}
h_\mu(G, X)\le \sup_{\mathcal{V}\in \mathcal{C}^{o}_X} h_\mu^-
(G,\mathcal{V})\le \sup_{\mathcal{V}\in \mathcal{C}^{o}_X} h_\mu
(G,\mathcal{V})\ (\text{by Lemma \ref{basic}~(1)}).
\end{equation*}
\end{proof}

\subsubsection{U.s.c. of measure-theoretic entropy of open covers}

A real-valued function $f$ defined on a compact metric space $Z$
is called {\it upper semi-continuous} (u.s.c) if one of
the following equivalent conditions holds:
\begin{enumerate}

\item[(A1)] $\limsup_{z'\rightarrow z} f(z')\le f(z)$ for each
$z\in Z$;

\item[(A2)] for each $r\in \R$, the set $\{ z\in Z:f(z)\ge r \}$ is
closed.
\end{enumerate}
Using (A2), the infimum of any family of u.s.c. functions is again a
u.s.c. one; both the sum and the supremum of finitely many u.s.c.
functions are u.s.c. ones.

In this sub-section, we aim to prove that those two kinds entropy of
open covers over $\mathcal{M} (X, G)$ are both u.s.c. First, we need

\begin{lem} \label{observation}
Let $\mathcal{U}= \{U_1, \cdots, U_M\}\in \mathcal{C}^{o}_X$ and
$F\in F(G)$. Then the function $\psi: \mathcal{M} (X)\rightarrow
\R_+$ with $\psi(\mu)=\inf_{\alpha\in \mathcal{P}_X: \alpha\succeq
\mathcal{U}} H_\mu (\alpha_F)$ is u.s.c.
%\end{enumerate}
\end{lem}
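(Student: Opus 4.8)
We need to show that $\psi(\mu)=\inf_{\alpha\in\mathcal P_X:\alpha\succeq\mathcal U}H_\mu(\alpha_F)$ is u.s.c. on $\mathcal M(X)$ with the weak$^*$-topology. Since $\psi$ is an infimum over the (huge) family indexed by partitions $\alpha$ refining $\mathcal U$, and since the infimum of u.s.c. functions is again u.s.c. by criterion (A2), the natural plan is to show that for each fixed $\alpha$ the map $\mu\mapsto H_\mu(\alpha_F)$ is u.s.c., and then invoke that stability property. So the whole problem reduces to the continuity-type behaviour of $\mu\mapsto H_\mu(\beta)$ for a fixed finite partition $\beta$ (here $\beta=\alpha_F=\bigvee_{g\in F}g^{-1}\alpha$, itself a finite partition of $X$).

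**The key step.** The plan is first to record that for a fixed finite Borel partition $\beta=\{B_1,\dots,B_r\}$, the function $\mu\mapsto H_\mu(\beta)=\sum_i-\mu(B_i)\log\mu(B_i)$ would be weak$^*$-continuous if each $\mu(B_i)$ varied continuously in $\mu$; but $1_{B_i}$ need not be continuous, so $\mu\mapsto\mu(B_i)$ is only measurable in general. The standard fix is to work with an open cover rather than a partition: because $\mathcal U=\{U_1,\dots,U_M\}$ is open, a refining partition $\alpha$ can be chosen inside the open sets, and one wants the boundaries to be $\mu$-negligible. Concretely I would exploit that we are taking an infimum over all $\alpha\succeq\mathcal U$: given a target measure $\mu_0$ and $\varepsilon>0$, pick a near-optimal partition $\alpha_0\succeq\mathcal U$ with $H_{\mu_0}((\alpha_0)_F)<\psi(\mu_0)+\varepsilon$, and arrange (using regularity of $\mu_0$, shrinking the atoms of $\alpha_0$ slightly to closed sets sitting inside the open members of $\mathcal U$) that the atoms of the finite partition $(\alpha_0)_F$ have $\mu_0$-null topological boundaries. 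For such a partition the Portmanteau theorem gives $\mu(B_i)\to\mu_0(B_i)$ as $\mu\to\mu_0$ weak$^*$, hence $H_\mu((\alpha_0)_F)\to H_{\mu_0}((\alpha_0)_F)$.

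**Assembling the bound.** With such an $\alpha_0$ in hand, for $\mu$ near $\mu_0$ we get
\begin{equation*}
\psi(\mu)\le H_\mu\big((\alpha_0)_F\big)\le H_{\mu_0}\big((\alpha_0)_F\big)+\varepsilon<\psi(\mu_0)+2\varepsilon,
\end{equation*}
using that $\psi$ is an infimum (so $\psi(\mu)$ is dominated by the value at the single partition $\alpha_0$) and the continuity from the previous step. Letting $\mu\to\mu_0$ and then $\varepsilon\to 0+$ yields $\limsup_{\mu\to\mu_0}\psi(\mu)\le\psi(\mu_0)$, which is exactly condition (A1). This is the cleaner route than trying to prove u.s.c. atom-by-atom and then intersecting, because it directly produces a single good partition whose entropy controls $\psi$ near $\mu_0$.

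**Where the difficulty lies.** The main obstacle is the boundary-reshaping step: I must choose the refining partition $\alpha_0$ so that simultaneously (i) it refines the \emph{open} cover $\mathcal U$, (ii) it is $\varepsilon$-near optimal for $\psi(\mu_0)$, and (iii) all atoms of the \emph{joined} partition $(\alpha_0)_F=\bigvee_{g\in F}g^{-1}\alpha_0$ have $\mu_0$-null boundaries. Point (iii) is the subtle one, since taking the join over $g\in F$ and applying $g^{-1}$ (a homeomorphism, so it preserves and transports boundaries) can create many boundary pieces; one has to push the atoms of $\alpha_0$ to closed sets inside the open $U_j$ whose boundaries are $\mu_0$-null, using that for each set there are only countably many levels whose boundary has positive measure, and that $g^{-1}$ carries null-boundary sets to null-boundary sets because each $g$ acts as a homeomorphism preserving the class of $g_*\mu_0$-null boundaries — here the $G$-invariance of $\mu_0$ is \emph{not} available since $\mu$ ranges over all of $\mathcal M(X)$, so one must instead choose boundaries null for the finitely many measures $g_*\mu_0$, $g\in F$, simultaneously, which is still possible as a finite intersection of co-countable choices. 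Once this combinatorial bookkeeping is done, the Portmanteau and infimum arguments are routine.
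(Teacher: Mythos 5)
Your proposal follows essentially the same route as the paper's proof: pick a near-optimal refining partition, perturb it (via regularity of $\mu_0$ and the countably-many-bad-radii argument, with boundaries chosen null simultaneously for the finitely many measures $g\mu_0$, $g\in F$) into a partition whose $F$-join has $\mu_0$-null atom boundaries, then apply Portmanteau and dominate $\psi$ near $\mu_0$ by the entropy of that single partition. The only step you leave implicit --- that the perturbed partition is still near-optimal --- is precisely what the paper supplies through the bound $H_{\mu_0}(\beta_F)\le H_{\mu_0}(\alpha_F)+H_{\mu_0}(\beta_F|\alpha_F)$ together with the standard continuity of conditional entropy in the partition metric (\cite[Lemma 4.15]{W}), so your sketch fills in correctly.
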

\begin{proof}
Fix $\mu\in \mathcal{M}(X)$ and $\epsilon>0$. It is sufficient to
prove that
\begin{equation} \label{07-03-06-01}
\limsup_{\mu'\rightarrow \mu: \mu'\in \mathcal{M}(X)}
\psi(\mu')\le \psi(\mu)+\epsilon.
\end{equation}

We choose $\alpha\in \mathcal{P}_X$ such that $\alpha\succeq
\mathcal{U}$ and $H_\mu(\alpha_F)\le \psi(\mu)+ \frac{\epsilon}{2}$.
With no loss of generality we assume $\alpha= \{A_1, \cdots, A_M\}$ with $A_i\subseteq
U_i$, $1\le i\le M$. Then there exists $\delta= \delta (M, F,
\epsilon)> 0$ such that if $\beta^i= \{B_1^i, \cdots, B_M^i\}\in
\mathcal{P}_X$, $i= 1, 2$ satisfy $\sum _{i= 1}^M \sum_{g\in F} g\mu
(B_i^1\Delta B_i^2)< \delta$ then $H_\mu(\beta^1_F|\beta_F^2)\le
\sum_{g\in F} H_{g\mu} (\beta^1| \beta^2)< \frac{\epsilon}{2}$
(\cite[Lemma 4.15]{W}). Set $\mathcal{U}^*_{\mu, F}= \{\beta\in
\mathcal{P}_X: \beta\succeq \mathcal{U}, \mu (\bigcup_{B\in \beta_F}
\partial B)= 0\}$.

\medskip

\noindent{\bf Claim.} There exists $\beta= \{B_1, \cdots, B_M\}\in
\mathcal{U}^*_{\mu,F}$ such that $H_\mu (\beta_F| \alpha_F)<
\frac{\epsilon}{2}$.

\begin{proof}[Proof of Claim]
Let $\delta_1\in (0,\frac{\delta}{2M})$. By the regularity of $\mu$,
there exists compact $C_j\subseteq A_j$ such that
\begin{align}\label{AOU1}
\sum_{g\in F} g\mu(A_j\setminus C_j)<\frac{\delta_1}{M}, j=
1,\cdots, M.
\end{align}
For $j\in \{ 1,\cdots,M\}$, set $O_j=U_j\cap (X \setminus
\bigcup_{i\neq j} C_i)$, then $O_j$ is an open subset of $X$
satisfying
\begin{align}\label{AOU}
A_j\subseteq O_j\subseteq U_j \text{ and } \sum_{g\in F}
g\mu(O_j\setminus A_j)\le \sum_{i\neq j} \sum_{g\in F}
g\mu(A_i\setminus C_i)<\delta_1,\ \text{as}\ O_j\setminus
A_j\subseteq \bigcup_{i\neq j} A_i\setminus C_i.
\end{align}
Note that if $x\in X$ then there exist at most countably many
$\gamma> 0$ such that $\{y\in X: d (x, y)= \gamma\}$ has positive $g
\mu$-measure for some $g\in F$. Moreover, as $O_1, \cdots, O_M$ are
open subsets of $X$ and $\bigcup_{i= 1}^M O_i= X$, it is not hard to
obtain Borel subsets $C_1^*, \cdots, C_M^*$ such that
$C_i^*\subseteq O_i, 1\le i\le M, \bigcup_{i= 1}^M C_i^*= X$ and
$\sum_{i= 1}^M \sum_{g\in F} g \mu (\partial C_i^*)= 0$.

Set $B_1=C_1^*$, $B_j= C_j^*\setminus (\bigcup_{i=1}^{j-1} C_i^*)$,
$2\le j\le M$. Then $\beta\doteq \{ B_1,\cdots,B_M\}\in
\mathcal{P}_X$ and $\beta\succeq \mathcal{U}$. As $g^{-1}(\partial
D)=
\partial (g^{-1}D)$ for each $g\in F$ and $D\subseteq X$, by the
construction of $C_1^*, \cdots, C_M^*$ it's easy to check that $\mu
(\bigcup_{B\in \beta_F}
\partial B)= 0$ and so $\beta\in \mathcal{U}^*_{\mu,F}$. Note that if
$1\le j\neq i\le M$ then $B_j\cap C_i\subseteq O_j\cap C_i=
\emptyset$, which implies $C_i\subseteq B_i\subseteq O_i$ for all
$1\le i\le M$. By \eqref{AOU1} and \eqref{AOU},
$$\sum_{i=1}^M\sum_{g\in F} g\mu(A_i\Delta B_i)\le \sum_{i=1}^M \sum_{g\in F}(g\mu(A_i\setminus
C_i)+g\mu(O_i\setminus A_i))\le \sum_{i=1}^M 2\delta_1<\delta.$$
Thus $H_\mu(\beta_F|\alpha_F)<\frac{\epsilon}{2}$ (by the selection
of $\delta$). This finishes the proof of the claim.
\end{proof}

Now, note that $\beta\in \mathcal{P}_X$ satisfies $\beta\succeq
\mathcal{U}$ and $\mu(\bigcup_{B\in \beta_F}\partial B)=0$, one has
\begin{eqnarray*}
\limsup_{\mu'\rightarrow \mu: \mu'\in \mathcal{M}(X)} \psi(\mu')
&\le & \limsup_{\mu'\rightarrow \mu, \mu'\in
\mathcal{M}(X)}H_{\mu'}(\beta_F)=H_\mu(\beta_F) \\
&\le & H_\mu(\alpha_F)+H_\mu(\beta_F|\alpha_F)\le \psi(\mu)+\epsilon
 \ \text{(by Claim)}.
\end{eqnarray*}
This establishes \eqref{07-03-06-01} and so completes the proof of
the lemma.
\end{proof}

\begin{lem} \label{07-01-02-01}
Let $\mu\in \mathcal{M} (X, G)$, $M\in \mathbb{N}$ and $\epsilon>
0$. Then there exists $\delta> 0$ such that if $\mathcal{U}=\{
U_1,\cdots,U_M\}\in \mathcal{C}_X$, $\mathcal{V}=\{
V_1,\cdots,V_M\}\in \mathcal{C}_X$ satisfy $\mu(\mathcal{U}\Delta
\mathcal{V})\doteq \sum_{m=1}^M \mu(U_m\Delta V_m)< \delta$ then
$|h_\mu (G, \mathcal{U})- h_\mu (G, \mathcal{V})|\le \epsilon$.
\end{lem}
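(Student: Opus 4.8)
The goal is a uniform (Lipschitz-type) continuity estimate for $h_\mu(G,\cdot)$ with respect to the $\mu$-metric $\sum_m \mu(U_m\Delta V_m)$ on covers with a fixed number $M$ of members. The plan is to reduce the statement about the limit quantity $h_\mu(G,\mathcal{U})=\inf_{\alpha\succeq\mathcal{U}}h_\mu(G,\alpha)$ to a single-set estimate at the level of the Shannon entropy $H_\mu(\alpha_F)$, and then pass to the limit along the F\o lner sequence. The key external tool is \cite[Lemma 4.15]{W}, already used in the proofs of Theorem \ref{ope} and Lemma \ref{observation}: it provides, for any $\epsilon'>0$ and any fixed $M$, a threshold $\delta'>0$ so that two ordered partitions into $M$ pieces that are $\delta'$-close in measure have small conditional entropy $H_\mu(\cdot\,|\,\cdot)\le\epsilon'$ in each direction.

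**Main steps.**
First I would symmetrize: it suffices to prove the one-sided bound $h_\mu(G,\mathcal{V})\le h_\mu(G,\mathcal{U})+\epsilon$ whenever $\mu(\mathcal{U}\Delta\mathcal{V})<\delta$, since swapping the roles of $\mathcal{U}$ and $\mathcal{V}$ gives the reverse inequality and hence the absolute value bound. Second, fix $\alpha=\{A_1,\dots,A_M\}\in\mathcal{P}_X$ with $\alpha\succeq\mathcal{U}$ realizing $h_\mu(G,\alpha)$ close to $h_\mu(G,\mathcal{U})$; the natural move is to build from $\alpha$ a partition $\beta=\{B_1,\dots,B_M\}$ refining $\mathcal{V}$ by setting $B_i\subseteq V_i$ and controlling $\sum_i\mu(A_i\Delta B_i)$ by $\mu(\mathcal{U}\Delta\mathcal{V})$ plus a correction (because $A_i\subseteq U_i$ but we now need $B_i\subseteq V_i$). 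Concretely, relabelling so $A_i\subseteq U_i$, one expects to cut each $A_i$ down to $A_i\cap V_i$ and redistribute the small leftover mass $\mu(A_i\setminus V_i)\le\mu(U_i\setminus V_i)$; this yields $\sum_i\mu(A_i\Delta B_i)$ bounded by a constant times $\mu(\mathcal{U}\Delta\mathcal{V})$. Third, with $\beta\succeq\mathcal{V}$ in hand, the subadditivity $H_\mu(\beta_F)\le H_\mu(\alpha_F)+H_\mu(\beta_F\,|\,\alpha_F)$ together with $H_\mu(\beta_F\,|\,\alpha_F)\le\sum_{g\in F}H_{g\mu}(\beta\,|\,\alpha)$ lets me invoke \cite[Lemma 4.15]{W}, provided $\delta$ is chosen (depending on $M$ and $\epsilon$, \emph{but not on $F$}) so small that $\sum_i\mu(A_i\Delta B_i)<\delta'$ forces $H_{g\mu}(\beta\,|\,\alpha)<\epsilon$ for every $g$; note that $G$-invariance of $\mu$ gives $g\mu=\mu$, so the per-coordinate closeness is uniform in $g$. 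Dividing by $|F_n|$, passing to the limit, and using $h_\mu(G,\mathcal{V})\le h_\mu(G,\beta)$ produces $h_\mu(G,\mathcal{V})\le h_\mu(G,\alpha)+\epsilon\le h_\mu(G,\mathcal{U})+2\epsilon$, after which $\epsilon$ is absorbed by relabelling.

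**The main obstacle.**
The delicate point is the uniformity of $\delta$ across the whole F\o lner sequence, i.e. making the per-window conditional entropy estimate hold with a single threshold independent of $F=F_n$. This is exactly where $G$-invariance of $\mu$ (so that $g\mu=\mu$ for all $g$, collapsing $\sum_{g\in F}H_{g\mu}(\beta\,|\,\alpha)$ to $|F|\cdot H_\mu(\beta\,|\,\alpha)$) is essential, converting the $|F|$-fold sum into a clean $|F|\epsilon$ term that survives division by $|F_n|$. The second subtlety is bookkeeping the matching of indices: one must keep $\alpha$ and $\beta$ \emph{ordered} consistently so that the symmetric differences $A_i\Delta B_i$ are small coordinatewise (not merely as unordered partitions), since \cite[Lemma 4.15]{W} compares labelled partitions. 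Once the ordered construction of $\beta$ from $\alpha$ is carried out carefully, the remaining estimates are routine applications of subadditivity and Lemma \ref{lem-436}, and the limit exists by Lemma \ref{convergent}.
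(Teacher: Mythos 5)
Your proposal is correct, and it shares the paper's two essential ingredients: the finite-partition continuity lemma \cite[Lemma 4.15]{W}, and the ordered construction of a partition $\beta=\{B_1,\cdots,B_M\}$ with $B_i\subseteq V_i$ that is coordinatewise close to a given $\alpha=\{A_1,\cdots,A_M\}$ with $A_i\subseteq U_i$ (the paper's explicit choice $B_i=V_i\setminus\bigl(\bigcup_{k>i}(A_k\cap V_k)\cup\bigcup_{j<i}B_j\bigr)$ does exactly your ``cut down to $A_i\cap V_i$ and redistribute'' step and yields $\mu(\alpha\Delta\beta)\le M\,\mu(\mathcal{U}\Delta\mathcal{V})$, so $\delta=\delta'/M$ works). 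The genuine difference lies in how the one-window estimate is globalized. The paper never fixes a near-optimal $\alpha\succeq\mathcal{U}$: for each $n$ it takes an arbitrary partition $\alpha\succeq\mathcal{U}_{F_n}$, applies the claim to each $g\alpha\succeq\mathcal{U}$, $g\in F_n$, and joins the resulting partitions into $\bigvee_{g\in F_n}g^{-1}\beta_g\succeq\mathcal{V}_{F_n}$, obtaining $H_\mu(\mathcal{V}_{F_n})\le H_\mu(\mathcal{U}_{F_n})+|F_n|\epsilon$; read literally, the limit of this is a statement about $h_\mu^-$ (the normalized limit of the cover entropies), not about $h_\mu=\inf_{\alpha\succeq\,\cdot}h_\mu(G,\alpha)$. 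Your route --- a single pair $(\alpha,\beta)$ plus the standard inequality $H_\mu(\beta_F)\le H_\mu(\alpha_F)+|F|\,H_\mu(\beta|\alpha)$, valid by $G$-invariance exactly as you say --- gives $h_\mu(G,\mathcal{V})\le h_\mu(G,\beta)\le h_\mu(G,\alpha)+\epsilon$ and hence directly the inequality for $h_\mu$, the quantity actually named in the statement. Since $h_\mu$ and $h_\mu^-$ are proved equal only later (Theorem \ref{equivalence}), within this section they are a priori different quantities, so your argument is the one matching the statement as written, while the paper's yields the $h_\mu^-$ analogue; for the lemma's application in Proposition \ref{plecd} both versions suffice, because there the lemma is applied to partitions, for which the two entropies trivially coincide. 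Two details to make explicit in a write-up: (i) a near-optimal partition finer than $\mathcal{U}$ may have more than $M$ atoms, so first coarsen it to some $\alpha'=\{A_1,\cdots,A_M\}$ with $A_i\subseteq U_i$ (coarsening never increases $h_\mu(G,\cdot)$ and preserves finer-than-$\mathcal{U}$) and run the construction on $\alpha'$; (ii) your redistribution gives $\sum_i\mu(A_i\Delta B_i)\le 2\,\mu(\mathcal{U}\Delta\mathcal{V})$ rather than $\mu(\mathcal{U}\Delta\mathcal{V})$ itself, so the constant must be absorbed into the choice of $\delta$.
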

\begin{proof} We follow the arguments in the proof of \cite[Lemma 5]{HMRY}. Fix $M
\in\mathbb{N}$ and $\epsilon>0$. Then there exists
$\delta'=\delta'(M,\varepsilon)>0$ such that for $M$-sets
partitions $\alpha,\beta$ of $X$, if $\mu(\alpha\Delta
\beta)<\delta'$ then $H_\mu(\beta|\alpha)<\epsilon$ (see for
example \cite[Lemma 4.15]{W}). Let $\mathcal{U}=\{
U_1, \cdots,U_M\}$ and $\mathcal{V}=\{ V_1, \cdots,V_M\}$ be
any two $M$-sets covers of $X$ with
$\mu(\mathcal{U}\Delta\mathcal{V})<{\delta'\over M}=\delta$.

\medskip
\noindent {\bf Claim.} for every finite partition $\alpha \succeq
\mathcal{U}$ there exists a finite partition
$\beta\succeq\mathcal{V}$ with $H_\mu(\beta|\alpha)<\epsilon$.

\begin{proof}[Proof of Claim] Since $\alpha\succeq \mathcal{U}$, there exists a partition
$\alpha'=\{ A_1,\cdots,A_M \}$ with $A_i\subseteq U_i$,
$i=1, \cdots,M$ and $\alpha\succeq \alpha'$, where $A_i$ may be
empty. Let
\begin{equation*}
B_1=V_1\setminus \bigcup_{k>1}(A_k\cap V_k),
\end{equation*}
\begin{equation*}
B_i=V_i\setminus \left(\bigcup_{k>i}(A_k\cap V_k)\cup \bigcup
\limits_{j<i}B_j\right),\ i  \in \{ 2,\cdots, M \}.
\end{equation*}
Then $\beta=\{B_1,\cdots,B_M\}\in \mathcal{P}_X$ which satisfies
$B_m\subseteq V_m$ and $A_m\cap V_m\subseteq B_m$ for $m\in
\{1,\cdots,M\}$. It is clear that $A_m \setminus B_m \subseteq U_m
\setminus V_m$ and
\begin{eqnarray*}
B_m\setminus A_m&=& \left(X\setminus \bigcup_{k\neq
m}B_k\right)\setminus A_m= \bigcup_{j\neq m}
A_j\setminus \bigcup_{k\neq m}B_k \\
& \subseteq &
\bigcup \limits_{k\ne m} (A_k\setminus B_k)\subseteq \bigcup
\limits_{k\ne m} (U_k\setminus V_k).
\end{eqnarray*}
Hence for every $m\in\{1,\cdots,M\}$, $A_m\Delta B_m\subseteq
\bigcup\limits_{k=1}^M (U_k\Delta V_k)$ and
$\mu(\alpha'\Delta\beta)\le M\cdot
\mu(\mathcal{U}\Delta\mathcal{V})<\delta'$. This implies that
$H_\mu(\beta|\alpha')<\epsilon$. Moreover, $H_\mu(\beta|\alpha)\le
H_\mu(\beta|\alpha')<\epsilon$.
\end{proof}

Fix $n\in \mathbb{N}$. For any $\alpha\in \mathcal{P}_X$ with
$\alpha\succeq \mathcal{U}_{F_n}$, we have $g\alpha\succeq
\mathcal{U}$ for $g\in F_n$. By the above Claim, there exists
$\beta_g\in \mathcal{P}_X$ such that $\beta_g\succeq \mathcal{V}$
and $H_\mu(\beta_g|g\alpha)<\epsilon$, i.e.,
$H_\mu(g^{-1}\beta_g|\alpha)<\epsilon$. Let $\beta=\bigvee_{g\in
F_n}g^{-1}\beta_g$. Then $\beta\in \mathcal{P}_X$ with
$\beta\succeq \mathcal{V}_{F_n}$. Now
\begin{eqnarray*}
H_\mu(\mathcal{V}_{F_n})&\le & H_\mu(\beta)\le H_\mu(\beta\vee
\alpha)=H_\mu(\alpha)+H_\mu(\beta|\alpha)\\
&\le & H_\mu(\alpha)+\sum_{g\in
F_n}H_\mu(g^{-1}\beta_g|\alpha)<H_\mu(\alpha)+n\epsilon.
\end{eqnarray*}
Since this is true for any $\alpha\in
\mathcal{P}_X$ with $\alpha\succeq \mathcal{U}_{F_n}$, we get
$\frac{1}{|F_n|}H_\mu(\mathcal{V}_{F_n})\le
\frac{1}{|F_n|}H_\mu(\mathcal{U}_{F_n})+\epsilon$.

Exchanging the roles of $\mathcal{U}$ and $\mathcal{V}$ we get
$$\frac{1}{|F_n|}H_\mu(\mathcal{U}_{F_n})\le
\frac{1}{|F_n|}H_\mu(\mathcal{V}_{F_n})+\epsilon.$$ This shows
$\frac{1}{|F_n|}|H_\mu(\mathcal{U}_{F_n})-H_\mu(\mathcal{V}_{F_n})|\le
\epsilon$. Letting $n\rightarrow +\infty$, one has
$|h_\mu(G,\mathcal{U})-h_\mu(G,\mathcal{V})|\le \epsilon$.
\end{proof}

Now we can prove the u.s.c. property of those two kinds of
measure-theoretic entropy of open covers over $\mathcal{M} (X, G)$.

\begin{prop}\label{usc-em_1}
Let $\mathcal{U}\in \mathcal{C}_X^o$. Then $h_{\{\cdot\}}
(G,\mathcal{U}): \mathcal{M}(X, G)\rightarrow \R_+$ is u.s.c. on
$\mathcal{M} (X, G)$.
\end{prop}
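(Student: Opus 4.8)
The plan is to reduce the statement to the upper semi-continuity already established in Lemma \ref{observation}, combined with the variational formula of Lemma \ref{lem-436}(4). First I would unwind the definitions. For $\mu\in\mathcal{M}(X,G)$ the quantity in question is $h_\mu(G,\mathcal{U})=\inf_{\alpha\succeq\mathcal{U}}h_\mu(G,\alpha)$, the infimum running over partitions $\alpha\in\mathcal{P}_X$ refining $\mathcal{U}$. Since $\mu$ is $G$-invariant, Lemma \ref{lem-436}(4) lets me rewrite each inner term as an infimum over $F(G)$, namely $h_\mu(G,\alpha)=\inf_{B\in F(G)}\frac{1}{|B|}H_\mu(\alpha_B)$.

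Next I would combine the two infima. As $h_\mu(G,\mathcal{U})$ is now a double infimum over the product of $\{\alpha\in\mathcal{P}_X:\alpha\succeq\mathcal{U}\}$ and $F(G)$, I can exchange the order of the two infima and pull the factor $\frac{1}{|B|}$ (independent of $\alpha$) out of the inner infimum, obtaining $h_\mu(G,\mathcal{U})=\inf_{B\in F(G)}\frac{1}{|B|}\bigl(\inf_{\alpha\succeq\mathcal{U}}H_\mu(\alpha_B)\bigr)$. The crucial observation is that the inner expression $\inf_{\alpha\succeq\mathcal{U}}H_\mu(\alpha_B)$ is precisely the function $\psi$ of Lemma \ref{observation} with the set $F$ there taken to be $B$; denote it $\psi_B(\mu)$.

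Finally I would assemble the conclusion from the soft stability properties of u.s.c. functions recorded after conditions (A1)--(A2). By Lemma \ref{observation} each $\psi_B$ is u.s.c. on $\mathcal{M}(X)$, hence so is its restriction to the compact subset $\mathcal{M}(X,G)$; multiplying by the positive constant $\frac{1}{|B|}$ preserves both upper semi-continuity and non-negativity. Thus $h_{\{\cdot\}}(G,\mathcal{U})=\inf_{B\in F(G)}\frac{1}{|B|}\psi_B$ is an infimum of a family of u.s.c. functions on $\mathcal{M}(X,G)$, and the stated fact that the infimum of any family of u.s.c. functions is again u.s.c. (via (A2)) closes the argument.

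I do not expect a serious obstacle here, since all the real content lives in Lemma \ref{observation} and Lemma \ref{lem-436}(4), leaving only bookkeeping. The one point deserving care is the identification of the interchanged inner infimum with $\psi_B$: I would verify that letting $\alpha$ range over partitions refining $\mathcal{U}$ inside $H_\mu(\alpha_B)$ matches the definition of $\psi$ verbatim, and that $\mathcal{U}\in\mathcal{C}^o_X$ is genuinely a \emph{finite open} cover so that Lemma \ref{observation} actually applies. That is the step most likely to conceal a gap, though I expect it to be immediate.
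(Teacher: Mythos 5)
Your proposal is correct and follows essentially the same route as the paper's own proof: both rewrite $h_\mu(G,\mathcal{U})$ via Lemma \ref{lem-436}~(4) as a double infimum, interchange the infima to expose $\inf_{\alpha\in\mathcal{P}_X:\,\alpha\succeq\mathcal{U}}H_\mu(\alpha_B)$ as the function $\psi$ of Lemma \ref{observation}, and conclude by the stability of upper semi-continuity under arbitrary infima. The point you flag as needing care (that the inner infimum matches $\psi$ verbatim and that $\mathcal{U}$ is a finite open cover) is indeed immediate, exactly as in the paper.
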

\begin{proof}
Note that
\begin{eqnarray*}
h_\mu(G,\mathcal{U})&= &\inf_{\alpha\in \mathcal{P}_X: \alpha
\succeq \mathcal{U}} h_\mu(G,\alpha)=\inf_{\alpha\in
\mathcal{P}_X: \alpha \succeq \mathcal{U}} \inf_{B\in F(G)}
\frac{H_\mu(\alpha_B) }{|B|}\  \text{(by Lemma \ref{lem-436}~(4))} \\
&=&\inf_{B\in F(G)} \inf_{\alpha\in \mathcal{P}_X: \alpha \succeq
\mathcal{U}}
 \frac{H_\mu(\alpha_B )}{|B|}.
\end{eqnarray*}
Since $\mu\mapsto \inf_{\alpha\in \mathcal{P}_X: \alpha \succeq
\mathcal{U}} H_\mu(\alpha_B )$ is u.s.c. (see Lemma
\ref{observation}) and the infimum of any family of u.s.c.
functions is again u.s.c., one has $h_{\{\cdot\}} (G,\mathcal{U}):
\mathcal{M}(X, G)\rightarrow \R_+$ is u.s.c. on $\mathcal{M} (X,
G)$.
\end{proof}

\begin{prop}\label{usc-em_2}
Let $\mathcal{U}\in \mathcal{C}_X^o$. Then $h_{\{\cdot\}}^-
(G,\mathcal{U}): \mathcal{M}(X,G)\rightarrow \R_+$ is u.s.c. on
$\mathcal{M} (X, G)$.
\end{prop}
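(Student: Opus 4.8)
The plan is to realize $\mu\mapsto h^-_\mu(G,\mathcal{U})$ as the pointwise infimum of a family of u.s.c. functions on $\mathcal{M}(X,G)$ and then invoke the stated fact that the infimum of any family of u.s.c. functions is again u.s.c.\ (via criterion (A2)). The whole point is that Lemma \ref{conv-lem} produces quasi-tiling indices $n_1<\cdots<n_k$ depending only on $\epsilon$ and $N$ (and the ambient F\o lner sequence), \emph{not} on the particular m.n.i.s.a.\ function to which it is applied; this uniformity across measures is exactly what is unavailable for a single element of a general amenable group, and is why a deep convergence lemma is needed here rather than the naive monotonicity argument that works for $\Z$.

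First I would record the basic u.s.c.\ building block. For any $\mathcal{W}\in\mathcal{C}^o_X$, applying Lemma \ref{observation} with the cover $\mathcal{W}$ and the singleton $F=\{e_G\}$ (so that $\alpha_{\{e_G\}}=\alpha$) shows that $\mu\mapsto H_\mu(\mathcal{W})=\inf_{\alpha\succeq\mathcal{W}}H_\mu(\alpha)$ is u.s.c.\ on $\mathcal{M}(X)$. Since $\mathcal{U}_F\in\mathcal{C}^o_X$ for every $F\in F(G)$, each map $\mu\mapsto |F|^{-1}H_\mu(\mathcal{U}_F)$ is u.s.c., and hence so is every finite maximum of such maps.

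Next, for $\mu\in\mathcal{M}(X,G)$ put $f(F)=H_\mu(\mathcal{U}_F)$; by the discussion following Lemma \ref{lem-2-2} this is m.n.i.s.a., and by $G$-invariance $f(\{g\})=H_\mu(\mathcal{U})$ for all $g$, with the crucial \emph{$\mu$-independent} bound $H_\mu(\mathcal{U})\le H(\mathcal{U})=:L$ supplied by Lemma \ref{lem-2-2}(1). Now fix $\epsilon\in(0,\tfrac14)$ and $N\in\mathbb{N}$, and let $n_1<\cdots<n_k$ be the indices furnished by Lemma \ref{conv-lem}. Applying that lemma to $f$ gives, \emph{simultaneously for every} $\mu\in\mathcal{M}(X,G)$,
\[
h^-_\mu(G,\mathcal{U})=\lim_{n\rightarrow+\infty}\frac{H_\mu(\mathcal{U}_{F_n})}{|F_n|}\le L\epsilon+\frac{1}{1-\epsilon}\max_{1\le i\le k}\frac{H_\mu(\mathcal{U}_{F_{n_i}})}{|F_{n_i}|}=:\Phi_{\epsilon,N}(\mu),
\]
and $\Phi_{\epsilon,N}$ is u.s.c.\ by the previous paragraph.

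It remains to check that $h^-_\mu(G,\mathcal{U})=\inf_{\epsilon,N}\Phi_{\epsilon,N}(\mu)$ pointwise; the inequality $\le$ is immediate from the display. For $\ge$, fix $\mu$ and $\eta>0$: since $|F_n|^{-1}H_\mu(\mathcal{U}_{F_n})\to h^-_\mu(G,\mathcal{U})$, the quantity $\sup_{n\ge N}|F_n|^{-1}H_\mu(\mathcal{U}_{F_n})$ decreases to $h^-_\mu(G,\mathcal{U})$ as $N\to+\infty$, and since all $n_i\ge N$ the maximum in $\Phi_{\epsilon,N}(\mu)$ is bounded by this supremum; choosing $N$ large and $\epsilon$ small (using $L\epsilon\to0$ and $\tfrac{1}{1-\epsilon}\to1$) forces $\Phi_{\epsilon,N}(\mu)\le h^-_\mu(G,\mathcal{U})+\eta$. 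Thus $h^-_{\{\cdot\}}(G,\mathcal{U})$ is an infimum of u.s.c.\ functions, hence u.s.c. The main obstacle — and the sole place where the argument genuinely departs from the $\Z$ case — is securing the measure-independence of the indices $n_i$ together with the uniform bound $L$; once Lemma \ref{conv-lem} provides these, the u.s.c.\ building block from Lemma \ref{observation} and the convergence from Lemma \ref{convergent} combine routinely.
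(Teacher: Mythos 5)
Your proof is correct and is essentially the paper's own argument: both hinge on Lemma \ref{conv-lem} supplying indices $n_1<\cdots<n_k$ that are independent of the measure, on Lemma \ref{observation} applied to the finite open covers $\mathcal{U}_{F_{n_i}}$ (with $F=\{e_G\}$) for upper semi-continuity of the finite-stage terms, and on the uniform bound $H_\nu(\mathcal{U})\le H(\mathcal{U})$; the paper merely verifies criterion (A1) directly at each $\mu$ (choosing $N$ so that $\sup_{n\ge N}|F_n|^{-1}H_\mu(\mathcal{U}_{F_n})$ is within $\epsilon/2$ of $h_\mu^-(G,\mathcal{U})$), while you repackage the identical estimates as $h^-_{\{\cdot\}}(G,\mathcal{U})=\inf_{\epsilon,N}\Phi_{\epsilon,N}$ and cite (A2). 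The only line worth adding is the paper's opening normalization, justified by Lemma \ref{convergent}, that one may assume $e_G\in F_1\subseteq F_2\subseteq\cdots$ without loss of generality, since this is a hypothesis of Lemma \ref{conv-lem}.
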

\begin{proof}
With no loss of generality we assume $e_G\in F_1\subseteq F_2\subseteq \cdots$ by
Lemma \ref{convergent}. Let $\mu\in \mathcal{M} (X, G)$ and
$\epsilon\in (0,\frac{1}{4})$. Then there exists $N\in \mathbb{N}$
with
\begin{equation} \label{semi-esti-1}
\sup_{n\ge N} \frac{H_\mu (\mathcal{U}_{F_n})}{|F_n|}\le h_\mu^-
(G, \mathcal{U})+ \frac{\epsilon}{2}.
\end{equation}
 By Lemma \ref{conv-lem}, there exist
integers $n_1, \cdots, n_k$ with $N\le n_1< \cdots< n_k$ such that
\begin{eqnarray}
\label{semi-esti-02}
h_\nu^- (G, \mathcal{U})&=&
\lim_{n\rightarrow +\infty} \frac{H_\nu
(\mathcal{U}_{F_n})}{|F_n|}\le \max_{1\le i\le k} \frac{H_\nu
(\mathcal{U}_{F_{n_i}})}{|F_{n_i}|}+ \frac{\epsilon
H_\nu(\mathcal{U})}{2\log (N(\mathcal{U})+1)}\nonumber \\
&\le&\max_{1\le i\le k} \frac{H_\nu
(\mathcal{U}_{F_{n_i}})}{|F_{n_i}|}+ \frac{\epsilon}{2} \text{ for
each } \nu\in \mathcal{M} (X, G).
\end{eqnarray}
Then we have
\begin{eqnarray} \label{07-03-06-02}
\limsup_{\mu'\rightarrow \mu, \mu'\in \mathcal{M} (X, G)} h_{\mu'}^-
(G, \mathcal{U}) &\le & \frac{\epsilon}{2}+ \limsup_{\mu'\rightarrow
\mu, \mu'\in \mathcal{M} (X, G)} \max_{1\le i\le k} \frac{H_{\mu'}
(\mathcal{U}_{F_{n_i}})}{|F_{n_i}|}\ (\text{using
\eqref{semi-esti-02}})\ \nonumber \\
&= & \frac{\epsilon}{2}+ \max_{1\le i\le k} \limsup_{\mu'\rightarrow
\mu, \mu'\in \mathcal{M} (X, G)} \frac{H_{\mu'}
(\mathcal{U}_{F_{n_i}})}{|F_{n_i}|}\ \nonumber \\
&\le & \frac{\epsilon}{2}+ \max_{1\le i\le k} \frac{H_\mu
(\mathcal{U}_{F_{n_i}})}{|F_{n_i}|}\ (\text{Lemma
\ref{observation}})\ \nonumber \\
&\le & \frac{\epsilon}{2}+ \sup_{n\ge N} \frac{H_\mu
(\mathcal{U}_{F_n})}{|F_n|}\le h_\mu^- (G, \mathcal{U})+ \epsilon\
(\text{using \eqref{semi-esti-1}}).
\end{eqnarray}
Thus, we claim the conclusion from the arbitrariness of $\mu\in
\mathcal{M} (X, G)$ and $\epsilon\in (0,\frac{1}{4})$ in
\eqref{07-03-06-02}.
\end{proof}

\subsubsection{Affinity of measure-theoretic entropy of covers}

Let $\mu= a \nu+ (1- a)\eta$, where $\nu,\eta\in \mathcal{M} (X, G)$
and $0< a <1$. Using the concavity of $\phi(t)=-t \log t$ on $[0,
1]$ with $\phi (0)= 0$ (fix it in the remainder of the paper), one
has if $\beta\in \mathcal{P}_X$ and $F\in F(G)$ then
$0\le
H_{\mu}(\beta_F)-a H_{\nu}(\beta_F)-(1-a)H_{\eta}(\beta_F)\le
\phi(a)+\phi(1-a)$ (see for example the proof of
\cite[Theorem 8.1]{W}) and so
\begin{align}\label{eq-36}
h_\mu(G,\beta)=ah_\nu(G,\beta)+(1-a)h_\eta(G,\beta),
\end{align}
i.e. the function $h_{\{\cdot\}} (G, \beta): \mathcal{M}(X,
G)\rightarrow \mathbb{R}_+$ is affine. In the following, we shall
show the affinity of $h_{\{\cdot\}} (G,\mathcal{U})$ and
$h_{\{\cdot\}}^-(G,\mathcal{U})$ on $\mathcal{M} (X, G)$ for each
$\mathcal{U}\in \mathcal{C}_X$.

Let $\mu\in \mathcal{M}(X,G)$ and $\mathcal{B}_X^\mu$ be the
completion of $\mathcal{B}_X$ under $\mu$. Then $(X,
\mathcal{B}_X^\mu,\mu,G)$ is a Lebesgue system. If $\{
\alpha_i\}_{i\in I}$ is a countable family in $\mathcal{P}_X$, the
partition $\alpha= \bigvee_{i\in I} \alpha_i\doteq \{\bigcap_{i\in
I} A_i: A_i\in \alpha_i, i\in I\}$ is called a {\it measurable
partition}. Note that the sets $A\in \mathcal{B}_X^\mu$, which are
unions of atoms of $\alpha$, form a sub-$\sigma$-algebra of
$\mathcal{B}_X^\mu$, which is denoted by $\widehat{\alpha}$ or
$\alpha$ if there is no ambiguity. In fact, every
sub-$\sigma$-algebra of $\mathcal{B}_X^\mu$ coincides with a
$\sigma$-algebra constructed in this way in the sense of mod $\mu$
\cite{Rohlin}. We consider the sub-$\sigma$-algebra $I_\mu= \{
A\in \mathcal{B}_X^\mu: \mu(gA\Delta A)=0 \text{ for each }g\in
G\}$. Clearly, $I_\mu$ is $G$-invariant since $G$ is countable.
Let $\alpha$ be the measurable partition of $X$ with
$\widehat{\alpha}= I_\mu$ (mod $\mu$). With no loss of generality we may require that
$\alpha$ is $G$-invariant, i.e. $g\alpha= \alpha$ for any $g\in
G$. Let $\mu= \int_X \mu_x d \mu(x)$ be the disintegration of
$\mu$ over $I_\mu$, where $\mu_x\in \mathcal{M}^e (X, G)$ and
$\mu_x (\alpha(x))= 1$ for $\mu$-a.e. $x\in X$, here $\alpha (x)$
denotes the atom of $\alpha$ containing $x$. This disintegration
is known as the {\it ergodic decomposition} of $\mu$ (see for
example \cite[Theorem 3.22]{G1}).

The disintegration is characterized by properties \eqref{meas1}
and \eqref{meas3} below:
\begin{eqnarray}
&& \label{meas1}  \text{for every } f \in
L^1(X,\mathcal{B}_X,\mu),  f \in L^1(X,\mathcal{B}_X,\mu_x) \text{
for $\mu$-a.e. } x\in X,\\
&& \text{and the map }x \mapsto \int_X  f(y)\,d\mu_x(y)\text{ is in
}L^1(X,I_\mu,\mu) \nonumber;
\end{eqnarray}
\begin{equation}
\label{meas3} \text{for every } f\in
L^1(X,\mathcal{B}_X,\mu),\mathbb{E}_{\mu}(f|I_\mu )(x)=\int_X
f\,d\mu_{x} \ \ \text{for $\mu$-a.e. } x\in X.
\end{equation}
Then for $f \in L^1(X,\mathcal{B}_X,\mu)$,
\begin{equation} \label{07-03-06-04}
\int_X \left(\int_X f\,d\mu_x \right)\, d\mu(x)=\int_X f \,d\mu.
\end{equation}

Note that the disintegration exists uniquely in the sense that if
$\mu= \int_X \mu_x d \mu (x)$ and $\mu= \int_X \mu'_x d \mu (x)$
are both the disintegrations of $\mu$ over $I_\mu$, then $\mu_x=
\mu'_x$ for $\mu$-a.e. $x\in X$. Moreover, there exists a
$G$-invariant subset $X_0\subseteq X$ such that $\mu(X_0)=1$ and
if for $x\in X_0$ we define $\Gamma_x= \{y\in X_0: \mu_x=\mu_y \}$
then $\Gamma_x= \alpha(x)\cap X_0$ and $\Gamma_x$ is
$G$-invariant.

\begin{lem} \label{lem4-2}
Let $\mu\in \mathcal{M}(X,G)$ with $\mu=\int_X \mu_x d \mu(x)$ the
ergodic decomposition of $\mu$ and $\mathcal{V}\in \mathcal{C}_X$.
Then $H_\mu(\mathcal{V}|I_\mu)=\int_X H_{\mu_x}(\mathcal{V})
d\mu(x)$.
\end{lem}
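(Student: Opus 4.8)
The plan is to reduce both sides to infima of ordinary conditional partition entropies over a convenient class of partitions, use the disintegration to turn the conditional entropy into a fibre integral, and then interchange the infimum with the integral $\int_X\cdot\,d\mu(x)$. Write $\mathcal{V}=\{V_1,\dots,V_d\}$. First I record a reduction: if $\gamma\in\mathcal{P}_X$ refines $\mathcal{V}$, then choosing for each atom $P\in\gamma$ an index $i(P)$ with $P\subseteq V_{i(P)}$ and setting $B_i=\bigcup_{i(P)=i}P$ yields a partition $\beta=\{B_1,\dots,B_d\}$ with $B_i\subseteq V_i$ that is coarser than $\gamma$. As conditional entropy increases under refinement, the infima defining $H_\nu(\mathcal{V}\mid\mathcal{A})$ and $H_\nu(\mathcal{V})$ are unchanged when restricted to such \emph{selector partitions}; I fix this class for the rest of the argument.

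For a selector partition $\beta=\{B_1,\dots,B_d\}$, property \eqref{meas3} with $f=1_{B_i}$ gives $\E_\mu(1_{B_i}\mid I_\mu)(x)=\mu_x(B_i)$ for $\mu$-a.e.\ $x$, so the definition of conditional entropy together with $\phi(t)=-t\log t$ yields
$$H_\mu(\beta\mid I_\mu)=\sum_{i=1}^d\int_X\phi(\mu_x(B_i))\,d\mu(x)=\int_X H_{\mu_x}(\beta)\,d\mu(x).$$
Since $H_{\mu_x}(\beta)\ge H_{\mu_x}(\mathcal{V})$ pointwise, integrating and taking the infimum over all selector partitions $\beta$ gives the easy inequality $H_\mu(\mathcal{V}\mid I_\mu)\ge\int_X H_{\mu_x}(\mathcal{V})\,d\mu(x)$.

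For the reverse inequality it suffices, given $\epsilon>0$, to build a \emph{single} selector partition $\beta$ with $H_{\mu_x}(\beta)\le H_{\mu_x}(\mathcal{V})+\epsilon$ for $\mu$-a.e.\ $x$: the displayed identity then gives $H_\mu(\mathcal{V}\mid I_\mu)\le H_\mu(\beta\mid I_\mu)\le\int_X H_{\mu_x}(\mathcal{V})\,d\mu(x)+\epsilon$, and letting $\epsilon\to0$ closes the proof. The construction of $\beta$ is the heart of the matter. Because $x\mapsto\mu_x$ is $I_\mu$-measurable and the distinct fibre measures are carried by the pairwise disjoint atoms $\Gamma_x=\alpha(x)\cap X_0$, the value $H_{\mu_x}(\beta)$ depends only on the restriction of $\beta$ to $\Gamma_x$; thus $\beta$ can be assembled by choosing, measurably in the atom, a near-optimal selector for the ergodic measure $\mu_x$ and glueing these fibrewise choices, the disjointness of the atoms guaranteeing that the glued assignment is a genuine global partition.

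I expect the delicate step to be exactly this simultaneous, measurable choice of a near-minimizing selector across all ergodic components at once. Concretely I would exploit the measurability of the disintegration $x\mapsto\mu_x$ into $\mathcal{M}(X)$ and invoke a measurable selection theorem for the set-valued map $\nu\mapsto\{f:\sum_{i}\phi(\nu(f^{-1}(i)))\le H_\nu(\mathcal{V})+\epsilon\}$ to produce an $I_\mu$-measurable field of $\epsilon$-optimal selectors, then use its joint measurability to conclude that the glued $\beta$ is Borel. All remaining steps are routine manipulations with \eqref{meas3} and the monotonicity of entropy.
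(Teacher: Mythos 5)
Your skeleton coincides with the paper's: reduce to partitions finer than $\mathcal{V}$ with at most $d$ atoms (your selector partitions), use \eqref{meas3} to get $H_\mu(\beta|I_\mu)=\int_X H_{\mu_x}(\beta)\,d\mu(x)$, obtain one inequality by integrating $H_{\mu_x}(\beta)\ge H_{\mu_x}(\mathcal{V})$, and obtain the other by gluing fibrewise (near-)optimal selectors along the ergodic components. The gap is exactly in the step you flag as delicate, and it is a genuine one. As written, the appeal to ``a measurable selection theorem for the set-valued map $\nu\mapsto\{f:\sum_i\phi(\nu(f^{-1}(i)))\le H_\nu(\mathcal{V})+\epsilon\}$'' is not a proof: you have not equipped the space of selectors with a standard Borel (Polish) structure, nor verified measurability of the graph and nonemptiness/closedness of the values required by any such theorem; moreover, since $H_{\mu_x}(\beta)$ is defined through conditional expectations, its pointwise evaluation requires a jointly measurable choice of versions, which is part of what needs proving. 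More basically, you never show that $x\mapsto H_{\mu_x}(\mathcal{V})$ is a measurable function; it is an infimum over an uncountable family of partitions, so this is not automatic, and without it neither your $\epsilon$-optimality constraint nor the integral $\int_X H_{\mu_x}(\mathcal{V})\,d\mu(x)$ appearing in your ``easy inequality'' is justified.

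The missing idea, which is the actual content of the paper's proof, is the elementary but crucial fact \eqref{keyeq1} (due to Romagnoli \cite{R}): for every $\theta\in\mathcal{M}(X)$ the infimum defining $H_\theta(\mathcal{V})$ is attained within the \emph{finite} family $P(\mathcal{V})$ of partitions sandwiched between $\mathcal{V}$ and the partition $\alpha$ generated by $\mathcal{V}$, i.e. $H_\theta(\mathcal{V})=\min_{\beta\in P(\mathcal{V})}H_\theta(\beta)$. Once $P(\mathcal{V})=\{\beta_1,\dots,\beta_l\}$ is finite, measurability of $x\mapsto H_{\mu_x}(\mathcal{V})$ is immediate (a minimum of finitely many measurable functions), the sets $A_i=\{x: H_{\mu_x}(\beta_i)=\min_j H_{\mu_x}(\beta_j)\}$ belong to $I_\mu$, and gluing the \emph{exactly} optimal $\beta_i$ over a disjointification of the $A_i$ yields a single partition $\beta^*\succeq\mathcal{V}$ with $H_{\mu_x}(\beta^*)=H_{\mu_x}(\mathcal{V})$ for $\mu$-a.e.\ $x$ --- no $\epsilon$, no selection theorem. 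Alternatively, one can replace your selection argument by a fixed countable family of selector partitions built from a countable generating algebra, which is $L^1(\nu)$-dense for every $\nu\in\mathcal{M}(X)$, and glue countably many of them along $I_\mu$-measurable sets; this is precisely the device the paper uses later in the proof of Lemma \ref{refer}. One of these two reductions (finite or countable) is what your sketch is missing, and without it the argument does not close.
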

\begin{proof}Let $\mathcal{V}=\{ V_1,\cdots,V_n\}$. For any
$s=(s(1),\cdots,s(n))\in { \{ 0,1 \} }^n$, set
$V_s=\bigcap_{i=1}^{n}V_i(s(i))$, where $V_i(0)=V_i$ and
$V_i(1)=X\setminus V_i$. Let $\alpha=\{ V_s: s\in { \{ 0,1 \} }^n
\}$. Then $\alpha$ is the Borel partition generated by
$\mathcal{V}$ and put $P(\mathcal{V})=\{ \beta\in
\mathcal{P}_X:\alpha \succeq \beta \succeq \mathcal{V}\}$, which
is a finite family of partitions. It is well known that, for each
$\theta \in \mathcal{M}(X)$ one has
\begin{align}\label{keyeq1}
H_\theta(\mathcal{V})=\min_{\beta\in P(\mathcal{V})}H_\theta(\beta),
\end{align}
see for example the proof of \cite[Proposition 6]{R}. Now denote
$P(\mathcal{V})=\{ \beta_1,\cdots, \beta_l \}$ and put
$$A_i=\left\{ x\in X:
H_{\mu_x}(\beta_i)= \min_{\beta\in P(\mathcal{V})} H_{\mu_x}(\beta)
\right\}, i\in \{1,\cdots, l\}.$$ Let $B_1= A_1$, $B_2= A_2\setminus
B_1$, $\cdots$, $B_l= A_l\setminus \bigcup_{i=1}^{l-1} B_i$ and
$B_0=X\setminus \bigcup_{i=1}^l A_i$. By \eqref{keyeq1},
$\mu(B_0)=0$.

Set $\beta^*= \{ B_0 \cap \beta_1\} \cup \{ B_i\cap \beta_i: i=1,
\cdots, l\}\in \mathcal{P}_X$ (mod $\mu$). Then $\beta^*\succeq
\mathcal{V}$. Clearly, for $i\in \{ 1,\cdots,l \}$ and $\mu$-a.e.
$x\in B_i$, $H_{\mu_x}(\beta^*)= H_{\mu_x} (\beta_i)= \min_{\beta
\in P(\mathcal{V})} H_{\mu_x}(\beta)= H_{\mu_x}(\mathcal{V})$
where the last equality follows from \eqref{keyeq1}. Combining
this fact with $\mu(B_0)=0$ one gets
$H_{\mu_x}(\beta^*)=H_{\mu_x}(\mathcal{V})$ for $\mu$-a.e. $x\in
X$. This implies
\begin{eqnarray*}
H_\mu(\mathcal{V}| I_\mu) &\le & H_\mu(\beta^*|I_\mu)=
\int_X H_{\mu_x}(\beta^*) d \mu(x) \ \text{(using \eqref{meas3})} \\
&= & \int_X H_{\mu_x}(\mathcal{V}) d \mu(x)\le \inf_{\beta\in
\mathcal{P}_X: \beta \succeq \mathcal{V}} \int_X
H_{\mu_x}(\beta) d \mu(x)\\
&= & \inf_{\beta\in \mathcal{P}_X: \beta \succeq \mathcal{V}}
H_\mu(\beta|I_\mu)=H_\mu(\mathcal{V}|I_\mu).
\end{eqnarray*}
Thus $H_\mu(\mathcal{V}| I_\mu)= \int_X H_{\mu_x}(\mathcal{V}) d
\mu(x)$. This finishes the proof.
\end{proof}

Then we have

\begin{prop}\label{entr-imu}
Let $\mathcal{U}\in \mathcal{C}_X$ and $\mu\in \mathcal{M}(X,G)$. If
$e_G\in F_1\subseteq F_2\subseteq \cdots$ then
$$h_\mu^-(G,\mathcal{U})=\lim_{n\rightarrow +\infty}
\frac{1}{|F_n|}H_\mu(\mathcal{U}_{F_n}|I_\mu).$$
\end{prop}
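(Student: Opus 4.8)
The plan is to prove the two-sided inequality between $h_\mu^-(G,\mathcal{U})$ and the limit $\lim_n \frac{1}{|F_n|}H_\mu(\mathcal{U}_{F_n}|I_\mu)$. First I would check that the limit on the right actually exists: since $F\in F(G)\mapsto H_\mu(\mathcal{U}_F|I_\mu)$ is monotone, non-negative, $G$-invariant (using that $I_\mu$ is $G$-invariant together with Lemma \ref{lem-2-2}(1)) and sub-additive (Lemma \ref{lem-2-2}(3)), Lemma \ref{convergent} applies and guarantees convergence independent of the F\o lner sequence. Because $H_\mu(\mathcal{U}_{F_n}|I_\mu)\le H_\mu(\mathcal{U}_{F_n})$, the inequality
\[
\lim_{n\rightarrow+\infty}\frac{1}{|F_n|}H_\mu(\mathcal{U}_{F_n}|I_\mu)\le h_\mu^-(G,\mathcal{U})
\]
is immediate, so the real content is the reverse inequality.

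For the reverse direction the key tool is the ergodic decomposition $\mu=\int_X\mu_x\,d\mu(x)$ together with Lemma \ref{lem4-2}, which identifies $H_\mu(\mathcal{V}|I_\mu)=\int_X H_{\mu_x}(\mathcal{V})\,d\mu(x)$ for any cover $\mathcal{V}\in\mathcal{C}_X$. Applying this to $\mathcal{V}=\mathcal{U}_{F_n}$ gives
\[
\frac{1}{|F_n|}H_\mu(\mathcal{U}_{F_n}|I_\mu)=\int_X\frac{1}{|F_n|}H_{\mu_x}((\mathcal{U})_{F_n})\,d\mu(x),
\]
where I am using that $(\mathcal{U}_{F_n})$ is itself of the form $(\mathcal{U})_{F_n}$. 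The integrand converges pointwise (for $\mu$-a.e. $x$) to $h_{\mu_x}^-(G,\mathcal{U})$ as $n\to\infty$, since each $\mu_x$ is $G$-invariant. The plan is then to pass the limit inside the integral by dominated convergence — the integrands are bounded above by $\frac{1}{|F_1|}H_{\mu_x}(\mathcal{U}_{F_1})\le H(\mathcal{U})$ uniformly, so the hypotheses are met — obtaining
\[
\lim_{n\rightarrow+\infty}\frac{1}{|F_n|}H_\mu(\mathcal{U}_{F_n}|I_\mu)=\int_X h_{\mu_x}^-(G,\mathcal{U})\,d\mu(x).
\]

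It then remains to show $h_\mu^-(G,\mathcal{U})\le\int_X h_{\mu_x}^-(G,\mathcal{U})\,d\mu(x)$, which should follow from the affinity (in $\mu$) of $h_{\{\cdot\}}^-(G,\mathcal{U})$ combined with the ergodic decomposition as a kind of continuous convex combination — concretely, I expect to invoke that $h_{\{\cdot\}}^-(G,\mathcal{U})$ is affine and u.s.c. (Proposition \ref{usc-em_2}) so that its value at the barycenter $\mu$ equals the integral of its values over the ergodic components. The main obstacle, and the step I would spend the most care on, is precisely this passage from finite affinity to the integral identity over the ergodic decomposition: making rigorous that an affine, upper semi-continuous functional respects the continuous barycenter $\mu=\int_X\mu_x\,d\mu(x)$ rather than only finite convex combinations. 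The combination of affinity (giving $\le$) and the dominated-convergence computation above (giving the matching value) should close the loop and yield equality.
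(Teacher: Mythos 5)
Your first two steps are sound: $F\mapsto H_\mu(\mathcal{U}_F|I_\mu)$ is indeed m.n.i.s.a., so the limit exists, the inequality $\lim_n \frac{1}{|F_n|}H_\mu(\mathcal{U}_{F_n}|I_\mu)\le h_\mu^-(G,\mathcal{U})$ is immediate, and your computation via Lemma \ref{lem4-2} and dominated convergence correctly identifies the limit with $\int_X h^-_{\mu_x}(G,\mathcal{U})\,d\mu(x)$. The problem is the last step. You reduce everything to the inequality $h_\mu^-(G,\mathcal{U})\le \int_X h^-_{\mu_x}(G,\mathcal{U})\,d\mu(x)$ and propose to get it from the affinity of $h^-_{\{\cdot\}}(G,\mathcal{U})$ (Theorem \ref{en-decom}) together with u.s.c. (Proposition \ref{usc-em_2}). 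This is circular in the paper's logical order: the affinity of $h^-_{\{\cdot\}}(G,\mathcal{U})$ and the ergodic-decomposition formula \eqref{lab-eq} are proved in Theorem \ref{en-decom} \emph{from} Lemma \ref{refer}, and Lemma \ref{refer} is proved \emph{from} Proposition \ref{entr-imu} --- the very statement you are trying to establish. Nor is there a cheap substitute: for covers, the only elementary convexity property of $\mu\mapsto H_\mu(\mathcal{U}_F)=\inf_{\alpha\succeq\mathcal{U}_F}H_\mu(\alpha)$ is concavity (an infimum of concave functions), which yields $h^-$ at a barycenter $\ge$ the average, i.e.\ exactly the direction you do not need; the ``convex'' half of affinity is the hard content, because the infimizing partitions for different measures need not match. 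A secondary defect: Proposition \ref{usc-em_2} gives u.s.c.\ only for $\mathcal{U}\in\mathcal{C}_X^o$, whereas Proposition \ref{entr-imu} is asserted for arbitrary finite Borel covers $\mathcal{U}\in\mathcal{C}_X$, so even the u.s.c.\ ingredient is unavailable in the required generality (and the barycentric calculus for affine u.s.c.\ functions over a continuous decomposition, which you flag as the main obstacle, would still need a Choquet-type argument).

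For comparison, the paper closes this gap without any appeal to affinity: it works directly with the quasi-tiling machinery (Proposition \ref{ow-prop}). For an arbitrary $\alpha\in\mathcal{P}_X$ one tiles $F_n$ by $F_{n_1},\dots,F_{n_k}$ and estimates $H_\mu(\mathcal{U}_{F_n}|\alpha_{F_n})$ tile by tile, arriving at
\begin{equation*}
h_\mu^-(G,\mathcal{U})\le h_\mu(G,\alpha)+\epsilon\log N(\mathcal{U})
+\frac{1}{1-\epsilon}\max_{1\le i\le k}\frac{1}{|F_{n_i}|}H_\mu(\mathcal{U}_{F_{n_i}}|\alpha_{F_{n_i}}),
\end{equation*}
and then takes a sequence $\alpha_i\nearrow I_\mu$, using that $h_\mu(G,\alpha)=0$ whenever $\alpha\subseteq I_\mu$, to obtain $h_\mu^-(G,\mathcal{U})\le \epsilon\log N(\mathcal{U})+\frac{1}{1-\epsilon}\sup_{m\ge N}\frac{1}{|F_m|}H_\mu(\mathcal{U}_{F_m}|I_\mu)$. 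If you want to salvage your route, you would have to prove the inequality $h_\mu^-(G,\mathcal{U})\le\int_X h^-_{\mu_x}(G,\mathcal{U})\,d\mu(x)$ by some argument independent of Proposition \ref{entr-imu}, and no such argument is supplied; as it stands, the proposal assumes a downstream consequence of the result it sets out to prove.
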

\begin{proof}
It is easy to check that $F\in F (G)\mapsto H_\mu (\mathcal{U}_F|
I_\mu)$ is a m.n.i.s.a. function, and so the sequence
$\{\frac{1}{|F_n|}H_\mu(\mathcal{U}_{F_n}|I_\mu)\}_{n\in \N}$
converges, say it converges to $f_\mathcal{U}$ (see Lemma
\ref{convergent}). Clearly $h_\mu^-(G,\mathcal{U})\ge
f_\mathcal{U}$.

Now we aim to prove $h_\mu^-(G,\mathcal{U})\le f_\mathcal{U}$. Let
$\epsilon\in (0,\frac{1}{4})$ and $N\in \mathbb{N}$. By Proposition
\ref{ow-prop} there exist integers $n_1, \cdots, n_k$ with $N\le
n_1< \cdots< n_k$ such that if $n$ is large enough then $F_{n_1},
\cdots, F_{n_k}$ $\epsilon$-quasi-tile the set $F_n$ with tiling
centers $C_1^n, \cdots, C_k^n$ and so
\begin{equation} \label{condition1}
F_n\supseteq \bigcup_{i= 1}^k F_{n_i} C_i^n\ \text{and}\
|\bigcup_{i= 1}^k F_{n_i} C_i^n|\ge \max \left\{(1- \epsilon) |F_n|,
(1- \epsilon) \sum_{i= 1}^k |C_i^n|\cdot |F_{n_i}|\right\}.
\end{equation}

Thus if $\alpha\in \mathcal{P}_X$ and $n$ is large enough then
\begin{eqnarray} \label{07-03-06-03}
H_\mu(\mathcal{U}_{F_n}|\alpha_{F_n}) &\le &
H_\mu(\mathcal{U}_{F_n\setminus \bigcup_{i= 1}^k F_{n_i}
C_i^n}|\alpha_{F_n})+\sum_{i=1}^k
H_\mu(\mathcal{U}_{F_{n_i}C_i^n}|\alpha_{F_n})\ \nonumber \\
&\le & |F_n\setminus \bigcup_{i= 1}^k F_{n_i}C_i^n|\cdot \log
N(\mathcal{U}) +\sum_{i=1}^k
H_\mu(\mathcal{U}_{F_{n_i}C_i^n}|\alpha_{F_{n_i}C_i^n}).
\end{eqnarray}
This implies
\begin{eqnarray*}
& & \limsup_{n\rightarrow
+\infty}\frac{1}{|F_n|}H_\mu(\mathcal{U}_{F_n}|\alpha_{F_n}) \\
&\le & \epsilon \log N(\mathcal{U})+\limsup_{n\rightarrow +\infty}
\frac{1}{|F_n|}\sum_{i=1}^k \sum_{g\in C_i^n}
H_\mu(\mathcal{U}_{F_{n_i}g}|\alpha_{F_{n_i} g})\ (\text{using}\
\eqref{condition1}\ \text{and}\ \eqref{07-03-06-03}) \\
&\le & \epsilon \log N(\mathcal{U})+\limsup_{n\rightarrow +\infty}
\frac{\sum_{i=1}^k |F_{n_i}||C_i^n|}{|F_n|} \max_{1\le i\le k}
\frac{1}{|F_{n_i}|}H_\mu(\mathcal{U}_{F_{n_i}}|\alpha_{F_{n_i}})\\
&\le & \epsilon \log N(\mathcal{U})+\frac{1}{1-\epsilon} \max_{1\le
i\le k}
\frac{1}{|F_{n_i}|}H_\mu(\mathcal{U}_{F_{n_i}}|\alpha_{F_{n_i}})\
(\text{using}\ \eqref{condition1}).
\end{eqnarray*}
Thus
\begin{eqnarray}\label{le-key-eq}
h_\mu^-(G,\mathcal{U})&\le & \limsup_{n\rightarrow +\infty}
\frac{1}{|F_n|} \left( H_\mu(\mathcal{U}_{F_n}|\alpha_{F_n})+
H_\mu(\alpha_{F_n}) \right)\ \nonumber\\
&\le & h_\mu(G,\alpha)+\epsilon \log
N(\mathcal{U})+\frac{1}{1-\epsilon} \max_{1\le i\le k}
\frac{1}{|F_{n_i}|}H_\mu(\mathcal{U}_{F_{n_i}}|\alpha_{F_{n_i}}).
\end{eqnarray}
Note that if $\alpha\in \mathcal{P}_X$ satisfies $\alpha\subseteq
I_\mu$ then $h_\mu (G, \alpha) = 0$. In particular, in
\eqref{le-key-eq} we replace $\alpha$ by a sequence
$\{\alpha_i\}_{i\in \N}$ in $\mathcal{P}_X$ with $\alpha_i\nearrow
I_\mu$, then
$$h_\mu^-(G,\mathcal{U})\le \epsilon \log
N(\mathcal{U})+\frac{1}{1-\epsilon} \sup_{m\ge N}
\frac{1}{|F_{m}|}H_\mu(\mathcal{U}_{F_{m}}|I_\mu).$$ Since the above
inequality is true for any $\epsilon\in (0, \frac{1}{4})$ and $N\in
\N$, one has $h_\mu^-(G,\mathcal{U})\le f_\mathcal{U}$.
\end{proof}

\begin{lem} \label{refer}
Let $\mathcal{U}\in \mathcal{C}_X$ and $\mu\in \mathcal{M}(X,G)$
with $\mu=\int_X \mu_x d \mu(x)$ the ergodic decomposition of $\mu$.
Then
\begin{equation*}
h_\mu^- (G, \mathcal{U})= \int_{X} h_{\mu_x}^- (G, \mathcal{U}) d
\mu (x) \hbox{ and } h_\mu (G, \mathcal{U})= \int_{X} h_{\mu_x}
(G, \mathcal{U}) d \mu (x).
\end{equation*}
\end{lem}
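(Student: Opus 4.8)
The plan is to prove the two identities separately, deriving the $h$-identity from the $h^-$-identity; throughout I may assume $e_G\in F_1\subseteq F_2\subseteq\cdots$, since by Lemma \ref{convergent} every entropy below is independent of the F\o lner sequence. For the $h^-$-identity I would start from Proposition \ref{entr-imu}, which gives $h_\mu^-(G,\mathcal{U})=\lim_{n}\frac{1}{|F_n|}H_\mu(\mathcal{U}_{F_n}|I_\mu)$. Applying Lemma \ref{lem4-2} to the cover $\mathcal{U}_{F_n}\in\mathcal{C}_X$ converts the conditional quantity into an integral, $H_\mu(\mathcal{U}_{F_n}|I_\mu)=\int_X H_{\mu_x}(\mathcal{U}_{F_n})\,d\mu(x)$. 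For $\mu$-a.e. $x$ one has $\mu_x\in\mathcal{M}^e(X,G)$, so $\frac{1}{|F_n|}H_{\mu_x}(\mathcal{U}_{F_n})\to h_{\mu_x}^-(G,\mathcal{U})$; moreover these integrands are bounded uniformly in $x$ and $n$ by $\log N(\mathcal{U})$, using $N(\mathcal{U}_{F_n})\le N(\mathcal{U})^{|F_n|}$ together with $H_{\mu_x}(\mathcal{U}_{F_n})\le\log N(\mathcal{U}_{F_n})$. Hence the bounded convergence theorem lets me pass the limit inside the integral and obtain $h_\mu^-(G,\mathcal{U})=\int_X h_{\mu_x}^-(G,\mathcal{U})\,d\mu(x)$.

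Next I would record the partition form of the statement. For a finite partition $\beta$ both infima defining $h_\mu(G,\beta)$ and $h_\mu^-(G,\beta)$ are attained at $\beta$ itself, so both reduce to the ordinary partition entropy of \eqref{eq-p-e}, i.e. $h_\mu^-(G,\beta)=h_\mu(G,\beta)$, and likewise for a.e. $\mu_x$; the already proved $h^-$-identity applied to the cover $\beta$ therefore yields $h_\mu(G,\beta)=\int_X h_{\mu_x}(G,\beta)\,d\mu(x)$. The inequality $h_\mu(G,\mathcal{U})\ge\int_X h_{\mu_x}(G,\mathcal{U})\,d\mu(x)$ is then immediate: for any $\beta\succeq\mathcal{U}$, monotonicity (Lemma \ref{basic}~(3)) gives $h_{\mu_x}(G,\beta)\ge h_{\mu_x}(G,\mathcal{U})$ for a.e. $x$, whence $h_\mu(G,\beta)=\int_X h_{\mu_x}(G,\beta)\,d\mu\ge\int_X h_{\mu_x}(G,\mathcal{U})\,d\mu$, and I take the infimum over $\beta\succeq\mathcal{U}$.

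The reverse inequality is the heart of the matter and where I expect the main obstacle: I must produce a \emph{single} finite partition $\beta\succeq\mathcal{U}$ that is simultaneously near-optimal for almost every ergodic component. Fix $\epsilon>0$. Since coarsening any partition $\succeq\mathcal{U}$ to the ``special'' form $\{A_1,\dots,A_M\}$ with $A_i\subseteq U_i$ lowers the entropy for every $\mu_x$, it suffices to consider such partitions. I would fix once and for all a countable family $\{\gamma_m\}$ of special partitions drawn from a fixed countable algebra generating $\mathcal{B}_X$, chosen to be $L^1$-dense for \emph{every} Borel measure. Because the modulus $\delta$ produced in Lemma \ref{07-01-02-01} depends only on $M$ and $\epsilon$ and not on the measure, density together with that lemma shows $\inf_m h_{\mu_x}(G,\gamma_m)=h_{\mu_x}(G,\mathcal{U})$ for a.e. $x$. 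Writing $\varphi_m(x)=h_{\mu_x}(G,\gamma_m)$ and $\varphi(x)=h_{\mu_x}(G,\mathcal{U})$, both are $I_\mu$-measurable since they depend only on $\mu_x$, so assigning each $x$ to the least index $m$ with $\varphi_m(x)\le\varphi(x)+\epsilon$ yields $I_\mu$-measurable, hence $G$-invariant $(\mathrm{mod}\ \mu)$, sets $D_m$ that partition $X$.

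Finally I would glue. Choose $m_0$ with $\mu(R)<\epsilon$ for $R=\bigcup_{m>m_0}D_m$, and let $\beta$ be the finite partition agreeing with $\gamma_m$ on $D_m$ for $m\le m_0$ and with $\gamma_1$ on $R$; then $\beta\succeq\mathcal{U}$. Each $D_m$ is $G$-invariant with $\mu_x(D_m)=1$ whenever $x\in D_m$, and $\mu_x$ is $G$-invariant, so $\beta$ and $\gamma_m$ agree $\mathrm{mod}\ \mu_x$ on all their $F_n$-joins and hence $h_{\mu_x}(G,\beta)=\varphi_m(x)\le\varphi(x)+\epsilon$ there, while on $R$ one has the crude bound $h_{\mu_x}(G,\beta)\le\log M$. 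Invoking the partition formula from the second paragraph gives $h_\mu(G,\mathcal{U})\le h_\mu(G,\beta)=\int_X h_{\mu_x}(G,\beta)\,d\mu\le\int_X h_{\mu_x}(G,\mathcal{U})\,d\mu+\epsilon+\epsilon\log M$, and letting $\epsilon\to0$ completes the proof. The delicate points are precisely the measure-uniform continuity modulus of Lemma \ref{07-01-02-01} and the joint use of the $G$-invariance of both $D_m$ and each $\mu_x$, which together guarantee that the glued partition has the prescribed component-wise entropy.
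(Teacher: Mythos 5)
Your proof is correct and follows essentially the same route as the paper's: Proposition \ref{entr-imu}, Lemma \ref{lem4-2} and dominated convergence give the $h^-$-identity, and then a countable $L^1$-dense family of special partitions, an $I_\mu$-measurable selection of $\epsilon$-optimal indices, and gluing along the resulting invariant sets give the $h$-identity. The only cosmetic differences are that the paper glues over the entire countable invariant partition (via the auxiliary normalized measures $\mu_n$) instead of truncating at $m_0$ and paying an extra $\epsilon\log M$, and it uses the measure-independent modulus of Lemma \ref{07-01-02-01} implicitly where you invoke it explicitly.
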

\begin{proof}
With no loss of generality we assume $e_G\in F_1\subseteq F_2\subseteq \cdots$ (by
Lemma \ref{convergent}). Then we have
\begin{eqnarray*}
h_\mu^- (G, \mathcal{U})&= & \lim_{n\rightarrow +\infty}
\frac{1}{|F_n|} H_\mu (\mathcal{U}_{F_n}|I_\mu) \ \ \ \text{(by Proposition \ref{entr-imu})} \\
&= & \lim_{n\rightarrow +\infty} \frac{1}{|F_n|} \int_{X} H_{\mu_x}
(\mathcal{U}_{F_n}) d \mu(x)
\ \ \ (\text{by Lemma \ref{lem4-2}}) \\
&= & \int_{X} \lim_{n\rightarrow +\infty} \frac{1}{|F_n|}
H_{\mu_x} (\mathcal{U}_{F_n}) d \mu (x)\ \ \ (\text{by Dominant
Convergence Theorem}).
\end{eqnarray*}
That is, $h_\mu^- (G, \mathcal{U})= \int_{X} h_{\mu_x}^- (G,
\mathcal{U}) d \mu(x)$. In particular, if $\alpha\in \mathcal{P}_X$
then
\begin{align}\label{ergecmu}
h_\mu (G, \alpha)=\int_X h_{\mu_x}(G, \alpha) d \mu(x).
\end{align}

Next we follow the idea of the proof of \cite[Lemma 4.8]{HY} to prove $h_\mu
(G, \mathcal{U})= \int_{X} h_{\mu_x} (G, \mathcal{U}) d \mu(x)$.
Let $\mathcal{U}=\{U_1, \cdots, U_M\}$ and put
$\mathcal{U}^*=
\{\alpha= \{A_1, \cdots, A_M\}\in \mathcal{P}_X: A_m\subseteq U_m,
m=1, \cdots, M\}$.
 As $(X,\mathcal{B}_X)$ is a standard Borel
space, there exists a countable algebra $\mathcal{A}\subseteq
\mathcal{B}_X$ such that $\mathcal{B}_X$ is the $\sigma$-algebra
generated by $\mathcal{A}$. It is well known that if $\nu\in
\mathcal{M} (X)$ then
\begin{equation}
\label{fact11}
\mathcal{B}_X=\{ A\in \mathcal{B}_X: \forall \
\epsilon>0, \exists B\in \mathcal{A} \text{ such that }
\nu(A\Delta B)<\epsilon \}.
\end{equation}
Take $\mathcal{C}$ to be the countable algebra generated by
$\mathcal{A}$ and $\{ U_1,\cdots,U_M \}$, then $\mathcal{F}=\{
P\in \mathcal{U}^*: P\subseteq \mathcal{C} \}$ is a countable set
and for each $\alpha \in \mathcal{U}^*$, $\epsilon>0$ and $\nu\in
\mathcal{M} (X)$ there exists $\beta \in \mathcal{F}$ such that
$\nu(\alpha \Delta \beta)< \epsilon$ by \eqref{fact11}, i.e.
$\mathcal{F}$ is $L^1(X,{\mathcal B}_X,\nu)$-dense in
$\mathcal{U}^*$. In particular, say $\mathcal{F}= \{ \alpha_k:
k\in\mathbb{N} \}$ (denote $\alpha_k=\{A_1^k,\cdots,A_M^k\}$ for
each $k\in \mathbb{N}$), if $\nu\in \mathcal{M}(X,G)$ then
\begin{equation}
\label{dense} h_\nu(G,\mathcal{U})=\inf_{\alpha\in \mathcal{U}^*}
h_\nu(G,\alpha)=\inf_{k\in \mathbb{N}}h_\nu(G,\alpha_k).
\end{equation}

First, for one inequality one has
\begin{eqnarray*}
h_\mu (G, \mathcal{U})& =& \inf_{k\in \mathbb{N}} h_\mu (G,
\alpha_k)= \inf_{k\in \mathbb{N}} \int_{X} h_{\mu_x} (G, \alpha_k)
d \mu(x))\ \ \
(\text{by \eqref{ergecmu}}) \\
&\ge & \int_{X} \inf_{k\in \mathbb{N}} h_{\mu_x} (G, \alpha_k) d
\mu(x)= \int_{X} h_{\mu_x} (G, \mathcal{U}) d \mu (x)\ \ (\text{by
\eqref{dense}}).
\end{eqnarray*}
For the other inequality, let $\epsilon>0$. For each $n\in
\mathbb{N}$ define
$B_n^\epsilon= \{ x\in X: h_{\mu_x}(G, \alpha_n)< h_{\mu_x} (G,
\mathcal{U})+ \epsilon\}$.
Then $B_n^\epsilon$ is $G$-invariant and $\mu(\bigcup_{n\in
\mathbb{N}} B_n^\epsilon)= 1$ by \eqref{dense}, and so there
exists a measurable partition $\{ X_n: n\in \mathbb{N}\}$ of $X$
with $X_n\in I_\mu$ and $\mu(X_n)> 0$, and a sequence
$\{\alpha_{k_n}\}_{n\in \mathbb{N}}$ such that for each $n\in \N$
and $\mu$-a.e. $x\in X_n$ one has $h_{\mu_x} (G, \alpha_{k_n})<
h_{\mu_x} (G, \mathcal{U})+ \epsilon$. For every $n\in\mathbb{N}$
we define $\mu_n(\cdot)=\frac{1}{\mu(X_n)} \int_{X}
\mu_x(\cdot\cap X_n) \,d \mu(x)\in\mathcal{M}(X,G)$. We deduce
\begin{align*}
h_{\mu_n}(G,\alpha_{k_n})&=\frac{1}{\mu(X_n)} \int_{X_n}
h_{\mu_x}(G,\alpha_{k_n})\,d \mu(x)\nonumber\ \ \  (\text{by
\eqref{ergecmu}})\\
&\le \frac{1}{\mu(X_n)} \int_{X_n} h_{\mu_x}(G,\mathcal{U})\,d
\mu(x)+\epsilon.
\end{align*}
Note that, by definition, for every $n\in\mathbb{N}$, $\mu_n(X_n)=1$
and $\mu_n(X_k)=0$ if $k\not=n$. For $m\in\{1,\cdots,M\}$ define
$A_m=\bigcup_{n\in \mathbb{N}}(X_n\cap A_m^{k_n})$, then $\alpha=\{
A_1,\cdots,A_M \} \in\mathcal{U}^*$. We get,
\begin{eqnarray*}
h_\mu(G,\mathcal{U})&\le & h_\mu(G,\alpha)= \sum\limits_{n\in
\mathbb{N}} \mu(X_n) h_{\mu_n}(G,\alpha)\ \ \ \text{(by
\eqref{ergecmu})}\\
&= & \sum\limits_{n\in \mathbb{N}} \mu(X_n) h_{\mu_n}(G,\alpha_{k_n})
\le \int\limits_X h_{\mu_x}(G,\mathcal{U})\,d\mu(x)+\epsilon.
\end{eqnarray*}
Letting $\epsilon\rightarrow 0+$ we conclude $h_\mu(G,\mathcal{U})
\le \int\limits_X h_{\mu_x}(G,\mathcal{U})\,d\mu(x)$ and the desired
equality holds.
\end{proof}

Denote by $C (X; \R)$ the Banach space of the set of all
continuous real-valued functions on $X$ equipped with the maximal
norm $||\cdot||$. Note that the Banach space $C (X; \R)$ is
separable, let $\{f_n: n\in \N\}\subseteq C (X; \R)\setminus \{0\}$ be
a countable dense subset, where $0$ is the constant $0$ function on $X$, then a compatible metric on ${\mathcal
M} (X)$ is given by
\begin{equation*} \rho (\mu, \nu)= \sum_{n\in \N}
\frac{|\int_X f_n d \mu- \int_X f_n d \nu|}{2^n ||f_n||}, \text{ for
each }\mu, \nu\in {\mathcal M} (X).
\end{equation*}
Let $\mu\in \mathcal{M}(X,G)$ with $\mu=\int_X \mu_x d \mu(x)$ the
ergodic decomposition of $\mu$. Then there exists a $G$-invariant
subset $X_0\subseteq X$ with $\mu(X_0)=1$ such that the map $\Phi:
X_0\rightarrow \mathcal{M}^e(X,G)$ with $\Phi(x)= \mu_x$ is
well defined. We extend $\Phi$ to the whole space $X$ such that
$\Phi(x)\in \mathcal{M}^e(X,G)$ for each $x\in X$. For any $g_i\in
C(X;\mathbb{R})$, $\mu_i\in \mathcal{M}(X,G)$ and $\epsilon_i>0$,
$i=1,\cdots,k$, note that for any $f\in C(X;\mathbb{R})$, the
function $x\in X_0 \mapsto \int_X f d\mu_x$ is an element of
$L^1(X,I_\mu,\mu)$, we have $\Phi^{-1}(\bigcap_{i=1}^k \{\nu \in
\mathcal{M}(X,G): |\int_X g_i d \nu-\int_X g_i d
\mu_i|<\epsilon_i\})\in I_\mu$. Since all the sets having the form
of $\bigcap_{i=1}^k \{\nu \in \mathcal{M}(X,G): |\int_X g_i d
\nu-\int_X g_i d \mu_i|<\epsilon_i\}$ form a topological base of
$\mathcal{M}(X,G)$, the map $\Phi:(X,I_\mu)\rightarrow
(\mathcal{M}(X,G),\mathcal{B}_{\mathcal{M}(X,G)})$ is measurable,
i.e. $\Phi^{-1}(A)\in I_\mu$ for any $A\in
\mathcal{B}_{\mathcal{M}(X,G)}$. Now we define $m\in \mathcal{M}
(\mathcal{M}(X,G))$ as following: $m(A)=\mu(\Phi^{-1}(A))$ for any
$A\in \mathcal{B}_{\mathcal{M}(X,G)}$. Then if $g$ is a bounded
Borel function on $\mathcal{M}(X,G)$ then $g\circ \Phi\in
L^1(X,I_\mu,\mu)$ and
\begin{align}\label{eq-1111}
\int_X g\circ \Phi(x) d \mu(x)=\int_{\mathcal{M}(X,G)}g(\theta) d
m(\theta).
\end{align}
Now if $f\in C(X;\mathbb{R})$, let $L_f:\theta\in
\mathcal{M}(X,G)\mapsto \int_X f d \theta$, then $L_f$ is a
continuous function, and so
$$\int_X \left(\int_X f d \mu_x \right) \, d \mu(x)=\int_X L_f \circ \Phi(x) d \mu(x)=
\int_{\mathcal{M}(X,G)} L_f(\theta) d m(\theta)\ (\text{using}\
\eqref{eq-1111}),$$ moreover,
\begin{align}\label{fj-er}
\int_X f(x) d \mu(x)=\int_{\mathcal{M}(X,G)} \left(\int_X f(x) d
\theta(x) \right) \, d m(\theta) \text{ for any }f\in
C(X;\mathbb{R})\ (\text{using}\ \eqref{07-03-06-04}).
\end{align}
Note that $m(\mathcal{M}^e(X,G))\ge \mu(X_0)=1$, $m$ can be viewed
as a Borel probability measure on $\mathcal{M}^e(X,G)$. So
\eqref{fj-er} can also be written as
\begin{align}\label{fj-er1}
\int_X f(x) d \mu(x)=\int_{\mathcal{M}^e(X,G)}  \left( \int_X f(x)
d \theta(x) \right) \, d m(\theta) \text{ for any }f\in
C(X;\mathbb{R}),
\end{align}
which is denoted by $\mu=\int_{\mathcal{M}^e(X,G)} \theta d
m(\theta)$ (also called the {\it ergodic decomposition of $\mu$}).
Finally, it is not hard to check that if $m'$ is another Borel
probability measure on $\mathcal{M}(X,G)$ satisfying
$m'(\mathcal{M}^e(X,G))=1$ and \eqref{fj-er1} then $m'=m$. That is,
for any given $\mu\in \mathcal{M}(X,G)$ there exists uniquely a
Borel probability measure $m'$ on $\mathcal{M}(X,G)$ with $m'
(\mathcal{M}^e(X,G))=1$ satisfying \eqref{fj-er1}.

\begin{thm} \label{en-decom}
Let $\mathcal{U}\in \mathcal{C}_X$. Then the function $\eta\in
\mathcal{M} (X, G)\mapsto h_\eta(G,\mathcal{U})$ and the function
$\eta\in \mathcal{M} (X, G)\mapsto h_\eta^{-}(G,\mathcal{U})$ are
both bounded affine Borel functions on $\mathcal{M}(X,G)$.
Moreover, if we let $\mu\in \mathcal{M} (X, G)$ with $\mu=
\int_{\mathcal{M}^e(X,G)} \theta \, d m(\theta)$ the ergodic
decomposition of $\mu$, then
\begin{equation}\label{lab-eq}
h_\mu(G,\mathcal{U})=\int_{\mathcal{M}^e(X,G)}
h_\theta(G,\mathcal{U})\, d m(\theta) \text{ and
}h_\mu^-(G,\mathcal{U})=\int_{\mathcal{M}^e(X,G)}
h^-_\theta(G,\mathcal{U})\, d m(\theta).
\end{equation}
\end{thm}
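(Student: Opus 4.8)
The plan is to handle $h_{\{\cdot\}}(G,\mathcal{U})$ and $h^-_{\{\cdot\}}(G,\mathcal{U})$ in parallel, establishing in order: (i) boundedness, (ii) Borel measurability, (iii) the integral formula \eqref{lab-eq}, and finally (iv) affinity as a consequence of (iii). Boundedness is immediate. By Lemma \ref{lem-2-2}~(1) one has $H_\eta(g^{-1}\mathcal{U})=H_{g\eta}(\mathcal{U})\le H(\mathcal{U})$, so subadditivity (Lemma \ref{lem-2-2}~(3)) gives $H_\eta(\mathcal{U}_{F_n})\le |F_n|\,H(\mathcal{U})$ and hence $0\le h^-_\eta(G,\mathcal{U})\le H(\mathcal{U})$; choosing a single finite Borel partition $\alpha\succeq\mathcal{U}$ with $\#\alpha\le\#\mathcal{U}$ (say $A_i=U_i\setminus\bigcup_{j<i}U_j$) and taking $B=\{e_G\}$ in Lemma \ref{lem-436}~(4) gives $0\le h_\eta(G,\mathcal{U})\le h_\eta(G,\alpha)\le H_\eta(\alpha)\le\log\#\mathcal{U}$ for every $\eta\in\mathcal{M}(X,G)$.

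For measurability I would exploit countable dense families of partitions, as in the proof of Lemma \ref{refer}. For each fixed finite partition $\beta$ the map $\eta\mapsto H_\eta(\beta)=\sum_{B\in\beta}-\eta(B)\log\eta(B)$ is Borel on $\mathcal{M}(X)$, since $\eta\mapsto\eta(B)$ is Borel for every $B\in\mathcal{B}_X$. For the $h$-entropy, \eqref{dense} yields $h_\eta(G,\mathcal{U})=\inf_{k}h_\eta(G,\alpha_k)$ over the countable family $\{\alpha_k\}$, while Lemma \ref{lem-436}~(4) gives $h_\eta(G,\alpha_k)=\inf_{B\in F(G)}|B|^{-1}H_\eta((\alpha_k)_B)$; as $F(G)$ is countable, $\eta\mapsto h_\eta(G,\mathcal{U})$ is a countable infimum of Borel functions, hence Borel. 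For the $h^-$-entropy I would first show, by the same $L^1$-density argument (applied to the countable algebra generated by $\mathcal{A}$ together with the finitely many members of $\mathcal{U}_{F_n}$, using the continuity estimate \cite[Lemma 4.15]{W}), that $\eta\mapsto H_\eta(\mathcal{U}_{F_n})$ equals a countable infimum of the Borel maps $\eta\mapsto H_\eta(\beta)$, hence is Borel; then $h^-_\eta(G,\mathcal{U})=\lim_n|F_n|^{-1}H_\eta(\mathcal{U}_{F_n})$ is Borel as a pointwise limit.

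With measurability secured, the identities \eqref{lab-eq} follow by transporting the internal ergodic decomposition of Lemma \ref{refer} along the map $\Phi$. Writing $\mu=\int_X\mu_x\,d\mu(x)$, Lemma \ref{refer} gives $h^-_\mu(G,\mathcal{U})=\int_X h^-_{\mu_x}(G,\mathcal{U})\,d\mu(x)$ and similarly for $h$. Applying \eqref{eq-1111} to the bounded Borel function $g(\theta)=h^-_\theta(G,\mathcal{U})$, and using $\Phi(x)=\mu_x$ on the conull $G$-invariant set $X_0$, converts the right-hand integral into $\int_{\mathcal{M}(X,G)}g\,dm=\int_{\mathcal{M}^e(X,G)}h^-_\theta(G,\mathcal{U})\,dm(\theta)$, since $m(\mathcal{M}^e(X,G))=1$; the same computation with $h$ in place of $h^-$ yields the first identity of \eqref{lab-eq}. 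This step is essentially bookkeeping once measurability is in hand.

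Finally, affinity is deduced from \eqref{lab-eq} together with the uniqueness of the ergodic decomposition established just before the theorem. Given $\mu=a\nu+(1-a)\eta$ with $\nu,\eta\in\mathcal{M}(X,G)$ and $0<a<1$, let $m_\nu,m_\eta$ be the measures furnished by \eqref{fj-er1}; then $am_\nu+(1-a)m_\eta$ is a Borel probability measure on $\mathcal{M}(X,G)$ concentrated on $\mathcal{M}^e(X,G)$ and satisfying \eqref{fj-er1} for $\mu$, so by the stated uniqueness it equals $m_\mu$. Substituting into \eqref{lab-eq} and using linearity of the integral gives $h_\mu(G,\mathcal{U})=a\,h_\nu(G,\mathcal{U})+(1-a)\,h_\eta(G,\mathcal{U})$, and likewise for $h^-$. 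The only genuinely delicate point is the Borel measurability of the two entropy functions on $\mathcal{M}(X,G)$; everything else reduces to the already-proven decomposition lemma, the change-of-variables formula \eqref{eq-1111}, and uniqueness of the ergodic decomposition. I expect the measurability of $\eta\mapsto H_\eta(\mathcal{U}_{F_n})$—that the infimum over all partitions refining $\mathcal{U}_{F_n}$ can be realized as a countable infimum uniformly in $\eta$—to be the step demanding the most care.
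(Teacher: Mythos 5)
Your proposal is correct and follows essentially the same route as the paper's own proof: Borel measurability via the countable family of partitions from the proof of Lemma \ref{refer} (together with Lemma \ref{lem-436}~(4) and the $L^1$-density/continuity argument), then \eqref{lab-eq} by combining Lemma \ref{refer} with the change-of-variables formula \eqref{eq-1111}, and finally affinity from \eqref{lab-eq} plus the uniqueness of the ergodic decomposition applied to $a\,m_\nu+(1-a)\,m_\eta$. The step you flag as delicate—realizing $H_\eta(\mathcal{U}_{F_n})$ as a countable infimum uniformly in $\eta$—is exactly the point the paper handles by the same density argument, so no gap remains.
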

\begin{proof}
First we aim to establish \eqref{lab-eq}. Similar to the proof of
Lemma \ref{refer}, there exists $\{ \alpha_k\}_{k\in
\mathbb{N}}\subseteq \mathcal{P}_X$ such that $\alpha_k\succeq
\mathcal{U}$ for each $k\in \N$ and $H_\eta (\mathcal{U})=
\inf_{k\in \mathbb{N}} H_\eta (\alpha_k), h_\eta (G,\mathcal{U})=
\inf_{k\in \mathbb{N}} h_\eta (G, \alpha_k)$ for each $\eta\in
\mathcal{M}(X,G)$. Note that, for any $A\in \mathcal{B}_X$, the
function $\eta\in \mathcal{M}(X,G)\mapsto \eta(A)$ is Borel
measurable and hence if $\alpha\in \mathcal{P}_X$ then the
function $\eta\in \mathcal{M}(X,G)\mapsto H_\eta(\alpha)$ and the
function $\eta\in \mathcal{M}(X,G)\mapsto h_\eta(G,\alpha)$ are
both Bounded Borel functions. Moreover, the function $\eta\in
\mathcal{M}(X,G)\mapsto H_\eta(\mathcal{U})$ is a bounded Borel
function. Thus, the function $\eta\in \mathcal{M} (X, G)\mapsto
h_\eta(G,\mathcal{U})$ and the function $\eta\in \mathcal{M} (X,
G)\mapsto h_\eta^{-}(G,\mathcal{U})$ are both bounded Borel
functions. In particular, \eqref{lab-eq} follows directly from
Lemma \ref{refer} and \eqref{eq-1111}.

Now let $\mu_1,\mu_2\in \mathcal{M}(X,G)$ and $\lambda\in (0,1)$.
For $i= 1, 2$, let $\mu_i=\int_{\mathcal{M}^e(X,T)} \theta d
m_i(\theta)$ be the ergodic decomposition of $\mu_i$, where $m_i$ is
a Borel propobility measure on $\mathcal{M}^e(X,G)$. Consider
$\mu=\lambda \mu_1+(1-\lambda)\mu_2$ and $m=\lambda m_1
+(1-\lambda)m_2$. Then $m$ is a Borel probability measure on
$\mathcal{M}^e(X,G)$ and $\mu=\int_{\mathcal{M}^e(X,G)} \theta d
m(\theta)$ is the ergodic decomposition of $\mu$. By \eqref{lab-eq},
we have
\begin{eqnarray*}
h_\mu(G,\mathcal{U})&= & \int_{\mathcal{M}^e(X,G)}
h_{\theta}(G,\mathcal{U})
d m(\theta) \\
&= & \lambda \int_{\mathcal{M}^e(X,G)} h_{\theta}(G,\mathcal{U}) d
m_1(\theta)+(1-\lambda) \int_{\mathcal{M}^e(X,G)}
h_{\theta}(G,\mathcal{U}) d m_2(\theta) \\
&= & \lambda
h_{\mu_1}(G,\mathcal{U})+(1-\lambda)h_{\mu_2}(G,\mathcal{U}).
\end{eqnarray*}
This shows the affinity of $h_{\{\cdot \}}(G,\mathcal{U})$. We can
obtain similarly the affinity of $h^-_{\{\cdot\}}(G,\mathcal{U})$.
\end{proof}

\section{The equivalence of measure-theoretic entropy of covers}

In the section,  following arguments of Danilenko in
\cite{D}, we will develop an orbital approach to local entropy
theory for actions of an amenable group. Then combining it with the
equivalence of measure-theoretic entropy of covers in the case of
$G= \Z$, we will establish the equivalence of those two kinds of
measure-theoretic entropy of covers for a general $G$.

\subsection{Backgrounds of orbital theory}

Let $(X,\mathcal{B}_X,\mu)$ be a Lebesgue space. Denote by
$Aut(X,\mu)$ the group of all $\mu$-measure preserving invertible
transformations of $(X, \mathcal{B}_X, \mu)$, which is endowed
with the weak topology, i.e. the weakest topology which makes
continuous the following unitary representation: $Aut(X,\mu)\ni \gamma\mapsto U_{\gamma}\in \mathcal{U}(L^2(X,\mu))\
\text{with}\ U_{\gamma} f=f\circ \gamma^{-1},$ where the unitary
group $\mathcal{U}(L^2(X,\mu))$ is the set of all unitary
operators on $L^2(X,\mu)$ endowed with the strong operator
topology. Let a Borel subset $\mathcal{R}\subseteq X\times X$ be
an equivalence relation on $X$. For each $x\in X$, we denote
$\mathcal{R}(x)= \{y\in X: (x, y)\in \mathcal{R}\}$. Following
\cite{FM}, $\mathcal{R}$ is called {\it measure preserving} if it
is generated by some countable sub-group $G\subseteq Aut(X,\mu)$,
in general, this generating sub-group is highly non-unique;
$\mathcal{R}$ is {\it ergodic} if $A$ belongs to the trivial
sub-$\sigma$-algebra of $\mathcal{B}_X$ when $A\in \mathcal{B}_X$
is $\mathcal{R}$-invariant (i.e. $A= \bigcup_{x\in A} \mathcal{R}
(x)$); $\mathcal{R}$ is {\it discrete} if $\# \mathcal{R}(x)\le \#
\mathbb{Z}$ for $\mu$-a.e. $x\in X$; $\mathcal{R}$ is of {\it type
I} if $\# \mathcal{R}(x)< +\infty$ for $\mu$-a.e. $x\in X$,
equivalently, there is a subset $B\in \mathcal{B}_X$ with $\# (B
\cap \mathcal{R}(x))= 1$ for $\mu$-a.e. $x\in X$, such a $B$ is
called a {\it $\mathcal{R}$-fundamental domain}; $\mathcal{R}$ is
{\it countable} if $\# \mathcal{R}(x)= +\infty$ for $\mu$-a.e.
$x\in X$, observe that if $\mathcal{R}$ is measure preserving then
it is countable iff it is {\it conservative}, i.e.
$\mathcal{R}\cap (B\times B)\setminus \Delta_2 (X)\neq \emptyset$
for each $B\in \mathcal{B}_X$ satisfying $\mu(B)>0$, where
$\Delta_2(X)=\{ (x,x):x\in X\}$; $\mathcal{R}$ is {\it
hyperfinite} if there exists a sequence $\mathcal{R}_1\subseteq
\mathcal{R}_2\subseteq \cdots$ of type I sub-relations of
$\mathcal{R}$ such that $\bigcup_{n\in \mathbb{N}}
\mathcal{R}_n=\mathcal{R}$, the sequence $\{\mathcal{R}_n\}_{n\in
\mathbb{N}}$ is called a {\it filtration} of $\mathcal{R}$. Note
that a measure preserving discrete equivalence relation is
hyperfinite iff it is generated by a single invertible
transformation \cite{Dye}, the orbit equivalence relation of a
measure preserving action of a countable discrete amenable group
is hyperfinite \cite{CFW, Zim}, any two ergodic hyperfinite measure
preserving countable equivalence relations are isomorphic in the
natural sense (i.e. there exists an isomorphism between the
Lebesgue spaces which intertwines the corresponding equivalent
classes) \cite{Dye}. Everywhere below $\mathcal{R}$ is a measure
preserving discrete equivalence relation on a Lebesgue space
$(X,\mathcal{B}_X,\mu)$.

The {\it full group} $[\mathcal{R}]$ of $\mathcal{R}$ and its {\it
normalizer} $N[\mathcal{R}]$ are defined, respectively, by
$$[\mathcal{R}]= \{ \gamma\in Aut(X,\mu): (x,\gamma x)\in
\mathcal{R}\ \text{for $\mu$-a.e. $x\in X$}\},$$
$$N[\mathcal{R}]= \{ \theta \in Aut(X,\mu): \theta \mathcal{R}(x)=
\mathcal{R}(\theta x)\ \text{for $\mu$-a.e. $x\in X$}\}.$$ Let $A$
be a Polish group. A Borel map $\phi: \mathcal{R}\rightarrow A$ is
called a {\it cocycle} if
$$\phi(x,z)=\phi(x,y)\phi(y,z) \text{ for all }(x,y),(y,z)\in \mathcal{R}.$$
Let $\theta\in N[\mathcal{R}]$, we define a cocycle $\phi\circ
\theta$ by setting $\phi \circ \theta (x,y)=\phi(\theta x,\theta
y)$ for all $(x, y)\in \mathcal{R}$. Let $(Y,\mathcal{B}_Y,\nu)$
be another Lebesgue space and $A$ be embedded continuously into
$Aut(Y,\nu)$. For each cocycle $\phi:\mathcal{R}\rightarrow A$, we
associate a measure preserving discrete equivalence relation
$\mathcal{R}(\phi)$ on $(X\times Y, \mathcal{B}_X\times
\mathcal{B}_Y, \mu\times \nu)$ by setting
$(x,y)\sim_{\mathcal{R}(\phi)} (x',y')$ if $(x,x')\in \mathcal{R}$
and $y'=\phi(x',x)y$. Then an one-to-one group homomorphism
$[\mathcal{R}]\ni \gamma\mapsto \gamma_\phi\in [\mathcal{R}_\phi]$
is well defined via the formula
$$\gamma_\phi (x,y)=(\gamma x, \phi(\gamma x,x)y) \text{ for each }(x,y)\in X\times Y.$$
The transformation $\gamma_\phi$ is called the {\it $\phi$-skew
product extension} of $\gamma$, and the equivalence relation
$\mathcal{R}(\phi)$ is called the {\it $\phi$-skew product
extension} of $\mathcal{R}$.

\subsection{Local entropy for a cocycle of a
discrete measure preserving equivalence relation}

Denote by $I (\mathcal{R})$ the set of all type I sub-relations of
$\mathcal{R}$. Let $\epsilon>0$ and $\mathcal{T}, \mathcal{S}\in I
(\mathcal{R})$. We write $\mathcal{T}\subseteq_{\epsilon}
\mathcal{S}$ if there is $A\in \mathcal{B}_X$ such that
$\mu(A)>1-\epsilon$ and
$$\# \{ y\in \mathcal{S}(x): \mathcal{T}(y)\subseteq
\mathcal{S}(x)\}>(1-\epsilon) \# \mathcal{S}(x) \text{ for each }
x\in A.
$$
Replacing, if necessity, $A$ by $\bigcup_{x\in A} \mathcal{S}(x)$
we may (and so shall) assume that $A$ is $\mathcal{S}$-invariant.
Let $A_0=\{ x\in A: \mathcal{T}(x)\subseteq \mathcal{S}(x)\}$. The
following two lemmas are proved in \cite{D}.

\begin{lem}\label{lem-c-1}
$A_0$ is $\mathcal{T}$-invariant, $\mu(A_0)>1-2\epsilon$ and $\#
(\mathcal{S}(x)\cap A_0)>(1-\epsilon)\# \mathcal{S}(x)$ for each
$x\in A_0$.
\end{lem}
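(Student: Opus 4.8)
The plan is to handle the two combinatorial assertions (the $\mathcal{T}$-invariance of $A_0$ and the fiberwise count in (3)) by a direct definition chase, and then to extract the measure estimate $\mu(A_0)>1-2\epsilon$ from the fiberwise count together with the fact that a type I measure preserving relation allows one to integrate along its classes against the normalized counting measure. First I would prove that $A_0$ is $\mathcal{T}$-invariant. Take $x\in A_0$ and $y\in \mathcal{T}(x)$; then $\mathcal{T}(y)=\mathcal{T}(x)$, and since $y\in \mathcal{T}(x)\subseteq \mathcal{S}(x)$ we also get $\mathcal{S}(y)=\mathcal{S}(x)$. As $A$ is $\mathcal{S}$-invariant and $x\in A$, the whole class $\mathcal{S}(x)$ lies in $A$, so $y\in A$; and $\mathcal{T}(y)=\mathcal{T}(x)\subseteq \mathcal{S}(x)=\mathcal{S}(y)$ shows $y\in A_0$. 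This is purely formal and should present no difficulty.

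Next I would establish the fiberwise estimate, which is the engine of the lemma. The key observation is the inclusion, valid for every $x\in A$,
\[
\{\, y\in \mathcal{S}(x): \mathcal{T}(y)\subseteq \mathcal{S}(x)\,\}\subseteq \mathcal{S}(x)\cap A_0 .
\]
Indeed, if $y\in \mathcal{S}(x)$ then $\mathcal{S}(y)=\mathcal{S}(x)$ and $y\in \mathcal{S}(x)\subseteq A$ by $\mathcal{S}$-invariance of $A$; if moreover $\mathcal{T}(y)\subseteq \mathcal{S}(x)=\mathcal{S}(y)$, then $y\in A_0$. Combining this inclusion with the defining property of $\mathcal{T}\subseteq_\epsilon \mathcal{S}$ yields, for every $x\in A$,
\[
\#(\mathcal{S}(x)\cap A_0)\ge \#\{\, y\in \mathcal{S}(x): \mathcal{T}(y)\subseteq \mathcal{S}(x)\,\}>(1-\epsilon)\#\mathcal{S}(x),
\]
which is exactly assertion (3) for $x\in A_0\subseteq A$ and also supplies the per-class bound needed below.

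Finally I would prove $\mu(A_0)>1-2\epsilon$. The single analytic ingredient, and the step I expect to be the real obstacle, is the integration-over-classes identity
\[
\mu(B)=\int_X \frac{\#(B\cap \mathcal{S}(x))}{\#\mathcal{S}(x)}\, d\mu(x),\qquad B\in \mathcal{B}_X,
\]
which holds because $\mathcal{S}$ is a type I measure preserving relation: one applies the mass transport principle to $f(x,y)=1_B(y)/\#\mathcal{S}(x)$, using that $x\mapsto 1/\#\mathcal{S}(x)$ is finite a.e.\ and $\mathcal{S}$-invariant. Applying this to $B=A_0$, restricting the integral to the $\mathcal{S}$-invariant set $A$, and inserting the fiberwise bound above, one obtains
\[
\mu(A_0)\ge \int_A \frac{\#(A_0\cap \mathcal{S}(x))}{\#\mathcal{S}(x)}\, d\mu(x)\ge (1-\epsilon)\mu(A)>(1-\epsilon)^2>1-2\epsilon,
\]
using $\mu(A)>1-\epsilon$. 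For the integration/mass-transport identity I would invoke the orbital machinery of Danilenko (ultimately Feldman--Moore), since that is the only point not reducible to a formal manipulation of the definitions.
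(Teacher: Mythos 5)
Your proof is correct, but there is nothing in the paper to compare it against line by line: the paper states Lemma \ref{lem-c-1} without proof, quoting it from Danilenko \cite{D}, and your argument is essentially the standard (and the cited) one. The two combinatorial parts are indeed pure definition chases; in fact your inclusion is an equality, $\mathcal{S}(x)\cap A_0=\{y\in\mathcal{S}(x):\mathcal{T}(y)\subseteq\mathcal{S}(x)\}$ for every $x\in A$, by the $\mathcal{S}$-invariance of $A$. The one step you flagged as the potential obstacle, the identity $\mu(B)=\int_X \frac{\#(B\cap\mathcal{S}(x))}{\#\mathcal{S}(x)}\,d\mu(x)$, is valid, and you can obtain it more cheaply than by appealing to the general mass-transport principle: since $\mathcal{S}$ is type I it has a fundamental domain $B_0$, and one can enumerate the classes by countably many $\mu$-preserving partial isomorphisms $\phi_n\colon B_n\to X$ (with $B_n\subseteq B_0$) whose graphs lie in $\mathcal{R}$, which yields $\mu(E)=\int_{B_0}\#(E\cap\mathcal{S}(b))\,d\mu(b)$ for every Borel set $E$; applying this with $E=A_0$ and $E=A$, and using that $A$ is $\mathcal{S}$-saturated together with your fiberwise bound, gives $\mu(A_0)\ge(1-\epsilon)\mu(A)>(1-\epsilon)^2>1-2\epsilon$ at once. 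This fundamental-domain bookkeeping is exactly the device $\mathcal{S}\sim(B_i,G_i)$ that the paper itself sets up at the start of its treatment of virtual entropy, so your proof stays entirely within the paper's own toolkit; the only cosmetic difference is that you normalize by $\#\mathcal{S}(x)$ and integrate over all of $X$, whereas the fundamental-domain version integrates unnormalized counts over $B_0$ --- the two identities are equivalent.
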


\begin{lem} \label{lem-c-2}
Let $\epsilon> 0$ and $\mathcal{R}$ be hyperfinite with
$\{\mathcal{R}_n\}_{n\in \mathbb{N}}$ a filtration of
$\mathcal{R}$.
\begin{description}

\item[1] If $\Gamma\subseteq [\mathcal{R}]$ is a countable subset
satisfying $\# (\Gamma x)< +\infty$ for $\mu$-a.e. $x\in X$ then
for each sufficiently large $n$ there is a
$\mathcal{R}_n$-invariant subset $A_n$ such that
$\mu(A_n)>1-\epsilon$ and
$$\# \{ y\in \mathcal{R}_n(x):\Gamma y\subseteq \mathcal{R}_n(x)\}
>(1-\epsilon) \# \mathcal{R}_n(x) \text{ for each } x\in A_n.$$

\item[2] If $\mathcal{S}\in I (\mathcal{R})$ then
$\mathcal{S}\subseteq_\epsilon \mathcal{R}_n$ if $n$ is large
enough.
\end{description}
\end{lem}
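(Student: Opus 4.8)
The plan is to deduce Part 2 from Part 1, and to prove Part 1 by averaging over the finite classes of $\mathcal{R}_n$. For Part 1, I would first fix $\Gamma$ and set, for each $n$,
\begin{equation*}
B_n = \{x \in X: \Gamma x \subseteq \mathcal{R}_n(x)\}.
\end{equation*}
Since $\Gamma \subseteq [\mathcal{R}]$ we have $\Gamma x \subseteq \mathcal{R}(x)$ for $\mu$-a.e.\ $x$, and because $\#(\Gamma x) < +\infty$ while $\mathcal{R}(x) = \bigcup_n \mathcal{R}_n(x)$ with $\mathcal{R}_n(x)$ increasing in $n$, for $\mu$-a.e.\ $x$ there is $N(x)$ with $\Gamma x \subseteq \mathcal{R}_n(x)$ once $n \ge N(x)$. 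Each $B_n$ is Borel (a countable intersection over $\gamma \in \Gamma$ of the Borel sets $\{x: \gamma x \in \mathcal{R}_n(x)\}$) and the $B_n$ increase to $X$ mod $\mu$, whence $\mu(B_n) \to 1$. The key structural observation is that since $\mathcal{R}_n$ is an equivalence relation, $\mathcal{R}_n(y) = \mathcal{R}_n(x)$ for every $y \in \mathcal{R}_n(x)$, so $\Gamma y \subseteq \mathcal{R}_n(x)$ if and only if $y \in B_n$; thus $\{y \in \mathcal{R}_n(x): \Gamma y \subseteq \mathcal{R}_n(x)\} = \mathcal{R}_n(x) \cap B_n$.

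Next I would pass to the $\mathcal{R}_n$-invariant function $f_n(x) = \#(\mathcal{R}_n(x) \cap B_n)/\#\mathcal{R}_n(x)$ and establish the averaging identity $\int_X f_n\,d\mu = \mu(B_n)$. This is where the measure-preserving hypothesis on $\mathcal{R}$ enters: the counting measure carried by a measure-preserving type I relation is symmetric, i.e.\ $\int_X \sum_{y \in \mathcal{R}_n(x)} \psi(x,y)\,d\mu(x) = \int_X \sum_{y \in \mathcal{R}_n(x)} \psi(y,x)\,d\mu(x)$ for every Borel $\psi \ge 0$ on $\mathcal{R}_n$. Applying this to $\psi(x,y) = 1_{B_n}(y)/\#\mathcal{R}_n(x)$ and using $\#\mathcal{R}_n(y) = \#\mathcal{R}_n(x)$ yields exactly $\int_X f_n\,d\mu = \mu(B_n)$. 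Then $\int_X (1 - f_n)\,d\mu = 1 - \mu(B_n) \to 0$, so Markov's inequality applied to the $\mathcal{R}_n$-invariant set $A_n = \{x: f_n(x) > 1 - \epsilon\}$ gives $\mu(X \setminus A_n) \le (1 - \mu(B_n))/\epsilon < \epsilon$ for $n$ large; on $A_n$ the inequality $\#(\mathcal{R}_n(x) \cap B_n) > (1-\epsilon)\#\mathcal{R}_n(x)$ holds by the definition of $A_n$, which is Part 1.

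For Part 2, since $\mathcal{S} \in I(\mathcal{R})$ is a measure-preserving sub-relation, it is generated by some countable group $\Gamma \subseteq [\mathcal{S}] \subseteq [\mathcal{R}]$, and its being type I gives $\#(\Gamma x) = \#\mathcal{S}(x) < +\infty$ for $\mu$-a.e.\ $x$. Applying Part 1 to this $\Gamma$ and using $\Gamma y = \mathcal{S}(y)$, the resulting inequality $\#\{y \in \mathcal{R}_n(x): \mathcal{S}(y) \subseteq \mathcal{R}_n(x)\} > (1-\epsilon)\#\mathcal{R}_n(x)$ on an $\mathcal{R}_n$-invariant set $A_n$ of measure $> 1-\epsilon$ is precisely the statement $\mathcal{S} \subseteq_\epsilon \mathcal{R}_n$. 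The main obstacle I anticipate is the averaging identity $\int_X f_n\,d\mu = \mu(B_n)$, i.e.\ correctly invoking the flip-symmetry of the measure on the relation and the attendant measurability of $f_n$ and $B_n$; the remaining steps (the increasing-sets argument for $\mu(B_n) \to 1$, the Markov estimate, and the reduction of Part 2 to Part 1 via a countable generating group of $\mathcal{S}$) are routine once that identity is in place.
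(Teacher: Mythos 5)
Your proof is correct, but note that the paper itself does not prove this lemma: it is quoted from Danilenko's work (``The following two lemmas are proved in \cite{D}''), so your argument is a self-contained reconstruction rather than a variant of anything in the text. The scheme you use --- the increasing Borel sets $B_n=\{x:\Gamma x\subseteq\mathcal{R}_n(x)\}$ exhausting $X$ mod $\mu$, the pointwise identification $\{y\in\mathcal{R}_n(x):\Gamma y\subseteq\mathcal{R}_n(x)\}=\mathcal{R}_n(x)\cap B_n$, the averaging identity $\int_X f_n\,d\mu=\mu(B_n)$ (equivalently, $f_n=\mathbb{E}_\mu(1_{B_n}\mid\mathcal{I}_n)$ with $\mathcal{I}_n$ the $\sigma$-algebra of $\mathcal{R}_n$-invariant sets), and Markov's inequality applied to $A_n=\{f_n>1-\epsilon\}$ --- is sound, and it is the same class-averaging technique the paper itself deploys in the proof of Proposition \ref{pro-c-5}. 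Three points deserve explicit justification rather than a passing mention: (i) the flip symmetry of the counting measure on $\mathcal{R}_n$ is obtained by restricting to $\mathcal{R}_n\subseteq\mathcal{R}$ the symmetry for $\mathcal{R}$, which in turn follows from the paper's definition of ``measure preserving'' by decomposing $\mathcal{R}$ into graphs of the generating automorphisms; (ii) in Part 2, the existence of a countable group $\Gamma\subseteq[\mathcal{S}]\subseteq[\mathcal{R}]$ with $\Gamma x=\mathcal{S}(x)$ a.e.\ is Feldman--Moore's theorem \cite{FM} (together with the observation that a Borel automorphism whose graph lies in $\mathcal{R}$ automatically preserves $\mu$), not a tautology from the phrase ``measure-preserving sub-relation''; (iii) since $\Gamma y=\mathcal{S}(y)$ holds only off a null set $N$, you should shrink $A_n$ by the null, $\mathcal{R}_n$-invariant saturation $\bigcup_{y\in N}\mathcal{R}_n(y)$ before reading off $\mathcal{S}\subseteq_\epsilon\mathcal{R}_n$, because the conclusion quantifies over every $y\in\mathcal{R}_n(x)$ for every $x$ in the exceptional-free set. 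With these routine patches your argument is complete.
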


Let $(Y,\mathcal{B}_Y,\nu)$ be a Lebesgue space and
$\phi:\mathcal{R}\rightarrow Aut(Y,\nu)$ a cocycle. For
$\mathcal{U}\in \mathcal{C}_{X\times Y}$, we consider
$\mathcal{U}$ as a measurable field $\{\mathcal{U}_x\}_{x\in
X}\subseteq \mathcal{C}_Y$, where $\{x\}\times
\mathcal{U}_x=\mathcal{U}\cap (\{x\}\times Y)$.

\begin{de} For
$\mathcal{U}\in \mathcal{C}_{X\times Y}$, we define
\begin{align*}
h^-_\nu(\mathcal{S},\phi,\mathcal{U})= \int_X \frac{1}{\#
\mathcal{S}(x)} H_\nu \left(\bigvee_{y\in
\mathcal{S}(x)}\phi(x,y)\mathcal{U}_y\right) d\mu(x)\text{ and }
h_\nu(\mathcal{S},\phi,\mathcal{U})= \inf_{\alpha\in
\mathcal{P}_{X\times Y}: \alpha\succeq
\mathcal{U}}h_\nu^{-}(\mathcal{S},\phi,\alpha).
\end{align*}
Then we define the {\it $\nu^-$-entropy $h^-_\nu
(\phi,\mathcal{U})$} and the {\it $\nu$-entropy $h_\nu
(\phi,\mathcal{U})$} of $(\phi,\mathcal{U})$, respectively, by
\begin{equation*}
h^-_\nu (\phi,\mathcal{U})=\inf_{\mathcal{S}\in I (\mathcal{R})}
h_\nu^- (\mathcal{S},\phi,\mathcal{U})\text{ and
}h_\nu(\phi,\mathcal{U})= \inf_{\mathcal{S}\in I (\mathcal{R})}
h_\nu (\mathcal{S},\phi,\mathcal{U}).
\end{equation*}
\end{de}

It is clear that if $\beta\in \mathcal{P}_{X\times Y}$ and
$\mathcal{U}\in \mathcal{C}_{X\times Y}$ then
$h_\nu(\mathcal{S},\phi,\beta)=h_\nu^{-}(\mathcal{S},\phi,\beta)$,
$h_\nu(\phi,\beta)=h_\nu^{-}(\phi,\beta)$ and
$h_\nu(\phi,\mathcal{U})=\inf_{\alpha\in \mathcal{P}_{X\times Y}:
\alpha\succeq \mathcal{U}} h_\nu^{-}(\phi,\alpha)$. Moreover, if
$\mathcal{U}, \mathcal{V}\in \mathcal{C}_{X\times Y}$ satisfy
$\mathcal{U}\succeq \mathcal{V}$ then
$h_\nu(\mathcal{S},\phi,\mathcal{U})\ge
h_\nu(\mathcal{S},\phi,\mathcal{V})$ and
$h^-_\nu(\mathcal{S},\phi,\mathcal{U})\ge
h^-_\nu(\mathcal{S},\phi,\mathcal{V})$. It's not hard to obtain

\begin{prop} \label{pro-conclusion}
Let $(Z, \mathcal{B}_Z, \kappa)$ be a Lebesgue space,
$\mathcal{S}\in I(\mathcal{R})$, $\beta: \mathcal{S}\rightarrow
Aut (Z, \kappa)$ a cocycle and $\sigma: Z\times X\rightarrow
X\times Z, (z, x)\mapsto (x, z)$ the flip.
\begin{description}

\item[1] Let $\alpha':
\sigma^{- 1} \mathcal{S} (\beta) \sigma\rightarrow Aut (Y, \nu)$
and $\alpha: \mathcal{S}\rightarrow Aut (Y, \nu)$ be cocycles
satisfying $\alpha' ((z, x), (z', x'))= \alpha (x, x')$ when
$((z, x), (z', x'))\in \sigma^{- 1} \mathcal{S} (\beta) \sigma$.
Then $h_\nu^- (\sigma^{- 1} \mathcal{S} (\beta) \sigma, \alpha',
Z\times \mathcal{U})= h_\nu^- (\mathcal{S}, \alpha, \mathcal{U})$
for any $\mathcal{U}\in \mathcal{C}_{X\times Y}$.

\item[2] Let $\alpha'':
\mathcal{S} (\beta) \rightarrow Aut (Y, \nu)$ and $\alpha:
\mathcal{S}\rightarrow Aut (Y, \nu)$ be cocycles satisfying
$\alpha'' ((x, z), (x'', z''))$ $= \alpha (x, x'')$ when $((x, z),
(x'', z''))\in \mathcal{S} (\beta)$. Then if $\mathcal{U}''\in
\mathcal{C}_{X\times Z\times Y}$ and $\mathcal{U}\in
\mathcal{C}_{X\times Y}$ satisfies $\mathcal{U}''_{(x, z)}=
\mathcal{U}_x$ for each $(x, z)\in X\times Z$ then $h_\nu^-
(\mathcal{S} (\beta), \alpha'', \mathcal{U}'')= h_\nu^-
(\mathcal{S}, \alpha, \mathcal{U})$.
\end{description}
\end{prop}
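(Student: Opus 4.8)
The plan is to prove both parts by unwinding the definition of $h^-_\nu$ and observing that, along each orbit of the relevant skew-product relation, both the cocycle values and the fibers of the cover depend only on the $X$-coordinates; Fubini's theorem then integrates out the extra $Z$-coordinate and collapses the expression to $h^-_\nu(\mathcal{S}, \alpha, \mathcal{U})$.

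First I would describe the orbit structure. For Part 2, the relation $\mathcal{S}(\beta)$ sends $(x, z)$ to those $(x'', z'')$ with $(x, x'')\in \mathcal{S}$ and $z''= \beta(x'', x) z$; since $\beta$ is a cocycle into $Aut(Z, \kappa)$, for each $x''\in \mathcal{S}(x)$ the second coordinate $z''$ is uniquely determined, so $x''\mapsto (x'', \beta(x'', x) z)$ is a bijection from $\mathcal{S}(x)$ onto $\mathcal{S}(\beta)(x, z)$. In particular $\# \mathcal{S}(\beta)(x, z)= \# \mathcal{S}(x)$ for $(\mu\times \kappa)$-a.e. $(x, z)$, which confirms that $\mathcal{S}(\beta)$ is again of type $I$ (so the defining integral makes sense), and the same computation conjugated by the flip gives, for Part 1, the orbit $\{(\beta(x', x) z, x'): x'\in \mathcal{S}(x)\}$ of $(z, x)$ under $\sigma^{- 1}\mathcal{S}(\beta)\sigma$ and $\#(\sigma^{- 1}\mathcal{S}(\beta)\sigma)(z, x)= \# \mathcal{S}(x)$.

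Next I would substitute the hypotheses on the cocycles and covers. In Part 2, using $\alpha''((x, z), (x'', z''))= \alpha(x, x'')$ and $\mathcal{U}''_{(x'', z'')}= \mathcal{U}_{x''}$, the inner join becomes
\[
\bigvee_{(x'', z'')\in \mathcal{S}(\beta)(x, z)} \alpha''((x, z), (x'', z''))\,\mathcal{U}''_{(x'', z'')}= \bigvee_{x''\in \mathcal{S}(x)} \alpha(x, x'')\,\mathcal{U}_{x''},
\]
a cover of $Y$ that does not involve $z$ at all. Hence the integrand $\frac{1}{\# \mathcal{S}(\beta)(x, z)} H_\nu(\cdots)$ is a function of $x$ alone, and since $\kappa(Z)= 1$, Fubini gives
\[
h_\nu^- (\mathcal{S}(\beta), \alpha'', \mathcal{U}'')= \int_X \frac{1}{\# \mathcal{S}(x)} H_\nu\left(\bigvee_{x''\in \mathcal{S}(x)} \alpha(x, x'')\,\mathcal{U}_{x''}\right) d\mu(x)= h_\nu^- (\mathcal{S}, \alpha, \mathcal{U}).
\]
Part 1 is identical after the flip: the relations $\alpha'((z, x), (z', x'))= \alpha(x, x')$ and $(Z\times \mathcal{U})_{(z', x')}= \mathcal{U}_{x'}$ collapse the inner join along the orbit to $\bigvee_{x'\in \mathcal{S}(x)} \alpha(x, x')\,\mathcal{U}_{x'}$, again independent of $z$, so integrating out $z$ against $\kappa$ recovers $h_\nu^- (\mathcal{S}, \alpha, \mathcal{U})$.

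The calculations are essentially mechanical, so I do not expect a serious obstacle; the only points that genuinely need care are the measurability of the fields $(x, z)\mapsto \bigvee_{x''\in \mathcal{S}(x)} \alpha(x, x'')\,\mathcal{U}_{x''}$ (so that the integrals are well defined and Fubini applies) and the verification that the orbit parametrization above is itself measurable and bijective a.e. Both follow from $\mathcal{S}\in I(\mathcal{R})$ together with the cocycle identity for $\beta$, so the proof reduces to the bookkeeping indicated.
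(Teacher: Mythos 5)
Your proposal is correct and follows essentially the same route as the paper: unwind the definition of $h_\nu^-$, use the bijection $x''\mapsto (x'',\beta(x'',x)z)$ between $\mathcal{S}(x)$ and the skew-product orbit to rewrite the inner join as $\bigvee_{x''\in \mathcal{S}(x)}\alpha(x,x'')\,\mathcal{U}_{x''}$ (independent of $z$), and then integrate out the $Z$-coordinate. The only cosmetic difference is that you prove Part 2 in detail and deduce Part 1 by the flip, whereas the paper writes out Part 1 and declares Part 2 similar.
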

\begin{proof}
As the proof is similar, we only present the proof for 1. Let
$\mathcal{U}\in \mathcal{C}_{X\times Y}$. Then
\begin{eqnarray*}
& & h_\nu^- (\sigma^{- 1} \mathcal{S} (\beta) \sigma, \alpha',
Z\times \mathcal{U}) \\
&= & \int_{Z\times X} \frac{1}{\# \sigma^{- 1} \mathcal{S} (\beta)
\sigma (z, x)} H_\nu \left(\bigvee_{(z', x')\in \sigma^{- 1}
\mathcal{S} (\beta) \sigma (z, x)} \alpha' ((z, x), (z', x'))
(Z\times \mathcal{U})_{(z',x')} \right) d \kappa\times \mu (z, x) \\
&= & \int_{Z\times X} \frac{1}{\# \mathcal{S} (x)} H_\nu
\left(\bigvee_{(x', z')\in \mathcal{S} (\beta) (x, z)} \alpha (x,
x')
\mathcal{U}_{x'}\right) d \kappa\times \mu (z, x) \\
&= & \int_X \frac{1}{\# \mathcal{S} (x)} H_\nu
\left(\bigvee_{x'\in \mathcal{S} (x)} \alpha (x, x')
\mathcal{U}_{x'}\right) d \mu (x)= h_\nu^- (\mathcal{S}, \alpha,
\mathcal{U}).
\end{eqnarray*}
\end{proof}

\begin{prop} \label{pro-c-5}
Let $\epsilon> 0$ and $\mathcal{T}, \mathcal{S}\in I
(\mathcal{R})$. If $\mathcal{T}\subseteq_{\epsilon} \mathcal{S}$
then
\begin{align*}
h_{\nu}^- (\mathcal{S},\phi,\mathcal{U})\le
h_\nu^-(\mathcal{T},\phi,\mathcal{U})+3 \epsilon \log
N(\mathcal{U}) \text{ and } h_\nu(\mathcal{S},\phi,\mathcal{U})\le
h_\nu(\mathcal{T},\phi,\mathcal{U})+3 \epsilon \log
N(\mathcal{U}).
\end{align*}
In particular, if $\mathcal{T}\subseteq \mathcal{S}$ then
$h_{\nu}^- (\mathcal{S},\phi,\mathcal{U})\le
h_\nu^-(\mathcal{T},\phi,\mathcal{U})$ and
$h_\nu(\mathcal{S},\phi,\mathcal{U})\le
h_\nu(\mathcal{T},\phi,\mathcal{U})$.
\end{prop}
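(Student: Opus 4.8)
The plan is to reduce both inequalities to a pointwise (orbit-by-orbit) entropy estimate and then integrate, the integration being governed by the fact that measure preservation of $\mathcal{R}$ forces the conditional measure of $\mu$ on each finite $\mathcal{S}$-orbit to be uniform; equivalently, the orbit average $F\mapsto\frac{1}{\#\mathcal{S}(\cdot)}\sum_{y\in\mathcal{S}(\cdot)}F(y)$ is the $\mu$-conditional expectation onto the $\sigma$-algebra of $\mathcal{S}$-invariant sets, and hence satisfies $\int_X\frac{1}{\#\mathcal{S}(x)}\sum_{y\in\mathcal{S}(x)}F(y)\,d\mu(x)=\int_X F\,d\mu$ for every non-negative Borel $F$. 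From $\mathcal{T}\subseteq_\epsilon\mathcal{S}$ I first fix, through Lemma~\ref{lem-c-1}, the $\mathcal{T}$-invariant set $A_0$ with $\mu(A_0)>1-2\epsilon$ and $\#(\mathcal{S}(x)\cap A_0)>(1-\epsilon)\#\mathcal{S}(x)$ for $x\in A_0$. The decisive structural point is that, for a.e. $x$, the set $\mathcal{S}(x)\cap A_0$ is a disjoint union of full $\mathcal{T}$-orbits: if $y\in\mathcal{S}(x)\cap A_0$ then $\mathcal{T}(y)\subseteq A_0$ (as $A_0$ is $\mathcal{T}$-invariant) and $\mathcal{T}(y)\subseteq\mathcal{S}(y)=\mathcal{S}(x)$ (by the definition of $A_0$). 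Choosing a measurable $\mathcal{T}$-fundamental domain $D_x\subseteq\mathcal{S}(x)\cap A_0$ then lets me cut the relevant join along $\mathcal{T}$-orbits.

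For the $\nu^-$-entropy I would bound, for a.e. $x$, the integrand $\frac{1}{\#\mathcal{S}(x)}H_\nu(\bigvee_{y\in\mathcal{S}(x)}\phi(x,y)\mathcal{U}_y)$ after splitting $\mathcal{S}(x)=(\mathcal{S}(x)\cap A_0)\sqcup(\mathcal{S}(x)\setminus A_0)$ and using subadditivity (Lemma~\ref{lem-2-2}(3)). The boundary part costs at most $\#(\mathcal{S}(x)\setminus A_0)\log N(\mathcal{U})$, since $H_\nu(\phi(x,y)\mathcal{U}_y)=H_\nu(\mathcal{U}_y)\le\log N(\mathcal{U}_y)\le\log N(\mathcal{U})$ (the automorphism $\phi(x,y)$ preserves $\nu$, so entropy is invariant by Lemma~\ref{lem-2-2}(1), and slicing a minimal subcover of $\mathcal{U}$ gives $N(\mathcal{U}_y)\le N(\mathcal{U})$). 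On the good part I write $\mathcal{S}(x)\cap A_0=\bigsqcup_{d\in D_x}\mathcal{T}(d)$ and invoke the cocycle identity $\phi(x,y)=\phi(x,d)\phi(d,y)$ together with the $\nu$-invariance of $\phi(x,d)$ to see that $H_\nu(\bigvee_{y\in\mathcal{T}(d)}\phi(x,y)\mathcal{U}_y)=H_\nu(\bigvee_{y\in\mathcal{T}(d)}\phi(d,y)\mathcal{U}_y)$; subadditivity over $d\in D_x$ then dominates the good part by $\sum_{y\in\mathcal{S}(x)\cap A_0}g(y)$, where $g(y)=\frac{1}{\#\mathcal{T}(y)}H_\nu(\bigvee_{y'\in\mathcal{T}(y)}\phi(y,y')\mathcal{U}_{y'})$ is $\mathcal{T}$-invariant (again by the cocycle rule), so the domain-weighted sum collapses to an orbit sum. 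Integrating and applying the orbit-averaging identity turns the boundary term into $\mu(X\setminus A_0)\log N(\mathcal{U})<2\epsilon\log N(\mathcal{U})$ and the good term into $\int_{A_0}g\,d\mu\le\int_X g\,d\mu=h_\nu^-(\mathcal{T},\phi,\mathcal{U})$, giving the first inequality (in fact with $2\epsilon\le3\epsilon$).

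For the $\nu$-entropy I cannot just feed a partition into the previous estimate, since then the boundary error would scale with $\log N(\alpha)$ rather than $\log N(\mathcal{U})$. Instead, given $\delta>0$ I pick $\beta\succeq\mathcal{U}$ with $h_\nu^-(\mathcal{T},\phi,\beta)<h_\nu(\mathcal{T},\phi,\mathcal{U})+\delta$ and build a partition $\alpha\succeq\mathcal{U}$ by setting $\alpha_x=\beta_x$ for $x\in A_0$ and, for $x\notin A_0$, taking $\alpha_x$ to be a partition refining $\mathcal{U}_x$ with only $N(\mathcal{U}_x)\le N(\mathcal{U})$ atoms (a routine measurable selection of minimal subcovers, $\mathcal{U}$ being finite). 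Since $\mathcal{S}(x)\cap A_0$ is a union of $\mathcal{T}$-orbits on which $\alpha$ coincides with $\beta$, the computation of the second paragraph applies verbatim—the good part is controlled by $\beta$ and the boundary part by $\log N(\mathcal{U})$—and yields $h_\nu^-(\mathcal{S},\phi,\alpha)\le h_\nu^-(\mathcal{T},\phi,\beta)+2\epsilon\log N(\mathcal{U})$. Taking the infimum over admissible partitions on the left and then $\delta\to0$ gives the second inequality. The final assertion is the case $A=A_0=X$ (no error term) when $\mathcal{T}\subseteq\mathcal{S}$.

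I expect two points to need the most care. First, both the orbit-averaging identity and the invariance $H_\nu(\phi(x,y)\mathcal{U}_y)=H_\nu(\mathcal{U}_y)$ hinge on measure preservation of $\mathcal{R}$; without it the conditional measure on orbits need not be uniform and the scheme breaks down. Second, the $\nu$-entropy case is the genuine obstacle: one must exhibit the modified $\alpha$ as a bona fide measurable partition with $\alpha\succeq\mathcal{U}$ and confirm that the join over $\mathcal{T}$-orbits inside $A_0$ really sees only the $\beta$-slices, so that the error stays proportional to $\log N(\mathcal{U})$ rather than to the (uncontrolled) size of $\beta$. The uniform bound $N(\mathcal{U}_y)\le N(\mathcal{U})$ is exactly what keeps the constant free of the chosen partitions.
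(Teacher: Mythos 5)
Your proof is correct, and while your argument for the $\nu^-$-inequality coincides with the paper's, your treatment of the $h_\nu$-inequality takes a genuinely different route. For the first inequality you use exactly the paper's ingredients: the set $A_0$ from Lemma \ref{lem-c-1}, the fact that $\mathcal{S}(x)\cap A_0$ decomposes into full $\mathcal{T}$-orbits, the cocycle identity combined with $\nu$-invariance of $\phi(x,d)$, and the identification of the orbit average with the conditional expectation onto the $\sigma$-algebra of invariant sets (the paper writes this as $f\le \mathbb{E}(g\,|\,\mathcal{S}\cap(A_0\times A_0))$ and integrates over $A_0$); your bookkeeping, which applies the averaging identity to $1_{A_0}\,g+1_{X\setminus A_0}\log N(\mathcal{U})$ over all of $X$ rather than splitting the domain of integration, even yields the constant $2\epsilon$ in place of $3\epsilon$. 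The real divergence is in the second inequality. The paper deduces it from the partition case $h_\nu(\mathcal{S},\phi,\alpha)\le h_\nu(\mathcal{T},\phi,\alpha)+3\epsilon\log N(\alpha)$ together with the claim that the infimum defining $h_\nu(\cdot,\phi,\mathcal{U})$ may be restricted to partitions $\alpha\succeq\mathcal{U}$ with $N(\alpha)\le N(\mathcal{U})$. That reduction is the delicate point of the paper's proof: merging the atoms of an arbitrary $\alpha\succeq\mathcal{U}$ according to containing elements of $\mathcal{U}$ only bounds the number of atoms by $\#(\mathcal{U})$, and a coarsening of $\alpha$ with at most $N(\mathcal{U})$ atoms that still refines $\mathcal{U}$ need not exist at all (already for the cover $\{\{1,2\},\{3,4\},\{2,3\}\}$ of a four-point fibre and the partition $\{\{1\},\{2,3\},\{4\}\}$ no such coarsening exists, and with $\mathcal{T}$ the diagonal relation the restricted infimum is then strictly larger than the full one). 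Your hybrid partition --- $\beta$ on the fibres over $A_0$, a disjointified minimal subcover on the fibres over $X\setminus A_0$ --- sidesteps this issue entirely: the boundary cost stays at $\log N(\mathcal{U})$ no matter how large $\beta$ is, which is exactly the fix for the obstacle you yourself flag, and it delivers the stated constant (indeed $2\epsilon\log N(\mathcal{U})$) directly.

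One simplification worth making: for the bad fibres you need no fibrewise selection of minimal subcovers, and it is better to avoid one, since the measurability of $x\mapsto N(\mathcal{U}_x)$ is not obvious (it involves projections of Borel sets). Instead fix once and for all a minimal subcover $\{U^{(j)}:j\in J\}$ of $\mathcal{U}$ with $|J|=N(\mathcal{U})$, disjointify it globally, and intersect with $(X\setminus A_0)\times Y$; its slices are then Borel, partition each bad fibre into at most $N(\mathcal{U})$ pieces, and refine $\mathcal{U}_x$ there, which is all your estimate requires.
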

\begin{proof} The proof follows the arguments of the proof of
\cite[Proposition 2.6]{D}. Let $A_0=\{ x\in A:
\mathcal{T}(x)\subseteq \mathcal{S}(x)\}$. Then
$\mu(A_0)>1-2\epsilon$ by Lemma \ref{lem-c-1}. We define the maps
$f, g: A_0\rightarrow \mathbb{R}$ by
\begin{equation*}
f(x)=\frac{1}{\# (\mathcal{S}(x)\cap A_0)}
H_\nu\left(\bigvee_{y\in \mathcal{S}(x)\cap A_0}
\phi(x,y)\mathcal{U}_y\right)\ \text{and}\ g(x)=\frac{1}{\#
\mathcal{T}(x)} H_\nu\left(\bigvee_{y\in \mathcal{T}(x)} \phi(x,y)
\mathcal{U}_y\right).
\end{equation*}
Since $A_0$ is $\mathcal{T}$-invariant, for each $x\in A_0$ there
are $x_1,\cdots,x_k\in X$ such that $\mathcal{S}(x)\cap
A_0=\bigsqcup_{i=1}^k \mathcal{T}(x_i)$, here the sign $\bigsqcup$
denotes the union of disjoint subsets. It follows that
\begin{eqnarray*}
f(x)&\le & \frac{1}{\# (\mathcal{S}(x)\cap A_0)} \sum_{i=1}^k
H_\nu\left(\phi(x, x_i) \bigvee_{y\in \mathcal{T}(x_i)}
\phi(x_i,y)
\mathcal{U}_y\right)\\
&= &\frac{1}{\# (\mathcal{S}(x)\cap A_0)}\sum_{i=1}^k \#
\mathcal{T}(x_i)\cdot g(x_i)=\frac{1}{\# (\mathcal{S}(x)\cap
A_0)}\sum_{i=1}^k \sum_{y\in
\mathcal{T}(x_i)} g(y)\\
&= &\frac{1}{\# (\mathcal{S}(x)\cap A_0)} \sum_{z\in
\mathcal{S}(x)\cap A_0} g(z) =\mathbb{E}(g|\mathcal{S}\cap
(A_0\times A_0))(x),
\end{eqnarray*}
where $\mathbb{E}(g|\mathcal{S}\cap (A_0\times A_0))$ denotes the
conditional expectation of $g$ w.r.t. $\mathcal{S}_{A_0}$, the
$\sigma$-algebra of all measurable $\mathcal{S}\cap (A_0\times
A_0)$-invariant subsets. Hence
\begin{eqnarray*}
h^-_\nu(\mathcal{S},\phi,\mathcal{U}) &= &\int_X \frac{1}{\#
\mathcal{S}(x)} H_\nu(\bigvee_{y\in \mathcal{S}(x)} \phi(x,y)
\mathcal{U}_y) d \mu(x)\\
&\le &\int_{A_0} \frac{1}{\# \mathcal{S}(x)} H_\nu(\bigvee_{y\in
\mathcal{S}(x)} \phi(x,y) \mathcal{U}_y) d \mu(x)+\int_{X\setminus
A_0}  \frac{1}{\# \mathcal{S}(x)}\sum_{y\in \mathcal{S}(x)}
H_\nu(\mathcal{U}_y) d \mu(x) \\
&\le &\int_{A_0} \left( f(x)+\frac{1}{\# \mathcal{S}(x)}H_\nu
(\bigvee_{y\in \mathcal{S}(x)\setminus A_0} \phi(x,y)\mathcal{U}_y) \right)d\mu(x)+\int_{X\setminus A_0}  \log N(\mathcal{U}) d\mu(x) \\
&\le &\int_{A_0} \left( \mathbb{E}(g|\mathcal{S}\cap (A_0\times
A_0))(x)+\frac{\#(\mathcal{S}(x)\setminus A_0)}{\#
\mathcal{S}(x)}\log N(\mathcal{U}) \right)
d \mu(x)+ 2\epsilon \log N(\mathcal{U})\\
&\le &\int_{A_0} \mathbb{E}(g|\mathcal{S}\cap (A_0\times A_0))(x) d
\mu(x)+ 3\epsilon \log N(\mathcal{U})\\
&=&\int_{A_0} g(x) d \mu(x)
+ 3\epsilon \log N(\mathcal{U})\le
h^-_\nu(\mathcal{T},\phi,\mathcal{U})+ 3\epsilon \log
N(\mathcal{U}).
\end{eqnarray*}
By the same reason, one has $h_\nu(\mathcal{S},\phi,\alpha)\le
h_\nu(\mathcal{T},\phi,\alpha)+ 3\epsilon \log N(\alpha)$ for
any $\alpha\in \mathcal{P}_{X\times Y}$. Thus
\begin{eqnarray*}
h_\nu(\mathcal{S},\phi,\mathcal{U})&=&\inf \{
h_\nu(\mathcal{S},\phi,\alpha):\alpha\in \mathcal{P}_{X\times Y}
\text{ with }\alpha\succeq \mathcal{U}, N(\alpha)\le N(\mathcal{U})\} \\
&\le &\inf \{ h_\nu(\mathcal{T},\phi,\alpha)+3\epsilon \log
N(\alpha):\alpha\in \mathcal{P}_{X\times Y} \text{ with
}\alpha\succeq \mathcal{U}, N(\alpha)\le N(\mathcal{U})\} \\
&\le &\inf \{ h_\nu(\mathcal{T},\phi,\alpha)+ 3\epsilon \log
N(\mathcal{U}): \alpha\in \mathcal{P}_{X\times Y}
\text{ with }\alpha\succeq \mathcal{U}, N(\alpha)\le N(\mathcal{U})\}\\
&= &h_\nu(\mathcal{T},\phi,\mathcal{U})+ 3\epsilon \log
N(\mathcal{U}).
\end{eqnarray*}

Now if $\mathcal{T}\subseteq \mathcal{S}$ then
$\mathcal{T}\subseteq_{\epsilon} \mathcal{S}$ for each $\epsilon>
0$, so letting $\epsilon\rightarrow 0+$ we have $h_{\nu}^-
(\mathcal{S},\phi,\mathcal{U})\le
h_\nu^-(\mathcal{T},\phi,\mathcal{U})$ and
$h_\nu(\mathcal{S},\phi,\mathcal{U})\le
h_\nu(\mathcal{T},\phi,\mathcal{U})$. This finishes the proof.
\end{proof}

As a direct application of Lemma \ref{lem-c-2} (2) and Proposition
\ref{pro-c-5} we have

\begin{cor} \label{pro-c-5-1}
Let $\mathcal{R}$ be hyperfinite with $\{\mathcal{R}_n\}_{n\in
\mathbb{N}}$ a filtration of $\mathcal{R}$. Then
\begin{equation*}
\lim_{n\rightarrow +\infty} h_\nu(\mathcal{R}_n,\phi,\mathcal{U})=
h_\nu(\phi,\mathcal{U}) \text{ and } \lim_{n\rightarrow +\infty}
h^-_\nu(\mathcal{R}_n,\phi,\mathcal{U})=
h^-_\nu(\phi,\mathcal{U}).
\end{equation*}
\end{cor}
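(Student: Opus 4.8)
The plan is to prove the desired equality by establishing the two opposite inequalities, relying entirely on the monotonicity and $\epsilon$-comparison already recorded in Lemma \ref{lem-c-2}(2) and Proposition \ref{pro-c-5}. I will write out the argument for $h_\nu(\phi,\mathcal{U})$; the case of $h^-_\nu(\phi,\mathcal{U})$ is word-for-word the same, so I will only remark on it at the end.

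First I would observe that the sequence $\{h_\nu(\mathcal{R}_n,\phi,\mathcal{U})\}_{n\in\mathbb{N}}$ is non-increasing: since $\{\mathcal{R}_n\}$ is a filtration we have $\mathcal{R}_n\subseteq \mathcal{R}_{n+1}$, so the ``in particular'' clause of Proposition \ref{pro-c-5} (applied with $\mathcal{T}=\mathcal{R}_n$ and $\mathcal{S}=\mathcal{R}_{n+1}$) gives $h_\nu(\mathcal{R}_{n+1},\phi,\mathcal{U})\le h_\nu(\mathcal{R}_n,\phi,\mathcal{U})$. As the sequence is also bounded below by $0$, the limit $\lim_{n\to\infty} h_\nu(\mathcal{R}_n,\phi,\mathcal{U})$ exists. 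Moreover, each $\mathcal{R}_n$ lies in $I(\mathcal{R})$, so directly from the definition of $h_\nu(\phi,\mathcal{U})$ as the infimum over $I(\mathcal{R})$ we get $h_\nu(\mathcal{R}_n,\phi,\mathcal{U})\ge h_\nu(\phi,\mathcal{U})$ for every $n$, whence $\lim_{n\to\infty} h_\nu(\mathcal{R}_n,\phi,\mathcal{U})\ge h_\nu(\phi,\mathcal{U})$.

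For the reverse inequality I would fix an arbitrary $\mathcal{S}\in I(\mathcal{R})$ and an arbitrary $\epsilon>0$. By Lemma \ref{lem-c-2}(2) there is $n_0$ such that $\mathcal{S}\subseteq_\epsilon \mathcal{R}_n$ for all $n\ge n_0$, and then Proposition \ref{pro-c-5} (with $\mathcal{T}=\mathcal{S}$ and the role of $\mathcal{S}$ in that proposition played by $\mathcal{R}_n$) yields $h_\nu(\mathcal{R}_n,\phi,\mathcal{U})\le h_\nu(\mathcal{S},\phi,\mathcal{U})+3\epsilon\log N(\mathcal{U})$. Passing to the limit in $n$ and then letting $\epsilon\to 0^+$ gives $\lim_{n\to\infty} h_\nu(\mathcal{R}_n,\phi,\mathcal{U})\le h_\nu(\mathcal{S},\phi,\mathcal{U})$. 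Since $\mathcal{S}\in I(\mathcal{R})$ was arbitrary, taking the infimum over $\mathcal{S}$ produces $\lim_{n\to\infty} h_\nu(\mathcal{R}_n,\phi,\mathcal{U})\le h_\nu(\phi,\mathcal{U})$, and combining with the first paragraph completes the proof.

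The $h^-_\nu$ statement follows by repeating these steps verbatim, using the $h^-_\nu$ halves of Proposition \ref{pro-c-5}. I do not expect any genuine obstacle here, since the entire content is packaged in Lemma \ref{lem-c-2}(2) and Proposition \ref{pro-c-5}; the only mild point to watch is that the threshold $n_0$ supplied by Lemma \ref{lem-c-2}(2) depends on both $\mathcal{S}$ and $\epsilon$, which is harmless because we send $n\to\infty$ first, then let $\epsilon\to 0^+$, and only at the very end take the infimum over $\mathcal{S}\in I(\mathcal{R})$.
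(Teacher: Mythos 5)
Your proof is correct and is precisely the argument the paper intends: the paper states the corollary as ``a direct application of Lemma \ref{lem-c-2}~(2) and Proposition \ref{pro-c-5}'' without spelling out details, and your two inequalities (monotonicity from the ``in particular'' clause plus the definition of the infimum for one direction, and Lemma \ref{lem-c-2}~(2) combined with the $3\epsilon\log N(\mathcal{U})$ estimate for the other) fill in exactly that application. The order of limits you flag at the end ($n\to\infty$, then $\epsilon\to 0^+$, then the infimum over $\mathcal{S}\in I(\mathcal{R})$) is indeed the right one, and no further care is needed.
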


\subsection{Two kinds virtual entropy of covers}

Everywhere below, $\mathcal{R}$ is generated by a free $G$-measure
preserving system $(X, \mathcal{B}_X, \mu, G)$. Then $\mathcal{R}$
is hyperfinite and conservative. Let $\mathcal{S}\in I
(\mathcal{R})$ with $B\subseteq X$ a $\mathcal{S}$-fundamental
domain. Then there is a measurable map $B \ni x\mapsto G_x\in
F(G)$ with $G_x x=\mathcal{S}(x)$ and hence $X=\bigsqcup_{x\in B}
G_x x$. Note that $F(G)$ is a countable set, we obtain that
$X=\bigsqcup_i \bigsqcup_{g\in G_i} gB_i$ for a countable family
$\{G_i\}_i\subseteq F(G)$ and a decomposition $B=\bigsqcup_{i}
B_i$ with $G_ix=\mathcal{S}(x)$ for each $x\in B_i$. We shall
write it as $\mathcal{S}\sim (B_i,G_i)$. Then
\begin{eqnarray} \label{eq-2858}
h^-_\nu(\mathcal{S},\phi,\mathcal{U}) &= & \sum_{i} \sum_{g\in
G_i} \int_{gB_i} \frac{1}{\# \mathcal{S} (x)}
H_\nu\left(\bigvee_{y\in \mathcal{S}(x)}
\phi(x,y) \mathcal{U}_y\right) d \mu(x)\ \nonumber \\
&= & \sum_{i} \sum_{g\in G_i} \int_{B_i} \frac{1}{|G_i|} H_\nu
\left(\bigvee_{g'\in G_i}
\phi(g x, g' x) \mathcal{U}_{g' x}\right) d \mu(x)\ \nonumber \\
&= & \sum_{i} \sum_{g\in G_i} \int_{B_i} \frac{1}{|G_i|} H_\nu
\left(\bigvee_{g'\in G_i}
\phi(x, g' x) \mathcal{U}_{g' x}\right) d \mu(x)\ \nonumber \\
&= & \sum_i \int_{B_i} H_\nu\left(\bigvee_{g\in G_i}
\phi(x,gx)\mathcal{U}_{gx}\right) d \mu(x).
\end{eqnarray}

\begin{de}\label{de-1943} Let $(Y,\mathcal{B}_Y,\nu,G)$ be a $G$-measure preserving
system, $\mathcal{U}\in \mathcal{C}_Y$, $\Pi_g\in Aut(Y,\nu)$ the
action of $g\in G$ on $(Y, \mathcal{B}_Y, \nu)$ and
$\phi_G:\mathcal{R}\rightarrow Aut(Y,\nu)$ a cocycle given by
$\phi_G(gx,x)=\Pi_g$ for any $x\in X, g\in G$. The {\it
$\nu^-$-virtual entropy} and {\it $\nu$-virtual entropy} of
$\mathcal{U}$ is defined respectively by
$$\widehat{h_\nu}^-(G,\mathcal{U})=h^-_\nu(\phi_G, X\times
\mathcal{U}) \text{ and
}\widehat{h_\nu}(G,\mathcal{U})=h_\nu(\phi_G, X\times
\mathcal{U}).$$
\end{de}

Clearly, if $\alpha\in \mathcal{P}_Y$  then
$\widehat{h_\nu}(G,\alpha)=\widehat{h_\nu}^-(G,\alpha)$. For
$\mathcal{U}\in \mathcal{C}_Y$,
$\widehat{h_\nu}(G,\mathcal{U})=\inf_{\alpha\in \mathcal{P}_Y:
\alpha\succeq \mathcal{U}} \widehat{h_\nu}^-(G, \alpha)$. Note
that there may exist plenty of free $G$-actions generating
$\mathcal{R}$, $\phi_G$ is not determined uniquely by $\Pi_g$.
Hence, we need to show that $\widehat{h_\nu}^-(G,\mathcal{U})$ and
$\widehat{h_\nu}(G,\mathcal{U})$ are well defined.

\begin{prop}
Let $\{ U_g\}_{g\in G}$ and $\{ U'_g\}_{g\in G}$ be two free
$G$-actions on $(X,\mathcal{B}_X,\mu)$ such that $$\{ U_g x:g\in
G\}=\{ U_g'x: g\in G\}=\mathcal{R}(x)$$ for $\mu$-a.e. $x\in X$.
Define cocycles $\phi,\phi':\mathcal{R}\rightarrow Aut(Y,\nu)$ by
$$\phi(U_g x,x)=\phi'(U_g' x,x)=\Pi_g \text{ for any } g\in G,x\in X.$$ Then for any
$\mathcal{U}\in \mathcal{C}_Y$, $h^-_\nu(\phi,X\times
\mathcal{U})=h^-_\nu(\phi',X\times \mathcal{U}) \text{ and }
h_\nu(\phi,X\times \mathcal{U})=h_\nu(\phi',X\times \mathcal{U})$.
\end{prop}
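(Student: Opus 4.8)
The plan is to realize $\phi$ and $\phi'$ as cohomologous cocycles on the common relation $\mathcal{R}$, transport the computation of the virtual entropy of one to that of the other through the coboundary, and then absorb the resulting twist of the cover; the genuine difficulty lies entirely in the last step. Throughout write $\mathcal{W}$ for the cover of $Y$ with $\mathcal{U}=X\times\mathcal{W}$.

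First I would construct a transfer function. Since $\mathcal{R}$ is a measurable countable equivalence relation, fix a measurable transversal $D\subseteq X$ meeting each $\mathcal{R}$-class once; by freeness of both actions, for a.e.\ $x$ with transversal point $x_\ast\in D$ there are unique $a(x),b(x)\in G$ with $x=U_{a(x)}x_\ast=U'_{b(x)}x_\ast$, and $x\mapsto a(x),b(x)$ are measurable. Put $c(x)=\Pi_{b(x)}\Pi_{a(x)}^{-1}=\Pi_{b(x)a(x)^{-1}}$, a measurable map $c:X\to Aut(Y,\nu)$ taking values in $\{\Pi_g:g\in G\}$. For $(x,y)\in\mathcal{R}$ lying over $x_\ast$ one has $\phi(y,x)=\Pi_{a(y)a(x)^{-1}}$ and $\phi'(y,x)=\Pi_{b(y)b(x)^{-1}}$, and a direct computation using the homomorphism property of $g\mapsto\Pi_g$ gives $\phi'(x,y)=c(x)\phi(x,y)c(y)^{-1}$; thus $\phi$ and $\phi'$ are cohomologous via $c$.

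Next I would use $c$ to compare the entropies. Let $\mathcal{U}'$ be the measurable field over $X$ with fibre $\mathcal{U}'_y=c(y)\mathcal{W}$. The cohomology identity yields, for $(x,y)\in\mathcal{R}$, $\phi'(x,y)\mathcal{U}'_y=c(x)\phi(x,y)c(y)^{-1}c(y)\mathcal{W}=c(x)\,\phi(x,y)\mathcal{W}$, so for every $\mathcal{S}\in I(\mathcal{R})$ and a.e.\ $x$ the two fibre joins satisfy $\bigvee_{y\in\mathcal{S}(x)}\phi'(x,y)\mathcal{U}'_y=c(x)\bigvee_{y\in\mathcal{S}(x)}\phi(x,y)\mathcal{W}$. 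Because $c(x)\in Aut(Y,\nu)$ preserves $\nu$ we have $H_\nu(c(x)\mathcal{V})=H_\nu(\mathcal{V})$ (as in Lemma \ref{lem-2-2}(1)), so the integrands in the defining formula for $h^-_\nu(\mathcal{S},\cdot,\cdot)$ agree pointwise and $h^-_\nu(\mathcal{S},\phi',\mathcal{U}')=h^-_\nu(\mathcal{S},\phi,X\times\mathcal{W})$ for every $\mathcal{S}$. Taking the infimum over $I(\mathcal{R})$ gives $h^-_\nu(\phi',\mathcal{U}')=h^-_\nu(\phi,X\times\mathcal{W})$, and infimizing over partitions $\alpha\succeq X\times\mathcal{W}$ (whose $c$-twists exhaust the partitions refining $\mathcal{U}'$) gives the corresponding identity $h_\nu(\phi',\mathcal{U}')=h_\nu(\phi,X\times\mathcal{W})$.

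It remains to remove the twist, that is to prove $h^-_\nu(\phi',\mathcal{U}')=h^-_\nu(\phi',X\times\mathcal{W})$ and the analogue for $h_\nu$, and I expect this to be the main obstacle: here $\mathcal{U}'$ differs from $X\times\mathcal{W}$ only by the fibrewise translation $c(y)=\Pi_{d(y)}$ with $d=ba^{-1}:X\to G$ measurable, i.e.\ by a $\{\Pi_g\}$-valued coboundary of the cover, and in $U'$-coordinates the two fibre joins $\bigvee_{b}\Pi_b^{-1}\mathcal{W}$ and $\bigvee_{b}\Pi_b^{-1}\Pi_{d(U'_bx)}\mathcal{W}$ are genuinely distinct, so the reduction does not by itself force equality of the infima. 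I would attack this by an orbital F\o lner argument: approximate $d$ by a function constant on a set of measure $>1-\epsilon$, compute $h^-_\nu(\phi',\cdot)$ along a filtration $\{\mathcal{R}_n\}$ of $\mathcal{R}$ via Corollary \ref{pro-c-5-1}, and invoke the quasi-tiling Proposition \ref{ow-prop} (through Lemma \ref{conv-lem}) to see that on the large finite classes $\mathcal{R}_n(x)$ the $U'$-address sets become asymptotically invariant, so translating the pieces of the fibre join by the values of $d$ disturbs only a vanishing proportion of them; the ensuing discrepancy in $H_\nu$ is controlled, uniformly in $n$, by a boundary term of size $O(\epsilon\log N(\mathcal{W}))$ exactly as in the estimate proving Proposition \ref{pro-c-5}. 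Letting $n\to\infty$ and then $\epsilon\to0+$ forces the two filtration limits to coincide, which combined with the equalities of the previous paragraph yields $h^-_\nu(\phi,X\times\mathcal{W})=h^-_\nu(\phi',X\times\mathcal{W})$; the same scheme, infimized over partitions, gives the statement for $h_\nu$.
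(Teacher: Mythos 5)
There is a genuine gap, and it occurs at the very first step: the measurable transversal $D$ on which your whole construction rests does not exist. Here $\mathcal{R}$ is generated by a \emph{free}, measure-preserving action of the infinite group $G$, so $\mu$-a.e.\ class is infinite. If a Borel set $D$ met a.e.\ class exactly once, then by freeness the sets $\{U_g D\}_{g\in G}$ would be pairwise disjoint (mod $\mu$), all of measure $\mu(D)$, and their union would be conull; since $G$ is infinite this forces $\mu(D)=0$ and hence $\mu(X)=0$, a contradiction. (This is the standard fact that a conservative, measure-preserving countable equivalence relation with infinite classes admits no fundamental domain; only the type I sub-relations $\mathcal{S}\in I(\mathcal{R})$ do.) Consequently the functions $a(x),b(x)$ and the transfer map $c(x)=\Pi_{b(x)a(x)^{-1}}$ are not defined measurably, and the assertion that $\phi$ and $\phi'$ are cohomologous is unsupported. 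Indeed it should not be expected: any two such index cocycles are ``cohomologous'' via a choice function on orbits, so the entire content of measurable cohomology is exactly the measurability you cannot supply, and measurable cohomology of $\phi$ and $\phi'$ would be a rigidity statement far stronger than the entropy identity being proved. Your third paragraph, which you yourself flag as the main obstacle, is also only a sketch (for instance, $d=ba^{-1}$ takes infinitely many values in general, so it cannot be approximated by a \emph{constant} on a set of measure $>1-\epsilon$), but the proof already fails before that point.

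For contrast, the paper avoids cohomology altogether: it forms the relation $\mathcal{S}$ on $X\times X$ generated by the diagonal action $\{U_g\times U_g'\}_{g\in G}$ and the single cocycle $\phi_G((U_gx,U_g'x'),(x,x'))=\Pi_g$, and observes that $\mathcal{S}=\mathcal{R}(\varphi_{U'})=\sigma^{-1}\mathcal{R}(\varphi_U)\sigma$ for the skew-product cocycles $\varphi_{U'}(U_gx,x)=U_g'$, $\varphi_U(U_g'x,x)=U_g$ and the flip $\sigma$. A filtration $\{\mathcal{R}_n\}$ of $\mathcal{R}$ then yields two different filtrations of the same relation $\mathcal{S}$, namely $\{\mathcal{R}_n(\varphi_{U'})\}$ and $\{\sigma^{-1}\mathcal{R}_n(\varphi_U)\sigma\}$; Proposition \ref{pro-conclusion} identifies the entropies along these with $h_\nu^-(\mathcal{R}_n,\phi,X\times\mathcal{U})$ and $h_\nu^-(\mathcal{R}_n,\phi',X\times\mathcal{U})$ respectively, and Corollary \ref{pro-c-5-1} (the limit along any filtration is the same, namely $h_\nu^-(\phi_G,X\times X\times\mathcal{U})$) forces the two limits to coincide. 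If you want to salvage your outline, the lesson is that the comparison must be made through the common object $\mathcal{S}$ and the filtration-independence result, not through a global transfer function on $X$.
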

\begin{proof}
Denote by $\mathcal{S}$ the equivalence relation on $X\times X$
generated by the diagonal $G$-action $\{ U_g\times U_g'\}_{g\in
G}$. Clearly, $\mathcal{S}$ is measure preserving and hyperfinite.
Let $\varphi_{U},\varphi_{U'}:\mathcal{R}\rightarrow Aut(X,\mu)$
and $\phi_G:\mathcal{S}\rightarrow Aut(Y,\nu)$ be cocycles defined
by
$$\varphi_U(U_g'x,x)=U_g, \ \varphi_{U'}(U_gx,x)=U_g' \text{ and }
\phi_G((U_gx,U'_g x'),(x,x'))=\Pi_g $$
 for any $g\in G, x, x'\in
X$. Then $\mathcal{S}=\mathcal{R}(\varphi_{U'})= \sigma^{- 1}
\mathcal{R}(\varphi_U) \sigma$, where $\sigma: X\times
X\rightarrow X\times X$ is the flip map, that is, $\sigma(x,x')=
(x',x)$. Hence if $\{\mathcal{R}_n\}_{n\in \mathbb{N}}$ is a
filtration of $\mathcal{R}$ then
$\{\mathcal{R}_n(\varphi_{U'})\}_{n\in \mathbb{N}}$ and
$\{\sigma^{- 1} \mathcal{R}_n(\varphi_U)\sigma\}_{n\in
\mathbb{N}}$ are both filtrations of $\mathcal{S}$.

For each $n\in \mathbb{N}$, one has $\phi_G ((x, z), (x'', z''))=
\phi (x, x'')$ if $((x, z), (x'', z''))\in
\mathcal{R}_n(\varphi_{U'})$ and $\phi_G ((z, x), (z', x'))= \phi'
(x, x')$ if $((z, x), (z', x'))\in \sigma^{- 1}
\mathcal{R}_n(\varphi_U)\sigma$. Then by Proposition
\ref{pro-conclusion}, for any $\mathcal{U}\in \mathcal{C}_Y$ one
has
\begin{equation*}
h^-_\nu (\mathcal{R}_n(\varphi_{U'}),\phi_G,X\times X\times
\mathcal{U})= h_\nu^-(\mathcal{R}_n,\phi,X\times \mathcal{U}),
\end{equation*}
\begin{equation*}
h^-_\nu (\sigma^{- 1}
\mathcal{R}_n(\varphi_{U})\sigma,\phi_G,X\times X\times
\mathcal{U})= h_\nu^- (\mathcal{R}_n,\phi',X\times \mathcal{U}).
\end{equation*}
Let $n\rightarrow +\infty$ we obtain $h_\nu^-(\phi_G,X\times X
\times \mathcal{U})=h_\nu^-(\phi,X\times \mathcal{U})$ and
$h_\nu^-(\phi_G,X\times X \times
\mathcal{U})=h_\nu^-(\phi',X\times \mathcal{U})$ for any
$\mathcal{U}\in \mathcal{C}_Y$ (see Corollary \ref{pro-c-5-1}).
This implies that $h_\nu^-(\phi,X\times
\mathcal{U})=h_\nu^-(\phi',X\times \mathcal{U})$ for any
$\mathcal{U}\in \mathcal{C}_Y$. Moreover, for $\mathcal{U}\in
\mathcal{C}_Y$ we have
\begin{eqnarray*}
h_\nu(\phi,X\times \mathcal{U})&= & \inf_{\alpha\in
\mathcal{P}_{X\times Y} :\alpha \succeq X\times \mathcal{U}}
h_\nu^{-}(\phi,\alpha)=\inf_{\beta\in \mathcal{P}_{Y}
:\beta \succeq \mathcal{U}} h_\nu^{-}(\phi,X\times \beta)\\
&= & \inf_{\beta\in \mathcal{P}_{Y} :\beta \succeq \mathcal{U}}
h_\nu^{-}(\phi',X\times \beta)=h_\nu(\phi',X\times \mathcal{U}).
\end{eqnarray*}
This finishes the proof of the proposition.
\end{proof}

Before proceeding, we need the following result. Let $K\in F(G)$
and $\epsilon>0$. $F\in F(G)$ is called {\it
$[K,\epsilon]$-invariant} if $|\{ g\in F|Kg\subseteq
F\}|>(1-\epsilon)|F|$.

\begin{lem} \label{in-est}Let $(Y,\mathcal{B}_Y,\nu,G)$ be a $G$-measure
preserving system, $\mathcal{U}\in \mathcal{C}_Y$ and
$\epsilon>0$. Then there exist $K\in F(G)$ and
$0<\epsilon'<\epsilon$ such that if $F\in F(G)$ is
$[K,\epsilon']$-invariant then
$$|\frac{1}{|F|}H_\nu(\mathcal{U}_F)-h_\nu(G,\mathcal{U})|<\epsilon.$$
\end{lem}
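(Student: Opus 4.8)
The plan is to show that $\frac{1}{|F|}H_\nu(\mathcal U_F)$ is uniformly close to its limiting value as soon as $F$ is sufficiently invariant. First recall that, by Lemma \ref{lem-2-2}, the map $F\in F(G)\mapsto H_\nu(\mathcal U_F)$ is monotone, non-negative, $G$-invariant and sub-additive, so by Lemma \ref{convergent} the ratios $\frac{1}{|F_n|}H_\nu(\mathcal U_{F_n})$ converge along every F\o lner sequence to one and the same number, namely $h^-_\nu(G,\mathcal U)$; this is the constant against which $\frac{1}{|F|}H_\nu(\mathcal U_F)$ must be compared. Thus it suffices to find $K\in F(G)$ and $\epsilon'\in(0,\epsilon)$ such that every $[K,\epsilon']$-invariant $F$ satisfies $\bigl|\frac{1}{|F|}H_\nu(\mathcal U_F)-h^-_\nu(G,\mathcal U)\bigr|<\epsilon$.

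I would prove this by contradiction, which produces both inequalities simultaneously and needs no new quasi-tiling estimate. Suppose the conclusion fails for some $\epsilon>0$; negating the existential statement, for every $K\in F(G)$ and every $\epsilon'>0$ there is a $[K,\epsilon']$-invariant witness. Fix an exhaustion $e_G\in K_1\subseteq K_2\subseteq\cdots$ with $\bigcup_{j}K_j=G$ and apply the negation to $(K_j,\tfrac1j)$ to obtain $[K_j,\tfrac1j]$-invariant sets $F_j\in F(G)$ with $\bigl|\frac{1}{|F_j|}H_\nu(\mathcal U_{F_j})-h^-_\nu(G,\mathcal U)\bigr|\ge\epsilon$ for all $j$.

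The crux, and the only delicate point, is to check that $\{F_j\}_{j\in\mathbb N}$ is a F\o lner sequence; granting this, Lemma \ref{convergent} forces $\frac{1}{|F_j|}H_\nu(\mathcal U_{F_j})\to h^-_\nu(G,\mathcal U)$, contradicting the previous display. To verify the F\o lner property, fix $K_0\in F(G)$ and $\delta>0$, set $K=K_0\cup K_0^{-1}\cup\{e_G\}$, and take $j$ so large that $K\subseteq K_j$ and $(1+|K|)/j<\delta$. Since $K\subseteq K_j$, each such $F_j$ is $[K,\tfrac1j]$-invariant; writing $F_j^K=\{g\in F_j:Kg\subseteq F_j\}$ (so that $|F_j\setminus F_j^K|<\tfrac1j|F_j|$), one checks from $e_G\in K=K^{-1}$ that $B(F_j,K)\subseteq(F_j\setminus F_j^K)\cup K(F_j\setminus F_j^K)$, whence $|B(F_j,K)|\le(1+|K|)|F_j\setminus F_j^K|<\delta|F_j|$. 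Hence $F_j$ is $(K,\delta)$-invariant, and since $B(F_j,K_0)\subseteq B(F_j,K)$ it is also $(K_0,\delta)$-invariant. As $K_0$ and $\delta$ were arbitrary, $\{F_j\}$ is F\o lner, and the contradiction is complete.

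I expect the genuine obstacle to be the lower bound $\frac{1}{|F|}H_\nu(\mathcal U_F)\ge h^-_\nu(G,\mathcal U)-\epsilon$. A direct quantitative argument via quasi-tiling $F$ with a family $F_{n_1},\dots,F_{n_k}$ (as in \eqref{07-03-05-02} and Lemma \ref{conv-lem}) yields the upper bound cleanly, because sub-additivity of $H_\nu(\mathcal U_{\cdot})$ bounds $H_\nu(\mathcal U_F)$ from above by the contributions of the tiles; but the matching lower bound is awkward precisely because sub-additivity runs the wrong way. The contradiction argument above circumvents this asymmetry by leaning on the already-established convergence in Lemma \ref{convergent}; should explicit constants be required, admissible $K$ and $\epsilon'$ can be extracted from the invariance comparison above together with the quasi-tiling estimate for the upper bound.
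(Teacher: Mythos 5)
Your proposal is correct and follows essentially the same route as the paper's own proof: argue by contradiction, use an exhaustion $e_G\in K_1\subseteq K_2\subseteq\cdots$ of $G$ to extract invariant witnesses $F_j$, verify via the comparison between $[\cdot,\cdot]$-invariance and $(\cdot,\cdot)$-invariance that $\{F_j\}_{j\in\N}$ is a F\o lner sequence, and contradict Lemma \ref{convergent} (the paper works with $[K_i^{-1}K_i,\delta_i]$-invariance where you symmetrize to $K=K_0\cup K_0^{-1}\cup\{e_G\}$, a cosmetic difference). Your explicit replacement of $h_\nu(G,\mathcal{U})$ by $h^-_\nu(G,\mathcal{U})$ as the comparison constant is also what the paper's proof implicitly does, since its contradiction is likewise with convergence to $h^-_\nu(G,\mathcal{U})$ and the equivalence of the two entropies is only established later (Theorem \ref{equivalence}).
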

\begin{proof}
Choose $e_G\in K_1\subseteq K_2\subseteq \cdots$ with
$\bigcup_{i\in \mathbb{N}} K_i=G$. For each $i\in \N$ set
$\delta_i=\frac{1}{2^i(|K_i|+1)}$. Now if the lemma is not true
then there exists $\epsilon>0$ such that for each $i\in
\mathbb{N}$ there exists $F_i\in F (G)$ such that it is
$[K_i^{-1}K_i,\delta_i]$-invariant and
\begin{align}\label{con-eq-1}
|\frac{1}{|F_i|}H_\nu(\mathcal{U}_{F_i})-h_\nu(G,\mathcal{U})|\ge
\epsilon.
\end{align}
Let $K\in F(G)$ with $e_G\in K$ and $\delta>0$. If $F\in F (G)$ is
$[K^{-1}K,\delta]$-invariant then
\begin{eqnarray*}
B(F,K)&= &\{ g\in G: Kg\cap F \neq \emptyset \text{ and } Kg\cap
(G\
\setminus F)\neq \emptyset\} \\
&= & K^{-1}F\setminus \{ g\in F:Kg\subseteq F\}= (K^{-1} F\setminus
F)\cup (F\setminus \{ g\in
F:Kg\subseteq F\}) \\
&\subseteq & K^{-1}(F\setminus \{g\in F: K^{-1}g\subseteq F\})\cup
(F\setminus \{ g\in F: Kg\subseteq F\})\\
&\subseteq & K^{-1}(F\setminus \{g\in F: K^{-1}Kg\subseteq F\})\cup
(F\setminus \{ g\in F: K^{-1}Kg\subseteq F\}),
\end{eqnarray*}
hence $|B(F,K)|\le (|K|+1)\cdot|F\setminus \{ g\in F:
K^{-1}Kg\subseteq F\}|\le \delta (|K|+1)|F|$ (as $F\in F (G)$ is
$[K^{-1}K,\delta]$-invariant), i.e. $F$ is a
$(K,(|K|+1)\delta)$-invariant set. Particularly, we have that $F_i$
is $(K_i,\frac{1}{2^i})$-invariant for each $i\in \mathbb{N}$.
Moreover, since $e_G\in K_1\subseteq K_2\subseteq \cdots$ and
$\bigcup_{i\in \mathbb{N}} K_i=G$, we have that $\{ F_i\}_{i\in
\mathbb{N}}$ is a F\o lner sequence of $G$. Hence
$\lim_{i\rightarrow +\infty}\frac{1}{|F_i|}H_\nu(\mathcal{U}_{F_i})=h_\nu^-(G,\mathcal{U})$, a contradiction with
\eqref{con-eq-1}.
\end{proof}

\begin{thm}\label{thm-c-equi} Let $(Y,\mathcal{B}_Y,\nu,G)$ be a $G$-measure
preserving system and $\mathcal{U}\in \mathcal{C}_Y$. Then
$$h^-_\nu(G,\mathcal{U})=\widehat{h_\nu}^-(G,\mathcal{U})\text{
and }h_\nu(G,\mathcal{U})=\widehat{h_\nu}(G,\mathcal{U}).$$
\end{thm}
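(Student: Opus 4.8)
The plan is to reduce both equalities, via the orbital formula \eqref{eq-2858}, to the asymptotics of $H_\nu(\mathcal{U}_F)$ on invariant tiles, and to feed in the convergence estimate of Lemma \ref{in-est} together with the Rohlin-type absorption of Lemma \ref{lem-c-2}. First I would record the basic simplification: since $\phi_G(gx,x)=\Pi_g$, the cocycle relation gives $\phi_G(x,gx)=\Pi_g^{-1}$, so for any $\mathcal{S}\in I(\mathcal{R})$ with $\mathcal{S}\sim(B_i,G_i)$ one has $\bigvee_{g\in G_i}\phi_G(x,gx)(X\times\mathcal{U})_{gx}=\bigvee_{g\in G_i}g^{-1}\mathcal{U}=\mathcal{U}_{G_i}$, and \eqref{eq-2858} collapses to $h^-_\nu(\mathcal{S},\phi_G,X\times\mathcal{U})=\sum_i\mu(B_i)H_\nu(\mathcal{U}_{G_i})$. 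Because $\sum_i\mu(B_i)|G_i|=1$, this exhibits the virtual entropy of a fixed $\mathcal{S}$ as a $\mu$-weighted average of the normalized tile quantities $H_\nu(\mathcal{U}_{G_i})/|G_i|$.

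For the first identity I would fix a filtration $\{\mathcal{R}_n\}$ of $\mathcal{R}$ and use Corollary \ref{pro-c-5-1} to write $\widehat{h_\nu}^-(G,\mathcal{U})=\lim_n\sum_i\mu(B_i^n)H_\nu(\mathcal{U}_{G_i^n})$, where $\mathcal{R}_n\sim(B_i^n,G_i^n)$. Given $\epsilon>0$, Lemma \ref{in-est} yields $K\in F(G)$ with $e_G\in K$ and $0<\epsilon'<\epsilon$ such that every $[K,\epsilon']$-invariant $F$ obeys $\big||F|^{-1}H_\nu(\mathcal{U}_F)-h^-_\nu(G,\mathcal{U})\big|<\epsilon$. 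Applying Lemma \ref{lem-c-2}(1) with $\Gamma=\{U_k:k\in K\}\subseteq[\mathcal{R}]$ and translating the absorption condition $\Gamma y\subseteq\mathcal{R}_n(x)$ through freeness into $Kg\subseteq G_i^n$, I obtain for all large $n$ an $\mathcal{R}_n$-invariant set $A_n$ with $\mu(A_n)>1-\epsilon'$ all of whose tiles are $[K,\epsilon']$-invariant; thus the tiles inside $A_n$ carry weight $>1-\epsilon'$ and the others at most $\epsilon'$. Bounding $H_\nu(\mathcal{U}_{G_i^n})$ between $(h^-_\nu(G,\mathcal{U})\mp\epsilon)|G_i^n|$ on the former and between $0$ and $|G_i^n|\log N(\mathcal{U})$ on the latter squeezes the average between $(h^-_\nu(G,\mathcal{U})-\epsilon)(1-\epsilon')$ and $h^-_\nu(G,\mathcal{U})+\epsilon+\epsilon'\log N(\mathcal{U})$; letting $n\to\infty$ and then $\epsilon\to0$ gives $h^-_\nu(G,\mathcal{U})=\widehat{h_\nu}^-(G,\mathcal{U})$. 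Applied to a partition $\alpha\succeq\mathcal{U}$, and using $h^-_\nu(\mathcal{S},\phi,\beta)=h_\nu(\mathcal{S},\phi,\beta)$ for partitions, the same argument gives $\widehat{h_\nu}(G,\alpha)=\widehat{h_\nu}^-(G,\alpha)=h^-_\nu(G,\alpha)=h_\nu(G,\alpha)$.

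The inequality $\widehat{h_\nu}(G,\mathcal{U})\le h_\nu(G,\mathcal{U})$ of the second identity is then immediate: by the remark $h_\nu(\phi_G,X\times\mathcal{U})=\inf_{\tilde\alpha\succeq X\times\mathcal{U}}h^-_\nu(\phi_G,\tilde\alpha)$, restricting the infimum to product partitions $X\times\alpha$ with $\alpha\in\mathcal{P}_Y$, $\alpha\succeq\mathcal{U}$, gives $\widehat{h_\nu}(G,\mathcal{U})\le\inf_\alpha\widehat{h_\nu}^-(G,\alpha)=\inf_\alpha h_\nu(G,\alpha)=h_\nu(G,\mathcal{U})$ by the previous paragraph. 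Combined with the general inequality $h^-_\nu(G,\mathcal{U})=\widehat{h_\nu}^-(G,\mathcal{U})\le\widehat{h_\nu}(G,\mathcal{U})$, this sandwiches $\widehat{h_\nu}(G,\mathcal{U})$ between $h^-_\nu(G,\mathcal{U})$ and $h_\nu(G,\mathcal{U})$.

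The reverse inequality $h_\nu(G,\mathcal{U})\le\widehat{h_\nu}(G,\mathcal{U})$ is the main obstacle. Here Corollary \ref{pro-c-5-1} gives $\widehat{h_\nu}(G,\mathcal{U})=\lim_n\sum_i\mu(B_i^n)\widetilde H(G_i^n)$ with $\widetilde H(G)=\inf\{H_\nu(\bigvee_{g\in G}g^{-1}\beta_g):\beta_g\in\mathcal{P}_Y,\ \beta_g\succeq\mathcal{U}\}$ (the per-tile infima decouple because the tiles partition $X$), and the trouble is that the position-dependent choice of the $\beta_g$ is strictly more flexible than a single partition: the naive bound $\widetilde H(G)\ge H_\nu(\mathcal{U}_G)$ only recovers $h^-_\nu(G,\mathcal{U})$, not $h_\nu(G,\mathcal{U})$. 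To control these non-product refinements I would use the orbital reduction to $\Z$: as $\mathcal{R}$ is hyperfinite it is, by \cite{Dye}, the orbit relation of a single transformation, and the virtual entropies do not depend on the generating action; realizing $\phi_G$ as a cocycle over this $\Z$-relation turns the quantities above into orbital entropies of a $\Z$-action, to which the already available equivalence of the two kinds of measure-theoretic cover entropy for $\Z$-systems (Romagnoli \cite{R}), transported along Danilenko's orbital method \cite{D}, applies. This is precisely the step where the $\Z$-case is indispensable, and with it the sandwich closes to $h_\nu(G,\mathcal{U})=\widehat{h_\nu}(G,\mathcal{U})$.
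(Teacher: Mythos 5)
Your treatment of the first identity is essentially the paper's own proof: the collapse of \eqref{eq-2858} to $h^-_\nu(\mathcal{S},\phi_G,X\times\mathcal{U})=\sum_i\mu(B_i)H_\nu(\mathcal{U}_{G_i})$, the choice of $K,\epsilon'$ from Lemma \ref{in-est}, the absorption sets $A_n$ from Lemma \ref{lem-c-2}(1) translated by freeness into $[K,\epsilon']$-invariance of the tiles, and the squeeze along a filtration via Corollary \ref{pro-c-5-1} — all of this matches, and your easy inequality $\widehat{h_\nu}(G,\mathcal{U})\le h_\nu(G,\mathcal{U})$ is also correct. You have moreover correctly identified the crux of the second identity: whether fiber-dependent refinements of $X\times\mathcal{U}$ can beat product partitions. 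The problem is your resolution of that crux. The $\Z$-reduction you invoke (Dye's theorem, the skew product $\gamma_{\phi_G}$, and the $\Z$-case equivalence of cover entropies — in the paper this is exactly Theorem \ref{thm-c-d} together with Lemmas \ref{lem-c-4-2} and \ref{lem-c-4-3}) proves the equality of the two \emph{virtual} entropies, $\widehat{h_\nu}(G,\mathcal{U})=\widehat{h_\nu}^-(G,\mathcal{U})$, and nothing more. Feeding this into your sandwich yields
$$h_\nu^-(G,\mathcal{U})=\widehat{h_\nu}^-(G,\mathcal{U})=\widehat{h_\nu}(G,\mathcal{U})\le h_\nu(G,\mathcal{U}),$$
which pins $\widehat{h_\nu}(G,\mathcal{U})$ to the \emph{lower} end of the sandwich, not the upper end. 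To close it to $\widehat{h_\nu}(G,\mathcal{U})=h_\nu(G,\mathcal{U})$ you would still need $h_\nu(G,\mathcal{U})\le h_\nu^-(G,\mathcal{U})$ — but that is precisely Theorem \ref{equivalence}, the main result of the section, which the paper \emph{deduces from} the theorem you are proving (its proof uses both identities of Theorem \ref{thm-c-equi} plus Theorem \ref{thm-c-d} and Lemma \ref{lem-c-4-3}). So your closing step is circular; as your own remark about the "naive bound" $\widetilde H(G)\ge H_\nu(\mathcal{U}_G)$ already shows, every route that passes through $h^-$-type quantities can only recover $h_\nu^-(G,\mathcal{U})$, and the $\Z$-case equivalence is such a route.

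The paper closes the gap differently, with no $\Z$-theory at all: it uses the reduction stated immediately after Definition \ref{de-1943}, namely that in the infimum defining $\widehat{h_\nu}(G,\mathcal{U})=h_\nu(\phi_G,X\times\mathcal{U})$ one may restrict to product partitions, so that $\widehat{h_\nu}(G,\mathcal{U})=\inf_{\alpha\in\mathcal{P}_Y,\,\alpha\succeq\mathcal{U}}\widehat{h_\nu}^-(G,\alpha)$; it then applies the first identity to each partition $\alpha\succeq\mathcal{U}$ to get $\widehat{h_\nu}^-(G,\alpha)=h_\nu^-(G,\alpha)=h_\nu(G,\alpha)$, and takes the infimum to obtain $h_\nu(G,\mathcal{U})$. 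In other words, the step you distrusted and tried to bypass is exactly the step the paper's proof rests on (the $\Z$-case enters only afterwards, in Theorem \ref{equivalence}). If you are unwilling to grant that product-partition reduction, the honest options are to prove it directly, or to prove $h_\nu(G,\mathcal{U})\le\widehat{h_\nu}(G,\mathcal{U})$ by some other argument that never presupposes $h_\nu=h_\nu^-$ for $G$-actions; substituting the orbital $\Z$-machinery does neither.
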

\begin{proof}
By Lemma \ref{in-est} for each $\epsilon>0$ there exist $K\in F(G)$
and $0<\epsilon'<\epsilon$ such that if $F\in F(G)$ is
$[K,\epsilon']$-invariant then
$|\frac{1}{|F|}H_\nu(\mathcal{U}_F)-h_\nu(G,\mathcal{U})|<\epsilon$.
Let $\{\mathcal{R}_n\}_{n\in \mathbb{N}}$ be a filtration of
$\mathcal{R}$ with $\mathcal{R}_n\sim (B_i^{(n)},G_i^{(n)})$ for
each $n\in \N$. Thus by Lemma \ref{lem-c-2}~(1), for each
sufficiently large $n$ there is a measurable
$\mathcal{R}_n$-invariant subset $A_n\subseteq X$ such that
$\mu(A_n)>1-\epsilon'$ and
\begin{align}\label{eq-h-1}
\# \{ x'\in \mathcal{R}_n(x): Kx'\subseteq \mathcal{R}_n(x)\}> (1-
\epsilon') \# \mathcal{R}_n(x)\ \text{for each}\ x\in A_n.
\end{align}
Since $A_n$ is $\mathcal{R}_n$-invariant, $A_n=\bigsqcup_{i\in J}
G_i^{(n)} C_i^{(n)}$ for some subset $J\subseteq \mathbb{N}$ and a
family of measurable subsets $C_i^{(n)}\subseteq B_i^{(n)}$ with
$\mu(C_i^{(n)})>0$, $i\in J$. By \eqref{eq-h-1}, if $i\in J$,
$x\in C_i^{(n)}$ and $g'\in G_i^{(n)}$ then
\begin{eqnarray*}
(1- \epsilon') \# \mathcal{R}_n (g' x)< \# \{x'\in \mathcal{R}_n
(g' x): K x'\subseteq \mathcal{R}_n (g' x)\}= \# \{x'\in
\mathcal{R}_n (x): K x'\subseteq \mathcal{R}_n (x)\}.
\end{eqnarray*}
That is, $(1- \epsilon') |G_i^{(n)}|< |\{g\in G_i^{(n)}: K
g\subseteq G_i^{(n)}\}|$, i.e. $G_i^{(n)}$ is
$[K,\epsilon']$-invariant. Set
\begin{equation*}
f (x)= \frac{1}{\# \mathcal{R}_n(x)}H_\nu\left(\bigvee_{y\in
\mathcal{R}_n(x)} \phi_G (x,y)\mathcal{U}\right)\le \log N
(\mathcal{U})\ \text{for each}\ x\in X.
\end{equation*}
Then by similar reasoning of \eqref{eq-2858}, one has
$$\int\limits_{A_n} f (x) d \mu (x)= \sum\limits_{j\in J}
\int\limits_{C_j^{(n)}} H_\nu\left(\bigvee\limits_{g\in G_j^{(n)}}
\Pi_g^{- 1} \mathcal{U}\right)\, d\mu (x).$$ Hence
\begin{eqnarray*}
& & |h_\nu^- (\mathcal{R}_n, \phi_G, X\times \mathcal{U})- \mu
(A_n) h_\nu^- (G, \mathcal{U})| \\
&\le & |\int_{A_n} (f (x)- h_\nu^- (G, \mathcal{U}))\, d \mu (x)|+
|\int_{X\setminus A_n} f (x)\, d \mu (x)| \\
&\le & |\sum_{j\in J} \int_{C_j^{(n)}} |G_j^{(n)}|
\left(\frac{1}{|G_j^{(n)}|} H_\nu(\bigvee_{g\in G_j^{(n)}}
\Pi_g^{-1} \mathcal{U})- h_\nu^- (G, \mathcal{U})\right)\, d\mu
(x)|+ (1- \mu (A_n))\log
N (\mathcal{U}) \\
&\le & \left(\sum_{j\in J} |G_j^{(n)}| \mu (C_j^{(n)})\right)
\epsilon+ (1- \mu (A_n))\log N (\mathcal{U})\ (\text{by the
selection of}\ K\ \text{and}\ \epsilon').
\end{eqnarray*}
Note that $A_n=\bigsqcup_{i\in J} G_i^{(n)} C_i^{(n)}$ and
$\mu(A_n)>1-\epsilon'$ where $0< \epsilon'< \epsilon$, first let
$n\rightarrow +\infty$ and then let $\epsilon\rightarrow 0+$ we
have $\widehat{h_\nu}^-(G,\mathcal{U})=h_\nu^-(\phi_G,X\times
\mathcal{U})=h_\nu^-(G,\mathcal{U})$ (see Corollary
\ref{pro-c-5-1}). Moreover,
\begin{equation*}
\widehat{h_\nu}(G,\mathcal{U})=\inf_{\alpha\in \mathcal{P}_X:
\alpha\succeq \mathcal{U}}
\widehat{h_\nu}^-(G,\alpha)=\inf_{\alpha\in \mathcal{P}_X:
\alpha\succeq \mathcal{U}} h_\nu(G,\alpha)=h_\nu(G,\mathcal{U}).
\end{equation*}
This finishes the proof.
\end{proof}

Let $(Z, \mathcal{B}_Z, \kappa)$ be a Lebesgue space with $T$ an
invertible measure-preserving transformation, $\mathcal{W}\in
\mathcal{C}_Z$ and $\mathcal{D}\subseteq \mathcal{B}_Z$ a
$T$-invariant sub-$\sigma$-algebra, i.e. $T^{- 1}
\mathcal{D}= \mathcal{D}$. Set $\mathcal{W}_0^{n -1}= \bigvee_{i=
0}^{n- 1} T^{- i} \mathcal{W}$ for each $n\in \N$. It is clear
that the sequence $\{H_\kappa (\mathcal{W}_0^{n- 1}|
\mathcal{D})\}_{n\in \mathbb{N}}$ is non-negative and
sub-additive. So we may define
\begin{equation*}
h_\kappa (T, \mathcal{W}| \mathcal{D})= \inf_{\gamma\in
\mathcal{P}_Z:
 \gamma\succeq \mathcal{W}} h_\kappa^- (T, \gamma|
\mathcal{D}),
\end{equation*}
\begin{equation*}
h_\kappa^- (T, \mathcal{W}|
\mathcal{D})= \lim_{n\rightarrow +\infty} \frac{1}{n} H_\kappa
(\mathcal{W}_0^{n -1}| \mathcal{D})= \inf_{n\in
\mathbb{N}}\frac{1}{n} H_\kappa (\mathcal{W}_0^{n -1}|
\mathcal{D}).
\end{equation*}
Clearly $h_\kappa^- (T, \mathcal{W}| \mathcal{D})= h_\kappa (T,
\mathcal{W}| \mathcal{D})$ when $\mathcal{W}\in \mathcal{P}_Z$. We
shall write simply
$$h_\kappa^- (T, \mathcal{W})= h_\kappa^- (T,
\mathcal{W}| \{\emptyset, Z\}) \text{ and }h_\kappa (T,
\mathcal{W})= h_\kappa (T, \mathcal{W}| \{\emptyset, Z\}).$$

\begin{thm} \label{thm-c-d}
Let $\gamma$ be an invertible measure-preserving transformation on
$(X,\mathcal{B}_X,\mu)$ generating $\mathcal{R}$,
$\phi:\mathcal{R}\rightarrow Aut(Y,\nu)$ a cocyle and
$\gamma_\phi$ stand for the $\phi$-skew product extension of
$\gamma$. Then for each $\mathcal{U}\in \mathcal{C}_{X\times Y}$,
one has
$$h^-_\nu(\phi,\mathcal{U})=h^-_{\mu\times
\nu}(\gamma_\phi,\mathcal{U}|\mathcal{B}_X\otimes \{
\emptyset,Y\})\text{ and } h_\nu(\phi,\mathcal{U})=h_{\mu\times
\nu}(\gamma_\phi,\mathcal{U}|\mathcal{B}_X\otimes \{
\emptyset,Y\}).$$
\end{thm}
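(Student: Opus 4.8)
The plan is to prove the stronger of the two identities, namely the one for $h^-$,
$$h^-_\nu(\phi,\mathcal{U})=h^-_{\mu\times\nu}(\gamma_\phi,\mathcal{U}\mid\mathcal{B}_X\otimes\{\emptyset,Y\}),$$
and then to deduce the $h$-identity by taking the infimum of both sides over the partitions $\alpha\in\mathcal{P}_{X\times Y}$ with $\alpha\succeq\mathcal{U}$ (using $h_\nu(\phi,\mathcal{U})=\inf_{\alpha\succeq\mathcal{U}}h^-_\nu(\phi,\alpha)$ and the analogous formula on the right). Write $\mathcal{D}=\mathcal{B}_X\otimes\{\emptyset,Y\}$, and note that $\gamma$ is aperiodic, so $\langle\gamma\rangle\cong\Z$ and $\mathcal{R}(x)=\{\gamma^kx:k\in\Z\}$.

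First I would identify the right-hand side. From $\gamma_\phi^{-i}(x,y)=(\gamma^{-i}x,\phi(\gamma^{-i}x,x)y)$ one computes that, viewing $\gamma_\phi^{-i}\mathcal{U}$ as a field over $X$, its fibre at $x$ is $\phi(x,\gamma^ix)\mathcal{U}_{\gamma^ix}$, whence the fibre of $\mathcal{U}_0^{n-1}=\bigvee_{i=0}^{n-1}\gamma_\phi^{-i}\mathcal{U}$ at $x$ is $\bigvee_{i=0}^{n-1}\phi(x,\gamma^ix)\mathcal{U}_{\gamma^ix}$. Since $\mu\times\nu$ disintegrates over $\mathcal{D}$ with constant fibre measure $\nu$, I would establish the fibre formula
$$H_{\mu\times\nu}(\mathcal{W}\mid\mathcal{D})=\int_X H_\nu(\mathcal{W}_x)\,d\mu(x)\qquad(\mathcal{W}\in\mathcal{C}_{X\times Y})$$
by the measurable-selection argument of Lemma \ref{lem4-2} (for each $x$ the minimizing partition is drawn from the finite set $P(\mathcal{W}_x)$ of partitions between $\mathcal{W}_x$ and the partition it generates, and these glue to a global $\alpha\succeq\mathcal{W}$). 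Applying this to $\mathcal{W}=\mathcal{U}_0^{n-1}$ gives $H_{\mu\times\nu}(\mathcal{U}_0^{n-1}\mid\mathcal{D})=\int_X F_n\,d\mu$ with $F_F(x):=H_\nu(\bigvee_{i\in F}\phi(x,\gamma^ix)\mathcal{U}_{\gamma^ix})$ and $F_n:=F_{[0,n-1]}$. As $\mathcal{D}$ is $\gamma_\phi$-invariant, $F\mapsto H_{\mu\times\nu}(\bigvee_{i\in F}\gamma_\phi^{-i}\mathcal{U}\mid\mathcal{D})$ is m.n.i.s.a., so by Lemma \ref{convergent} and the argument of Lemma \ref{lem-436}(4),
$$R:=h^-_{\mu\times\nu}(\gamma_\phi,\mathcal{U}\mid\mathcal{D})=\lim_n\tfrac1n\int_X F_n\,d\mu=\inf_{F\in F(\langle\gamma\rangle)}\tfrac1{|F|}\int_X F_F\,d\mu.$$

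For the left-hand side I would rewrite the orbital entropy. The cocycle identity $\phi(x,\gamma^kx)=\phi(x,y)\phi(y,\gamma^kx)$ together with the $\nu$-invariance of the maps in $Aut(Y,\nu)$ (Lemma \ref{lem-2-2}(1)) shows that the integrand defining $h^-_\nu(\mathcal{S},\phi,\mathcal{U})$ is constant along each $\mathcal{S}$-class and equals $F_{G_i}$; hence, for $\mathcal{S}\sim(B_i,G_i)$, formula \eqref{eq-2858} reads $h^-_\nu(\mathcal{S},\phi,\mathcal{U})=\sum_i\int_{B_i}F_{G_i}\,d\mu$, and one checks $F_G(\gamma^ax)=F_{G+a}(x)$. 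I would then prove $h^-_\nu(\phi,\mathcal{U})=R$ in two steps. For $h^-_\nu(\phi,\mathcal{U})\le R$: for each $L$ take a Rokhlin tower for $\gamma$ of large height (Lemma \ref{lem-c-2}(1)) and cut every column into consecutive length-$L$ blocks; the $L$ choices of starting offset $j$ give interval sub-relations whose orbital entropies average over $j$ to $\tfrac1L\int_X F_L\,d\mu+o(1)=\tfrac{a_L}{L}+o(1)$, so the best offset forces $h^-_\nu(\phi,\mathcal{U})\le\tfrac{a_L}{L}+o(1)\to R$. For $h^-_\nu(\phi,\mathcal{U})\ge R$: given any $\mathcal{S}\sim(B_i,G_i)$, quasi-tile a long interval $[0,N-1]$ by translates of the $G_i$ (Proposition \ref{ow-prop}) and apply sub-additivity of $H_\nu$ over joins (Lemma \ref{lem-2-2}(3), in the averaged form of Lemma \ref{lem-436}(2)) to bound $F_N$ above by the sum of the matching $F_{G_i}$-terms plus a boundary error; integrating and letting $N\to\infty$ gives $R=\lim\tfrac{a_N}{N}\le h^-_\nu(\mathcal{S},\phi,\mathcal{U})$, and taking the infimum over $\mathcal{S}$ yields $R\le h^-_\nu(\phi,\mathcal{U})$.

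The main obstacle is the lower bound $h^-_\nu(\phi,\mathcal{U})\ge R$: transferring the space average $\tfrac1N\int_XF_N\,d\mu$ onto the irregular orbital decomposition $\{(B_i,G_i)\}$ demands an honest Ornstein--Weiss quasi-tiling estimate, controlling both the $\epsilon$-overlaps and the classes straddling the ends of $[0,N-1]$, uniformly over the infinitely many class-shapes $G_i$; this is where Proposition \ref{ow-prop} and the sub-additivity bookkeeping of Lemma \ref{lem-436} do the real work. A secondary technical point is the measurable selection behind the fibre formula, which is routine once one invokes the finiteness of each $P(\mathcal{W}_x)$ exactly as in Lemma \ref{lem4-2}.
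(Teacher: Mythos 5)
The reduction to the $h^-$ identity, the fibre formula for $H_{\mu\times\nu}(\cdot\mid\mathcal{B}_X\otimes\{\emptyset,Y\})$ (proved exactly as Lemma \ref{lem4-2}), and the Rokhlin-tower/offset-averaging argument for $h^-_\nu(\phi,\mathcal{U})\le R$ are all sound (though Lemma \ref{lem-c-2}(1) is not a Rokhlin lemma; you need the classical Rokhlin lemma for the aperiodic transformation $\gamma$, which is available). The genuine gap is in the lower bound, precisely the step you flag as the main obstacle. Proposition \ref{ow-prop} cannot be used to ``quasi-tile $[0,N-1]$ by translates of the $G_i$'': that proposition produces quasi-tilings by sets \emph{selected} from a nested F\o lner sequence, with invariance conditions between consecutive tiles arranged inside its proof; it says nothing about tiling by an arbitrary prescribed family of shapes. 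The sets $G_i$ attached to $\mathcal{S}\sim(B_i,G_i)$ are arbitrary finite subsets of $\Z$ (countably many, of unbounded cardinality, with no invariance), and translates of such shapes need not $\epsilon$-quasi-tile any interval. Worse, even if a combinatorial tiling by the $G_i$ existed, the argument would still fail: subadditivity bounds $\tfrac1N\int_X F_N\,d\mu$ by the tiling-weighted sum $\tfrac1N\sum_j\int_X F_{G_{i_j}}\,d\mu$, and this exceeds $h^-_\nu(\mathcal{S},\phi,\mathcal{U})=\sum_i\int_{B_i}F_{G_i}\,d\mu$ unless the shapes occur in proportions matching the measures $\mu(B_i)$ (already with shapes $G_1=\{0\}$, $G_2=\{0,1\}$, a tiling by singletons gives the bound $\int_X F_{\{0\}}\,d\mu$, which dominates rather than is dominated by the orbital entropy). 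The correct argument constructs no tiling at all: for a.e.\ $x$ the $\mathcal{S}$-classes already partition the $\gamma$-orbit in orbit coordinates, so one bounds $F_N(x)$ by the entropies of the classes lying inside $[0,N-1]$ plus the straddling boundary, truncates to finitely many shapes carrying measure $1-\epsilon$ (needed since $\sup_i|G_i|=+\infty$), and converts $\tfrac1N\int_X F_N\,d\mu$ into $\sum_i\int_{B_i}F_{G_i}\,d\mu$ by the change of variables $\int_X\#\{0\le a<N:\gamma^a x\in B_i\}\,d\mu=N\mu(B_i)$. Repaired this way, your step becomes essentially a re-proof of Proposition \ref{pro-c-5}.

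This is exactly the hand-work the paper avoids, and the device it uses is the idea missing from your proposal: take the product of $(X,\gamma)$ with an adding machine $(\Sigma,\delta,\lambda)$. On $\Sigma\times X$ the dyadic sets $A_n$ give, canonically, a filtration $\mathcal{S}_n\sim(A_n\times X,\{(\delta\times\gamma)^i:0\le i<2^n\})$ of the orbit relation of $\delta\times\gamma$ whose classes are honest intervals of constant length $2^n$ — something that does not exist for $\gamma$ on $X$ alone, which is why you were forced into towers and tilings. By \eqref{eq-2858} the orbital entropy of $\mathcal{S}_n$ equals $\tfrac1{2^n}H_{\mu\times\nu}\bigl(\bigvee_{i=0}^{2^n-1}\gamma_\phi^{-i}\mathcal{U}\mid\mathcal{B}_X\otimes\{\emptyset,Y\}\bigr)$, Corollary \ref{pro-c-5-1} (whose proof, via Proposition \ref{pro-c-5} and Lemma \ref{lem-c-2}(2), packages all the $\subseteq_\epsilon$ bookkeeping and yields both inequalities simultaneously) identifies the limit with $h^-_\nu(1\oplus\phi,\Sigma\times\mathcal{U})$, and Proposition \ref{pro-conclusion}(1) transfers this back to $h^-_\nu(\phi,\mathcal{U})$. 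You already have all of these tools at your disposal; invoking them through the adding machine closes the gap without any tiling estimate.
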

\begin{proof}
Let $\Sigma= \prod_{i= 1}^{+\infty} \{0, 1\}$ be the product space
of the discrete space $\{0, 1\}$. If $x= (x_1, x_2, \cdots), y=
(y_1, y_2, \cdots)\in \Sigma$ then the sum $x\oplus y= (z_1, z_2,
\cdots)$ is defined as follow. If $x_1+ y_1< 2$ then $z_1= x_1+
y_1$, if $x_1+ y_1\ge 2$ then $z_1= x_1+ y_1- 2$ and we carry $1$
to the next position. The other terms $z_2, \cdots$ are
successively determined in the same fashion. Let $\delta:
\Sigma\rightarrow \Sigma, z\mapsto z\oplus 1$ with $1= (1, 0, 0,
\cdots)$. It is known that $(\Sigma, \delta)$ is minimal, which is
called an {\it adding machine}. Let $\lambda$ be the Haar measure
on $(\Sigma, \oplus)$. Denote by $\mathcal{S}$ the $\delta\times
\gamma$-orbit equivalence relation on $\Sigma\times X$. Let
$\sigma:\Sigma\times X\rightarrow X\times \Sigma$ be the flip map.
We have $\mathcal{S}= \sigma^{- 1} \mathcal{R}(\varphi) \sigma$
for the cocycle $\varphi:\mathcal{R}\rightarrow
Aut(\Sigma,\lambda)$ given by $(\gamma^n x, x)\mapsto \delta^n,
n\in \mathbb{Z}$ (as $\mathcal{R}$ is conservative, $\gamma$ is
aperiodic and so $\varphi$ is well defined).

Now we define a cocycle $1\oplus \phi:\mathcal{S}\rightarrow
Aut(Y,\nu)$ by setting $((z, x), (z', x'))\mapsto \phi (x, x')$.
Let~ $\{\mathcal{R}_n\}_{n\in \mathbb{N}}$ be a filtration of
$\mathcal{R}$. Then
$\{\sigma^{-1}\mathcal{R}_n(\varphi)\sigma\}_{n\in \mathbb{N}}$ is
a filtration of $\mathcal{S}$ and so for each $\mathcal{U}\in
\mathcal{C}_{X\times Y}$
\begin{eqnarray} \label{07.02.25-1}
h^-_{\nu}(1\oplus \phi,\Sigma\times \mathcal{U})&= &
\lim_{n\rightarrow +\infty} h^-_{\nu}
(\sigma^{-1}\mathcal{R}_n(\varphi)\sigma,1\oplus \phi,\Sigma\times
\mathcal{U})\ (\text{by Corollary \ref{pro-c-5-1}})\ \nonumber \\
&= & \lim_{n\rightarrow +\infty}
h^-_\nu(\mathcal{R}_n,\phi,\mathcal{U})\ (\text{by Proposition
\ref{pro-conclusion}~(1)})\ \nonumber \\
&=&h^-_\nu(\phi,\mathcal{U})\ (\text{by Corollary
\ref{pro-c-5-1}}).
\end{eqnarray}
On the other hand, for each $n\in \N$ we let $A_n=\{ z\in \Sigma:
z_i=0 \text{ for }1\le i\le n\}$. Then $A_1\supseteq A_2\supseteq
\cdots$ is a sequence of measurable subsets of $\Sigma$ such that
$\Sigma= \bigsqcup_{i=0}^{2^n-1} \delta^i A_n$ and so
$\Sigma\times X=\bigsqcup_{i=0}^{2^n-1} (\delta\times
\gamma)^i(A_n\times X)$ for each $n\in \N$. Let $\mathcal{S}_n\in
I (\mathcal{S})$ with $\mathcal{S}_n\sim (A_n\times X,\{
(\delta\times \gamma)^i: i= 0, 1, \cdots, 2^n-1\})$. By
\eqref{eq-2858} we obtain that
\begin{eqnarray*}
& & h^-_\nu(\mathcal{S}_n,1\oplus \phi, \Sigma\times \mathcal{U})
\\
&= & \int_{A_n\times X} H_\nu\left(\bigvee_{i=0}^{2^n-1}\phi
(x,\gamma^ix)\mathcal{U}_{\gamma^i x}\right) d \lambda\times \mu (z,x) \\
&= & \frac{1}{2^n} \int_X H_\nu\left(\bigvee_{i=0}^{2^n-1}
\phi(x,\gamma^i
x)\mathcal{U}_{\gamma^i x}\right) d \mu(x)\ (\text{as}\ \lambda (A_n)= \frac{1}{2^n}) \\
&= & \frac{1}{2^n}\int_X H_\nu\left(\left(\bigvee_{i=0}^{2^n-1}
\gamma_{\phi}^{-i}\mathcal{U}\right)_{x}\right) d \mu(x)\
\left(\text{as}\ \bigvee_{i=0}^{2^n-1} \phi(x,\gamma^i
x)\mathcal{U}_{\gamma^i x}= \left(\bigvee_{i=0}^{2^n-1}
\gamma_{\phi}^{-i}\mathcal{U}\right)_{x}\right) \\
&= & \frac{1}{2^n}\int_X H_{\delta_x \times
\nu}\left(\bigvee_{i=0}^{2^n-1}
\gamma_{\phi}^{-i}\mathcal{U}\right) d \mu(x)=
\frac{1}{2^n}H_{\mu\times \nu} \left(\bigvee_{i=0}^{2^n-1}
\gamma_{\phi}^{-i}\mathcal{U}|\mathcal{B}_X\otimes \{
\emptyset,Y\}\right).
\end{eqnarray*}
Note that $\mathcal{S}_1\subseteq \mathcal{S}_2\subseteq \cdots$
and $\bigcup_{n\in \mathbb{N}} \mathcal{S}_n=\mathcal{S}$, then
\begin{eqnarray*}
h^-_\nu(1\oplus \phi,\Sigma\times \mathcal{U})&= &\lim_{n\rightarrow
+\infty} h^-_\nu(\mathcal{S}_n,1\oplus \phi, \Sigma\times
\mathcal{U}) \ \ \text{(by Corollary \ref{pro-c-5-1})}\\
&= &\lim_{n\rightarrow +\infty}\frac{1}{2^n}H_{\mu\times \nu}
\left(\bigvee_{i=0}^{2^n-1}
\gamma_{\phi}^{-i}\mathcal{U}|\mathcal{B}_X\otimes \{
\emptyset,Y\}\right)\\
&= &h_{\mu\times \nu}^-(\gamma_\phi,\mathcal{U}|\mathcal{B}_X\otimes
\{ \emptyset, Y\})
\end{eqnarray*}
 and so
$h_\nu^-(\phi,\mathcal{U})=h_{\mu\times
\nu}^-(\gamma_\phi,\mathcal{U}|\mathcal{B}_X \otimes \{ \emptyset,
Y\})$ for each $\mathcal{U}\in \mathcal{C}_{X\times Y}$ by
\eqref{07.02.25-1}. Finally,
\begin{eqnarray*}
h_\nu(\phi,\mathcal{U})&= & \inf_{\alpha\in \mathcal{P}_{X\times Y}:
\alpha\succeq \mathcal{U}} h^-_\nu(\phi,\alpha) =\inf_{\alpha\in
\mathcal{P}_{X\times Y}: \alpha\succeq \mathcal{U}} h^-_{\mu\times
\nu}(\gamma_\phi,\alpha|\mathcal{B}_X\otimes \{ \emptyset, Y\})\\
&= & h_{\mu\times \nu} (\gamma_\phi, \mathcal{U}|\mathcal{B}_X\otimes
\{ \emptyset,Y\})
\end{eqnarray*}
 for each $\mathcal{U}\in
\mathcal{C}_{X\times Y}$. This finishes the proof the theorem.
\end{proof}

\subsection{The proof of the equivalence of measure-theoretic entropy of covers}

The following result was proved by the same authors \cite[Theorem 6.4]{HYZ1} (see
also \cite{GW4, HMRY}).

\begin{lem}\label{lem-c-4-2}
Let $(X, T)$ be a TDS with $\mathcal{U}\in \mathcal{C}_X$ and
$\mu\in \mathcal{M} (X,T)$. Then $h_\mu^{-} (T,
\mathcal{U})= h_\mu (T, \mathcal{U}).$
\end{lem}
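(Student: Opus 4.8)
The plan is to prove the two inequalities $h_\mu^-(T,\mathcal{U})\le h_\mu(T,\mathcal{U})$ and $h_\mu(T,\mathcal{U})\le h_\mu^-(T,\mathcal{U})$ separately; the first is routine and the second carries all the weight. For the easy inequality, fix any partition $\alpha\in\mathcal{P}_X$ with $\alpha\succeq\mathcal{U}$. Then $\alpha_0^{n-1}=\bigvee_{i=0}^{n-1}T^{-i}\alpha\succeq\bigvee_{i=0}^{n-1}T^{-i}\mathcal{U}=\mathcal{U}_0^{n-1}$ for every $n\in\N$, so by the definition of $H_\mu(\mathcal{U}_0^{n-1})$ as an infimum over partitions finer than $\mathcal{U}_0^{n-1}$ we get $H_\mu(\mathcal{U}_0^{n-1})\le H_\mu(\alpha_0^{n-1})$. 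Dividing by $n$ and letting $n\to+\infty$ yields $h_\mu^-(T,\mathcal{U})\le h_\mu(T,\alpha)$, and taking the infimum over all such $\alpha$ gives $h_\mu^-(T,\mathcal{U})\le h_\mu(T,\mathcal{U})$. It therefore suffices to produce, for every $\epsilon>0$, a single partition $\alpha\succeq\mathcal{U}$ with $h_\mu(T,\alpha)\le h_\mu^-(T,\mathcal{U})+\epsilon$.

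By the ergodic decomposition together with the cover-version integral formulas of Lemma \ref{refer} (valid in particular for $G=\Z$), both $h_{\{\cdot\}}(T,\mathcal{U})$ and $h_{\{\cdot\}}^-(T,\mathcal{U})$ integrate over the ergodic components, so it is enough to treat ergodic $\mu$, the general case following by affinity. Using subadditivity I write $h_\mu^-(T,\mathcal{U})=\inf_{n}\frac1n H_\mu(\mathcal{U}_0^{n-1})$ and fix $n$ with $\frac1n H_\mu(\mathcal{U}_0^{n-1})\le h_\mu^-(T,\mathcal{U})+\frac{\epsilon}{2}$. Since $H_\mu(\mathcal{U}_0^{n-1})$ equals the minimum of $H_\mu(\beta)$ over the finite family of partitions lying between $\mathcal{U}_0^{n-1}$ and the partition it generates, exactly as in \eqref{keyeq1} of Lemma \ref{lem4-2}, I choose an optimal $\xi\succeq\mathcal{U}_0^{n-1}$ realizing $H_\mu(\xi)=H_\mu(\mathcal{U}_0^{n-1})$.

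The heart of the argument is to turn the ``length-$n$'' rule $\xi$, which reads the $\mathcal{U}$-itinerary of a point over the window $\{0,\dots,n-1\}$ and outputs a single refining set, into a genuine partition $\alpha\succeq\mathcal{U}$ (a ``length-$1$'' rule). I would do this by the classical Rokhlin lemma for the ergodic system $(X,T,\mu)$: for $N\gg n$ and $\delta$ small take a base $B$ with $B,TB,\dots,T^{N-1}B$ disjoint and $\mu(\bigsqcup_{0\le j<N}T^jB)>1-\delta$, cut each Rokhlin column into consecutive blocks of length $n$, and let $\alpha$ on the floors of a block imitate the atom assignment prescribed by $\xi$, assigning the remaining short stretch, the error set and any overlaps arbitrarily but always inside some element of $\mathcal{U}$. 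Then $\alpha\in\mathcal{P}_X$ and $\alpha\succeq\mathcal{U}$. Conditioning $\alpha_0^{N-1}$ on the low-entropy partition recording the position of a point in the tower, the name of a typical point decomposes into about $N/n$ blockwise $\xi$-names plus $O(\delta N+n)$ free coordinates; a Shannon--McMillan--Breiman count then gives $\frac1N H_\mu(\alpha_0^{N-1})\le\frac1n H_\mu(\xi)+O(\delta)\log\#(\mathcal{U})+O(n/N)$, hence $h_\mu(T,\alpha)\le\frac1n H_\mu(\xi)+O(\delta)\log\#(\mathcal{U})$. Letting $\delta\to0$ and recalling the choice of $n$ gives $h_\mu(T,\alpha)\le h_\mu^-(T,\mathcal{U})+\epsilon$, as required.

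The main obstacle is precisely this last construction: the optimal $\xi$ lives over the $n$-window and is in general \emph{not} of the form $\bigvee_{i=0}^{n-1}T^{-i}\alpha$ for any single partition, so the recoding into a bona fide refinement of $\mathcal{U}$ must be carried out while controlling the entropy lost at block boundaries, on the Rokhlin error set, and in the overlap coordinates, keeping all of it $O(\epsilon)$ per symbol. This is also the step that forces the passage to ergodic $\mu$ (to invoke Rokhlin and the pointwise name counting), the non-ergodic case being recovered by the affinity and integral formula of Lemma \ref{refer}.
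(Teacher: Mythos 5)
The paper itself gives no proof of this lemma: it is quoted from \cite[Theorem 6.4]{HYZ1} (see also \cite{GW4, HMRY}), and the proofs in those references follow precisely the strategy you outline --- reduce to ergodic $\mu$, take an optimal $\xi\succeq\mathcal{U}_0^{n-1}$ with $H_\mu(\xi)=H_\mu(\mathcal{U}_0^{n-1})$ as in \eqref{keyeq1}, and recode the length-$n$ rule $\xi$ into a genuine partition $\alpha\succeq\mathcal{U}$ by means of a Rohlin tower. Your easy inequality, the reduction to ergodic measures via Lemma \ref{refer}, and the recoding construction itself are all fine. The genuine gap is in the entropy estimate after recoding, which is exactly where all the work in the cited proofs is concentrated.

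After recoding, the $\alpha$-name of an orbit segment is determined by the tower itinerary (cheap), the labels at the free coordinates (cheap), and the $\xi$-atoms read at the block-base times, i.e.\ by the process generated by $\hat\xi=\{A\cap\tilde B: A\in\xi\}\cup\{X\setminus\tilde B\}$, where $\tilde B$ is the union of the block bases. Thus the per-symbol cost of the third item is governed by $H_\mu(\hat\xi)$, and your bound requires $H_\mu(\hat\xi)\approx\frac{1}{n}H_\mu(\xi)$. For an arbitrary Rohlin base this is false: one only has $H_\mu(\hat\xi)\le H_\mu(\xi)+\log 2$, with no division by $n$, and a base correlated with $\xi$ (which the plain Rohlin lemma does nothing to exclude; e.g.\ $\tilde B$ could be a union of low-measure atoms of $\xi$ carrying most of its entropy) makes $H_\mu(\hat\xi)$ of order $H_\mu(\xi)$, so the final bound degenerates to $h_\mu(T,\alpha)\lesssim H_\mu(\mathcal{U}_0^{n-1})$, which proves nothing. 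Your appeal to a ``Shannon--McMillan--Breiman count'' does not repair this: SMB counts names in terms of dynamical entropies ($h_\mu(T,\xi)$, or entropies of induced or power maps, where moreover $T^n$ need not be ergodic), never in terms of the static quantity $\frac{1}{n}H_\mu(\xi)$, and the sampling times are themselves tied to the tower. The missing ingredient, used in the cited proofs, is the strong Rohlin lemma: the base $B$ may be chosen independent of the finite partition $\bigvee_{q}T^{qn}\xi$; then $\mu(A\cap\tilde B)=\mu(A)\mu(\tilde B)$ for every $A\in\xi$, hence $H_\mu(\hat\xi)=\mu(\tilde B)H_\mu(\xi)+\phi(\mu(\tilde B))+\phi(1-\mu(\tilde B))\le\frac{1}{n}H_\mu(\xi)+O(\frac{\log n}{n})$, after which your bookkeeping closes. (That the difficulty is concentrated in Rohlin-type statements is exactly why the paper poses the open question at the end of Section \ref{sectionlvp}.) Two smaller points: your displayed estimate over the window $\{0,\dots,N-1\}$ cannot hold as stated, since a point at tower level $j$ has $j$ uncontrolled coordinates in that window (about $N/2$ on average), so one must estimate $\frac{1}{m}H_\mu(\alpha_0^{m-1})$ for $m\gg N$, conditioning on the full itinerary rather than on the initial position only; and Rohlin's lemma needs aperiodicity, so the periodic ergodic case (where both sides vanish) should be disposed of separately.
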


\begin{lem}\label{lem-c-4-3}
Let $(Z, \mathcal{B}_Z, \kappa)$ be a Lebesgue space with $T$ an
invertible measure-preserving transformation, $\mathcal{W}\in
\mathcal{C}_Z$ and $\mathcal{D}\subseteq \mathcal{B}_Z$ a
$T$-invariant sub-$\sigma$-algebra. Then $h^-_\kappa (T,
\mathcal{W}| \mathcal{D})= h_\kappa (T, \mathcal{W}|
\mathcal{D})$.
\end{lem}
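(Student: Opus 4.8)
The plan is to prove the two inequalities separately, the easy one directly and the hard one by reducing to the unconditional equivalence in Lemma \ref{lem-c-4-2}. Since any $\gamma\in\mathcal{P}_Z$ with $\gamma\succeq\mathcal{W}$ satisfies $\gamma_0^{n-1}\succeq\mathcal{W}_0^{n-1}$, monotonicity of conditional entropy gives $H_\kappa(\mathcal{W}_0^{n-1}|\mathcal{D})\le H_\kappa(\gamma_0^{n-1}|\mathcal{D})$; dividing by $n$, letting $n\to+\infty$ and taking the infimum over such $\gamma$ yields $h_\kappa^-(T,\mathcal{W}|\mathcal{D})\le h_\kappa(T,\mathcal{W}|\mathcal{D})$. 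So the whole content is the reverse inequality, and the idea is to \emph{absorb the conditioning $\sigma$-algebra into the cover}.

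First I would realize $\mathcal{D}$ as an increasing limit of finite partitions: fix $\xi_1\preceq\xi_2\preceq\cdots$ in $\mathcal{P}_Z$ with $\xi_m\subseteq\mathcal{D}$ and $\bigvee_m\xi_m=\mathcal{D}$ (mod $\kappa$); the $T$-invariance $T^{-1}\mathcal{D}=\mathcal{D}$ then gives $(\xi_m)_0^{n-1}\subseteq\mathcal{D}$ for all $m,n$ and $\bigvee_m(\xi_m)_0^{n-1}=\mathcal{D}$. Fix $\epsilon>0$ and $m$, and apply Lemma \ref{lem-c-4-2} (a purely measure-theoretic statement, after realizing $(Z,\mathcal{B}_Z,\kappa,T)$ as a TDS) to the cover $\mathcal{W}\vee\xi_m\in\mathcal{C}_Z$: since $h_\kappa(T,\mathcal{W}\vee\xi_m)=h_\kappa^-(T,\mathcal{W}\vee\xi_m)$, the defining infimum produces a partition $\gamma_m\succeq\mathcal{W}\vee\xi_m$ with $h_\kappa^-(T,\gamma_m)\le h_\kappa^-(T,\mathcal{W}\vee\xi_m)+\epsilon$. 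Because $\gamma_m\succeq\mathcal{W}$ it is admissible for $h_\kappa(T,\mathcal{W}|\mathcal{D})$, and because $\gamma_m\succeq\xi_m$ while $\mathcal{D}\supseteq(\xi_m)_0^{n-1}$, conditioning on the larger algebra $\mathcal{D}$ only decreases entropy, so $h_\kappa(T,\mathcal{W}|\mathcal{D})\le h_\kappa^-(T,\gamma_m|\mathcal{D})=\lim_n\frac{1}{n}H_\kappa((\gamma_m)_0^{n-1}|\mathcal{D})\le\lim_n\frac{1}{n}H_\kappa((\gamma_m)_0^{n-1}|(\xi_m)_0^{n-1})=h_\kappa^-(T,\gamma_m)-h_\kappa^-(T,\xi_m)$. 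Using the cover/partition identity $H_\kappa(\mathcal{A}\vee\eta)=H_\kappa(\eta)+H_\kappa(\mathcal{A}|\eta)$ (valid for a cover $\mathcal{A}$ and a partition $\eta$) to rewrite $h_\kappa^-(T,\mathcal{W}\vee\xi_m)-h_\kappa^-(T,\xi_m)=L_m$, where $L_m:=\lim_n\frac{1}{n}H_\kappa(\mathcal{W}_0^{n-1}|(\xi_m)_0^{n-1})$, I arrive at $h_\kappa(T,\mathcal{W}|\mathcal{D})\le L_m+\epsilon$ for every $m$.

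It then remains to show $\inf_m L_m=h_\kappa^-(T,\mathcal{W}|\mathcal{D})$, after which letting $\epsilon\to0+$ finishes the proof. Writing $a_{n,m}=\frac{1}{n}H_\kappa(\mathcal{W}_0^{n-1}|(\xi_m)_0^{n-1})$, the crucial structural fact is that $n\mapsto H_\kappa(\mathcal{W}_0^{n-1}|(\xi_m)_0^{n-1})$ is subadditive: splitting $\mathcal{W}_0^{n+k-1}=\mathcal{W}_0^{n-1}\vee T^{-n}\mathcal{W}_0^{k-1}$, then conditioning each piece on the smaller sub-algebras $(\xi_m)_0^{n-1}$ and $T^{-n}(\xi_m)_0^{k-1}\subseteq(\xi_m)_0^{n+k-1}$ and translating the second piece by $T^{n}$ (measure preservation), gives the bound. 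Hence $L_m=\inf_n a_{n,m}$; likewise $T$-invariance of $\mathcal{D}$ makes $H_\kappa(\mathcal{W}_0^{n-1}|\mathcal{D})$ subadditive, so $h_\kappa^-(T,\mathcal{W}|\mathcal{D})=\inf_n\frac{1}{n}H_\kappa(\mathcal{W}_0^{n-1}|\mathcal{D})$. For each fixed $n$, $a_{n,m}$ decreases in $m$ and, using the finite family $P(\mathcal{W}_0^{n-1})$ of partitions from the proof of Lemma \ref{lem4-2} (so that $H_\kappa(\mathcal{W}_0^{n-1}|\cdot)=\min_{\beta\in P(\mathcal{W}_0^{n-1})}H_\kappa(\beta|\cdot)$) together with the increasing martingale theorem along $(\xi_m)_0^{n-1}\nearrow\mathcal{D}$, one gets $\lim_m a_{n,m}=\frac{1}{n}H_\kappa(\mathcal{W}_0^{n-1}|\mathcal{D})$. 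Therefore $\inf_m L_m=\inf_{n,m}a_{n,m}=\inf_n\inf_m a_{n,m}=h_\kappa^-(T,\mathcal{W}|\mathcal{D})$.

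The hard part is exactly this last interchange of the entropy-rate limit in $n$ with the limit in $m$ refining the finite conditioning partitions up to $\mathcal{D}$. It is resolved by noticing that both conditional sequences are genuinely subadditive in $n$ --- which hinges on $T^{-1}\mathcal{D}=\mathcal{D}$ --- so that each rate is an infimum and the interchange collapses to the trivial identity $\inf_{n,m}=\inf_n\inf_m$; the $m$-convergence for fixed $n$ is then supplied by the finite-partition description of the conditional entropy of a cover and the martingale convergence theorem.
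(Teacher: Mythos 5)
Your overall strategy is essentially the paper's own: approximate $\mathcal{D}$ by finite partitions $\xi_m\nearrow\mathcal{D}$, reduce to the unconditional equivalence of Lemma \ref{lem-c-4-2} applied to the covers $\mathcal{W}\vee\xi_m$, and reconcile the two limits via subadditivity in $n$ (your splitting argument is exactly the paper's \eqref{eeee-1}, and it does hinge on $T^{-1}\mathcal{D}=\mathcal{D}$) together with martingale convergence in $m$. The step you gloss over --- ``realizing $(Z,\mathcal{B}_Z,\kappa,T)$ as a TDS'' so that Lemma \ref{lem-c-4-2} applies --- is a genuine step; the paper supplies it via ergodic decomposition plus the Jewett--Krieger theorem, but any topological-model argument suffices, so this is a difference of presentation only.

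There is, however, one genuine flaw: you justify $\lim_m a_{n,m}=\frac{1}{n}H_\kappa(\mathcal{W}_0^{n-1}|\mathcal{D})$ by asserting the identity $H_\kappa(\mathcal{W}_0^{n-1}|\cdot)=\min_{\beta\in P(\mathcal{W}_0^{n-1})}H_\kappa(\beta|\cdot)$. The identity \eqref{keyeq1} is an \emph{unconditional} statement, and its conditional analogue is false. Take $Z=\{1,2,3,4\}$ with uniform measure $\kappa$, the cover $\mathcal{U}=\{U_1,U_2\}$ with $U_1=\{1,2,3\}$, $U_2=\{2,3,4\}$, and $\mathcal{A}$ the $\sigma$-algebra generated by $\{\{1,2\},\{3,4\}\}$. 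Then $P(\mathcal{U})$ consists of the three partitions $\{\{1\},\{2,3\},\{4\}\}$, $\{\{1,2,3\},\{4\}\}$, $\{\{1\},\{2,3,4\}\}$, whose conditional entropies given $\mathcal{A}$ are $\log 2$, $\frac{1}{2}\log 2$, $\frac{1}{2}\log 2$ respectively; yet $\{\{1,2\},\{3,4\}\}$ is finer than $\mathcal{U}$ and has conditional entropy $0$, so $H_\kappa(\mathcal{U}|\mathcal{A})=0<\frac{1}{2}\log 2=\min_{\beta\in P(\mathcal{U})}H_\kappa(\beta|\mathcal{A})$. The point is that conditioning rewards partitions aligned with $\mathcal{A}$: the concavity argument behind \eqref{keyeq1} selects extreme points, and in the conditional setting the optimal extreme point varies from one atom of $\mathcal{A}$ to another, which no single member of the finite family can achieve. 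Fortunately the conclusion of that step is true, and the repair is precisely the paper's move in \eqref{kkk-eq-11}: since $H_\kappa(\mathcal{W}_0^{n-1}|\cdot)$ is by definition an infimum over \emph{all} partitions finer than $\mathcal{W}_0^{n-1}$, interchange the infima over $m$ and over such partitions $\alpha$,
\begin{equation*}
\inf_m H_\kappa(\mathcal{W}_0^{n-1}|(\xi_m)_0^{n-1})
=\inf_{\alpha\succeq\mathcal{W}_0^{n-1}}\inf_m H_\kappa(\alpha|(\xi_m)_0^{n-1})
=\inf_{\alpha\succeq\mathcal{W}_0^{n-1}}H_\kappa(\alpha|\mathcal{D})
=H_\kappa(\mathcal{W}_0^{n-1}|\mathcal{D}),
\end{equation*}
where the middle equality is the martingale theorem applied to each fixed partition $\alpha$ (using $(\xi_m)_0^{n-1}\nearrow\mathcal{D}$, which again needs the $T$-invariance of $\mathcal{D}$); since $a_{n,m}$ decreases in $m$, this infimum is the limit you want. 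With that substitution your proof is complete and coincides in substance with the paper's.
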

\begin{proof}
First we claim the conclusion for the case $\mathcal{D}=
\{\emptyset, Z\}$. By the ergodic decomposition of $h^-_\kappa (T,
\mathcal{W})$ and $h_\kappa (T, \mathcal{W})$ (see \eqref{lab-eq}
in the case of $G=\Z$), it suffices to prove it when $\kappa$ is
ergodic. By the Jewett-Krieger Theorem (see \cite{DGS}),  $(Z, \kappa,
T)$ is measure theoretical isomorphic to a uniquely ergodic
zero-dimensional topological dynamical system $(\widehat{Z},
\widehat{\kappa}, \widehat{T})$. Let $\pi: (\widehat{Z},
\widehat{\kappa}, \widehat{T})\to (Z, \kappa, T)$ be such an
isomorphism. Then using Lemma \ref{lem-c-4-2} we have
\begin{equation*}
h^-_\kappa (T, \mathcal{W})= h^-_{\widehat{\kappa}} (\widehat{T},
\pi^{- 1} \mathcal{W})= h_{\widehat{\kappa}} (\widehat{T}, \pi^{-
1} \mathcal{W})= h_\kappa (T, \mathcal{W}).
\end{equation*}

In general case, let $\{\beta_j\}_{j\in \mathbb{N}}\subseteq
\mathcal{P}_Z$ with $\beta_j\nearrow \mathcal{D}$ (mod $\mu$).
For simplicity, we write $\mathcal{P}(\mathcal{V})=\{ \alpha\in
\mathcal{P}_Z: \alpha\succeq \mathcal{V}\}$ for $\mathcal{V}\in
\mathcal{C}_X$. Then
\begin{eqnarray} \label{kkk-eq-11}
h^-_\kappa (T, \mathcal{W}| \mathcal{D})&=&\inf_{n\ge 1} \frac{1}{n}
H_\kappa (\mathcal{W}_0^{n-1}| \mathcal{D})=\inf_{n\ge 1}
\frac{1}{n} \left(\inf_{\alpha\in \mathcal{P}(\mathcal{W}_0^{n- 1})}
H_\kappa (\alpha|
\mathcal{D})\right) \nonumber \\
&=&\inf_{n\ge 1} \frac{1}{n} \left(\inf_{\alpha\in
\mathcal{P}(\mathcal{W}_0^{n- 1})} \inf_{j\ge 1} H_\kappa (\alpha|
(\beta_j)_0^{n- 1})\right)\ (\text{as $\beta_j\nearrow
\mathcal{D}$ (mod $\mu$)}) \nonumber \\
&=& \inf_{j\ge 1} \inf_{n\ge 1} \frac{1}{n} \left(\inf_{\alpha\in
\mathcal{P}(\mathcal{W}_0^{n- 1})} H_\kappa (\alpha| (\beta_j)_0^{n-
1})\right) \nonumber \\
&=&\inf_{j\ge 1} \inf_{n\ge 1} \frac{1}{n} H_\kappa
(\mathcal{W}_0^{n-1}| (\beta_j)_0^{n- 1}).
\end{eqnarray}
Let $j\in \N$. Since for any $n,m\in \N$ and $\mathcal{V}\in \mathcal{C}_X$ one has
\begin{eqnarray*}
H_\kappa (\mathcal{V}_0^{n+m-1}| (\beta_j)_0^{n+m- 1})&\le &
H_\kappa(\mathcal{V}_0^{n-1}|(\beta_j)_0^{n+m-
1})+H_\kappa(T^{-n}\mathcal{V}_0^{m-1}|(\beta_j)_0^{n+m- 1})\\
&\le & H_\kappa(\mathcal{V}_0^{n-1}|(\beta_j)_0^{n-
1})+H_\kappa(T^{-n}\mathcal{V}_0^{m-1}|T^{-n}(\beta_j)_0^{m- 1})\\
&= & H_\kappa(\mathcal{V}_0^{n-1}|(\beta_j)_0^{n-
1})+H_\kappa(\mathcal{V}_0^{m-1}|(\beta_j)_0^{m- 1}).
\end{eqnarray*}
Hence
\begin{align}\label{eeee-1} \inf_{n\ge 1} \frac{1}{n} H_\kappa
(\mathcal{V}_0^{n-1}| (\beta_j)_0^{n- 1})=\lim_{n\rightarrow
+\infty}\frac{1}{n} H_\kappa (\mathcal{V}_0^{n-1}| (\beta_j)_0^{n-
1}).
\end{align}
Combing \eqref{eeee-1} for $\mathcal{V}=\mathcal{W}$ with
\eqref{kkk-eq-11}, one has
\begin{eqnarray*}
h^-_\kappa (T, \mathcal{W}| \mathcal{D}) &= & \inf_{j\ge 1}
\lim_{n\rightarrow +\infty}\frac{1}{n} H_\kappa
(\mathcal{W}_0^{n-1}| (\beta_j)_0^{n- 1}) \\
&= & \inf_{j\ge
1}\lim_{n\rightarrow +\infty}\frac{1}{n}\inf_{\alpha\in
\mathcal{P}(\mathcal{W}_0^{n- 1})} H_\kappa (\alpha| (\beta_j)_0^{n-
1}) \\
&=&\inf_{j\ge 1}\lim_{n\rightarrow +\infty} \frac{1}{n}\left(
\inf_{\alpha\in \mathcal{P}(\mathcal{W}_0^{n- 1})} H_\kappa
(\alpha\vee (\beta_j)_0^{n- 1})-H_\kappa ((\beta_j)_0^{n-
1}) \right)\\
&\ge & \inf_{j\ge 1} \lim_{n\rightarrow +\infty} \frac{1}{n}\left(
H_\kappa (\mathcal{W}_0^{n- 1}\vee (\beta_j)_0^{n- 1})-H_\kappa
((\beta_j)_0^{n- 1}) \right)\\
&=& \inf_{j\ge 1} \left( h^-_\kappa (T,\mathcal{W}\vee \beta_j)-
h_\kappa^- (T, \beta_j) \right)
\\
&=& \inf_{j\ge 1} (h_\kappa (T,\mathcal{W}\vee \beta_j)- h_\kappa
(T, \beta_j))
\ \ \ \ \ (\text{by the first part}) \\
&=& \inf_{j\ge 1} (\inf_{\alpha\in \mathcal{P}( \mathcal{W})}
h_\kappa (T, \alpha\vee \beta_j)- h_\kappa
(T, \beta_j)) \\
&=&\inf_{j\ge 1} \inf_{\alpha\in \mathcal{P}(\mathcal{W})}
\lim_{n\rightarrow +\infty} \frac{1}{n}\left( H_\kappa ((\alpha\vee
\beta_j)_0^{n- 1})- H_\kappa ((\beta_j)_0^{n- 1}) \right) \\
&\ge & \inf_{j\ge 1} \inf_{\alpha\in \mathcal{P}(\mathcal{W})}
\inf_{n\ge 1} \frac{1}{n} H_\kappa (\alpha_0^{n- 1}| (\beta_j)_0^{n- 1}) \ \ \ \text{(by \eqref{eeee-1}
for $\mathcal{V}=\alpha$)}\\
&= &  \inf_{\alpha\in \mathcal{P}(\mathcal{W})}
\inf_{n\ge 1} \inf_{j\ge 1}\frac{1}{n} H_\kappa (\alpha_0^{n- 1}| (\beta_j)_0^{n- 1}) \\
&=& \inf_{\alpha\in \mathcal{P}(\mathcal{W})} \inf_{n\ge 1}
\frac{1}{n} H_\kappa (\alpha_0^{n- 1}| \mathcal{D})\, (\text{as
$\beta_j\nearrow \mathcal{D}$ (mod $\mu$)})\\
&=&h_\kappa (T, \mathcal{W}| \mathcal{D}).
\end{eqnarray*}
As the inequality of $h^-_\kappa (T, \mathcal{W}| \mathcal{D})\le
h_\kappa (T, \mathcal{W}| \mathcal{D})$ is straightforward, this
finishes the proof.
\end{proof}

The following result is our main result in the  section.

\begin{thm} \label{equivalence}
Let $(Y, \mathcal{B}_Y, \nu, G)$ be a $G$-measure preserving
system with $(Y, \mathcal{B}_Y, \nu)$ a Lebesgue space and
$\mathcal{U}\in \mathcal{C}_Y$. Then $h_\nu (G, \mathcal{U})=
h_\nu^- (G, \mathcal{U})$.
\end{thm}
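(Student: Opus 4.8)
The plan is to reduce the statement for the amenable group $G$ to the already-established $\Z$-case, via the orbital (virtual) entropy machinery developed in this section. The point is that there is no universal Rohlin lemma or Glasner--Weiss type argument available for a general amenable $G$, so one cannot argue directly; instead one realizes the two group entropies as invariants of the orbit equivalence relation and transports them to a single invertible transformation, where Lemma \ref{lem-c-4-2} (packaged with conditioning as Lemma \ref{lem-c-4-3}) already settles the matter.

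First I would fix a free $G$-measure preserving system $(X,\mathcal{B}_X,\mu,G)$ on a Lebesgue space generating the orbit equivalence relation $\mathcal{R}$ (this is the standing setup of the virtual-entropy subsection; such free actions exist, e.g. Bernoulli actions). By Theorem \ref{thm-c-equi} the group entropies coincide with the corresponding virtual entropies,
\[
h_\nu(G,\mathcal{U})=\widehat{h_\nu}(G,\mathcal{U})=h_\nu(\phi_G,X\times\mathcal{U}),\qquad
h_\nu^-(G,\mathcal{U})=\widehat{h_\nu}^-(G,\mathcal{U})=h_\nu^-(\phi_G,X\times\mathcal{U}),
\]
where $\phi_G$ is the cocycle with $\phi_G(gx,x)=\Pi_g$ and $X\times\mathcal{U}\in\mathcal{C}_{X\times Y}$ is the cover whose field is the constant $\mathcal{U}$. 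This converts the problem into proving $h_\nu(\phi_G,X\times\mathcal{U})=h_\nu^-(\phi_G,X\times\mathcal{U})$, a statement purely about the cocycle entropy of a single cover.

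Next, since $\mathcal{R}$ is the orbit relation of an amenable group action it is hyperfinite, hence (by Dye) generated by a single invertible measure preserving transformation $\gamma$ of $(X,\mathcal{B}_X,\mu)$. I would apply Theorem \ref{thm-c-d} to $\gamma$, the cocycle $\phi_G$, and its skew product extension $\gamma_{\phi_G}$, obtaining that both cocycle entropies equal the conditional $\Z$-entropies $h_{\mu\times\nu}(\gamma_{\phi_G},X\times\mathcal{U}\mid\mathcal{D})$ and $h^-_{\mu\times\nu}(\gamma_{\phi_G},X\times\mathcal{U}\mid\mathcal{D})$, with $\mathcal{D}=\mathcal{B}_X\otimes\{\emptyset,Y\}$. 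Here I would check the one hypothesis that matters: because $\gamma_{\phi_G}(x,y)=(\gamma x,\phi_G(\gamma x,x)y)$ acts on the $X$-coordinate through the automorphism $\gamma$, one has $\gamma_{\phi_G}^{-1}\mathcal{D}=\mathcal{D}$, so $\mathcal{D}$ is a $\gamma_{\phi_G}$-invariant sub-$\sigma$-algebra and Lemma \ref{lem-c-4-3} applies. That lemma then gives $h^-_{\mu\times\nu}(\gamma_{\phi_G},X\times\mathcal{U}\mid\mathcal{D})=h_{\mu\times\nu}(\gamma_{\phi_G},X\times\mathcal{U}\mid\mathcal{D})$, and chaining the three groups of equalities yields $h_\nu(G,\mathcal{U})=h_\nu^-(G,\mathcal{U})$.

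The main obstacle is not this final chaining, which is essentially formal once the intermediate results are available, but rather that all the genuine content is absorbed into the preceding machinery: Theorem \ref{thm-c-equi} (the independence of the virtual entropy from the chosen free action and its agreement with the group entropy), Theorem \ref{thm-c-d} (the passage from the cocycle to the single-transformation skew product), and Lemma \ref{lem-c-4-3} (the $\Z$-case with conditioning). The subtle verifications I would be most careful about are the bookkeeping of $X\times\mathcal{U}$ as a cover with constant field and the $\gamma_{\phi_G}$-invariance of $\mathcal{B}_X\otimes\{\emptyset,Y\}$; everything else is a direct composition of the cited equalities.
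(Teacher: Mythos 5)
Your proposal is correct and follows essentially the same route as the paper's own proof: fix a free $G$-action generating the hyperfinite orbit relation $\mathcal{R}$, identify both group entropies with the cocycle entropies of $\phi_G$ via Theorem \ref{thm-c-equi}, pass to the skew product $\gamma_{\phi_G}$ over a single generator $\gamma$ via Theorem \ref{thm-c-d}, and conclude with Lemma \ref{lem-c-4-3} using the $\gamma_{\phi_G}$-invariance of $\mathcal{B}_X\otimes\{\emptyset,Y\}$. The details you flag for care (the constant-field cover $X\times\mathcal{U}$ and the invariance of the conditioning $\sigma$-algebra) are precisely the points the paper relies on as well.
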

\begin{proof}
Let $(X,\mathcal{B}_X,\mu,G)$ be a free $G$-measure preserving
system with $\mathcal{R}\subseteq X\times X$ the $G$-orbit
equivalence relation and $\gamma$ an invertible measure-preserving
transformation on $(X,\mathcal{B}_X,\mu)$ generating
$\mathcal{R}$. The cocycle $\phi_G:\mathcal{R}\rightarrow
Aut(Y,\nu)$ is given by $\phi_G(gx,x)=\Pi_g$, where $\Pi_g\in
Aut(Y,\nu)$ is the action of $g\in G$ on $(Y,\mathcal{B}_Y,\nu)$.
By Definition \ref{de-1943} of vitual entropy and Theorem
\ref{thm-c-equi}, we have
\begin{align}\label{key-eq-1}
h_\nu^-(G,\mathcal{U})=h_\nu^-(\phi_G,X\times \mathcal{U}) \text{
and } h_\nu(G,\mathcal{U})=h_\nu(\phi_G,X\times \mathcal{U}).
\end{align}
Let $T=\gamma_{\phi_G}$ be the $\phi$-skew production extension of
$\gamma$. Using Theorem \ref{thm-c-d} one has
\begin{equation} \label{key-eq-2}
h_\nu^-(\phi_G,X\times \mathcal{U})=h_{\mu\times
\nu}^-(T,\mathcal{U}|\mathcal{B}_X \times \{\emptyset,Y\})\
\text{and}\ h_\nu(\phi_G,X\times \mathcal{U})=h_{\mu\times
\nu}(T,\mathcal{U}|\mathcal{B}_X \times \{\emptyset,Y\}).
\end{equation}
%\begin{equation} \label{key-eq-2}
%\begin{array}{ll}
%&h_\nu^-(\phi_G,X\times \mathcal{U})=h_{\mu\times
%\nu}^-(T,\mathcal{U}|\mathcal{B}_X \times \{\emptyset,Y\}) \\
%&h_\nu(\phi_G,X\times \mathcal{U})=h_{\mu\times
%\nu}(T,\mathcal{U}|\mathcal{B}_X \times \{\emptyset,Y\}).
%\end{array}
%\end{equation}
As $\mathcal{B}_X\times \{ \emptyset, Y\}$ is $T$-invariant,
$h_{\mu\times \nu}^-(T,\mathcal{U}|\mathcal{B}_X \times
\{\emptyset,Y\}) =h_{\mu\times \nu}(T,\mathcal{U}|\mathcal{B}_X
\times \{\emptyset,Y\})$ by Lemma \ref{lem-c-4-3}. Combining this
fact with \eqref{key-eq-1} and \eqref{key-eq-2}, we get
$h_\nu^-(G,\mathcal{U})=h_\nu(G,\mathcal{U})$. This finishes the
proof.
\end{proof}

\subsection{A local version of Katok's result}

At the end of this section, we shall give a local version of a
well-known result of Katok \cite[Theorem I.I]{Kat(1980.b)} for a
$G$-action. Let $(X, G)$ be a $G$-system, $\mu\in \mathcal{M} (X,
G)$ and $\mathcal{U}\in \mathcal{C}_X$. Let $a\in (0, 1)$ and
$F\in F (G)$. Set
$$b (F, a, \mathcal{U})= \min \{\#
(\mathcal{C}): \mathcal{C}\subseteq \mathcal{U}_F\ \text{and}\ \mu
(\cup \mathcal{C})\ge a\}.$$ The following simple fact is inspired
by \cite[Lemma 5.11]{We}.

\begin{lem} \label{weiss}
$H_\mu (\mathcal{U}_F)\le \log b (F, a, \mathcal{U})+ (1- a)
|F|\log N (\mathcal{U})+ \log 2$.
\end{lem}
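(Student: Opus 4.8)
The plan is to exploit the fact that, by definition, $H_\mu(\mathcal{U}_F)=H_\mu(\mathcal{U}_F\mid\mathcal{N})=\inf\{H_\mu(\alpha):\alpha\in\mathcal{P}_X,\ \alpha\succeq\mathcal{U}_F\}$, so it suffices to exhibit a single partition $\alpha\succeq\mathcal{U}_F$ whose entropy obeys the asserted bound. First I would fix a subfamily $\mathcal{C}=\{C_1,\dots,C_b\}\subseteq\mathcal{U}_F$ attaining the minimum in the definition of $b\doteq b(F,a,\mathcal{U})$, so that $W\doteq\bigcup_{i=1}^b C_i$ satisfies $\mu(W)\ge a$. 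Then I would split $X$ into the ``good'' set $W$ and the ``bad'' set $X\setminus W$, recording this as the two-element partition $P=\{W,X\setminus W\}$, and build $\alpha$ so that it is coarse on $W$ (only $b$ atoms there) while refining $\mathcal{U}_F$ everywhere.

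Concretely, I would disjointify $\mathcal{C}$ inside $W$: set $D_1=C_1$ and $D_i=C_i\setminus\bigcup_{j<i}C_j$ for $2\le i\le b$; these are pairwise disjoint, cover $W$, and each $D_i\subseteq C_i$ lies in an element of $\mathcal{U}_F$. On the complement I would pick a minimal subcover $\{E_1,\dots,E_{N(\mathcal{U}_F)}\}$ of $\mathcal{U}_F$, disjointify it in the same way into a partition $\{E_1',\dots\}$ of $X$ with each $E_\ell'\subseteq E_\ell\in\mathcal{U}_F$, and intersect it with $X\setminus W$. Discarding empty sets, $\alpha\doteq\{D_1,\dots,D_b\}\cup\{E_\ell'\cap(X\setminus W)\}_\ell$ is a partition of $X$ with $\alpha\succeq\mathcal{U}_F$ and $\alpha\succeq P$, having at most $b$ atoms inside $W$ and at most $N(\mathcal{U}_F)$ atoms inside $X\setminus W$.

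Then I would estimate via $H_\mu(\alpha)\le H_\mu(\alpha\vee P)=H_\mu(P)+H_\mu(\alpha\mid P)$. Here $H_\mu(P)\le\log 2$ since $P$ has two atoms, while decomposing the conditional entropy over the blocks of $P$ gives $H_\mu(\alpha\mid P)=\mu(W)\,H_{\mu_W}(\alpha)+\mu(X\setminus W)\,H_{\mu_{X\setminus W}}(\alpha)$, where $\mu_B$ denotes the normalized restriction of $\mu$ to $B$. Because $\alpha$ has at most $b$ atoms of positive measure inside $W$ and at most $N(\mathcal{U}_F)$ inside $X\setminus W$, the two conditional entropies are bounded by $\log b$ and $\log N(\mathcal{U}_F)$ respectively. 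Using $\mu(W)\le 1$, $\mu(X\setminus W)\le 1-a$, and the submultiplicativity $N(\mathcal{U}_F)\le N(\mathcal{U})^{|F|}$ (which follows by iterating $H(\mathcal{U}\vee\mathcal{V})\le H(\mathcal{U})+H(\mathcal{V})$ together with $N(g^{-1}\mathcal{U})=N(\mathcal{U})$), I obtain
\[
H_\mu(\mathcal{U}_F)\le H_\mu(\alpha)\le \log 2+\log b+(1-a)\,|F|\log N(\mathcal{U}),
\]
which is exactly the claim.

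I expect the only genuinely delicate step to be the construction of $\alpha$: one must simultaneously keep $\alpha$ finer than $\mathcal{U}_F$ (so that it is admissible in the infimum defining $H_\mu(\mathcal{U}_F)$) and hold the number of its atoms inside $W$ down to $b$, so that the cheap covering of $W$ by $\mathcal{C}$ actually pays off in the conditional-entropy bound. The disjointification trick handles both requirements at once; everything else is the standard $H_\mu(\alpha)\le H_\mu(P)+H_\mu(\alpha\mid P)$ splitting together with the elementary estimate that a partition with $k$ nonempty atoms has entropy at most $\log k$.
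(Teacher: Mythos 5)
Your proof is correct and follows essentially the same route as the paper: the same partition $\alpha$ (disjointified optimal subfamily $\mathcal{C}$ on $W$, disjointified minimal subcover of $\mathcal{U}_F$ on $X\setminus W$), with the entropy bound obtained by conditioning on $\{W, X\setminus W\}$, which is just a repackaging of the paper's elementary inequality $\sum_{i}\phi(x_i)\le \left(\sum_i x_i\right)\log m+\phi\left(\sum_i x_i\right)$ applied to the atoms in each block. No gaps.
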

\begin{proof}
Let $\mathcal{C}=\{ C_1,\cdots,C_\ell\}\subseteq
\mathcal{U}_F$ such that $\mu (\cup \mathcal{C})\ge a$ and $\ell=
b (F, a, \mathcal{U})$. Let $\alpha_1=\{ C_1,C_2\setminus
C_1,\cdots, C_\ell\setminus \bigcup_{j=1}^{\ell-1} C_j\}$. Then
$\alpha_1$ is a partition of $\bigcup_{i=1}^\ell C_i$ and $\#
\alpha_1=b (F, a, \mathcal{U})$. Similarly, we take $\alpha_2'\in
\mathcal{P}_X$ satisfying $\#\alpha_2'=N(\mathcal{U}_F)$. Then let
$\alpha_2=\{ A \cap (X\setminus \bigcup_{i=1}^\ell C_i): A\in
\alpha_2'\}$. Then $\#\alpha_2\le N(\mathcal{U}_F)$. Set $\alpha=
\alpha_1\cup \alpha_2$. Then $\alpha\in \mathcal{P}_X$ and $\alpha
\succeq \mathcal{U}_F$. Note that if $x_1, \cdots, x_m\ge 0$ then
\begin{equation} \label{27-6}
\sum_{i= 1}^m \phi (x_i)\le \left(\sum_{i= 1}^m x_i\right) \log m+
\phi \left(\sum_{i= 1}^m x_i\right),
\end{equation}
thus one has
\begin{eqnarray*}
& & H_\mu (\mathcal{U}_F)\le H_\mu (\alpha) \\
&\le & \mu \left(\bigcup_{i=1}^\ell C_i\right) \left(\log \# \alpha_1- \log \mu
\left(\bigcup_{i=1}^\ell C_i\right)\right)+ \\
& &\hskip2cm \left(1- \mu \left(\bigcup_{i=1}^\ell C_i\right)\right) \left(\log \#
\alpha_2- \log \left(1- \mu \left(\bigcup_{i=1}^\ell C_i\right)\right)\right)\ (\text{by \eqref{27-6}}) \\
&\le & \log b (F, a, \mathcal{U})+ (1- a) \log N(\mathcal{U}_F)-
\mu \left(\bigcup_{i=1}^\ell C_i\right) \log \mu
\left(\bigcup_{i=1}^\ell C_i\right) \\
& &\hskip2cm -\left(1- \mu \left(\bigcup_{i=1}^\ell C_i\right)\right) \log \left(1- \mu
\left(\bigcup_{i=1}^\ell C_i\right)\right)\\
 &\le & \log b (F, a, \mathcal{U})+ (1- a) |F|\log
N(\mathcal{U})+ \log 2.
\end{eqnarray*}
\end{proof}

As a direct application of Lemma \ref{weiss} by letting
$a\rightarrow 1-$ we have

\begin{prop} \label{le part}
Let $\{F_n\}_{n\in \mathbb{N}}$ be a F\o lner sequence of $G$.
Then
\begin{equation*}
h_\mu^- (G, \mathcal{U})\le \lim_{\epsilon\rightarrow 0+}
\liminf_{n\rightarrow +\infty} \frac{1}{|F_n|} \log b (F_n, 1-
\epsilon, \mathcal{U}).
\end{equation*}
\end{prop}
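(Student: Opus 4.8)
The plan is to read the estimate directly off Lemma \ref{weiss}, since the proposition is advertised as a ``direct application.'' First I would fix $\epsilon\in(0,1)$ and apply Lemma \ref{weiss} with $F=F_n$ and $a=1-\epsilon$, obtaining for every $n$
\[
H_\mu(\mathcal{U}_{F_n})\le \log b(F_n,1-\epsilon,\mathcal{U})+\epsilon\,|F_n|\log N(\mathcal{U})+\log 2.
\]
Dividing through by $|F_n|$ gives
\[
\frac{1}{|F_n|}H_\mu(\mathcal{U}_{F_n})\le \frac{1}{|F_n|}\log b(F_n,1-\epsilon,\mathcal{U})+\epsilon\log N(\mathcal{U})+\frac{\log 2}{|F_n|}.
\]

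Next I would take $\liminf_{n\to+\infty}$ on both sides. On the left, since $F\in F(G)\mapsto H_\mu(\mathcal{U}_F)$ is m.n.i.s.a., Lemma \ref{convergent} guarantees that $\frac{1}{|F_n|}H_\mu(\mathcal{U}_{F_n})$ actually converges to $h_\mu^-(G,\mathcal{U})$ along \emph{any} F\o lner sequence, so the left-hand $\liminf$ equals $h_\mu^-(G,\mathcal{U})$. On the right, the term $\epsilon\log N(\mathcal{U})$ is independent of $n$, while $\frac{\log 2}{|F_n|}\to 0$ because $|F_n|\to+\infty$ for a F\o lner sequence of the infinite group $G$. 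Hence
\[
h_\mu^-(G,\mathcal{U})\le \liminf_{n\to+\infty}\frac{1}{|F_n|}\log b(F_n,1-\epsilon,\mathcal{U})+\epsilon\log N(\mathcal{U})
\]
for every $\epsilon\in(0,1)$.

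Finally I would let $\epsilon\to 0+$. Because $b(F,a,\mathcal{U})$ is non-decreasing in $a$ (requiring $\mu(\cup\mathcal{C})\ge a$ for larger $a$ can only force more elements), the quantity $\liminf_{n\to+\infty}\frac{1}{|F_n|}\log b(F_n,1-\epsilon,\mathcal{U})$ is non-increasing in $\epsilon$, so its limit as $\epsilon\to 0+$ exists (as the supremum over $\epsilon$); meanwhile the correction $\epsilon\log N(\mathcal{U})$ tends to $0$. Passing to the limit yields precisely the asserted inequality. I do not expect a genuine obstacle here: all the real content sits in Lemma \ref{weiss}, and the only points needing (routine) care are invoking Lemma \ref{convergent} to identify the left-hand limit as $h_\mu^-(G,\mathcal{U})$ independently of the chosen F\o lner sequence, noting $|F_n|\to+\infty$ so the $\log 2$ term is negligible, and using monotonicity of $b$ in $a$ to ensure the outer $\epsilon$-limit exists and the two limits may be split.
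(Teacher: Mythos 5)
Your proposal is correct and is exactly the paper's argument: the paper proves Proposition \ref{le part} by applying Lemma \ref{weiss} with $a=1-\epsilon$, dividing by $|F_n|$, and letting $a\rightarrow 1-$, which is precisely your computation. The details you supply (identifying the left-hand limit as $h_\mu^-(G,\mathcal{U})$ via Lemma \ref{convergent}, noting $|F_n|\rightarrow+\infty$ so the $\log 2$ term vanishes, and using monotonicity of $b$ in $a$ to justify the $\epsilon$-limit) are the routine points the paper leaves implicit.
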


The following result is \cite[Theorem 1.3]{L}.

\begin{lem} \label{Lindenstrauss}
Let $\alpha\in \mathcal{P}_X$ and $\{F_n\}_{n\in \mathbb{N}}$ be a
F\o lner sequence of $G$ such that $\lim_{n\rightarrow +\infty}
\frac{|F_n|}{\log n}= +\infty$ and for some constant $C> 0$ one
has $|\bigcup_{k= 1}^{n- 1} F_k^{- 1} F_n|\le C |F_n|$ for each
$n\in \N$. If $\mu$ is ergodic then for $\mu$-a.e. $x\in X$ and in
the sense of $L^1 (X, \mathcal{B}_X, \mu)$-norm one has
\begin{equation*}
\lim_{n\rightarrow +\infty} - \frac{\log \mu (\alpha_{F_n}
(x))}{|F_n|}= h_\mu (G, \alpha).
\end{equation*}
\end{lem}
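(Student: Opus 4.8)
The plan is to adapt Lindenstrauss's scheme: reduce the statement to convergence of the information functions, prove the $\mu$-a.e. limit by sandwiching between an upper and a lower bound, and deduce the $L^1$ statement from the a.e. one. Write $I_n(x)\doteq -\log\mu(\alpha_{F_n}(x))$, $g_n\doteq\frac{1}{|F_n|}I_n$ and $h\doteq h_\mu(G,\alpha)$. Since $\int_X I_n\,d\mu=H_\mu(\alpha_{F_n})$, the means $\int_X g_n\,d\mu$ already converge to $h$ by \eqref{eq-p-e} together with Lemma \ref{convergent}; hence the entire content of the lemma is the upgrade of this convergence of \emph{means} to convergence of the \emph{functions} $g_n$ to the constant $h$. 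The $L^1$ statement is then essentially free: once $g_n\to h$ $\mu$-a.e. is established, the functions $g_n$ are non-negative with $\int_X g_n\,d\mu\to h$, so Scheff\'e's lemma yields $\int_X|g_n-h|\,d\mu\to 0$. Thus everything reduces to the pointwise convergence.

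For the pointwise \emph{upper} bound $\limsup_n g_n(x)\le h$, I would fix $\epsilon>0$ and choose a fixed block $F_{n_i}$ with $\frac{1}{|F_{n_i}|}H_\mu(\alpha_{F_{n_i}})\le h+\epsilon$, then quasi-tile $F_n$ by translates of such blocks using Proposition \ref{ow-prop}, with tiling centres $C_i^n$. Since $\alpha_{F_n}$ is finer than $\bigvee_{i}\bigvee_{c\in C_i^n}\alpha_{F_{n_i}c}$ off the small remainder $F_n\setminus\bigcup_i F_{n_i}C_i^n$, the strong subadditivity of Lemma \ref{lem-436}~(1) dominates $I_n(x)$ by a sum of block information functions evaluated along the orbit (each $-\log\mu(\alpha_{F_{n_i}c}(x))=-\log\mu(\alpha_{F_{n_i}}(cx))$ by $G$-invariance of $\mu$) plus an error of order $\epsilon|F_n|\log N(\alpha)$. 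Applying the pointwise ergodic theorem for $\{F_n\}$ and its maximal inequality to the block information functions then forces $\limsup_n g_n(x)\le h+O(\epsilon)$ $\mu$-a.e., and letting $\epsilon\to 0+$ gives the bound.

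For the matching \emph{lower} bound $\liminf_n g_n(x)\ge h$, I would use a packing/counting estimate: the atoms of $\alpha_{F_n}$ of $\mu$-measure at least $e^{-(h-\epsilon)|F_n|}$ number at most $e^{(h-\epsilon)|F_n|}$, so a fixed amount of mass cannot concentrate on such "large" atoms in the limit without contradicting the value $h$ of the entropy. Converting this in-measure statement into an a.e. statement again goes through the maximal inequality, and the growth hypothesis $\lim_n|F_n|/\log n=+\infty$ enters precisely in the Borel--Cantelli step that excludes the events $\{g_n<h-\epsilon\}$ infinitely often. Combining the two bounds gives $g_n\to h$ $\mu$-a.e., and with the first paragraph this completes both the a.e. and the $L^1$ conclusions.

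The step I expect to be the genuine obstacle, and the one for which both hypotheses on $\{F_n\}$ are indispensable, is the pointwise ergodic theorem, equivalently the weak-type $(1,1)$ maximal inequality for the averages $A_{F_n}$, invoked in both bounds above. Unlike the Euclidean setting there is no deterministic Vitali covering available; Lindenstrauss's device is a \emph{probabilistic} covering lemma that greedily selects from the translates $\{F_n g\}$ a subfamily covering a definite proportion of any finite set with uniformly bounded multiplicity. The tempered (Shulman) condition $|\bigcup_{k<n}F_k^{-1}F_n|\le C|F_n|$ is exactly what guarantees that this bounded-overlap selection succeeds, while $\lim_n|F_n|/\log n=+\infty$ supplies the Borel--Cantelli input. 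I would regard establishing this covering lemma and the resulting maximal inequality as the heart of the matter; the remaining ingredients are organizational, assembled from the subadditivity and quasi-tiling results already developed in Sections 2 and 3.
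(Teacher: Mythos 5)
You should first be aware that the paper contains no proof of this lemma to compare against: it is imported verbatim, with the single sentence ``The following result is \cite[Theorem 1.3]{L}'', i.e.\ it is Lindenstrauss's Shannon--McMillan--Breiman theorem for amenable groups, used here as a black box. So your reconstruction has to be measured against Lindenstrauss's original argument. Your high-level architecture does match his --- prove a.e.\ convergence, deduce the $L^1$ statement (your Scheff\'e reduction is correct, since $\int_X g_n\,d\mu=H_\mu(\alpha_{F_n})/|F_n|\to h$ by \eqref{eq-p-e}), and rest everything on the covering lemma and maximal inequality for tempered F\o lner sequences --- but two of the steps you describe would fail as stated, and the step you correctly call ``the heart of the matter'' is precisely the content of \cite{L} and is not supplied, so even on its own terms the proposal is an outline of the literature rather than a proof.

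The concrete gaps are these. First, your upper bound invokes the strong subadditivity of Lemma \ref{lem-436}~(1) to dominate the \emph{pointwise} information function $I_n(x)=-\log\mu(\alpha_{F_n}(x))$ by a sum of block information functions along the orbit. Lemma \ref{lem-436}~(1) is a statement about entropies, i.e.\ about the integrals $H_\mu(\alpha_E)=\int_X I_E\,d\mu$; it has no pointwise analogue. Pointwise one only has monotonicity $I_{E\cup F}(x)\ge I_E(x)$, while the subadditive inequality $I_{E\cup F}(x)\le I_E(x)+I_F(x)$ would require $\mu\bigl(\alpha_E(x)\cap\alpha_F(x)\bigr)\ge\mu(\alpha_E(x))\,\mu(\alpha_F(x))$, i.e.\ positive correlation of atoms, which is false in general; this failure is exactly why SMB cannot be run as a Kingman-type subadditive argument and why Ornstein--Weiss and Lindenstrauss instead \emph{count typical names}: quasi-tiling plus the ergodic theorem shows a.e.\ $x$ eventually lies in a family of at most $e^{(h+\epsilon)|F_n|}$ atoms, and then one discards those atoms in this family of measure $<e^{-(h+2\epsilon)|F_n|}$, whose union has measure $\le e^{-\epsilon|F_n|}$, by Borel--Cantelli. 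Second, and relatedly, you place the Borel--Cantelli step (and hence the hypothesis $|F_n|/\log n\to+\infty$) in the \emph{lower} bound, to exclude the events $\{g_n<h-\epsilon\}$ infinitely often; but no summable bound on $\mu(\{g_n<h-\epsilon\})$ is available there --- knowing that the ``large'' atoms number at most $e^{(h-\epsilon)|F_n|}$ says nothing about their total measure, so that Borel--Cantelli step has nothing to feed on. In the actual proof the growth hypothesis is consumed in the upper bound as just described, while the lower bound is handled by a genuine packing argument (upgrade ``positive measure of large atoms infinitely often'' into a covering of most of the space by too few $\alpha_{F_m}$-atoms, contradicting the elementary estimate of Lemma \ref{weiss} type), which your sketch compresses into the unproved assertion that mass ``cannot concentrate on large atoms without contradicting the value $h$''.
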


\begin{prop} \label{ge part}
Let $\{F_n\}_{n\in \mathbb{N}}$ be a F\o lner sequence of $G$. If
$\mu\in \mathcal{M}^e (X, G)$ then
\begin{equation*}
h_\mu (G, \mathcal{U})\ge \lim_{\epsilon\rightarrow 0+}
\limsup_{n\rightarrow +\infty} \frac{1}{|F_n|} \log b (F_n, 1-
\epsilon, \mathcal{U}).
\end{equation*}
\end{prop}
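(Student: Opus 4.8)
The plan is to combine two facts: the definition $h_\mu(G,\mathcal{U})=\inf_{\alpha\in\mathcal{P}_X:\,\alpha\succeq\mathcal{U}}h_\mu(G,\alpha)$, and the pointwise Shannon--McMillan--Breiman theorem for ergodic $\mu$ supplied by Lemma \ref{Lindenstrauss}. It suffices to prove, for each partition $\alpha\succeq\mathcal{U}$ and each $\epsilon\in(0,1)$, the estimate $\limsup_{n\to+\infty}\frac{1}{|F_n|}\log b(F_n,1-\epsilon,\mathcal{U})\le h_\mu(G,\alpha)$. Taking the infimum over all such $\alpha$ replaces the right-hand side by $h_\mu(G,\mathcal{U})$, and since $b(F_n,1-\epsilon,\mathcal{U})$ is nondecreasing as $\epsilon\to0+$, the outer limit over $\epsilon$ is a supremum and the estimate passes to it, giving the asserted inequality.

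First I would carry out a domination step reducing covering by $\mathcal{U}_{F_n}$ to covering by atoms of $\alpha_{F_n}$. Since $\alpha\succeq\mathcal{U}$ we have $\alpha_{F_n}\succeq\mathcal{U}_{F_n}$, so every atom of $\alpha_{F_n}$ lies inside some element of $\mathcal{U}_{F_n}$. Hence, writing $K(F_n,1-\epsilon,\alpha)$ for the least number of atoms of $\alpha_{F_n}$ whose union has measure $\ge1-\epsilon$, and choosing for each of these atoms a containing element of $\mathcal{U}_{F_n}$, one obtains a subfamily of $\mathcal{U}_{F_n}$ of the same or smaller cardinality covering the same set, whence $b(F_n,1-\epsilon,\mathcal{U})\le K(F_n,1-\epsilon,\alpha)$.

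The heart of the argument is to bound $K(F_n,1-\epsilon,\alpha)$ via SMB, and here I expect the main obstacle: Lemma \ref{Lindenstrauss} requires the Følner sequence to be tempered with $|F_n|/\log n\to+\infty$, while $\{F_n\}$ is arbitrary. I would circumvent this by a contradiction together with a subsequence device. Suppose $\limsup_n\frac{1}{|F_n|}\log b(F_n,1-\epsilon,\mathcal{U})>h+2\eta$ with $h=h_\mu(G,\alpha)$ and some $\eta>0$; choose a subsequence along which $\frac{1}{|F_{n_j}|}\log b(F_{n_j},1-\epsilon,\mathcal{U})>h+2\eta$, and then thin it, by the standard extraction, to a tempered subsequence $\{F_{m_i}\}$ grown fast enough that $|F_{m_i}|/\log i\to+\infty$ (every Følner sequence admits such a subsequence). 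Because $h_\mu(G,\alpha)$ is independent of the Følner sequence (Lemma \ref{convergent}), Lemma \ref{Lindenstrauss} applies along $\{F_{m_i}\}$: for $\mu$-a.e. $x$ one has $-\frac{1}{|F_{m_i}|}\log\mu(\alpha_{F_{m_i}}(x))\to h$. Almost everywhere convergence gives convergence in measure, so for large $i$ the set $E_i=\{x:\mu(\alpha_{F_{m_i}}(x))\ge e^{-|F_{m_i}|(h+\eta)}\}$ satisfies $\mu(E_i)\ge1-\epsilon$; the atoms of $\alpha_{F_{m_i}}$ meeting $E_i$ then cover a set of measure $\ge1-\epsilon$, each has measure $\ge e^{-|F_{m_i}|(h+\eta)}$, and being disjoint there are at most $e^{|F_{m_i}|(h+\eta)}$ of them. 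Thus $b(F_{m_i},1-\epsilon,\mathcal{U})\le K(F_{m_i},1-\epsilon,\alpha)\le e^{|F_{m_i}|(h+\eta)}$, i.e. $\frac{1}{|F_{m_i}|}\log b(F_{m_i},1-\epsilon,\mathcal{U})\le h+\eta<h+2\eta$, contradicting the choice of the subsequence. This yields the desired $\limsup$ estimate.

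The decisive difficulty is precisely this clash between the arbitrary Følner sequence in the statement and the temperedness hypothesis of Lemma \ref{Lindenstrauss}. The ergodicity assumption $\mu\in\mathcal{M}^e(X,G)$ is used exactly to invoke the pointwise SMB theorem, and the sequence-independence of $h_\mu(G,\alpha)$ is what legitimizes passing to a tempered subsequence inside the contradiction; once these are in place the remaining counting and the final optimization over $\alpha$ and $\epsilon$ are routine.
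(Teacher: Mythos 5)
Your proposal is correct and follows essentially the same route as the paper's proof: both reduce to partitions $\alpha\succeq\mathcal{U}$ via $b(F_n,1-\epsilon,\mathcal{U})\le b(F_n,1-\epsilon,\alpha)$, both resolve the temperedness issue by passing to a subsequence of $\{F_n\}$ satisfying the hypotheses of Lemma \ref{Lindenstrauss}, and both conclude by the same atom-counting bound $b\le e^{|F|(h_\mu(G,\alpha)+\delta)}$ before optimizing over $\alpha$ and $\epsilon$. The only cosmetic difference is that you package the subsequence extraction inside a proof by contradiction, whereas the paper directly selects a tempered subsequence along which the $\limsup$ is attained.
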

\begin{proof}
First for any $\mathcal{P}\in \mathcal{P}_X$ we claim the
conclusion by proving
\begin{equation} \label{partition case}
h_\mu (G, \alpha)\ge \limsup_{n\rightarrow +\infty}
\frac{1}{|F_n|} \log b (F_n, 1- \epsilon, \alpha)\ \text{for each
$\epsilon\in (0, 1)$}.
\end{equation}

\begin{proof}[Proof of Claim]
Fix $\epsilon\in (0, 1)$. In $\{F_n\}_{n\in \N}$ we can select a
sub-sequence $\{E_n\}_{n\in \mathbb{N}}$ satisfying
\begin{equation*}
\limsup_{n\rightarrow +\infty} \frac{1}{|F_n|} \log b (F_n, 1-
\epsilon, \alpha)= \lim_{n\rightarrow +\infty} \frac{1}{|E_n|}
\log b (E_n, 1- \epsilon, \alpha),
\end{equation*}
$\lim_{n\rightarrow +\infty} \frac{|E_n|}{\log n}= +\infty$ and
for some constant $C> 0$ one has $|\bigcup_{k= 1}^{n- 1} E_k^{- 1}
E_n|\le C |E_n|$ for each $n\in \N$. Now applying Lemma
\ref{Lindenstrauss} to $\{E_n\}_{n\in \mathbb{N}}$, for each
$\delta> 0$ there exists $N\in \mathbb{N}$ such that for each
$n\ge N$, $\mu (A_n)\ge 1- \epsilon$ where
\begin{eqnarray*}
A_n&= & \left\{x\in X: - \frac{\log \mu (\alpha_{E_n}
(x))}{|E_n|}\le h_\mu
(G, \alpha)+ \delta\right\} \\
&\supseteq & \left\{x\in X: - \frac{\log \mu (\alpha_{E_m}
(x))}{|E_m|}\le h_\mu (G, \alpha)+ \delta\ \text{if}\ m\ge
n\right\}.
\end{eqnarray*}
Note that $A_n$ must be a union of some atoms in $\alpha_{E_n}$,
where each atom has measure at least $e^{- |E_n| (h_\mu (G,
\alpha)+ \delta)}$, which implies $b (E_n, 1- \epsilon, \alpha)\le
(1- \epsilon) e^{|E_n| (h_\mu (G, \alpha)+ \delta)}$ when $n\ge
N$. So
\begin{equation*}
\limsup_{n\rightarrow +\infty} \frac{1}{|F_n|} \log b (F_n, 1-
\epsilon, \alpha)= \lim_{n\rightarrow +\infty} \frac{1}{|E_n|}
\log b (E_n, 1- \epsilon, \alpha)\le h_\mu (G, \alpha)+ \delta.
\end{equation*}
Since $\delta> 0$ is arbitrary, one claims \eqref{partition case}.
\end{proof}

Now for general case, by the above discussions we have
\begin{eqnarray*}
h_\mu (G, \mathcal{U})&= & \inf_{\alpha\in \mathcal{P}_X:
\alpha\succeq \mathcal{U}} h_\mu (G, \alpha) \\
&\ge & \inf_{\alpha\in \mathcal{P}_X: \alpha\succeq \mathcal{U}}
\lim_{\epsilon\rightarrow 0+} \limsup_{n\rightarrow +\infty}
\frac{1}{|F_n|} \log b (F_n, 1- \epsilon, \alpha)\
(\text{by}\ \eqref{partition case}) \\
&\ge & \lim_{\epsilon\rightarrow 0+} \limsup_{n\rightarrow
+\infty} \frac{1}{|F_n|} \log b (F_n, 1- \epsilon, \mathcal{U}).
\end{eqnarray*}
\end{proof}

Now combining Lemma \ref{equivalence} with Propositions \ref{le
part} and \ref{ge part} we obtain (when $G= \Z$, it can be viewed
as a local version of Katok's result \cite[Theorem
I.I]{Kat(1980.b)})

\begin{thm} \label{060111}
Let $\{F_n\}_{n\in \mathbb{N}}$ be a F\o lner sequence of $G$. If
$\mu\in \mathcal{M}^e (X, G)$ then
\begin{eqnarray*}
h_\mu (G, \mathcal{U})= \lim_{\epsilon\rightarrow 0+}
\limsup_{n\rightarrow +\infty} \frac{1}{|F_n|} \log b (F_n, 1-
\epsilon, \mathcal{U})= \lim_{\epsilon\rightarrow 0+}
\liminf_{n\rightarrow +\infty} \frac{1}{|F_n|} \log b (F_n, 1-
\epsilon, \mathcal{U}).
\end{eqnarray*}
\end{thm}

\section{A local variational principle of topological entropy}\label{sectionlvp}

The main result of this section is

\begin{thm}[Local Variational Principle of Topological Entropy]  \label{avpin}
Let $\mathcal{U}\in \mathcal{C}_X^{o}$. Then
$$h_{\text{top}}(G,\mathcal{U})= \max_{\mu \in
\mathcal{M}(X,G)} h_\mu(G,\mathcal{U})= \max_{\mu \in \mathcal{M}^e
(X,G)} h_\mu(G,\mathcal{U}).$$
\end{thm}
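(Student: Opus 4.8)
The plan is to prove two inequalities and then extract an ergodic maximiser. For the inequality $\sup_{\mu\in\mathcal M(X,G)}h_\mu(G,\mathcal U)\le h_{\text{top}}(G,\mathcal U)$, I would simply combine Theorem \ref{equivalence} (which gives $h_\mu(G,\mathcal U)=h_\mu^-(G,\mathcal U)$ once $(X,\mathcal B_X^\mu,\mu,G)$ is viewed as a Lebesgue $G$-system) with Lemma \ref{basic}~(1), which yields $h_\mu^-(G,\mathcal U)\le h_{\text{top}}(G,\mathcal U)$. Since $\mathcal M^e(X,G)\subseteq\mathcal M(X,G)$, this also bounds the ergodic supremum. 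All the work is therefore in producing one invariant measure $\mu$ with $h_\mu(G,\mathcal U)\ge h_{\text{top}}(G,\mathcal U)$.

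The construction rests on a finitary ingredient: for every finite cover $\mathcal W$ of $X$ one has $\max_{\nu\in\mathcal M(X)}H_\nu(\mathcal W)=\log N(\mathcal W)$. The bound $\le$ is immediate (refine a minimal subcover to a partition of cardinality $N(\mathcal W)$), while the existence of a maximiser is the combinatorial heart; I would obtain it from the concavity and upper semicontinuity of $\nu\mapsto H_\nu(\mathcal W)=\min_{\beta\in P(\mathcal W)}H_\nu(\beta)$ on the simplex $\mathcal M(X)$ (the min-over-partitions identity \eqref{keyeq1}). Applying this to $\mathcal W=\mathcal U_{F_n}$, pick $\sigma_n\in\mathcal M(X)$ with $H_{\sigma_n}(\mathcal U_{F_n})=\log N(\mathcal U_{F_n})$ and set $\mu_n=\frac1{|F_n|}\sum_{g\in F_n}g\sigma_n$. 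A standard F\o lner estimate (for $f\in C(X)$ and $h\in G$ the averages differ by $O(|hF_n\Delta F_n|/|F_n|)\to 0$) shows that any weak$^*$-limit point $\mu$ of $\{\mu_n\}$ lies in $\mathcal M(X,G)$.

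The key estimate is then to bound, for a fixed $F\in F(G)$, the concave function $\psi_F(\cdot)=\inf_{\alpha\in\mathcal P_X:\alpha\succeq\mathcal U}H_{\cdot}(\alpha_F)$ from below along $\{\mu_n\}$. Since every $\alpha\succeq\mathcal U$ coarsens to some $\beta\in P(\mathcal U)$ with $\mathcal U\preceq\beta\preceq\alpha$, the infimum defining $\psi_F$ reduces to the finite family $P(\mathcal U)$, whose cardinalities are bounded. For each such $\alpha$, concavity of $\theta\mapsto H_\theta(\alpha_F)$ gives $H_{\mu_n}(\alpha_F)\ge\frac1{|F_n|}\sum_{g\in F_n}H_{\sigma_n}(\alpha_{Fg})$, and the strong-subadditivity counting inequality Lemma \ref{lem-436}~(3) (with $B=F$ and the outer set $F_n$) produces the crucial factor $|F|$:
\begin{equation*}
\frac1{|F_n|}\sum_{g\in F_n}H_{\sigma_n}(\alpha_{Fg})\ge |F|\,\frac{H_{\sigma_n}(\alpha_{F_n})}{|F_n|}-o(1)\ge |F|\,\frac{\log N(\mathcal U_{F_n})}{|F_n|}-o(1),
\end{equation*}
where the boundary error $|F|\cdot|F_n\setminus\{g:F^{-1}g\subseteq F_n\}|\cdot\log\#\alpha/|F_n|$ is $o(1)$ by the F\o lner property (uniformly over $\alpha\in P(\mathcal U)$), and where $\alpha_{F_n}\succeq\mathcal U_{F_n}$ is used. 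Taking the infimum over $\alpha$ gives $\psi_F(\mu_n)\ge |F|\,\frac{\log N(\mathcal U_{F_n})}{|F_n|}-o(1)$, hence $\liminf_n\psi_F(\mu_n)\ge |F|\,h_{\text{top}}(G,\mathcal U)$. Now the upper semicontinuity of $\psi_F$ (Lemma \ref{observation}) gives $\limsup_n\psi_F(\mu_n)\le\psi_F(\mu)$, so $\psi_F(\mu)\ge |F|\,h_{\text{top}}(G,\mathcal U)$. Dividing by $|F|$ and taking the infimum over $F$, the formula $h_\mu(G,\mathcal U)=\inf_{F\in F(G)}\frac1{|F|}\psi_F(\mu)$ (from Lemma \ref{lem-436}~(4), exactly as in the proof of Proposition \ref{usc-em_1}) yields $h_\mu(G,\mathcal U)\ge h_{\text{top}}(G,\mathcal U)$, so in fact equality holds and the supremum over $\mathcal M(X,G)$ is attained (it is also attained by u.s.c.\ of $h_{\{\cdot\}}(G,\mathcal U)$ plus compactness).

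Finally, to replace the maximiser by an ergodic one, I would feed $\mu$ through its ergodic decomposition $\mu=\int_{\mathcal M^e(X,G)}\theta\,dm(\theta)$ and the affine integral formula \eqref{lab-eq} of Theorem \ref{en-decom}: since $h_\theta(G,\mathcal U)\le h_{\text{top}}(G,\mathcal U)$ for every $\theta$ while $\int_{\mathcal M^e(X,G)} h_\theta(G,\mathcal U)\,dm(\theta)=h_\mu(G,\mathcal U)=h_{\text{top}}(G,\mathcal U)$, we must have $h_\theta(G,\mathcal U)=h_{\text{top}}(G,\mathcal U)$ for $m$-a.e.\ $\theta\in\mathcal M^e(X,G)$, which furnishes the ergodic maximiser. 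The main obstacle is the quantitatively correct middle step: extracting the factor $|F|$ (not just a single copy) from the subadditivity lemma while keeping all boundary contributions sublinear, and deploying the upper semicontinuity of cover entropy (Lemma \ref{observation}, which itself rests on the amenable convergence machinery of Lemma \ref{conv-lem}) in precisely the place where cover entropy fails to be continuous. The finitary identity $\max_\nu H_\nu(\mathcal W)=\log N(\mathcal W)$ is the other delicate point.
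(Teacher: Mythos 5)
There is a genuine, fatal gap: the ``finitary ingredient'' on which your whole construction rests, namely $\max_{\nu\in\mathcal M(X)}H_\nu(\mathcal W)=\log N(\mathcal W)$ for every finite cover $\mathcal W$, is false. Take $X=\{1,2,3\}$ with the discrete topology and the open cover $\mathcal W=\{\{1,2\},\{2,3\},\{1,3\}\}$, so $N(\mathcal W)=2$. A partition is finer than $\mathcal W$ exactly when each of its atoms has at most two points, so for $\nu=(p_1,p_2,p_3)$ one gets $H_\nu(\mathcal W)=\min_{1\le i\le 3}\bigl(\phi(p_i)+\phi(1-p_i)\bigr)$, the minimum being realized by the partitions $\{\{j,k\},\{i\}\}$. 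Since $\sum_i p_i=1$, some $p_i\le\frac13$, and $t\mapsto\phi(t)+\phi(1-t)$ is increasing on $[0,\frac12]$, so $H_\nu(\mathcal W)\le\phi(\tfrac13)+\phi(\tfrac23)=\log 3-\tfrac23\log 2<\log 2$ for \emph{every} $\nu\in\mathcal M(X)$. Concavity and upper semicontinuity can only give you that the supremum is attained, not that its value is $\log N(\mathcal W)$. Consequently you cannot choose $\sigma_n$ with $H_{\sigma_n}(\alpha_{F_n})\ge\log N(\mathcal U_{F_n})$ for all $\alpha\succeq\mathcal U$ (which is what your displayed chain of inequalities requires, since it must hold after taking the infimum over $\alpha$), and the key estimate $\liminf_n\psi_F(\mu_n)\ge|F|\,h_{\text{top}}(G,\mathcal U)$ collapses. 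The obstruction is structural, not technical: no single measure can be spread out with respect to \emph{all} partitions inscribed in a cover simultaneously, and this is precisely what makes the local variational principle hard. (A secondary slip: your reduction of the infimum to ``the finite family $P(\mathcal U)$'' is also wrong as stated --- merging the atoms of $\alpha$ inside elements of $\mathcal U$ gives a partition finer than $\mathcal U$ but generally \emph{not} coarser than the partition generated by $\mathcal U$; only the cardinality bound $\#\beta\le\#\mathcal U$ survives, though that is all you actually use.)

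The rest of your architecture is sound and closely parallels the paper's Proposition \ref{plecd}: the averaging $\mu_n=\frac1{|F_n|}\sum_{g\in F_n}g\sigma_n$, the use of Lemma \ref{lem-436}~(3) with $B=F$ to extract the factor $|F|$ with F{\o}lner-small boundary error (compare \eqref{n-v}, \eqref{n-v1}, \eqref{n-v2}), the weak$^*$ limit handled by upper semicontinuity, the identity $h_\mu(G,\mathcal U)=\inf_{F\in F(G)}\frac1{|F|}\inf_{\alpha\succeq\mathcal U}H_\mu(\alpha_F)$ from Lemma \ref{lem-436}~(4), and the ergodic decomposition step via Theorem \ref{en-decom}. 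What the paper puts in place of your false identity is the combinatorial Lemma \ref{key} together with a zero-dimensional reduction: one first replaces $(X,G)$ by a zero-dimensional extension $(Z,G)$ (a closed shift-invariant subset of $C^G$), so that the partitions finer than the lifted cover may be taken clopen and hence enumerated as a countable family $\{\alpha_l\}$; then for each $n$ Lemma \ref{key} produces a finite set $B_n$ with $\#B_n\ge N(\mathcal U_{F_n})/n$ meeting every atom of each $(\alpha_l)_{F_n}$, $l\le n$, at most once; taking $\sigma_n$ uniform on $B_n$ gives $H_{\sigma_n}((\alpha_l)_{F_n})\ge\log N(\mathcal U_{F_n})-\log n$ \emph{simultaneously} for $l\le n$, with a loss $\log n$ that is sublinear in $|F_n|$. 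The division by the number of partitions under consideration, plus the countable clopen enumeration (and clopenness is also what makes the entropies pass to the weak$^*$ limit), is the mechanism that replaces the identity you assumed; it is the ingredient you would have to import to repair the proposal.
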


We remark that Theorem \ref{avpin} generalizes the results in
\cite{OP2, ST}:

\begin{thm}[Variational Principle of Topological Entropy, \cite{OP2,
ST}]
\label{vpp}
$$h_{\text{top}}(G, X)=\sup_{\mu \in
\mathcal{M}(X,G)} h_\mu(G, X)=\sup_{\mu\in
\mathcal{M}^e(X,G)}h_\mu(G, X).
$$
\end{thm}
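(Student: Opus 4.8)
The plan is to derive the global principle directly from the Local Variational Principle for a fixed open cover (Theorem \ref{avpin}) together with the cover formula for measure-theoretic entropy (Theorem \ref{ope}). Concretely, I would prove the chain
\begin{equation*}
\sup_{\mu \in \mathcal{M}^e(X,G)} h_\mu(G, X) \le \sup_{\mu \in \mathcal{M}(X,G)} h_\mu(G, X) \le h_{\text{top}}(G, X) \le \sup_{\mu \in \mathcal{M}^e(X,G)} h_\mu(G, X),
\end{equation*}
whose three links then force equality throughout. The first link is immediate from $\mathcal{M}^e(X,G)\subseteq \mathcal{M}(X,G)$.

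For the middle link I would fix $\mu\in \mathcal{M}(X,G)$ and use Theorem \ref{ope} to write $h_\mu(G,X)=\sup_{\mathcal{U}\in \mathcal{C}^o_X} h_\mu(G,\mathcal{U})$. For each finite open cover $\mathcal{U}$, Theorem \ref{avpin} gives $h_\mu(G,\mathcal{U})\le \max_{\nu\in \mathcal{M}(X,G)} h_\nu(G,\mathcal{U}) = h_{\text{top}}(G,\mathcal{U})\le h_{\text{top}}(G,X)$, where the final step is just the definition of $h_{\text{top}}(G,X)$ as a supremum over $\mathcal{C}^o_X$. Taking the supremum over $\mathcal{U}$ and then over $\mu$ yields the middle inequality.

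For the third (and only substantial) link I would fix a finite open cover $\mathcal{U}$ and invoke the ergodic clause of Theorem \ref{avpin} to pick $\mu_\mathcal{U}\in \mathcal{M}^e(X,G)$ with $h_{\text{top}}(G,\mathcal{U})=h_{\mu_\mathcal{U}}(G,\mathcal{U})$. Then Theorem \ref{ope} gives $h_{\mu_\mathcal{U}}(G,\mathcal{U})\le \sup_{\mathcal{V}\in \mathcal{C}^o_X} h_{\mu_\mathcal{U}}(G,\mathcal{V})=h_{\mu_\mathcal{U}}(G,X)\le \sup_{\mu\in \mathcal{M}^e(X,G)} h_\mu(G,X)$, so that $h_{\text{top}}(G,\mathcal{U})\le \sup_{\mu\in \mathcal{M}^e(X,G)} h_\mu(G,X)$ for every open cover; taking the supremum over $\mathcal{U}$ closes the chain.

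The entire weight of the argument sits in Theorem \ref{avpin}, which is established separately in Section \ref{sectionlvp}, so I do not expect any new obstacle here. The only point requiring care is the passage between the global entropies $h_\mu(G,X)$, $h_{\text{top}}(G,X)$ and their cover-level versions, which is exactly what Theorem \ref{ope} and the definition of $h_{\text{top}}(G,X)$ supply, letting the per-cover identity be assembled (through suprema over $\mathcal{C}^o_X$) into the global one. In particular, the ergodic refinement costs nothing beyond using the ergodic clause of Theorem \ref{avpin} in place of its invariant-measure clause.
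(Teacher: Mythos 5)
Your proposal is correct and follows exactly the route the paper takes: the paper declares Theorem \ref{vpp} a direct corollary of Lemma \ref{basic}~(3), Theorem \ref{ope} and Theorem \ref{avpin}, and your three-link chain is precisely the spelled-out version of that deduction, with the ergodic clause of Theorem \ref{avpin} handling the refinement to $\mathcal{M}^e(X,G)$ just as intended. No gaps; you have merely made explicit what the paper leaves as ``simple arguments.''
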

\begin{proof}
It is  a direct corollary of Lemma \ref{basic} (3), Theorems
\ref{ope} and \ref{avpin}.
\end{proof}

Before proving Theorem \ref{avpin}, we need a key lemma.

\begin{lem} \label{key} Let $\mathcal{U}\in \mathcal{C}_X^{o}$ and $\alpha_l\in
\mathcal{P}_X$ with $\alpha_l\succeq \mathcal{U}$, $1\le l\le K$.
Then for each $F\in F (G)$ there exists a finite subset
$B_F\subseteq X$ such that each atom of $(\alpha_l)_F$ contains at
most one point of $B_F$, $l=1,\cdots,K$ and $\# B_F\ge
\frac{N(\mathcal{U}_F)}{K}$.
\end{lem}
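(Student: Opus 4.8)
The plan is to produce $B_F$ by a maximality argument and then bound its size from below by double-counting a subcover of $\mathcal{U}_F$. First I would consider the collection of all subsets $B\subseteq X$ with the property that, for every $l\in\{1,\dots,K\}$, each atom of $(\alpha_l)_F$ contains at most one point of $B$. Since $\alpha_l$ is a finite partition and $F\in F(G)$ is finite, $(\alpha_l)_F=\bigvee_{g\in F}g^{-1}\alpha_l$ is again a finite partition; hence any such $B$ satisfies $\#B\le \#(\alpha_1)_F<+\infty$, so a set $B_F$ that is maximal with respect to this separation property exists and is automatically finite. This $B_F$ will be the desired set.

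Next I would exploit maximality. For any $y\in X$ the set $B_F\cup\{y\}$ cannot have the separation property (or else $y\in B_F$ already), so there is some $l$ and some $x\in B_F$ with $y$ lying in the atom $(\alpha_l)_F(x)$ of $(\alpha_l)_F$ that contains $x$. Consequently
\[
X=\bigcup_{l=1}^K\bigcup_{x\in B_F}(\alpha_l)_F(x).
\]
This is the heart of the argument: maximality forces the finitely many atoms "occupied" by the points of $B_F$ to cover all of $X$.

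Finally I would pass from atoms to members of $\mathcal{U}_F$. The hypothesis $\alpha_l\succeq\mathcal{U}$ lifts to $(\alpha_l)_F\succeq\mathcal{U}_F$, because refinement is preserved both under applying $g^{-1}$ and under taking the join over $g\in F$; hence every atom $(\alpha_l)_F(x)$ is contained in some $U_{l,x}\in\mathcal{U}_F$. The family $\{U_{l,x}:1\le l\le K,\ x\in B_F\}$ then covers $X$ and has cardinality at most $K\cdot\#B_F$, so by the definition of $N(\mathcal{U}_F)$ as the minimal cardinality of a subcover of $\mathcal{U}_F$ we obtain $N(\mathcal{U}_F)\le K\cdot\#B_F$, that is $\#B_F\ge N(\mathcal{U}_F)/K$, as required.

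The argument is essentially combinatorial and I do not anticipate a serious obstacle; the only points needing care are verifying that refinement of covers is preserved under the operation $\mathcal{V}\mapsto\mathcal{V}_F$ (so that indeed $(\alpha_l)_F\succeq\mathcal{U}_F$) and confirming the existence of a finite maximal separated set. The genuinely useful idea is the double counting in the last step, where the factor $K$ enters precisely because a point of $B_F$ can "block" a candidate new point through any one of the $K$ partitions.
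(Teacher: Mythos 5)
Your proof is correct and takes essentially the same approach as the paper: the paper constructs $B_F$ by the greedy procedure of repeatedly picking a point outside the union of all atoms $(\alpha_l)_F(x_i)$ occupied so far, which is just an explicit implementation of your maximal-separated-set argument, and both proofs finish with the identical double count, namely that the at most $K\cdot \# B_F$ elements of $\mathcal{U}_F$ containing the occupied atoms cover $X$, so $N(\mathcal{U}_F)\le K\cdot \# B_F$. No gaps.
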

\begin{proof} We follow the arguments in the proof of \cite[Lemma 3.5]{HYZ1}.
Let $F\in F (G)$. For each $l=1,\cdots, K$ and $x\in X$, let
$A_l(x)$ be the atom of $(\alpha_l)_F$ containing $x$, then for
$x_1,x_2\in X$, $x_1$ and $x_2$ are contained in the same atom of
$(\alpha_l)_F$ iff $A_l(x_1)=A_l(x_2)$.

 To construct the subset $B_F$ we first take any $x_1\in X$.
If $\bigcup_{l=1}^K A_l(x_1)=X$,  then we take $B_F=\{ x_1 \}$.
Otherwise, we take $X_1=X\setminus \bigcup_{l=1}^K A_l(x_1)\neq
\emptyset$ and take any $x_2\in X_1$. If $\bigcup_{l=1}^K
A_l(x_2)\supseteq X_1$, then we take $B_F=\{ x_1,x_2 \}$. Otherwise,
we take $X_2=X_1\setminus \bigcup_{l=1}^K A_l(x_2)\neq \emptyset$.
Since $\{ A_l(x):1\le l\le K,x\in X \}$ is a finite cover of $X$, we
can continue the above procedure inductively to obtain a finite
subset $B_F=\{ x_1,\cdots x_m \}$ and non-empty subsets $X_j$,
$j=1,\cdots,m-1$ such that
\begin{enumerate}

\item $X_1=X\setminus \bigcup_{l=1}^K A_l(x_1)$,

\item $X_{j+1}=X_j\setminus \bigcup_{l=1}^K A_l(x_{j+1})$ for
$j=1,\cdots,m-1$,

\item $\bigcup_{j=1}^m \bigcup_{l=1}^K A_l(x_j)=X$.
\end{enumerate}

From the construction of $B_F$, clearly each atom of
$(\alpha_l)_F$, $l=1,\cdots,K$, contains at most one point of
$B_F$. Since for any $1\le i\le m$ and $1\le l\le K$, $A_l(x_i)$
is an atom of $(\alpha_l)_F$, and thus is contained in some
element of $\mathcal{U}_F$, so $m K\ge N(\mathcal{U}_F)$ (using
(3)), that is, $\# B_F= m\ge \frac{N(\mathcal{U}_F)}{K}$.
\end{proof}

\begin{prop} \label{plecd} Let $\mathcal{U}\in \mathcal{C}^{o}_X$. If $X$
is zero-dimensional then there exists $\mu \in \mathcal{M}(X,G)$
satisfying
\begin{equation} \label{gcd}
h_\mu(G,\mathcal{U})\ge h_{\text{top}}(G,\mathcal{U}).
\end{equation}
\end{prop}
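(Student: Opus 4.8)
The plan is to realize $h_{\text{top}}(G,\mathcal{U})$ as the entropy of a measure built by averaging atomic measures supported on sets that are spread out with respect to the iterated cover, in the spirit of Misiurewicz's proof of the variational principle, but with the separation supplied by Lemma~\ref{key} and the averaging controlled by the amenable machinery of Section~2.

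First I would fix a countable family of clopen test partitions. Since $X$ is zero-dimensional and compact metric, its clopen subsets form a countable algebra generating $\mathcal{B}_X$, and clopen partitions refining $\mathcal{U}$ are $L^1(\nu)$-dense among all partitions refining $\mathcal{U}$ for every $\nu$; hence, exactly as in the construction behind \eqref{dense} (using the $L^1$-continuity of entropy, cf. Lemma~\ref{07-01-02-01}), there is a sequence $\{\alpha_l\}_{l\in\N}\subseteq\mathcal{P}_X$ of clopen partitions with $\alpha_l\succeq\mathcal{U}$ and $h_\nu(G,\mathcal{U})=\inf_{l}h_\nu(G,\alpha_l)$ for every $\nu\in\mathcal{M}(X,G)$. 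The point of clopenness is that for each such $\alpha_l$ and each $S\in F(G)$ the atoms of $(\alpha_l)_S$ are clopen, so $\nu\mapsto H_\nu((\alpha_l)_S)$ is weak$^*$-continuous and, by Lemma~\ref{lem-436}~(4), $\nu\mapsto h_\nu(G,\alpha_l)=\inf_{S\in F(G)}\frac{1}{|S|}H_\nu((\alpha_l)_S)$ is u.s.c.

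Next, for fixed $K,n\in\N$ I would apply Lemma~\ref{key} to $\alpha_1,\dots,\alpha_K$ and $F_n$, obtaining a finite set $B^{(K)}_n\subseteq X$ meeting each atom of every $(\alpha_l)_{F_n}$ ($1\le l\le K$) at most once with $\#B^{(K)}_n\ge N(\mathcal{U}_{F_n})/K$. Let $\sigma^{(K)}_n$ be the uniform probability measure on $B^{(K)}_n$ and set $\mu^{(K)}_n=\frac{1}{|F_n|}\sum_{g\in F_n}g\sigma^{(K)}_n$; the F\o lner property forces every weak$^*$-limit of $\{\mu^{(K)}_n\}_n$ into $\mathcal{M}(X,G)$. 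The separation gives $H_{\sigma^{(K)}_n}((\alpha_l)_{F_n})=\log\#B^{(K)}_n\ge\log N(\mathcal{U}_{F_n})-\log K$ for $l\le K$. The heart of the argument is the averaging estimate: applying Lemma~\ref{lem-436}~(3) to $\sigma^{(K)}_n$ with a fixed window $S\in F(G)$, rewriting $H_{\sigma^{(K)}_n}((\alpha_l)_{Sg})=H_{g\sigma^{(K)}_n}((\alpha_l)_S)$ via Lemma~\ref{lem-2-2}~(1) (note $(\alpha_l)_{Sg}=g^{-1}(\alpha_l)_S$), and using concavity of $\nu\mapsto H_\nu((\alpha_l)_S)$ to get $\frac{1}{|F_n|}\sum_{g\in F_n}H_{g\sigma^{(K)}_n}((\alpha_l)_S)\le H_{\mu^{(K)}_n}((\alpha_l)_S)$, yields
\[
\frac{1}{|F_n|}H_{\sigma^{(K)}_n}((\alpha_l)_{F_n})\le \frac{1}{|S|}H_{\mu^{(K)}_n}((\alpha_l)_S)+\frac{|F_n\setminus\{g\in G:S^{-1}g\subseteq F_n\}|}{|F_n|}\log\#(\alpha_l),
\]
where the last term tends to $0$ as $n\to\infty$ by \eqref{need}.

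Finally I would let $n\to\infty$ along a subsequence with $\mu^{(K)}_n\to\mu^{(K)}\in\mathcal{M}(X,G)$. Clopenness of $(\alpha_l)_S$ gives $H_{\mu^{(K)}_n}((\alpha_l)_S)\to H_{\mu^{(K)}}((\alpha_l)_S)$, while the left side converges to at least $h_{\text{top}}(G,\mathcal{U})$; since $S$ is arbitrary, Lemma~\ref{lem-436}~(4) gives $h_{\mu^{(K)}}(G,\alpha_l)\ge h_{\text{top}}(G,\mathcal{U})$ for every $l\le K$. Passing to a weak$^*$-limit $\mu$ of $\{\mu^{(K)}\}_K$ and invoking the upper semicontinuity of $\nu\mapsto h_\nu(G,\alpha_l)$ noted above, I obtain $h_\mu(G,\alpha_l)\ge\limsup_K h_{\mu^{(K)}}(G,\alpha_l)\ge h_{\text{top}}(G,\mathcal{U})$ for each fixed $l$; taking the infimum over $l$ and using the choice of $\{\alpha_l\}$ gives $h_\mu(G,\mathcal{U})=\inf_l h_\mu(G,\alpha_l)\ge h_{\text{top}}(G,\mathcal{U})$, which is \eqref{gcd}. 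I expect the main obstacle to be precisely the averaging estimate of the third paragraph: one must transfer the entropy that Lemma~\ref{key} produces on the large window $F_n$ down to a fixed finite window $S$—so that a weak$^*$-limit can be taken—while keeping the F\o lner boundary contribution negligible, and this is exactly what the strong subadditivity packaged in Lemma~\ref{lem-436}~(3) together with concavity of entropy delivers.
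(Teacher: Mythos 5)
Your proposal is correct and follows essentially the same route as the paper's proof: a countable family of clopen partitions refining $\mathcal{U}$ (countable by zero-dimensionality, sufficient by Lemma \ref{07-01-02-01}), point separation via Lemma \ref{key}, empirical measures averaged over $F_n$, the fixed-window estimate from Lemma \ref{lem-436}~(3) combined with concavity of entropy and \eqref{need}, and Lemma \ref{lem-436}~(4) at a weak$^*$ limit. The only difference is bookkeeping: the paper diagonalizes by letting the number of test partitions grow with $n$ (applying Lemma \ref{key} to $\alpha_1,\dots,\alpha_n$ so that $\#B_n\ge N(\mathcal{U}_{F_n})/n$), whence a single weak$^*$ limit suffices and your second limit in $K$, together with the u.s.c. argument it requires, becomes unnecessary.
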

\begin{proof}
Let $\mathcal{U}=\{ U_1,\cdots,U_d \}$ and $\mathcal{U}^*=
\{\alpha= \{A_1,\cdots,A_d\}\in \mathcal{P}_X: A_m\subseteq U_m,
m=1, \cdots, d \}$. Since $X$ is zero-dimensional, the family of
partitions in $\mathcal{U}^*$ consisting of clopen (closed and
open) subsets, which are finer than $\mathcal{U}$, is countable.
We let $\{ \alpha_l: l\ge 1 \}$ denote an enumeration of this
family. Then $h_\nu (G, \mathcal{U})= \inf_{l\in \N} h_\nu (G,
\alpha_l)$ for each $\nu\in \mathcal{M} (X, G)$ by Lemma
\ref{07-01-02-01}.

Let $\{ F_n \}_{n\in \mathbb{N}}$ be a F\o lner sequence of $G$
satisfying $|F_n|\ge n$ for each $n\in \N$ (obviously, such a
sequence exists since $|G|=+\infty$). By Lemma \ref{key}, for each
$n\in \N$ there exists a finite subset $B_n\subseteq X$ such that
\begin{align}\label{Bn}
\# B_n\ge \frac{N(\mathcal{U}_{F_n})}{n},
\end{align}
 and each atom of
$(\alpha_l)_{F_n}$ contains at most one point of $B_n$, for each
$l=1,\cdots, n$. Let
\begin{eqnarray*}
\nu_n=\frac{1}{\# B_n}\sum_{x\in B_n} \delta_{x}\ \text{and}\
\mu_n=\frac{1}{|F_n|}\sum_{g\in F_n} g\nu_n.
\end{eqnarray*}
We can choose a sub-sequence $\{ n_j \}_{j\in \mathbb{N}}\subseteq
\N$ such that $\mu_{n_j}\rightarrow \mu$ in the weak$^*$-topology of
$\mathcal{M}(X)$ as $j\rightarrow +\infty$. It is not hard to check
the invariance of $\mu$, i.e. $\mu\in \mathcal{M} (X, G)$. Now we
aim to show that $\mu$ satisfies \eqref{gcd}. It suffices to show
that $h_{\text{top}}(G,\mathcal{U})\le h_\mu(G,\alpha_l)$ for each
$l\in \mathbb{N}$.

Fix a $l\in \mathbb{N}$ and each $n>l$. Using \eqref{Bn} we know
from the construction of $B_n$ that
\begin{equation} \label{n-v}
\log N(\mathcal{U}_{F_n})-\log n \le \log (\# B_n)=\sum_{x\in
B_n}- \nu_n(\{ x \}) \log \nu_n( \{ x
\})=H_{\nu_n}((\alpha_l)_{F_n}).
\end{equation}
On the other hand, for each $B\in F (G)$, using Lemma
\ref{lem-436} (3) one has
\begin{eqnarray} \label{n-v1}
\frac{1}{|F_n|}H_{\nu_n}((\alpha_l)_{F_n})&\le & \frac{1}{|F_n|}
\sum\limits_{g\in F_n} \frac{1}{|B|} H_{\nu_n}
((\alpha_l)_{Bg})+\frac{|F_n \setminus \{ g\in G:
B^{-1}g\subseteq F_n \}|}{|F_n|} \cdot \log \# \alpha_l\ \nonumber \\
&= & \frac{1}{|B|\cdot|F_n|} \sum\limits_{g\in F_n} H_{g\nu_n}
((\alpha_l)_{B}) +\frac{|F_n \setminus \{ g\in G: B^{-1}g\subseteq
F_n \}|}{|F_n|} \cdot
\log d\ \nonumber \\
&\le & \frac{1}{|B|}H_{\mu_n}((\alpha_l)_{B})+\frac{|F_n \setminus
\{ g\in G: B^{-1}g\subseteq F_n \}|}{|F_n|} \cdot \log d.
\end{eqnarray}
Now by dividing \eqref{n-v} on both sides by $|F_n|$, then combining
it with \eqref{n-v1} we obtain
\begin{equation} \label{n-v2}
\frac{1}{|F_n|} \log N(\mathcal{U}_{F_n})\le
\frac{1}{|B|}H_{\mu_n}((\alpha_l)_{B})+\frac{\log n}{|F_n|}
+\frac{|F_n \setminus \{ g\in G: B^{-1}g\subseteq F_n \}|}{|F_n|}
\cdot \log d.
\end{equation}
Note that $\lim_{j\rightarrow +
\infty}H_{\mu_{n_j}}((\alpha_l)_B)=H_\mu((\alpha_l)_B)$, by
substituting $n$ with $n_j$ in \eqref{n-v2} one has
$$h_{\text{top}}(G,\mathcal{U})\le \frac{1}{|B|}H_{\mu}((\alpha_l)_{B})\
(\text{using}\ \eqref{need}).$$ Now, taking the infimum over $B\in
F (G)$, we get $h_{\text{top}}(G,\mathcal{U})\le
h_\mu(G,\alpha_l)$. This ends the proof.
\end{proof}

A continuous map $\pi: (X, G)\rightarrow (Y, G)$ is called a {\it
homomorphism} or a {\it factor map} if it is onto and $\pi\circ
g=g\circ \pi$ for each $g\in G$. In this case, $(X, G)$ is called
an {\it extension of $(Y, G)$} and $(Y, G)$ is called a {\it
factor of $(X, G)$}. If $\pi$ is also injective then it is called
an {\it isomorphism}.

\begin{proof}[Proof of Theorem \ref{avpin}] First, by Lemmas
\ref{basic} (1) and \ref{equivalence}, it suffices to prove
$h_{\theta}(G,\mathcal{U})\ge h_{\text{top}}(G,\mathcal{U})$ for
some $\theta\in \mathcal{M}^e (X, G)$.
 It is well known that there
exists a surjective continuous map $\phi_1: C\rightarrow X$, where
$C$ is a cantor set. Let $C^G$ be the product space equipped with
the $G$-shift $G\times C^G\rightarrow C^G, (g', (z_g)_{g\in
G})\mapsto (z'_g)_{g\in G}$ where $z'_g= z_{g' g}, g', g\in G$.
Define
\begin{eqnarray*}
Z= \{\overline{z}= (z_g)_{g\in G}\in C^G: \phi_1 (z_{g_1 g_2})= g_1
\phi_1 (z_{g_2})\ \text{for each}\ g_1, g_2\in G\},
\end{eqnarray*}
and $\varphi: Z\rightarrow X, (z_g)_{g\in G}\mapsto \phi_1 (z_{e_G})$.
It's not hard to check that $Z\subseteq C^G$ is a closed invariant
subset under the $G$-shift. Moreover, $\varphi:(Z,G)\rightarrow
(X,G)$ becomes a factor map between $G$-systems.
Applying Proposition \ref{plecd} to the $G$-system $(Z, G)$, there
exists $\nu\in \mathcal{M}(Z,G)$ with
$h_\nu(G,\varphi^{-1}(\mathcal{U}))\ge h_{\text{top}} (G,
\varphi^{-1} (\mathcal{U}))= h_{\text{top}} (G,\mathcal{U})$. Let
$\mu=\varphi \nu\in \mathcal{M}(X,G)$. Then
$$h_\mu(G,\mathcal{U})=\inf_{\alpha\in \mathcal{P}_X:
\alpha\succeq \mathcal{U}} h_\mu(G,\alpha) =\inf_{\alpha\in
\mathcal{P}_X: \alpha\succeq \mathcal{U}}
h_\nu(G,\varphi^{-1}(\alpha))\ge
h_\nu(G,\varphi^{-1}(\mathcal{U}))\ge
h_{\text{top}}(G,\mathcal{U}).$$ Let
$\mu=\int_{\mathcal{M}^e(X,T)} \theta d m(\theta)$ be the ergodic
decomposition of $\mu$. Then by Theorem \ref{en-decom} one has
\begin{equation*}
\int_{\mathcal{M}^e(X,T)} h_{\theta}(G,\mathcal{U}) d m (\theta)
=h_\mu(G,\mathcal{U}).
\end{equation*}
Hence, $h_{\theta}(G,\mathcal{U})\ge
h_{\text{top}}(G,\mathcal{U})$ for some $\theta\in \mathcal{M}^e
(X, G)$. This ends the proof.
\end{proof}

At last, we ask an  open question.

\begin{ques}
In the proof of \cite[Proposition 7.10]{GW4} (or its relative
version \cite[Theorem A.3]{HYZ1}), a universal version of the
well-known Rohlin Lemma \cite[Proposition 7.9]{GW4} plays a key
role. Thus, a natural open question arises: for actions of a countable
discrete amenable group, are there a universal version of Rohlin
Lemma and a similar result to \cite[Proposition 7.10]{GW4} or
\cite[Theorem A.3]{HYZ1}? Whereas, up to now they still stand as
open questions.
\end{ques}

\section{Entropy tuples}

In this section we will firstly introduce entropy tuples in both
topological and measure-theoretic settings. Then we characterize the
set of entropy tuples for an invariant measure as the support of
some specific relative product measure. Finally by the lift property
of entropy tuples, we will establish the variational relation of
entropy tuples. At the same time, we also discuss entropy tuples of
a finite product. We need to mention that the proof of those
results in this section are similar to the proof of corresponding
results in \cite{HY, HYZ2} for the case $G=\mathbb{Z}$, but for
completion we provide the detailed proof.

\subsection{Topological entropy tuples}

First we are going to define the topological entropy tuples.

Let $n\ge 2$. Set $X^{(n)}=X\times \cdots \times X$ ($n$-times);
$\Delta_{n}(X)=\{(x_i)_{1}^n \in X^{(n)}| x_1= \cdots =x_n\}$, the
{\it $n$-th diagonal} of $X$. Let $(x_i)_1^n\in X^{(n)}\setminus
\Delta_{n}(X)$. We say $\mathcal{U}\in \mathcal{C}_X$ {\it
admissible w.r.t. $(x_i)_1^n$}, if for any $U\in \mathcal{U}$,
$\overline{U}\not \supseteq \{ x_1,\cdots,x_n \}$.

\begin{de}
Let $n\ge 2$. $(x_i)_1^n\in X^{(n)}\setminus \Delta_{n}(X)$ is
called a {\it topological entropy $n$-tuple} if $h_{\text{top}}(G,
\mathcal{U})>0$ when $\mathcal{U}\in \mathcal{C}_X$ is admissible
w.r.t. $(x_i)_1^n$.
\end{de}

\begin{rem} \label{thm108}
We may replace all admissible finite covers by admissible finite
open or closed covers in the definition. Moreover, we can choose all
covers to be of the forms $\mathcal{U}=\{U_1, \cdots, U_n\}$, where
$U_i^c$ is a neighborhood of $x_i$, $1\le i\le n$ such that if
$x_i\neq x_j, 1\le i< j\le n$ then $U_i^c\cap U_j^c =\emptyset$.
Thus, our definition of topological entropy $n$-tuples is the same
as the one defined by Kerr and Li in \cite{KL}.
\end{rem}

For each $n\ge 2$, denote by $E_n(X, G)$ the set of all topological
entropy $n$-tuples. Then following the ideas of \cite{B2} we obtain
directly

\begin{prop} \label{thm110}
Let $n\ge 2$.
\begin{description}

\item[1] If $\mathcal{U}=\{ U_1,\cdots,U_n \}\in
\mathcal{C}_X^{o}$ satisfies $h_{\text{top}}(G,\mathcal{U})>0$
then $E_n(X,G)\cap \bigcap_{i= 1}^n U_i^c\neq \emptyset$.

\item[2] If $h_{\text{top}}(G, X)>0$, then $\emptyset\neq \overline{E_n(X, G)}
\subseteq X^{(n)}$ is $G$-invariant. Moreover,
$\overline{E_n(X,G)}\setminus \Delta_n(X)= E_n(X,G)$.

\item[3] Let $\pi: (Z, G)\rightarrow (X, G)$ be a factor map between
$G$-systems. Then
$$E_n(X,G)\subseteq (\pi\times \cdots \times
\pi) E_n(Z,G)\subseteq E_n(X,G)\cup \Delta_n(X).$$

\item[4] Let $(W, G)$ be a sub-$G$-system of $(X, G)$. Then
$E_n(W,G)\subseteq E_n(X,G)$.
\end{description}
\end{prop}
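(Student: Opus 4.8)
The plan is to handle the four assertions in turn, since they are essentially independent; Part 1 is the combinatorial core, Parts 2 and 4 are quick once Part 1 is in hand, and the lifting inclusion in Part 3 will be the real obstacle.

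For Part 1 I would argue by contradiction. Put $K=U_1^c\times\cdots\times U_n^c\subseteq X^{(n)}$; since $\mathcal{U}$ covers $X$, no diagonal point $(y,\dots,y)$ can lie in $K$, so if $K$ met no entropy tuple then every $\bar y=(y_i)\in K$ would be an off-diagonal non-entropy tuple. Using the special form of admissible covers recorded in Remark \ref{thm108}, I attach to each $\bar y\in K$ a cover $\mathcal{W}^{\bar y}=\{W_1,\dots,W_n\}$ with $h_{\text{top}}(G,\mathcal{W}^{\bar y})=0$ and each $\mathrm{int}(W_i^c)$ an open neighbourhood of $y_i$ (equivalently $y_i\notin\overline{W_i}$). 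The open boxes $\prod_i\mathrm{int}(W_i^c)$ then cover the compact set $K$; extracting a finite subcover indexed by $\bar y^{(1)},\dots,\bar y^{(m)}$ and setting $\mathcal{W}=\bigvee_{j=1}^m\mathcal{W}^{(j)}$ gives a cover of zero entropy by subadditivity. The key step is to verify $\mathcal{W}\succeq\mathcal{U}$, which yields $h_{\text{top}}(G,\mathcal{U})=0$, the desired contradiction: were some atom $A=\bigcap_j W_{i_j}^{(j)}$ contained in no $U_\ell$, one picks $z_\ell\in A\cap U_\ell^c$ for each $\ell$, observes $(z_\ell)\in K$, locates the box $j_0$ with $z_\ell\notin W_\ell^{(j_0)}$ for all $\ell$, and contradicts $z_{i_{j_0}}\in A\subseteq W_{i_{j_0}}^{(j_0)}$.

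For Part 2, existence is immediate from Part 1: as $h_{\text{top}}(G,X)>0$ there is an open cover $\mathcal{U}_0=\{U_1,\dots,U_k\}$ with $h_{\text{top}}(G,\mathcal{U}_0)>0$, so the Part 1 argument read with arity $k$ produces $(y_1,\dots,y_k)\in E_k(X,G)$; choosing two distinct coordinates $y_a\neq y_b$ and padding, the tuple $(y_a,y_b,\dots,y_b)\in X^{(n)}\setminus\Delta_n(X)$ lies in $E_n(X,G)$, because any cover admissible with respect to it has closures avoiding $\{y_a,y_b\}$ and is therefore admissible with respect to the original $k$-tuple. $G$-invariance of $E_n(X,G)$, hence of $\overline{E_n(X,G)}$ (each $g$ being a homeomorphism), follows since $g^{-1}\mathcal{U}$ is admissible w.r.t. $(x_i)$ iff $\mathcal{U}$ is admissible w.r.t. $(gx_i)$, together with $h_{\text{top}}(G,g^{-1}\mathcal{U})=h_{\text{top}}(G,\mathcal{U})$. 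The inclusion $E_n(X,G)\subseteq\overline{E_n(X,G)}\setminus\Delta_n(X)$ is trivial, and for the reverse I would show the off-diagonal non-entropy tuples are relatively open: if $(x_i)\notin E_n(X,G)$ is off-diagonal, a zero-entropy admissible cover $\{U_1,\dots,U_n\}$ of the special form has $x_i\notin\overline{U_i}$, so the neighbourhood $\prod_i\mathrm{int}(U_i^c)$ of $(x_i)$ is made of tuples for which this same cover stays admissible, hence is disjoint from $E_n(X,G)$.

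Part 4 and the easy inclusion of Part 3 are direct. For Part 4, given $(w_i)\in E_n(W,G)$ and an open cover $\mathcal{U}$ of $X$ admissible w.r.t. $(w_i)$, the restriction $\mathcal{U}|_W$ is admissible in $W$ (closures only shrink under restriction) and $h_{\text{top}}(G,\mathcal{U}|_W)\le h_{\text{top}}(G,\mathcal{U})$ since a subcover of $X$ restricts to one of $W$; thus $h_{\text{top}}(G,\mathcal{U})>0$. Likewise, for the second inclusion $\pi^{(n)}E_n(Z,G)\subseteq E_n(X,G)\cup\Delta_n(X)$ with $\pi^{(n)}=\pi\times\cdots\times\pi$: if $(z_i)\in E_n(Z,G)$ and $(x_i)=\pi^{(n)}(z_i)\notin\Delta_n(X)$, any admissible $\mathcal{U}$ w.r.t. $(x_i)$ pulls back to $\pi^{-1}\mathcal{U}$, which is admissible w.r.t. $(z_i)$ because $\overline{\pi^{-1}U}\subseteq\pi^{-1}\overline{U}$, and $h_{\text{top}}(G,\pi^{-1}\mathcal{U})=h_{\text{top}}(G,\mathcal{U})$, forcing $h_{\text{top}}(G,\mathcal{U})>0$. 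The hard part, and the main obstacle, is the lifting $E_n(X,G)\subseteq\pi^{(n)}E_n(Z,G)$. Given $(x_i)\in E_n(X,G)$, I would show the compact off-diagonal box $L=\prod_i\pi^{-1}(x_i)$ must meet $E_n(Z,G)$; assuming not, cover $L$ as in Part 1 by boxes from zero-entropy admissible $Z$-covers $\mathcal{W}^{(1)},\dots,\mathcal{W}^{(m)}$ of the special form, with $\mathcal{W}=\bigvee_j\mathcal{W}^{(j)}$ of zero entropy. The obstacle is to manufacture from $\mathcal{W}$ a zero-entropy cover of $X$ admissible w.r.t. $(x_i)$: I would (i) invoke Wallace's tube lemma on $\prod_i\pi^{-1}(x_i)\subseteq\bigcup_j\prod_i\mathrm{int}((W_i^{(j)})^c)$ to get open $\tilde K_i\supseteq\pi^{-1}(x_i)$ with $\prod_i\tilde K_i$ still inside that union, (ii) use that $\pi$ is a closed map to find open $O_i\ni x_i$ with $\pi^{-1}(O_i)\subseteq\tilde K_i$, and (iii) pick $O_i'\ni x_i$ with $\overline{O_i'}\subseteq O_i$ and $\bigcap_i\overline{O_i'}=\emptyset$ (possible as $(x_i)$ is off-diagonal), setting $U_\ell=X\setminus\overline{O_\ell'}$. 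Then $\mathcal{U}=\{U_1,\dots,U_n\}$ is admissible w.r.t. $(x_i)$, and a repeat of the Part 1 atom computation, now fed by $\prod_\ell\pi^{-1}(O_\ell)\subseteq\bigcup_j\prod_i\mathrm{int}((W_i^{(j)})^c)$, gives $\mathcal{W}\succeq\pi^{-1}\mathcal{U}$, whence $h_{\text{top}}(G,\mathcal{U})=h_{\text{top}}(G,\pi^{-1}\mathcal{U})\le h_{\text{top}}(G,\mathcal{W})=0$, contradicting $(x_i)\in E_n(X,G)$. The two delicate points I expect to absorb most of the work are the correct form of the tube lemma for a finite union of boxes over the compact fibres, and the bookkeeping ensuring $\mathcal{W}$ refines $\pi^{-1}\mathcal{U}$ and not the reverse.
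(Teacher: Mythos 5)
Your proposal is correct and follows essentially the approach the paper intends: Proposition \ref{thm110} is stated there without proof, with a pointer to the ideas of \cite{B2}, and your arguments --- the compactness-and-join contradiction for Part 1 (via the special covers of Remark \ref{thm108}), the relative closedness of $E_n(X,G)$ off the diagonal and the padding trick for Part 2, the pull-back and restriction arguments for Part 4 and the easy inclusion of Part 3, and the fibre-box compactness argument combined with the tube lemma and the closedness of $\pi$ for the lifting $E_n(X,G)\subseteq(\pi\times\cdots\times\pi)E_n(Z,G)$ --- are precisely Blanchard's classical arguments transplanted to $n$-tuples and amenable group actions. All steps check out, including the two key refinement verifications $\mathcal{W}\succeq\mathcal{U}$ and $\mathcal{W}\succeq\pi^{-1}\mathcal{U}$, so there is no gap.
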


The notion of disjointness of two TDSs was introduced in \cite{F}.
Blanchard proved that any u.p.e. TDS was disjoint from all minimal
TDSs with zero topological entropy (see \cite[Proposition 6]{B2}).
This is also true for actions of a countable discrete amenable
group. First we introduce

\begin{de}
Let $n\ge 2$. We say that
\begin{enumerate}

\item $(X, G)$ has {\it u.p.e. of order $n$},
if any cover of $X$ by $n$ non-dense open sets has positive
topological entropy. When $n=2$, we say simply that $(X, G)$ has
{\it u.p.e.}

\item $(X, G)$ has {\it u.p.e. of all orders} or {\it
topological $K$} if any cover of $X$ by finite non-dense open sets
has positive topological entropy, equivalently, it has u.p.e. of
order $m$ for any $m\ge 2$.
\end{enumerate}
\end{de}

Thus, for each $n\ge 2$, $(X,G)$ has u.p.e. of order $n$ iff
$E_n(X,G)= X^{(n)}\setminus \Delta_n(X)$.

We say $(X, G)$ {\it minimal} if it contains properly no other
sub-$G$-systems. Let $(X, G)$ and $(Y, G)$ be two $G$-systems and
$\pi_X: X\times Y\rightarrow X$, $\pi_Y: X\times Y\rightarrow Y$ the
natural projections. $J\subseteq X\times Y$ is called a {\it joining
of $(X, G)$ and $(Y, G)$} if $J$ is a $G$-invariant closed subset
satisfying $\pi_X(J)=X$ and $\pi_Y(J)=Y$. Clearly, $X\times Y$ is
always a joining of $(X, G)$ and $(Y, G)$. We say that $(X,G)$ and
$(Y,G)$ are {\it disjoint} if $X\times Y $ is the unique joining of
$(X, G)$ and $(Y,G)$. The proof of the following theorem is similar
to that of \cite[Proposition 6]{B2} or \cite[Theorem 2.5]{HYZ2}.

\begin{thm} \label{thm104}
Let $(X,G)$ be a $G$-system having u.p.e. and $(Y,G)$ a minimal
$G$-system with zero topological entropy. Then $(X,G)$ and $(Y,G)$
are disjoint.
\end{thm}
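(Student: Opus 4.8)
The plan is to argue by contradiction, in the spirit of Blanchard's proof of \cite[Proposition 6]{B2} (see also \cite[Theorem 2.5]{HYZ2}). Suppose $(X,G)$ and $(Y,G)$ are not disjoint. Then there is a joining $J\subsetneq X\times Y$, i.e. a closed $G$-invariant proper subset with $\pi_X(J)=X$ and $\pi_Y(J)=Y$. Since $J$ is closed and proper, I may fix a point $(x_0,y_0)\in(X\times Y)\setminus J$ together with a nonempty open set $U\subseteq X$ and a nonempty open set $V\subseteq Y$ such that $(\overline U\times V)\cap J=\emptyset$. As $J=gJ$ for every $g\in G$, each translate also satisfies $(g^{-1}\overline U\times g^{-1}V)\cap J=\emptyset$.

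From this data I would build a finite cover of $X$ by non-dense open sets having zero entropy. Since $(Y,G)$ is minimal and $V$ is nonempty open, finitely many translates cover $Y$: there are $g_1,\dots,g_k\in G$ with $Y=\bigcup_{i=1}^k g_i^{-1}V$. Put $C_i=X\setminus g_i^{-1}\overline U$; each $C_i$ is open and non-dense, since its complement $g_i^{-1}\overline U$ contains the nonempty open set $g_i^{-1}U$. The family $\mathcal C=\{C_1,\dots,C_k\}$ covers $X$: for $x\in X$ choose $y$ with $(x,y)\in J$ (possible as $\pi_X(J)=X$); minimality gives an $i$ with $g_iy\in V$, and then $(g_ix,g_iy)\in J$ together with $(\overline U\times V)\cap J=\emptyset$ forces $g_ix\notin\overline U$, i.e. $x\in C_i$.

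Next I would transport the vanishing entropy of $(Y,G)$ through $J$ to show $h_{\text{top}}(G,\mathcal C)=0$. Consider the open cover $\mathcal V=\{g_1^{-1}V,\dots,g_k^{-1}V\}$ of $Y$; since $h_{\text{top}}(G,Y)=0$ we have $h_{\text{top}}(G,\mathcal V)=0$, i.e. $N(\mathcal V_{F_n})=e^{o(|F_n|)}$. Fix $F\in\{F_n\}$ and a minimal subfamily $\mathcal W\subseteq\mathcal V_F$ covering $Y$, so $\#\mathcal W=N(\mathcal V_F)$; write each $W\in\mathcal W$ as $W=\bigcap_{g\in F}g^{-1}\bigl(g_{s(g)}^{-1}V\bigr)$ for a function $s=s_W\colon F\to\{1,\dots,k\}$. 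For $x\in X$ pick $y$ with $(x,y)\in J$ and pick $W\in\mathcal W$ with $y\in W$; then for every $g\in F$ one has $g_{s(g)}gy\in V$, whence $g_{s(g)}gx\notin\overline U$ and $x\in g^{-1}C_{s(g)}$. Thus $x$ lies in the single atom $a_W=\bigcap_{g\in F}g^{-1}C_{s(g)}$ of $\mathcal C_F$, and as $W$ ranges over $\mathcal W$ these atoms cover $X$. Hence $N(\mathcal C_F)\le N(\mathcal V_F)$, so $h_{\text{top}}(G,\mathcal C)=\lim_n\frac1{|F_n|}\log N(\mathcal C_{F_n})=0$.

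The remaining, and most delicate, step is to turn this into a contradiction with u.p.e., which only controls covers by \emph{two} non-dense open sets. Writing $D_i=g_i^{-1}\overline U$ (so $C_i=X\setminus D_i$ and $\bigcap_i D_i=\emptyset$), I would split $\{1,\dots,k\}=I\sqcup I^c$ so that both $\bigcap_{i\in I}D_i$ and $\bigcap_{i\in I^c}D_i$ have nonempty interior, and set $U_1=\bigcup_{i\in I}C_i$, $U_2=\bigcup_{i\in I^c}C_i$. Then $\{U_1,U_2\}$ is a cover of $X$ by two non-dense open sets which is refined by $\mathcal C$ (each $C_i$ lies in $U_1$ or $U_2$), so by monotonicity $h_{\text{top}}(G,\{U_1,U_2\})\le h_{\text{top}}(G,\mathcal C)=0$, contradicting the definition of u.p.e. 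The main obstacle is precisely to secure such a two-colouring — equivalently, to choose $U$ (with small closure, about a point genuinely moved by the action) and the covering family $g_1,\dots,g_k$ of $Y$ so that the sets $D_i$ can be grouped into two families with fat intersections; this is the technical heart, handled as in \cite[Proposition 6]{B2} and \cite[Theorem 2.5]{HYZ2}.
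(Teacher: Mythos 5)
Your first three paragraphs are correct: given a proper joining $J$, the sets $C_i=X\setminus g_i^{-1}\overline{U}$ do form a cover of $X$ by $k$ non-dense open sets, and your transport of minimal subcovers of $\mathcal{V}_F$ through $J$ indeed gives $N(\mathcal{C}_F)\le N(\mathcal{V}_F)$, hence $h_{\text{top}}(G,\mathcal{C})=0$. But this only contradicts u.p.e. \emph{of order} $k$, so everything rests on your final reduction to a two-set cover, and that step is a genuine gap, not a deferred technicality. The required two-colouring need not exist: nothing in the construction prevents the translates $g_1^{-1}U,\dots,g_k^{-1}U$ from being pairwise disjoint, and if two open sets are disjoint then the intersection of their closures has empty interior; consequently, for every $I\subseteq\{1,\dots,k\}$ with $\#I\ge 2$ the set $\bigcap_{i\in I}g_i^{-1}\overline{U}$ has empty interior, i.e. $\bigcup_{i\in I}C_i$ is dense in $X$, and no splitting into two non-dense unions can exist once $k\ge 3$. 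You cannot repair this by re-choosing the data: $U$ may only be shrunk (which makes disjointness of translates more likely), while the $g_i$ are dictated by the covering $Y=\bigcup_i g_i^{-1}V$. Nor is there a soft way around it, since u.p.e. of order $2$ does not imply u.p.e. of order $n$ for $n\ge 3$ (there are known $\Z$-examples), so a zero-entropy cover by $k$ non-dense open sets is not by itself incompatible with u.p.e. What your argument honestly proves is the weaker statement that a topological $K$ system is disjoint from every minimal zero-entropy system.

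Deferring this step to \cite{B2} and \cite{HYZ2} also mischaracterizes those proofs: they perform no such grouping of covers, but run through entropy pairs, which is precisely the machinery this paper sets up around the theorem. In outline: for a nontrivial u.p.e. system every off-diagonal pair lies in $E_2(X,G)$; by the lifting half of Proposition \ref{thm110}~(3) applied to $\pi_X\colon J\to X$ it lifts to a pair in $E_2(J,G)$, and the projection half applied to $\pi_Y\colon J\to Y$ together with $E_2(Y,G)=\emptyset$ forces the two $Y$-coordinates to coincide; hence any two points of $X$ admit a common $y\in Y$ with $(x_1,y),(x_2,y)\in J$. Since $(X\times X,G)$ is again u.p.e. (Theorem \ref{product-entropy-tuple} and Corollary \ref{easy-2}~(1)) and $J^*=\{(x_1,x_2,y):(x_1,y)\in J,\ (x_2,y)\in J\}$ is a joining of $(X\times X,G)$ with $(Y,G)$ (its projection to $X^{(2)}$ is closed, invariant and contains the dense set $X^{(2)}\setminus\Delta_2(X)$, as u.p.e. implies weak mixing, hence no isolated points), one can iterate: any $2^n$ points of $X$ share a common $y$. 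Compactness then produces $y$ with $X\times\{y\}\subseteq J$, and the set of such $y$ is closed, invariant and nonempty, so minimality of $(Y,G)$ gives $J=X\times Y$. This lifting/projection argument, combined with the product theorem and minimality, is the missing idea that your cover construction cannot replace.
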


\subsection{Measure-theoretic entropy tuples}

Now we aim to define the measure-theoretic entropy tuples for an
invariant Borel probability measure.

Let $\mu\in \mathcal{M} (X,G)$. $A\subseteq X$ is called a {\it
$\mu$-set} if $A\in \mathcal{B}_X^{\mu}$. If $\alpha= \{A_1, \cdots,
A_k\}\subseteq \mathcal{B}_X^\mu$ satisfies $\bigcup_{i= 1}^k A_i=
X$ and $A_i\cap A_j= \emptyset$ when $1\le i< j\le k$ then we say
$\alpha$ a {\it finite $\mu$-measurable partition of $X$}. Denote by
$\mathcal{P}_X^\mu$ the set of all finite $\mu$-measurable
partitions of $X$. Similarly, we can introduce $\mathcal{C}_X^\mu$
and define $\alpha_1\succeq \alpha_2$ for $\alpha_1, \alpha_2\in
\mathcal{C}_X^\mu$ and so on.

\begin{de} \label{etfm-de-1} Let $n\ge 2$. $(x_i)_1^n\in
X^{(n)}\setminus \Delta_{n}(X)$ is called a {\it measure-theoretic
entropy $n$-tuple for $\mu$} if $h_{\mu}(G,\alpha)>0$ for any
admissible $\alpha\in \mathcal{P}_X$ w.r.t. $(x_i)_1^n$.
\end{de}

\begin{rem} \label{etfm-rem-2} We may replace all admissible $\alpha\in \mathcal{P}_X$
 by
all admissible $\alpha\in \mathcal{P}_X^\mu$ in the definition.
\end{rem}

For each $n\ge 2$, denote by $E_n^{\mu}(X,G)$ the set of all
measure-theoretic entropy $n$-tuples for $\mu\in \mathcal{M} (X,
G)$. In the following, we shall investigate the structure of
$E_n^{\mu}(X,G)$. To this purpose, let $P_{\mu}$ be the Pinsker
$\sigma$-algebra of $(X,\mathcal{B}_X^\mu,\mu,G)$, i.e. $P_\mu=
\{A\in \mathcal{B}_X^\mu: h_\mu (G, \{A, A^c\})= 0\}$. We define a
measure $\lambda_n(\mu)$ on $(X^{(n)},(\mathcal{B}_X^\mu)^{(n)}, G)$
by letting
$$
\lambda_n(\mu) \left(\prod_{i=1}^n A_i\right)= \int_X \prod_{i=1}^n
\E(1_{A_i}|P_{\mu}) d \mu,
$$
where $(\mathcal{B}_X^\mu)^{(n)}=\mathcal{B}_X^\mu\times
\cdots\times \mathcal{B}_X^\mu$ ($n$ times) and $A_i\in
\mathcal{B}_X^\mu, i=1, \cdots, n$. First we need

\begin{lem} \label{etfm-lem-3}
Let $\mathcal{U}=\{ U_1,\cdots,U_n \}\in \mathcal{C}_X$. Then
$\lambda_n(\mu)(\prod_{i=1}^n U_i^c)>0$ iff for any finite (or
$n$-set) $\mu$-measurable partition $\alpha$, finer than
$\mathcal{U}$ as a cover, one has $h_{\mu}(G,\alpha)>0$.
\end{lem}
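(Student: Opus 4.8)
We must characterize when the relative product measure $\lambda_n(\mu)$ gives positive mass to a "box" $\prod_{i=1}^n U_i^c$, where $\mathcal{U} = \{U_1,\ldots,U_n\}$ is a finite cover. The measure $\lambda_n(\mu)$ is defined by integrating the product of conditional expectations $\prod_i \E(1_{A_i}|P_\mu)$ against $\mu$, so positivity of $\lambda_n(\mu)(\prod_i U_i^c)$ should reflect the presence of "independence over the Pinsker algebra" on the sets $U_i^c$, which is precisely the mechanism producing positive entropy.

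**Approach.**

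The plan is to prove the contrapositive in one direction and a direct argument in the other, exploiting the identity $\lambda_n(\mu)(\prod_i U_i^c) = \int_X \prod_i \E(1_{U_i^c}|P_\mu)\,d\mu$. First I would observe that, since $\alpha \succeq \mathcal{U}$ as a cover means each atom of $\alpha$ lies in some $U_j$, and since $\mathcal{U}$ covers $X$, the complements $U_i^c$ are closed sets whose mutual intersection $\bigcap_i U_i^c$ is empty (because $\bigcup_i U_i = X$); the content is entirely in how the conditional expectations overlap.

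For the ``only if'' direction I would show: if $\lambda_n(\mu)(\prod_i U_i^c) > 0$, then every $\mu$-measurable partition $\alpha \succeq \mathcal{U}$ has $h_\mu(G,\alpha) > 0$. The key is that $h_\mu(G,\alpha) = 0$ forces $\alpha \subseteq P_\mu$ (mod $\mu$), i.e. each atom of $\alpha$ is $P_\mu$-measurable, so $\E(1_{A_j}|P_\mu) = 1_{A_j}$ a.e. Writing each $A_j \subseteq U_{i_j}$ for some index $i_j$, one bounds $\prod_i \E(1_{U_i^c}|P_\mu)$ using that the atoms are disjoint and $P_\mu$-measurable; since the $A_j$ partition $X$ with each contained in a single $U_{i_j}$, at every point $x$ at least one factor $\E(1_{U_{i_j(x)}^c}|P_\mu)(x) = 1_{U_{i_j(x)}^c}(x) = 0$, making the integrand vanish $\mu$-a.e. and contradicting positivity. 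The forward direction therefore reduces to the standard fact that zero-entropy partitions are Pinsker-measurable, which follows from $h_\mu(G,\alpha) = H_\mu(\alpha \mid P_\mu)$ being the relevant quantity and the definition of $P_\mu$.

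**The main obstacle and the ``if'' direction.**

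The harder direction is: if $\lambda_n(\mu)(\prod_i U_i^c) = 0$, construct a single $\mu$-measurable partition $\alpha \succeq \mathcal{U}$ with $h_\mu(G,\alpha) = 0$. I expect this to be the crux. The natural attempt is to build $\alpha$ by refining $\mathcal{U}$ using $P_\mu$-measurable sets: the vanishing of $\int \prod_i \E(1_{U_i^c}|P_\mu)\,d\mu$ means that for $\mu$-a.e. $x$, at least one $\E(1_{U_i^c}|P_\mu)(x)$ vanishes, so one can select, $P_\mu$-measurably, for $\mu$-a.e. $x$ an index $i(x)$ with $\E(1_{U_{i(x)}^c}|P_\mu)(x) = 0$, equivalently $\E(1_{U_{i(x)}}|P_\mu)(x) = 1$. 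The plan is to use these conditional expectations to carve $X$ into $P_\mu$-measurable pieces $X_i = \{x : i(x) = i\}$ and then intersect a partition refining $\mathcal{U}$ with this Pinsker-measurable skeleton so that the resulting $\alpha$ is forced to be $P_\mu$-measurable (hence zero entropy) while still refining $\mathcal{U}$. The delicate point is ensuring that the refinement genuinely sits inside $U_i$ on $X_i$ while remaining $P_\mu$-measurable; here $\E(1_{U_i}|P_\mu) = 1$ on $X_i$ gives $\mu(X_i \setminus U_i) = 0$, so one may replace $X_i$ by $X_i \cap U_i$ without changing $\mu$-measure, obtaining atoms inside the $U_i$. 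I would then set $\alpha = \{X_i \cap U_i : i = 1,\ldots,n\}$ (disjointified), check $\alpha \succeq \mathcal{U}$ and $\alpha \subseteq P_\mu$ (mod $\mu$), whence $h_\mu(G,\alpha) = 0$ by Lemma \ref{lem-436}~(4) applied within the Pinsker factor. The main technical care will be the measurable selection of $i(x)$ and the mod-$\mu$ manipulations guaranteeing $\alpha$ is a genuine partition refining $\mathcal{U}$.
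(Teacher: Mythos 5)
Your proposal is correct and takes essentially the same route as the paper: the easy direction rests on the fact that a zero-entropy partition has all atoms in $P_\mu$, forcing the integrand $\prod_{i}\E(1_{U_i^c}|P_\mu)$ to vanish $\mu$-a.e., while the hard direction builds a $P_\mu$-measurable partition refining $\mathcal{U}$ from the sets $\{\E(1_{U_i^c}|P_\mu)=0\}$ and splits the leftover null set inside the $U_j$'s, exactly as the paper does with its sets $C_i$, $D_i$ and $D_0^j$. Two justifications should be repaired, though both are one-line fixes: $\E(1_{U_{i_j}^c}|P_\mu)(x)$ need not equal $1_{U_{i_j}^c}(x)$ since the $U_i$ are not $P_\mu$-measurable — instead use $U_{i_j}^c\subseteq A_j^c\in P_\mu$ to get $\E(1_{U_{i_j}^c}|P_\mu)\le 1_{A_j^c}=0$ a.e. on $A_j$ — and the identity $h_\mu(G,\alpha)=H_\mu(\alpha\mid P_\mu)$ is false in general; the fact you actually need (a partition has zero entropy iff all its atoms lie in $P_\mu$) follows directly from the definition $P_\mu=\{A\in\mathcal{B}_X^\mu: h_\mu(G,\{A,A^c\})=0\}$ together with monotonicity and subadditivity of $h_\mu(G,\cdot)$.
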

\begin{proof}
First we assume that for any finite (or $n$-set) $\mu$-measurable
partition $\alpha$, finer than $\mathcal{U}$ as a cover, one has
$h_{\mu}(G,\alpha)>0$ and $\lambda_n(\mu)(\prod_{i=1}^n U_i^c)=0$.
For $i= 1, \cdots, n$. Let $C_i=\{ x\in X:
\E(1_{U_i^c}|P_{\mu})(x)>0 \}\in P_{\mu}$, and put $D_i=C_i\cup
(U_i^c\setminus C_i)$, $D_i(0)= D_i$ and $D_i(1)= D_i^c$, as
$$
0=\int_{X\setminus C_i} \E(1_{U_i^c}|P_{\mu})(x) d \mu=\mu
(U_i^c\cap (X\setminus C_i))= \mu(U_i^c\setminus C_i),
$$
then $D_i^c\subseteq U_i$ and $D_i (0), D_i (1)\in P_{\mu}$. For any
$s=(s(1),\cdots,s(n))\in \{ 0,1 \}^n$, let $D_s= \bigcap_{i=1}^{n}
D_i (s(i))$ and set $D^j_0=(\bigcap_{i=1}^{n}D_i) \cap (U_j\setminus
\bigcup_{k=1}^{j-1} U_k)$ for $j=1,\cdots,n$. We consider
$$
\alpha=\{ D_s: s\in \{ 0,1 \}^n\ \text {and} \ s\neq (0,\cdots,0) \}
\cup \{ D_0^1,\cdots,D_0^n\}.
$$
On one hand, for any $s\in {\{ 0,1 \}}^n $ with $s\neq (0,
\cdots,0)$, one has $s(i)=1$ for some $1\le i \le n$, then
$D_s\subseteq D_i^c \subseteq U_i$. Note that $D_0^j\subseteq U_j, \
j=1,\cdots,n$, thus $\alpha\succeq \mathcal{U}$ and so
$h_{\mu}(G,\alpha)>0$. On the other hand, obviously
$\mu(\bigcap_{i=1}^n D_i)=\mu(\bigcap_{i=1}^n C_i)$ and
\begin{equation*}
0= \lambda_n(\mu)\left(\prod_{i=1}^n U_i^c\right)=
\int_{\bigcap_{i=1}^n C_i} \prod_{i=1}^n \E(1_{U_i^c}|P_{\mu}) (x) d
\mu (x),
\end{equation*}
then $\mu (\bigcap_{i=1}^n C_i)= 0$, and so $D_0^1,\cdots,D_0^n\in
P_{\mu}$. As $D_1,\cdots,D_n\in P_{\mu}$, $D_s\in P_{\mu}$ for each
$s\in \{ 0,1 \}^n$, thus $\alpha\subseteq P_{\mu}$, one gets
$h_{\mu}(G,\alpha)= 0$, a contradiction.

Now we assume $\lambda_n(\mu)(\prod_{i=1}^n U_i^c)>0$. For any
finite (or $n$-set) $\mu$-measurable partition $\alpha$ which is
finer than $\mathcal{U}$, with no loss of generality we assume $\alpha=\{ A_1,\cdots,A_n
\}$ with $A_i\subseteq U_i, \ i=1,\cdots,n$. As
\begin{equation*}
\int_X \prod_{i=1}^n \E(1_{X\setminus A_i}|P_{\mu})(x) d \mu(x) \ge
\int_X \prod_{i=1}^n \E(1_{U_i^c}|P_{\mu})(x) d \mu(x)
=\lambda_n(\mu)(\prod_{i=1}^n U_i^c)>0,
\end{equation*}
therefore $A_j \notin P_{\mu}$ for every $1\le j \le n$, and so
$h_{\mu}(G,\alpha)>0$. This finishes the proof.
\end{proof}

Then we have (we remark that the case of $G=\Z$ is proved in
\cite{G} and \cite{HY}).

\begin{thm} \label{etfm-thm-4} Let $n\ge 2$ and
$\mu\in \mathcal{M} (X,G)$. Then
$E_n^{\mu}(X,G)=\text{supp}(\lambda_n(\mu))\setminus \Delta_n (X)$.
\end{thm}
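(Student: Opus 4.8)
The plan is to prove the two set inclusions $E_n^\mu(X,G)\subseteq \text{supp}(\lambda_n(\mu))\setminus\Delta_n(X)$ and $\text{supp}(\lambda_n(\mu))\setminus\Delta_n(X)\subseteq E_n^\mu(X,G)$ separately, translating in both directions between the measure-theoretic entropy condition defining entropy tuples and the positivity of $\lambda_n(\mu)$ on product neighborhoods. Lemma~\ref{etfm-lem-3} is exactly the bridge between these two worlds, and by Remark~\ref{etfm-rem-2} we may freely test the tuple condition against $\mu$-measurable partitions $\alpha\in\mathcal{P}_X^\mu$ rather than Borel ones, which is what lets us feed finite $\mu$-measurable partitions into Lemma~\ref{etfm-lem-3}. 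The core observation linking the two sides is that a point $(x_i)_1^n$ lies in $\text{supp}(\lambda_n(\mu))$ precisely when $\lambda_n(\mu)(\prod_{i=1}^n U_i)>0$ for every choice of open neighborhoods $U_i$ of $x_i$, and that admissible covers w.r.t.\ $(x_i)_1^n$ are built from closed sets $U_i^c$ whose complements are neighborhoods of the respective coordinates.

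\medskip

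\noindent\textbf{First} I would prove $\text{supp}(\lambda_n(\mu))\setminus\Delta_n(X)\subseteq E_n^\mu(X,G)$. Fix $(x_i)_1^n\in\text{supp}(\lambda_n(\mu))\setminus\Delta_n(X)$ and let $\alpha\in\mathcal{P}_X$ be admissible w.r.t.\ $(x_i)_1^n$; I must show $h_\mu(G,\alpha)>0$. Using Remark~\ref{thm108}'s style of normalization, it suffices to control the situation by an admissible open cover $\mathcal{U}=\{U_1,\dots,U_n\}$ with $U_i^c$ a neighborhood of $x_i$ and the $U_i^c$ pairwise disjoint on distinct coordinates; since the tuple is off the diagonal such a cover exists, and one arranges $\alpha\succeq\mathcal{U}$. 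Because $(x_i)_1^n\in\text{supp}(\lambda_n(\mu))$ and each $U_i^c$ is a neighborhood of $x_i$, we get $\lambda_n(\mu)(\prod_{i=1}^n U_i^c)>0$. Lemma~\ref{etfm-lem-3} then immediately yields $h_\mu(G,\alpha)>0$ for any finite $\mu$-measurable partition refining $\mathcal{U}$, hence for $\alpha$; so $(x_i)_1^n\in E_n^\mu(X,G)$.

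\medskip

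\noindent\textbf{For the reverse inclusion} $E_n^\mu(X,G)\subseteq\text{supp}(\lambda_n(\mu))\setminus\Delta_n(X)$, I argue by contraposition: suppose $(x_i)_1^n\in X^{(n)}\setminus\Delta_n(X)$ but $(x_i)_1^n\notin\text{supp}(\lambda_n(\mu))$. Then there exist open neighborhoods $V_i$ of $x_i$ with $\lambda_n(\mu)(\prod_{i=1}^n V_i)=0$. Shrinking if necessary and passing to closed sets, I would produce an admissible open cover $\mathcal{U}=\{U_1,\dots,U_n\}$ w.r.t.\ $(x_i)_1^n$ with $U_i^c\subseteq V_i$, so that $\lambda_n(\mu)(\prod_{i=1}^n U_i^c)=0$ by monotonicity. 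Lemma~\ref{etfm-lem-3} (in its contrapositive direction, which is precisely the stated ``iff'') then provides a finite $\mu$-measurable partition $\alpha\succeq\mathcal{U}$ with $h_\mu(G,\alpha)=0$. Since $\alpha$ is admissible w.r.t.\ $(x_i)_1^n$, this shows $(x_i)_1^n\notin E_n^\mu(X,G)$, completing the contrapositive.

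\medskip

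\noindent\textbf{The main obstacle} I anticipate is the careful neighborhood/cover bookkeeping needed to match the topological support condition with the specific form of admissible covers: one must check that positivity (resp.\ vanishing) of $\lambda_n(\mu)$ on \emph{product} sets $\prod U_i^c$ corresponds correctly to the \emph{cover} admissibility condition, including handling coordinates where $x_i=x_j$ (which forces the corresponding closed pieces to be allowed to overlap) versus distinct coordinates (where Remark~\ref{thm108} lets us take them disjoint). Everything else is a direct application of Lemma~\ref{etfm-lem-3}, which has already done the hard analytic work of relating $\lambda_n(\mu)(\prod U_i^c)>0$ to positivity of $h_\mu(G,\alpha)$; the remaining effort is purely the topological translation between ``being in the support'' and ``every admissible cover has positive entropy,'' together with the measurable-versus-Borel partition interchange justified by Remark~\ref{etfm-rem-2}.
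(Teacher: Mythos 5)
Your proposal is correct and follows essentially the same route as the paper: both inclusions are reduced to Lemma \ref{etfm-lem-3} via the translation between admissible partitions and covers of the form $\{U_1^c,\dots,U_n^c\}$ with each $U_i^c$ a neighborhood of $x_i$, exactly as in the paper's proof. The only difference is cosmetic -- you prove the inclusion $E_n^{\mu}(X,G)\subseteq \mathrm{supp}(\lambda_n(\mu))\setminus\Delta_n(X)$ by contraposition (producing an admissible $\mu$-measurable partition of zero entropy from a null product neighborhood), while the paper argues it directly, which is logically the same use of the ``iff'' in Lemma \ref{etfm-lem-3}.
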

\begin{proof} Let $(x_i)_1^n\in E_n^{\mu}(X,G)$. To show
$(x_i)_1^n\in \text{supp}(\lambda_n(\mu))\setminus \Delta_n (X)$, it
remains to prove that for any Borel neighborhood $\prod_{i=1}^n U_i$ of
$(x_i)_1^n$ in $X^{(n)}$, $\lambda_n(\mu)(\prod_{i=1}^n U_i)>0$. Set
$\mathcal{U}=\{ U_1^c,\cdots,U_n^c \}$. With no loss of generality we assume
$\mathcal{U}\in \mathcal{C}_X$ (if necessity we replace $U_i$ by a
smaller Borel neighborhood of $x_i$, $1\le i\le n$). It is clear that if
$\alpha\in \mathcal{P}_X^\mu$ is finer than $\mathcal{U}$ then
$\alpha$ is admissible w.r.t. $(x_i)_1^n$, and so
$h_{\mu}(G,\alpha)>0$. Using Lemma \ref{etfm-lem-3} one has
$\lambda_n(\mu)(\prod_{i=1}^n U_i)>0$.

Now let $(x_i)_1^n\in \text{supp}(\lambda_n(\mu)) \setminus
\Delta_n (X)$. We shall show that $h_{\mu}(G,\alpha)>0$ for any
admissible $\alpha=\{ A_1,\cdots,A_k \}\in \mathcal{P}_X$ w.r.t.
$(x_i)_1^n$. In fact, let $\alpha$ be such a partition. Then there
exists a neighborhood $U_l$ of $x_l$, $1\le l\le n$ such that for
each $i\in \{1,\cdots,k \}$ we find $j_i\in \{ 1,\cdots,n \}$ with
$A_i \subseteq U_{j_i}^c$, i.e. $\alpha\succeq \mathcal{U}=\{
U_1^c,\cdots,U_n^c \}$. As $(x_i)_1^n\in
\text{supp}(\lambda_n(\mu)) \setminus \Delta_n (X)$,
$\lambda_n(\mu)(\prod_{i=1}^n U_i)>0$ and so $h_{\mu}(G,\alpha)>0$
(see Lemma \ref{etfm-lem-3}). This ends the proof.
\end{proof}

Before proceeding we also need

\begin{thm}[{\cite[Theorem 0.1]{D}}] \label{Danil} Let
$\mu\in \mathcal{M} (X,G)$, $\alpha\in \mathcal{P}_X^\mu$ and
$\epsilon>0$. Then there exists $K\in F (G)$ such that if $F\in F
(G)$ satisfies $(F F^{- 1}\setminus \{e_G\})\cap K= \emptyset$ then
$$|\frac{1}{|F|}H_\mu(\alpha_F|P_\mu)-H_\mu(\alpha|P_\mu)|<\epsilon.$$
\end{thm}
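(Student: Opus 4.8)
The plan is to prove the two inequalities separately, reserving the spreading hypothesis on $F$ for the lower bound only. Throughout I use that $P_\mu$ is $G$-invariant (standard, since $h_\mu(G,\cdot)$ is $G$-invariant) together with the measure-preservation identity $H_\mu(g^{-1}\beta\mid\mathcal A)=H_\mu(\beta\mid g\mathcal A)$ for $g\in G$, $\beta\in\mathcal P_X$ and $\mathcal A\subseteq\mathcal B_X^\mu$, where $g\mathcal A=\{gC:C\in\mathcal A\}$; this follows from the definition of conditional entropy and invariance of $\mu$ by the change of variables $y=gx$.

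First I would establish the upper bound $\frac1{|F|}H_\mu(\alpha_F\mid P_\mu)\le H_\mu(\alpha\mid P_\mu)$, which holds for every $F\in F(G)$ with no spreading. Writing $F=\{g_1,\dots,g_n\}$ and applying the chain rule,
\[ H_\mu(\alpha_F\mid P_\mu)=\sum_{j=1}^n H_\mu\Big(g_j^{-1}\alpha\;\Big|\;\big(\textstyle\bigvee_{i<j}g_i^{-1}\alpha\big)\vee P_\mu\Big), \]
and each summand is at most $H_\mu(g_j^{-1}\alpha\mid P_\mu)=H_\mu(\alpha\mid g_jP_\mu)=H_\mu(\alpha\mid P_\mu)$, using that dropping conditioning increases entropy and that $g_jP_\mu=P_\mu$. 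Summing and dividing by $|F|$ gives the bound.

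For the lower bound I would again use the chain rule, but rewrite the $j$-th summand by pushing forward by $g_j$:
\[ H_\mu\Big(g_j^{-1}\alpha\;\Big|\;\big(\textstyle\bigvee_{i<j}g_i^{-1}\alpha\big)\vee P_\mu\Big)=H_\mu\Big(\alpha\;\Big|\;\big(\textstyle\bigvee_{h\in S_j}h\alpha\big)\vee P_\mu\Big),\qquad S_j=\{g_jg_i^{-1}:i<j\}. \]
Since the $g_i$ are distinct, $S_j\subseteq FF^{-1}\setminus\{e_G\}$, which is disjoint from $K$ by hypothesis (and $FF^{-1}$ is symmetric, so I may take $K$ symmetric). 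Hence everything reduces to the following asymptotic-independence lemma: for every $\epsilon>0$ there is a symmetric $K\in F(G)$ so that for every finite $S\subseteq G\setminus K$ one has $H_\mu(\alpha\mid(\bigvee_{h\in S}h\alpha)\vee P_\mu)\ge H_\mu(\alpha\mid P_\mu)-\epsilon$. Granting this, each of the $|F|$ summands lies in $[H_\mu(\alpha\mid P_\mu)-\epsilon,\,H_\mu(\alpha\mid P_\mu)]$, and dividing by $|F|$ closes the estimate.

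It remains to prove the lemma, and this is where I expect the genuine difficulty. The statement says precisely that, conditionally on $P_\mu$, the translates of $\alpha$ along group elements far from $e_G$ are nearly independent of $\alpha$ — the relative completely-positive-entropy (relative $K$) property of the extension $X\to(X,P_\mu)$. I would deduce it from triviality of the relative tail field: set $\mathcal T=\bigcap_{K\in F(G)}\big((\bigvee_{h\in G\setminus K}h\alpha)\vee P_\mu\big)$, a decreasing intersection with $\mathcal T\supseteq P_\mu$, and aim to prove $\mathcal T=P_\mu\pmod\mu$. Given this, the decreasing martingale theorem yields $\E_\mu(1_A\mid(\bigvee_{h\in G\setminus K}h\alpha)\vee P_\mu)\to\E_\mu(1_A\mid P_\mu)$ in $L^1$ for each $A\in\alpha$ as $K$ grows; monotonicity of $H_\mu(\alpha\mid\cdot)$ in the conditioning $\sigma$-algebra then gives $H_\mu(\alpha\mid(\bigvee_{h\in S}h\alpha)\vee P_\mu)\ge H_\mu(\alpha\mid(\bigvee_{h\in G\setminus K}h\alpha)\vee P_\mu)\to H_\mu(\alpha\mid P_\mu)$, which is the lemma. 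The main obstacle is the identification $\mathcal T=P_\mu$: this is the amenable-group analogue of the Rokhlin--Sinai theorem, and establishing it without a distinguished time direction is the delicate point. I would try to obtain the inclusion $\mathcal T\subseteq P_\mu$ from the Pinsker characterization $P_\mu=\{A:h_\mu(G,\{A,A^c\})=0\}$, showing that any $\mathcal T$-measurable partition, being determined by arbitrarily remote translates of $\alpha$, has vanishing entropy; the relative Shannon--McMillan--Breiman theorem (Lemma \ref{Lindenstrauss}) should provide the quantitative control needed to force this entropy to zero.
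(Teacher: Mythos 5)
Your two reductions are sound: the upper bound $\frac{1}{|F|}H_\mu(\alpha_F|P_\mu)\le H_\mu(\alpha|P_\mu)$ via the chain rule, and the rewriting of the $j$-th increment as $H_\mu(\alpha\,|\,(\bigvee_{h\in S_j}h\alpha)\vee P_\mu)$ with $S_j\subseteq FF^{-1}\setminus\{e_G\}$, using invariance of $\mu$ and of $P_\mu$, are both correct, as is the reverse-martingale argument that would convert tail triviality into your asymptotic-independence lemma. Be aware, however, that the paper itself does not prove this statement at all: it is imported verbatim from Danilenko \cite[Theorem 0.1]{D}. Consequently, everything you have actually proved is the routine part, and the lemma you deferred \emph{is} the theorem.

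The genuine gap is your treatment of the inclusion $\mathcal{T}\subseteq P_\mu$. For $G=\Z$ this is the Rokhlin--Sinai theorem, and every classical proof exploits the order structure of $\Z$ (formulas of the type $h_\mu(T,\beta)=H_\mu(\beta\,|\,\bigvee_{k\ge 1}T^{-k}\beta)$, remote past versus remote future); no analogue of these exists for a general countable discrete amenable group, and this absence is precisely why the statement was hard. The known proofs (Rudolph--Weiss \cite{RW} via orbit equivalence, streamlined by Danilenko's orbital method in \cite{D} --- the same machinery this paper develops in Section 4 for the equivalence of the two cover entropies) transfer the problem to a $\Z$-action generating the same orbit equivalence relation and invoke the classical theory there; nothing in your sketch reproduces or replaces that step. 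Moreover, the tool you invoke, Lemma \ref{Lindenstrauss}, is the \emph{non-relative} pointwise SMB theorem: it requires $\mu$ ergodic and a tempered F{\o}lner sequence, and it controls $-\frac{1}{|F_n|}\log\mu(\alpha_{F_n}(x))$, not conditional entropies relative to $P_\mu$ nor any tail $\sigma$-algebra; it also says nothing for the non-ergodic $\mu$ allowed in the statement. So the sentence ``the relative SMB theorem should provide the quantitative control needed'' names a hope, not an argument, and the proposal as written does not contain a proof of the one assertion that carries the theorem's content.
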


The following theorem are crucial for this section of our paper, and
the methods of proving it may be useful in other settings as well.

\begin{thm} \label{etfm-thm-5}
Let $\mu\in \mathcal{M} (X,G)$ and $\mathcal{U}
 =\{U_1,\cdots, U_n \}\in \mathcal{C}_X^\mu, n\ge 2$.
 If $h_{\mu}(G,\alpha)>0$ for any finite
(or $n$-set) $\mu$-measurable partition $\alpha$, finer than
$\mathcal{U}$, then $h_\mu^-(G,\mathcal{U})>0$.
\end{thm}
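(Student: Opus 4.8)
The plan is to reduce the positivity of $h_\mu^-(G,\mathcal{U})$ to a one--step conditional statement over the Pinsker $\sigma$--algebra $P_\mu$, and then to transfer that one--step positivity to a positive entropy \emph{rate} by sampling well--separated translates and invoking Danilenko's Theorem \ref{Danil}. First I would record the reformulation of the hypothesis: by Lemma \ref{etfm-lem-3} the assumption is equivalent to $\lambda_n(\mu)\left(\prod_{i=1}^n U_i^c\right)>0$. From this I claim the one--step conditional cover entropy $c_0:=H_\mu(\mathcal{U}\mid P_\mu)$ is strictly positive. Indeed, exactly as in \eqref{keyeq1} of Lemma \ref{lem4-2} one has $H_\mu(\mathcal{U}\mid P_\mu)=\min_{\beta\in P(\mathcal{U})}H_\mu(\beta\mid P_\mu)$, a minimum over the finite family $P(\mathcal{U})=\{\beta\in\mathcal{P}_X:\alpha(\mathcal{U})\succeq\beta\succeq\mathcal{U}\}$. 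If this minimum were $0$ there would be $\beta^\ast\succeq\mathcal{U}$ with $\beta^\ast\subseteq P_\mu$ (mod $\mu$); assigning each atom $B$ of $\beta^\ast$ to one $U_i\supseteq B$ and letting $A_i$ be the union of the atoms assigned to $i$ produces a partition $\{A_1,\dots,A_n\}$ with $A_i\subseteq U_i$ and every $A_i\in P_\mu$. Then $\mathbb{E}(1_{U_i^c}\mid P_\mu)\le\mathbb{E}(1_{A_i^c}\mid P_\mu)=1_{A_i^c}$ forces $\prod_{i}\mathbb{E}(1_{U_i^c}\mid P_\mu)\le 1_{\bigcap_i A_i^c}=0$, whence $\lambda_n(\mu)\left(\prod_i U_i^c\right)=0$, a contradiction. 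Thus $c_0>0$.

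Next, since conditioning decreases entropy and $F\in F(G)\mapsto H_\mu(\mathcal{U}_F\mid P_\mu)$ is again m.n.i.s.a. (so the rate exists by Lemma \ref{convergent}), it suffices to bound from below the conditional rate
\[
h_\mu^-(G,\mathcal{U}\mid P_\mu):=\lim_{n}\frac{1}{|F_n|}H_\mu(\mathcal{U}_{F_n}\mid P_\mu)\le h_\mu^-(G,\mathcal{U}).
\]
Here I would use a separation device. Fix $\epsilon\in(0,c_0)$ and let $K\in F(G)$ be the set furnished by Theorem \ref{Danil}. Inside any Følner set $F_n$ choose a maximal $K$--separated subset $E_n\subseteq F_n$ (so $(E_nE_n^{-1}\setminus\{e_G\})\cap K=\emptyset$); maximality gives $|E_n|\ge\rho|F_n|$ for a constant $\rho=\rho(K)>0$. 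Since $E_n\subseteq F_n$ gives $\mathcal{U}_{F_n}\succeq\mathcal{U}_{E_n}$, monotonicity yields $H_\mu(\mathcal{U}_{F_n}\mid P_\mu)\ge H_\mu(\mathcal{U}_{E_n}\mid P_\mu)$. It then remains to establish the super--additive estimate $H_\mu(\mathcal{U}_{E_n}\mid P_\mu)\ge (c_0-\epsilon)|E_n|$; granting it, one concludes $h_\mu^-(G,\mathcal{U})\ge h_\mu^-(G,\mathcal{U}\mid P_\mu)\ge\rho(c_0-\epsilon)>0$, and by Theorem \ref{equivalence} also $h_\mu(G,\mathcal{U})>0$.

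The hard part will be precisely that super--additive estimate. For a \emph{partition} $\alpha$, the approximation $\frac{1}{|E|}H_\mu(\alpha_E\mid P_\mu)\approx H_\mu(\alpha\mid P_\mu)$ over spread--out $E$ is exactly Theorem \ref{Danil}; but for a \emph{cover} the quantity $H_\mu(\mathcal{U}_E\mid P_\mu)=\inf_{\beta\succeq\mathcal{U}_E}H_\mu(\beta\mid P_\mu)$ is an infimum over refinements of the large join, and the minimizing $\beta$ need not factor through single--site refinements of $\mathcal{U}$, so Danilenko's partition statement does not transfer verbatim. This is the crux: I must show that relative independence over $P_\mu$ of the well--separated coordinates $\{g^{-1}\mathcal{U}\}_{g\in E_n}$ prevents \emph{any} refinement of $\mathcal{U}_{E_n}$ from being cheaper than $(c_0-\epsilon)|E_n|$. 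I expect to handle it by first reducing to ergodic $\mu$ via Lemma \ref{refer}, then disintegrating $\mu$ over $P_\mu$ so that on fibres the system is relatively $K$ and the translates $g^{-1}\mathcal{U}$ become genuinely independent, and finally running the finite--family minimum formula fibrewise together with the conditional independence to force additivity of the cover entropy up to the error $\epsilon|E_n|$.
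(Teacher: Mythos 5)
Your outline has the right general shape (Pinsker algebra, Danilenko's Theorem \ref{Danil}, a $K$-separated subset $E_m\subseteq F_m$ of density $\ge 1/(2|K|+1)$), but it contains one invalid step and one genuine hole, and the hole is exactly where the theorem's difficulty lies. The invalid step is your derivation of $c_0=H_\mu(\mathcal{U}\mid P_\mu)>0$: you assert that, ``exactly as in \eqref{keyeq1}'', $H_\mu(\mathcal{U}\mid P_\mu)=\min_{\beta\in P(\mathcal{U})}H_\mu(\beta\mid P_\mu)$. But \eqref{keyeq1} is an \emph{unconditional} statement, and its extreme-point/concavity proof does not survive conditioning: conditionally, the optimal refinement of $\mathcal{U}$ may cut the overlaps of the $U_i$ along sets of the conditioning $\sigma$-algebra, which no member of the finite family $P(\mathcal{U})$ does. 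Concretely, on $X=[0,1]$ with Lebesgue measure, take $\mathcal{U}=\{[0,2/3],[1/3,1]\}$ and $\mathcal{A}$ generated by $\{[0,1/2],(1/2,1]\}$: the partition $\{[0,1/2],(1/2,1]\}$ refines $\mathcal{U}$ and lies in $\mathcal{A}$, so $\inf_{\beta\succeq\mathcal{U}}H_\mu(\beta\mid\mathcal{A})=0$, while every member of $P(\mathcal{U})$ has conditional entropy at least $\tfrac12(\phi(1/3)+\phi(2/3))>0$. Since $P_\mu$ can be rich (it contains every zero-entropy factor), nothing protects you from this phenomenon. The conclusion $c_0>0$ does happen to be true, but it requires an argument of a different kind; in the paper it falls out of the key Claim described below, via $H_\mu(\beta\mid P_\mu)\ge H_\mu(\alpha\mid P_\mu)-H_\mu(\alpha\mid\beta\vee P_\mu)\ge\epsilon$.

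The more serious problem is that the ``super-additive estimate'' $H_\mu(\mathcal{U}_{E_n}\mid P_\mu)\ge(c_0-\epsilon)|E_n|$, which you correctly flag as the crux, is left unproved, and the plan you sketch for it cannot work as stated: ``genuine independence on fibres'' of translates of a \emph{cover} over $P_\mu$ is not something relative $K$ hands you (making such statements precise for partitions is exactly the content of Theorem \ref{Danil}), and the ``fibrewise finite-family minimum formula'' is the same invalid identity as above. The paper avoids cover-level near-additivity altogether. It works with the partition $\alpha$ generated by $\mathcal{U}$ and proves, by a concavity/AM--GM computation using $\lambda_n(\mu)\left(\prod_{i=1}^n U_i^c\right)>0$ (Lemma \ref{etfm-lem-3}), a \emph{uniform drop}: there exist $M\in\N$ and $D\in P_\mu$ with $\mu(D)>0$ such that, with $\epsilon=\frac{\mu(D)}{M}\log\frac{n}{n-1}$, every $\beta\in\mathcal{P}_X^\mu$ finer than $\mathcal{U}$ satisfies $H_\mu(\alpha\mid\beta\vee P_\mu)\le H_\mu(\alpha\mid P_\mu)-\epsilon$. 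Then, taking $K$ from Theorem \ref{Danil} \emph{for the partition $\alpha$} and a $K$-separated $E_m\subseteq F_m$ with $|E_m|\ge|F_m|/(2|K|+1)$, any $\beta_m$ finer than $\mathcal{U}_{F_m}$ satisfies $g\beta_m\succeq\mathcal{U}$ for all $g\in E_m$, so the chain rule gives
\begin{equation*}
H_\mu(\beta_m)\ \ge\ H_\mu(\alpha_{E_m}\mid P_\mu)-\sum_{g\in E_m}H_\mu(\alpha\mid g\beta_m\vee P_\mu)\ \ge\ |E_m|\Bigl(H_\mu(\alpha\mid P_\mu)-\tfrac{\epsilon}{2}\Bigr)-|E_m|\bigl(H_\mu(\alpha\mid P_\mu)-\epsilon\bigr)=|E_m|\tfrac{\epsilon}{2},
\end{equation*}
whence $h_\mu^-(G,\mathcal{U})\ge\frac{\epsilon}{2(2|K|+1)}>0$. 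Note that Danilenko's theorem is only ever applied to the partition $\alpha$, never to the cover; the uniform-drop inequality is the device that converts cover refinements into partition statements. This inequality (or some equivalent) is the missing ingredient in your proposal: your $c_0$, even once correctly established, cannot be fed into Theorem \ref{Danil}, which is a statement about partitions, not covers.
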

\begin{proof}
For any $s=(s(1),\cdots,s(n))\in { \{ 0,1 \} }^n$, set
$A_s=\bigcap_{i=1}^{n}U_i(s(i))$, where $U_i(0)=U_i$ and
$U_i(1)=U_i^c$. Let $\alpha=\{ A_s: s\in { \{ 0,1 \} }^n \}$. Note
that $\lambda_n(\mu)(\prod_{i=1}^n U_i^c)= \int_X \prod_{i=1}^n
\E(1_{U_i^c}|P_{\mu}) d\mu >0$ (Lemma \ref{etfm-lem-3}), hence there
exists $M\in \N$ such that $\mu(D)>0$, where
$$D=\left\{ x\in X:\min_{1\le i \le n} \E(1_{U_i^c}|P_{\mu})(x) \ge \frac {1}{M} \right\}.$$

\noindent{\bf {Claim.}} If $\beta\in \mathcal{P}_X^\mu$ is finer
than $\mathcal{U}$ then $H_{\mu}(\alpha|\beta \vee P_{\mu}) \le
H_{\mu}(\alpha|P_{\mu})-\frac{\mu(D)}{M} \log (\frac {n}{n-1})$.

\begin{proof}[Proof of Claim] With no loss of generality we
assume $\beta=\{ B_1, \cdots, B_n \}$ with $B_i\subseteq U_i, \
i=1,\cdots,n$. Then
\begin{eqnarray} \label{etfm-eq-1}
H_{\mu}(\alpha|\beta \vee P_{\mu}) &= & H_{\mu}(\alpha \vee \beta|
P_{\mu})-
H_{\mu}(\beta| P_{\mu}) \nonumber \\
&= & \int_X \sum_{s\in { \{ 0,1 \} }^n} \sum_{i=1}^n \E(1_{B_i} |
P_{\mu}) \phi \left(\frac {\E(1_{A_s \cap B_i}| P_{\mu})}
{\E(1_{B_i} | P_{\mu})}\right) d \mu \nonumber \\
&= & \sum_{s\in { \{ 0,1 \} }^n} \int_X \sum_{1\le i\le n, s(i)=0}
\E(1_{B_i} | P_{\mu}) \phi\left(\frac {\E(1_{A_s \cap B_i}|
P_{\mu})} {\E(1_{B_i}| P_{\mu})}\right) d \mu,
\end{eqnarray}
where the last equality comes from the fact that, for any $s\in { \{
0,1 \} }^n$ and $1\le i \le n$, if $s(i)=1$ then $A_s\cap
B_i=\emptyset$ and so $\frac {\E(1_{A_s \cap B_i}
|P_{\mu})}{\E(1_{B_i} |P_{\mu})}(x)= 0$ for $\mu$-a.e. $x\in X$. Put
$c_s= \sum_{1\le k\le n,s(k)=0} \E(1_{B_k} |P_{\mu})$. As $\phi$ is
a concave fucntion, \eqref{etfm-eq-1}
\begin{eqnarray}
&\le & \sum\limits_{s\in { \{ 0,1 \} }^n} \int_X c_s \cdot
\phi\left(\sum\limits_{1\le i\le n,s(i)=0} \frac{\E(1_{B_i}
|P_{\mu})} {c_s}\cdot \frac {\E(1_{A_s \cap B_i} |P_{\mu})}
{\E(1_{B_i} |P_{\mu})}\right) d \mu \nonumber \\
&= & \sum\limits_{s\in { \{ 0,1 \} }^n} \int_X c_s \cdot
\phi\left(\frac {\E(1_{A_s} |P_{\mu})}
{c_s}\right) d \mu \nonumber \\
&= & \sum\limits_{s\in { \{ 0,1 \} }^n} \left(\int_X
\phi(\E(1_{A_s}|P_{\mu})) d \mu - \int_X \E(1_{A_s}|P_{\mu}) \log
\frac {1}
{c_s} d \mu\right) \nonumber \\
&= & H_{\mu}(\alpha|P_{\mu})-\sum\limits_{s\in { \{ 0,1 \} }^n}
\int_X \E(1_{A_s}|P_{\mu}) \log \frac {1} {c_s} d \mu.
\label{07-01-03-07}
\end{eqnarray}
Note that if $s(i)=1$, $1\le i\le n$ then $\sum_{1\le k\le n,
s(k)=0} \E(1_{B_k} |P_{\mu}) \le \E(1_{X\setminus B_i}|P_{\mu})$,
moreover, $(\frac{b_1+\cdots +b_n}{n})^n\ge b_1\cdots b_n$ and
$\sum_{i=1}^n b_i=\sum_{i=1}^n\sum_{1\le j\le n,
j\not=i}\E(1_{B_j}|P_\mu) =(n-1)\sum_{i=1}^n\E(1_{B_i}|P_\mu)=n-1$,
here $b_i=\E(1_{X\setminus B_i}|P_\mu)$, $i=1,\cdots, n$. Then we
have
\begin{eqnarray}
& & \sum\limits_{s\in { \{ 0,1 \}}^n} \int_X \E(1_{A_s}|P_{\mu})
\log \left(\frac {1} {\sum\limits_{1\le k\le n,s(k)=0}
\E(1_{B_k} |P_{\mu})}\right) d \mu \nonumber \\
&\ge & \frac{1}{n} \sum\limits_{i=1}^n \int_X
\left(\sum\limits_{s\in { \{ 0,1 \}}^n,s(i)=1}
\E(1_{A_s}|P_{\mu})\right)
\log \frac {1}{b_i} d \mu \nonumber \\
&= & \frac{1}{n} \sum\limits_{i=1}^n \int_X \E(1_{U_i^c}|P_{\mu})
\log \frac {1}{b_i}d\mu \ge \frac{1}{nM}
\sum\limits_{i=1}^n \int_D \log \frac{1}{b_i}d \mu \nonumber \\
&= &\frac{1}{nM} \int_D \log \frac {1} {\prod_{i=1}^n b_i} d\mu \ge
\frac{1}{M} \int_D \log\frac {n} {\sum\limits_{i=1}^n b_i} d
\mu=\frac{\mu(D)}{M} \log \left(\frac {n}{n-1}\right),
\label{07-01-03-08}
\end{eqnarray}
Hence, $H_{\mu}(\alpha|\beta \vee P_{\mu}) \le
H_{\mu}(\alpha|P_{\mu})-\frac {\mu(D)}{M} \log (\frac {n}{n-1})$
(using \eqref{07-01-03-07} and \eqref{07-01-03-08}).
\end{proof}

Set $\epsilon=\frac {\mu(D)}{M} \log (\frac {n}{n-1})>0$. By Theorem
\ref{Danil}, there exists $K\in F (G)$ such that
\begin{equation} \label{etfm-eq-2}
|\frac{1}{|F|}H_\mu(\alpha_F|P_\mu)-
H_\mu(\alpha|P_\mu)|<\frac{\epsilon}{2}
\end{equation}
when $F\in F (G)$ satisfies $(F F^{-1}\setminus \{e_G\})\cap K=
\emptyset$. Let $\{ F_m \}_{m\in \mathbb{N}}$ be a F\o lner sequence
of $G$. For each $m\in \N$, we can take $E_m\subseteq F_m$ such that
$(E_m E_m^{-1}\setminus \{e_G\})\cap K= \emptyset$ and $|E_m|\ge
\frac{|F_m|}{2|K|+ 1}$. Now if $\beta_m\in \mathcal{C}_X^\mu$ is finer
than $\mathcal{U}_{F_m}$ then $g\beta_m\succeq \mathcal{U}$ for each
$g\in F_m$, and so
\begin{eqnarray*}
H_{\mu}(\beta_m)&\ge & H_\mu(\beta_m \vee \alpha_{E_m}|P_\mu)-
H_\mu(\alpha_{E_m}|\beta_m \vee P_\mu) \\
&\ge & H_\mu(\alpha_{E_m}|P_\mu)-\sum_{g\in E_m}
H_\mu(\alpha|g\beta_m \vee P_\mu) \\
&\ge & H_\mu(\alpha_{E_m}|P_\mu)-|E_m|
(H_\mu(\alpha|P_\mu)-\epsilon) \ \text{(by Claim)} \\
&\ge &|E_m|\frac{\epsilon}{2} \ \text{(by the selection of $E_m$ and
applying \eqref{etfm-eq-2} to $E_m$)}.
\end{eqnarray*}
Hence, $H_\mu(\mathcal{U}_{F_m})\ge |E_m|\frac{\epsilon}{2}$ and so
$h_\mu^-(G,\mathcal{U})\ge \frac{\epsilon}{2 (2|K|+ 1)}$. This finishes the
proof of the theorem.
\end{proof}

An immediate consequence of Lemma \ref{etfm-lem-3} and Theorem
\ref{etfm-thm-5} is
\begin{cor} \label{etfm-cor-6}
Let $\mu\in \mathcal{M} (X,G)$ and $\mathcal{U}
 =\{U_1,\cdots, U_n \}\in \mathcal{C}_X^\mu$. Then the following
 statements are equivalent:
\begin{description}

\item[1] $h_{\mu}^- (G,\mathcal{U})>0$, equivalently, $h_{\mu} (G,\mathcal{U})>0$;

\item[2] $h_\mu (G, \alpha)> 0$ if $\alpha\in \mathcal{C}_X^\mu$ is
finer than $\mathcal{U}$;

\item[3]
$\lambda_n(\mu)(\prod_{i=1}^n U_i^c)>0$.
\end{description}
\end{cor}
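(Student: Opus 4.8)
The plan is to close the cycle of implications $(3)\Rightarrow(1)\Rightarrow(2)\Rightarrow(3)$, using as a bridge the auxiliary property
\[
(\mathrm{P})\colon\quad h_\mu(G,\alpha)>0\ \text{for every}\ \alpha\in\mathcal{P}_X^\mu\ \text{with}\ \alpha\succeq\mathcal{U}.
\]
First I would dispose of the ``equivalently'' inside $(1)$: since $\mu\in\mathcal{M}(X,G)$ makes $(X,\mathcal{B}_X^\mu,\mu,G)$ a Lebesgue $G$-system and $\mathcal{U}\in\mathcal{C}_X^\mu$, Theorem \ref{equivalence} gives $h_\mu^-(G,\mathcal{U})=h_\mu(G,\mathcal{U})$, so the two quantities are positive together and I may use whichever is convenient.

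For $(3)\Rightarrow(1)$ I would chain the two quoted results. Lemma \ref{etfm-lem-3} identifies $(3)$, namely $\lambda_n(\mu)(\prod_{i=1}^n U_i^c)>0$, with exactly $(\mathrm{P})$, and Theorem \ref{etfm-thm-5} then upgrades $(\mathrm{P})$ to $h_\mu^-(G,\mathcal{U})>0$. This is the substantive step: $(\mathrm{P})$ only says that each single partition refining $\mathcal{U}$ has positive entropy, whereas $h_\mu(G,\mathcal{U})=\inf_{\alpha\succeq\mathcal{U}}h_\mu(G,\alpha)$ is an infimum of positive numbers that could in principle be $0$; Theorem \ref{etfm-thm-5} furnishes the uniform lower bound that excludes this. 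Together with the equivalence above, this establishes $(1)$.

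The last two implications are formal. For $(1)\Rightarrow(2)$, if $\alpha\in\mathcal{C}_X^\mu$ refines $\mathcal{U}$ then every partition refining $\alpha$ refines $\mathcal{U}$, whence $h_\mu(G,\alpha)\ge h_\mu(G,\mathcal{U})>0$ by the monotonicity of Lemma \ref{basic}(3) read in the $\mu$-measurable category. For $(2)\Rightarrow(3)$, restricting the hypothesis of $(2)$ to the subfamily $\mathcal{P}_X^\mu\subseteq\mathcal{C}_X^\mu$ yields precisely $(\mathrm{P})$, and Lemma \ref{etfm-lem-3} returns $(3)$. This closes the cycle.

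I expect no real analytic obstacle here, since all the difficulty is already absorbed into Lemma \ref{etfm-lem-3} and Theorem \ref{etfm-thm-5}; the only thing demanding attention is bookkeeping. Those two results, together with Theorem \ref{equivalence} and Lemma \ref{basic}, are stated for covers in $\mathcal{C}_X$ and for $\mu$-measurable partitions, so I must check that they transfer verbatim to $\mathcal{U}\in\mathcal{C}_X^\mu$ and $\alpha\in\mathcal{P}_X^\mu$. This is routine: every ingredient ($H_\mu$, the conditional expectations onto $P_\mu$, the entropies $h_\mu$ and $h_\mu^-$, and the refinement order $\succeq$) depends only on the $\mu$-classes of the sets involved, and $(X,\mathcal{B}_X^\mu,\mu)$ is a Lebesgue space on which all the earlier machinery operates unchanged.
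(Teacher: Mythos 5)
Your proposal is correct and follows essentially the same route as the paper, which derives the corollary directly from Lemma \ref{etfm-lem-3} (identifying condition (3) with positivity of $h_\mu(G,\alpha)$ for all $\mu$-measurable partitions $\alpha\succeq\mathcal{U}$) and Theorem \ref{etfm-thm-5} (upgrading that to $h_\mu^-(G,\mathcal{U})>0$), with the remaining implications being the same monotonicity bookkeeping you describe. Your invocation of Theorem \ref{equivalence} for the ``equivalently'' in (1) is a legitimate alternative to noting that the cycle itself, together with $h_\mu^-\le h_\mu$, already yields it.
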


Now with the help of Theorem \ref{en-decom} and Corollary
\ref{etfm-cor-6} we can obtain Theorem \ref{et-decom} which
discloses the relation of entropy tuples for an invariant measure
and entropy tuples for ergodic measures in its ergodic
decomposition, generalizing \cite[Theorem 4]{BGH} and \cite[Theorem
4.9]{HY}.

\begin{thm} \label{et-decom}
Let $\mu\in \mathcal{M} (X, G)$ with $\mu=\int_{\Omega} \mu_{\omega}
d m(\omega)$ the ergodic decomposition of $\mu$. Then
\begin{description}

\item[1] for $m$-a.e. $\omega\in \Omega$, $E_n^{\mu_{\omega}}(X,
G)\subseteq E_n^{\mu}(X, G)$ for each $n\ge 2$.

\item[2] if $(x_i)_1^n\in E_n^{\mu}(X, G)$, then for every
measurable neighborhood $V$ of $(x_i)_1^n$, $m (\{ \omega\in \Omega:
V\cap E_n^{\mu_{\omega}}(X, G)\not=\emptyset\})>0$. Thus for an
appropriate choice of $\Omega$, we can require
$$
\overline{\cup \{ E_n^{\mu_{\omega}}(X, G): \omega \in \Omega
\}}\setminus \Delta_n(X) =E_n^{\mu}(X, G).
$$
\end{description}
\end{thm}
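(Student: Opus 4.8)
The plan is to bottle the entire argument into one \emph{master equivalence} and then read off both parts from the identification $E_n^{\mu}(X,G)=\mathrm{supp}(\lambda_n(\mu))\setminus\Delta_n(X)$ of Theorem~\ref{etfm-thm-4}. Fix $n\ge 2$ and work with covers $\mathcal{U}=\{U_1,\cdots,U_n\}$ in which each $U_i$ is \emph{closed}, so that $B\doteq\prod_{i=1}^n U_i^c$ is an open box and $\mathcal{U}$ is a cover precisely when $\bigcap_{i=1}^n U_i^c=\emptyset$. For such a cover, Corollary~\ref{etfm-cor-6} gives $h_\mu^-(G,\mathcal{U})>0$ iff $\lambda_n(\mu)(B)>0$, and the same statement applied to each ergodic component $\mu_\omega$ gives $h_{\mu_\omega}^-(G,\mathcal{U})>0$ iff $\lambda_n(\mu_\omega)(B)>0$. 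Combining this with the ergodic–decomposition formula $h_\mu^-(G,\mathcal{U})=\int_\Omega h_{\mu_\omega}^-(G,\mathcal{U})\,dm(\omega)$ from Theorem~\ref{en-decom} (equation \eqref{lab-eq}) and the non-negativity of the integrand, I obtain the equivalence
\[
\lambda_n(\mu)(B)>0\iff m\big(\{\omega\in\Omega:\lambda_n(\mu_\omega)(B)>0\}\big)>0 .
\]
This is the one nontrivial ingredient, and it already encapsulates all the analytic work; everything else is bookkeeping.

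For assertion (1) I would first fix, using compactness and metrizability of $X^{(n)}$, a countable family $\{B_j\}_j$ of off-diagonal open boxes of the above form that is a base for the topology of $X^{(n)}\setminus\Delta_n(X)$ (around an off-diagonal point two coordinate neighbourhoods can be made disjoint, forcing $\bigcap_i U_i^c=\emptyset$, so $\mathcal{U}$ is genuinely a cover). For every $j$ with $\lambda_n(\mu)(B_j)=0$ the master equivalence gives $\lambda_n(\mu_\omega)(B_j)=0$ for $m$-a.e.\ $\omega$. Intersecting over the countably many such $j$, and then over all $n\ge 2$, yields a full-measure set $\Omega$ on which, for every $\omega\in\Omega$, no off-diagonal box missing $\mathrm{supp}(\lambda_n(\mu))$ carries $\lambda_n(\mu_\omega)$-mass; hence $\mathrm{supp}(\lambda_n(\mu_\omega))\setminus\Delta_n(X)\subseteq\mathrm{supp}(\lambda_n(\mu))$, i.e.\ $E_n^{\mu_\omega}(X,G)\subseteq E_n^{\mu}(X,G)$ by Theorem~\ref{etfm-thm-4}.

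For assertion (2), let $(x_i)_1^n\in E_n^{\mu}(X,G)=\mathrm{supp}(\lambda_n(\mu))\setminus\Delta_n(X)$ and let $V$ be a measurable neighbourhood. Since the tuple is off-diagonal I may shrink to one of the basic off-diagonal boxes $B\subseteq V$ containing it, and then $\lambda_n(\mu)(B)>0$ because $(x_i)_1^n$ lies in the support. The master equivalence supplies a positive-$m$-measure set of $\omega$ with $\lambda_n(\mu_\omega)(B)>0$; for each such $\omega$ the set $\mathrm{supp}(\lambda_n(\mu_\omega))\cap B$ is nonempty and, being inside the off-diagonal box $B$, produces a point of $E_n^{\mu_\omega}(X,G)\cap V$. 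This proves the positive-measure claim. The closure identity then follows: Part (1) gives $\bigcup_{\omega\in\Omega}E_n^{\mu_\omega}(X,G)\subseteq E_n^{\mu}(X,G)$, and since $\mathrm{supp}(\lambda_n(\mu))$ is closed we get $\overline{\bigcup_\omega E_n^{\mu_\omega}(X,G)}\setminus\Delta_n(X)\subseteq E_n^{\mu}(X,G)$; conversely, Part (2) shows every $(x_i)_1^n\in E_n^{\mu}(X,G)$ is a limit of points drawn from the $E_n^{\mu_\omega}(X,G)$ (the relevant $\omega$ form a positive-measure set, hence meet the full-measure $\Omega$), giving the reverse inclusion.

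The only genuine subtlety I anticipate is administrative rather than conceptual: I must arrange a \emph{single} full-measure $\Omega$ that simultaneously validates Part (1) for every $n\ge 2$ while remaining large enough that the positive-measure sets produced in Part (2) still intersect it, which is why the exceptional null sets are collected over the countable family of boxes and over all $n$ before $\Omega$ is fixed. The deliberate payoff of routing through Corollary~\ref{etfm-cor-6} and Theorem~\ref{en-decom} is that I never have to compute $\lambda_n(\mu)$ explicitly, and in particular I avoid the delicate question of how the Pinsker $\sigma$-algebra $P_\mu$ interacts with the ergodic decomposition over $I_\mu$.
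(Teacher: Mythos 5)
Your proposal is correct and takes essentially the same route as the paper's proof: both rest on the identification $E_n^{\mu}(X,G)=\text{supp}(\lambda_n(\mu))\setminus \Delta_n(X)$ from Theorem \ref{etfm-thm-4}, convert positivity of $\lambda_n$ on off-diagonal boxes into positivity of the entropy of the complementary closed cover via Corollary \ref{etfm-cor-6}, transfer between $\mu$ and its ergodic components through the integral formula \eqref{lab-eq} of Theorem \ref{en-decom} (your ``master equivalence''), and handle part (1) by a countability argument over boxes. The only cosmetic difference is that you quantify over a countable base of off-diagonal boxes of $X^{(n)}\setminus \Delta_n(X)$, whereas the paper writes the complement of $E_n^{\mu}(X,G)\cup \Delta_n(X)$ as a countable union of boxes $\prod_{i=1}^n U_i$ with $\bigcap_{i=1}^n \overline{U_i}=\emptyset$; these are the same idea.
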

\begin{proof}
1. It suffices to prove the conclusion for each given $n\ge 2$. Let
$n\ge 2$ be fixed.

Let $U_i,\ i=1,\cdots,n$ be open subsets of $X$ with
$\bigcap_{i=1}^n \overline{U_i}=\emptyset$ and $(\prod_{i=1}^n
\overline{U_i}) \cap E^\mu_n(X,G)=\emptyset$. Then
$\lambda_n(\mu)(\prod_{i=1}^n \overline{U_i})= 0$ by Theorem
\ref{etfm-thm-4}, and so $h_\mu(G,\mathcal{U})=0$ by Corollary
\ref{etfm-cor-6}, where $\mathcal{U}=\{ U_1^c,\cdots,U_n^c\}$. As
$\int_\Omega h_{\mu_\omega}(G,\mathcal{U}) d
m(\omega)=h_\mu(G,\mathcal{U})=0$ (see \eqref{lab-eq}), for
$m$-a.e. $\omega\in \Omega$, $h_{\mu_\omega}(G,\mathcal{U})=0$ and
so $\lambda_n(\mu_\omega)(\prod_{i=1}^n U_i)=0$ by Corollary
\ref{etfm-cor-6}, hence $(\prod_{i=1}^n U_i) \cap
E^{\mu_\omega}_n(X,G)=\emptyset$ (using Theorem \ref{etfm-thm-4}
and the assumption of $\bigcap_{i=1}^n \overline{U_i}=\emptyset$).

Since $E_n^\mu(X,G)\cup \Delta_n(X)\subseteq X^{(n)}$ is closed, its
complement can be written as a union of countable sets of the form
$\prod_{i=1}^n U_i$ with $U_i,i=1,\cdots,n$ open subsets satisfying
$\bigcap_{i=1}^n \overline{U_i}=\emptyset$. Then applying the above
procedure to each such a subset $\prod_{i=1}^n U_i$ one has that for
$m$-a.e. $\omega\in \Omega$, $E^{\mu_\omega}_n(X,G)\cap
{(E^\mu_n(X,T))}^c=\emptyset$, equivalently,
$E^{\mu_\omega}_n(X,G)\subseteq E^\mu_n(X,T)$.

2.  With no loss of generality we assume $V=\prod_{i=1}^n A_i$, where $A_i$ is a
closed neighborhood of $x_i$, $1\le i\le n$ and $\bigcap_{i=1}^n
A_i=\emptyset$. As $\lambda_n(\mu)(\prod_{i=1}^n A_i)>0$ by
Theorem \ref{etfm-thm-4}, one has
\begin{equation*}
\int_\Omega h_{\mu_\omega}(T,\{ A_1^c,\cdots,A_n^c \}) d m(\omega)
=h_\mu(T,\{ A_1^c,\cdots,A_n^c \})>0\text{ (using \eqref{lab-eq} and
Corollary \ref{etfm-cor-6})},
\end{equation*}
there exists $\Omega'\subseteq \Omega$ with $m(\Omega')>0$ such that
if $\omega\in \Omega'$ then
$$
h_{\mu_\omega}(G,\{ A_1^c,\cdots,A_n^c \})>0,\ \text{i.e.}\
\lambda_n(\mu_\omega)\left(\prod_{i=1}^nA_i\right)>0\text{ (see
Corollary \ref{etfm-cor-6})},
$$
and so $(\prod_{i=1}^nA_i) \cap E_n^{\mu_\omega}(X,G)\not=\emptyset$
 (see Theorem
\ref{etfm-thm-4}), i.e. $m(\{\omega\in \Omega: V\cap
E_n^{\mu_\omega}(X,G)\not=\emptyset\})>0$.
\end{proof}

\begin{lem} \label{lelift}
Let $\pi: (X,G)\rightarrow (Y,G)$
 be a factor map between
$G$-systems, $\mathcal{U}\in \mathcal{C}_Y$ and $\mu\in
\mathcal{M}(X,G)$. Then $h_\mu^- (G,\pi^{-1}\mathcal{U}) =h_{\pi
\mu}^- (G,\mathcal{U})$.
\end{lem}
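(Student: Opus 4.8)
The plan is to reduce the whole statement, for each fixed member of the F\o lner sequence, to the single-cover identity
\[
H_\mu(\pi^{-1}\mathcal{V})=H_{\pi\mu}(\mathcal{V})\quad\text{for every }\mathcal{V}\in\mathcal{C}_Y,
\]
and then apply it with $\mathcal{V}=\mathcal{U}_{F_n}$. First I would check that taking preimages under $\pi$ commutes with the operations building $\mathcal{U}_F$. Since $\pi$ is a factor map, $\pi\circ g=g\circ\pi$ gives $\pi^{-1}(g^{-1}U)=g^{-1}(\pi^{-1}U)$, and $\pi^{-1}$ commutes with $\vee$ because preimage commutes with intersection; hence $(\pi^{-1}\mathcal{U})_F=\pi^{-1}(\mathcal{U}_F)$ for every $F\in F(G)$, and in particular $(\pi^{-1}\mathcal{U})_{F_n}=\pi^{-1}(\mathcal{U}_{F_n})$. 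Note also that $\pi\mu\in\mathcal{M}(Y,G)$, since $g(\pi\mu)=\pi(g\mu)=\pi\mu$, so the right-hand side is meaningful. Granting the displayed identity, dividing by $|F_n|$ and letting $n\to+\infty$ in the definition $h_\mu^-(G,\cdot)=\lim_n\frac{1}{|F_n|}H_\mu(\cdot_{F_n})$ yields $h_\mu^-(G,\pi^{-1}\mathcal{U})=h_{\pi\mu}^-(G,\mathcal{U})$.

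For the displayed identity I would invoke the finite minimum characterization \eqref{keyeq1}: for any Borel probability measure $\theta$ and any cover $\mathcal{V}$ one has $H_\theta(\mathcal{V})=\min_{\beta\in P(\mathcal{V})}H_\theta(\beta)$, where $P(\mathcal{V})$ is the finite family of partitions $\beta$ with $\alpha_{\mathcal{V}}\succeq\beta\succeq\mathcal{V}$ and $\alpha_{\mathcal{V}}$ is the Borel partition generated by $\mathcal{V}$. Apply this on $Y$ to $\mathcal{V}$ and on $X$ to $\pi^{-1}\mathcal{V}$. The key structural observation is that the partition generated by $\pi^{-1}\mathcal{V}$ equals $\pi^{-1}\alpha_{\mathcal{V}}$: its atoms are exactly $\pi^{-1}(V_s)$, $s\in\{0,1\}^{\#\mathcal{V}}$, where $V_s=\bigcap_i V_i(s(i))$. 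Consequently $\beta\mapsto\pi^{-1}\beta$ is a bijection of $P(\mathcal{V})$ onto $P(\pi^{-1}\mathcal{V})$. Indeed, every $\gamma\in P(\pi^{-1}\mathcal{V})$ is coarser than $\pi^{-1}\alpha_{\mathcal{V}}$, so each atom of $\gamma$ is a union of atoms $\pi^{-1}(V_s)$; grouping the corresponding $V_s$ in the same way produces $\beta\in\mathcal{P}_Y$ with $\pi^{-1}\beta=\gamma$, and surjectivity of $\pi$ guarantees that $\beta$ again refines $\mathcal{V}$ (if $\pi^{-1}W\subseteq\pi^{-1}V_i$ then $W\subseteq V_i$, using $\pi^{-1}(y)\neq\emptyset$). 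Under this bijection $H_\mu(\pi^{-1}\beta)=H_{\pi\mu}(\beta)$, because $\mu(\pi^{-1}B)=(\pi\mu)(B)$ for every Borel $B\subseteq Y$ and $\phi$ is evaluated on these equal masses. Taking minima over the two identified finite families gives the displayed identity.

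The main obstacle is precisely this bijection step: justifying that a minimizing partition refining $\pi^{-1}\mathcal{V}$ may always be taken to be a pullback from $Y$. This is what forces the use of \eqref{keyeq1}, replacing the infimum over all partitions refining $\pi^{-1}\mathcal{V}$ by a minimum over the explicit finite family $P(\pi^{-1}\mathcal{V})$, together with surjectivity of $\pi$ to transport the refinement relation and the nonemptiness of atoms between $X$ and $Y$. Once this correspondence is in place, both inequalities in the displayed identity are immediate and no entropy estimates are required.
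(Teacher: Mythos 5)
Your proposal is correct and is essentially the paper's own argument: the paper likewise invokes \eqref{keyeq1} together with the identity $P((\pi^{-1}\mathcal{U})_F)=\pi^{-1}P(\mathcal{U}_F)$ to get $H_{\pi\mu}(\mathcal{U}_F)=H_\mu((\pi^{-1}\mathcal{U})_F)$, then divides by $|F|$ and passes to the limit along the F\o lner sequence. The only difference is that you spell out the verification (via surjectivity of $\pi$) that $\beta\mapsto\pi^{-1}\beta$ is a bijection between the two finite families of partitions, a step the paper states without proof.
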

\begin{proof}
Note that, for each $F\in F (G)$, $P ((\pi^{-1}\mathcal{U})_F)=
\pi^{-1} P (\mathcal{U}_F)$, using \eqref{keyeq1} we have
\begin{eqnarray}
\label{avv} H_{\pi\mu} (\mathcal{U}_F)&= & \inf_{\beta\in P
(\mathcal{U}_F)} H_{\pi\mu} (\beta) =\inf_{\beta\in P
(\mathcal{U}_F)} H_{\mu} (\pi^{-1}\beta)\ \nonumber
\\
&= & \inf_{\beta'\in P ((\pi^{-1}\mathcal{U})_F)} H_\mu(\beta') =
H_\mu((\pi^{-1}\mathcal{U})_F).
\end{eqnarray}
Then the lemma immediately follows when divide $|F|$ on  both sides
of \eqref{avv} and then let $F$ range over a fixed F\o lner sequence
of $G$.
\end{proof}

Then we have

\begin{thm} Let $\pi: (X,G)\rightarrow (Y,G)$ be a factor map
between $G$-systems, $\mu\in \mathcal{M} (X, G)$. Then
$$E_n^{\pi \mu} (Y, G)\subseteq (\pi\times \cdots \times
\pi) E_n^\mu (X, G)\subseteq E_n^{\pi \mu} (Y, G)\cup
\Delta_n(Y)\text{ for each $n\ge 2$}.$$
\end{thm}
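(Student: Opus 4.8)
The plan is to reduce both inclusions to the support description $E_n^\mu(X,G)=\text{supp}(\lambda_n(\mu))\setminus\Delta_n(X)$ of Theorem \ref{etfm-thm-4}, together with a single positivity principle relating the two measures across the factor map. Concretely, I would first establish the following biconditional: for Borel sets $A_1,\dots,A_n\subseteq Y$ with $\bigcap_{i=1}^n A_i=\emptyset$,
\begin{equation*}
\lambda_n(\mu)\Big(\prod_{i=1}^n\pi^{-1}A_i\Big)>0\quad\Longleftrightarrow\quad\lambda_n(\pi\mu)\Big(\prod_{i=1}^n A_i\Big)>0.
\end{equation*}
To prove this, set $U_i=A_i^c$, so that $\mathcal{U}=\{U_1,\dots,U_n\}\in\mathcal{C}_Y^{\pi\mu}$ and $\pi^{-1}\mathcal{U}=\{\pi^{-1}U_1,\dots,\pi^{-1}U_n\}\in\mathcal{C}_X^{\mu}$, since both families cover their space. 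Applying Corollary \ref{etfm-cor-6} in $Y$ gives $h_{\pi\mu}^-(G,\mathcal{U})>0\iff\lambda_n(\pi\mu)(\prod A_i)>0$, and applying it in $X$ gives $h_\mu^-(G,\pi^{-1}\mathcal{U})>0\iff\lambda_n(\mu)(\prod(\pi^{-1}U_i)^c)>0$; but $(\pi^{-1}U_i)^c=\pi^{-1}A_i$, and by Lemma \ref{lelift} one has $h_\mu^-(G,\pi^{-1}\mathcal{U})=h_{\pi\mu}^-(G,\mathcal{U})$, so the two positivity statements coincide. This identity is the heart of the argument and is where the local variational machinery (Corollary \ref{etfm-cor-6}) and the lifting lemma (Lemma \ref{lelift}) enter.

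For the inclusion $(\pi\times\cdots\times\pi)E_n^\mu(X,G)\subseteq E_n^{\pi\mu}(Y,G)\cup\Delta_n(Y)$, I would take $(x_i)_1^n\in E_n^\mu(X,G)$ and put $y_i=\pi(x_i)$, assuming $(y_i)_1^n\notin\Delta_n(Y)$. It then suffices, by Theorem \ref{etfm-thm-4}, to show $(y_i)_1^n\in\text{supp}(\lambda_n(\pi\mu))$. Given any product neighborhood $\prod V_i$ of $(y_i)_1^n$, since the $y_i$ are not all equal there are $j\ne k$ with $y_j\ne y_k$; using that $Y$ is metric I shrink $V_j,V_k$ to disjoint neighborhoods, producing a smaller product neighborhood $\prod V_i'$ with $\bigcap_i V_i'=\emptyset$. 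Then $\prod\pi^{-1}V_i'$ is a neighborhood of $(x_i)_1^n\in\text{supp}(\lambda_n(\mu))$, so $\lambda_n(\mu)(\prod\pi^{-1}V_i')>0$; the biconditional yields $\lambda_n(\pi\mu)(\prod V_i')>0$, whence $\lambda_n(\pi\mu)(\prod V_i)>0$. As $\prod V_i$ was arbitrary this gives $(y_i)_1^n\in\text{supp}(\lambda_n(\pi\mu))$, and since it avoids the diagonal it lies in $E_n^{\pi\mu}(Y,G)$.

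For $E_n^{\pi\mu}(Y,G)\subseteq(\pi\times\cdots\times\pi)E_n^\mu(X,G)$, I would take $(y_i)_1^n\in E_n^{\pi\mu}(Y,G)$ and seek a preimage tuple in $E_n^\mu(X,G)$. The key sub-claim is that the compact set $K=\prod_i\pi^{-1}(y_i)$ meets $\text{supp}(\lambda_n(\mu))$. If not, compactness gives an open $W\supseteq K$ with $\lambda_n(\mu)(W)=0$; the tube lemma produces open $O_i\supseteq\pi^{-1}(y_i)$ with $\prod O_i\subseteq W$, and since $\pi$ is a closed map (continuous image of a compact space into a Hausdorff space) there are open $V_i\ni y_i$ with $\pi^{-1}(V_i)\subseteq O_i$, so $\lambda_n(\mu)(\prod\pi^{-1}V_i)=0$. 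Shrinking two coordinates to disjoint neighborhoods as above to arrange $\bigcap_i V_i'=\emptyset$ while keeping $\prod V_i'$ a neighborhood of $(y_i)_1^n$, the biconditional forces $\lambda_n(\pi\mu)(\prod V_i')=0$, contradicting $(y_i)_1^n\in\text{supp}(\lambda_n(\pi\mu))$. Hence I may choose $(x_i)_1^n\in K\cap\text{supp}(\lambda_n(\mu))$; then $\pi(x_i)=y_i$, and because $y_j\ne y_k$ forces $x_j\ne x_k$ the tuple avoids $\Delta_n(X)$, so $(x_i)_1^n\in E_n^\mu(X,G)$ by Theorem \ref{etfm-thm-4}. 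I expect the main obstacle to be the positivity biconditional; once it is in place the remaining steps are the routine topological manipulations (tube lemma, closedness of $\pi$, and separating non-diagonal coordinates) described above.
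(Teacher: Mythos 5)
Your proposal is correct, and it rests on exactly the same three pillars as the paper's proof: the support description $E_n^\mu(X,G)=\text{supp}(\lambda_n(\mu))\setminus\Delta_n(X)$ (Theorem \ref{etfm-thm-4}), the equivalence of positive cover entropy with positivity of $\lambda_n$ (Corollary \ref{etfm-cor-6}), and the lifting identity $h^-_\mu(G,\pi^{-1}\mathcal{U})=h^-_{\pi\mu}(G,\mathcal{U})$ (Lemma \ref{lelift}); your ``positivity biconditional'' is precisely the composite of the last two, and it is legitimate since Borel covers lie in $\mathcal{C}_Y^{\pi\mu}$ and $\mathcal{C}_X^{\mu}$. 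The differences are in execution. For the inclusion $E_n^{\pi\mu}(Y,G)\subseteq(\pi\times\cdots\times\pi)E_n^\mu(X,G)$, the paper takes closed neighborhoods $V_i^m$ of $y_i$ of diameter at most $\frac{1}{m}$ with empty intersection, uses the same positivity transfer to find tuples $(x_i^m)_1^n\in\prod_{i=1}^n\pi^{-1}V_i^m\cap E_n^\mu(X,G)$, and passes to a subsequential limit, which lies in $\prod_{i=1}^n\pi^{-1}(y_i)\cap\text{supp}(\lambda_n(\mu))$ and off the diagonal; you instead show directly, by contradiction, that the compact fiber product $\prod_{i=1}^n\pi^{-1}(y_i)$ meets $\text{supp}(\lambda_n(\mu))$, using the tube lemma and the closedness of $\pi$ to convert a hypothetical null open set around the fiber product into a null product neighborhood $\prod_{i=1}^n\pi^{-1}V_i'$ of the form your biconditional can see. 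The two compactness arguments are interchangeable: yours avoids verifying that a limit point survives, the paper's avoids the tube-lemma and closed-map machinery. For the other inclusion the paper simply notes it ``follows directly from the definition'' --- an admissible partition of $Y$ w.r.t.\ $(\pi x_i)_1^n$ pulls back to an admissible partition of $X$ w.r.t.\ $(x_i)_1^n$ with the same entropy --- whereas your route through supports and the biconditional is correct but heavier than needed.
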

\begin{proof} The second inclusion follows directly from the
definition. For the first inclusion, we assume
$(y_1,\cdots,y_n)\in E_n^{\pi\mu} (Y,G)$. For $m\in \N$, take a
closed neighborhood $V_i^m$ of $y_i, i=1,\cdots,n$ with diameter
at most $\frac{1}{m}$ such that $\bigcap_{i=1}^n V_i^m=\emptyset$.
Consider $\mathcal{U}_m=\{ (V_1^m)^c,\cdots,(V_n^m)^c \}\in
\mathcal{C}_Y^{o}$, then $h^-_\mu (G, \pi^{-1}\mathcal{U}_m)=
h^-_{\pi \mu} (G, \mathcal{U}_m)>0$ and so
$\lambda_n(\mu)(\prod_{i=1}^n \pi^{- 1} V_i^m)>0$ by Corollary
\ref{etfm-cor-6} and Lemma \ref{lelift}. Hence $\prod_{i=1}^n
\pi^{- 1} V_i^{m} \cap (\text{supp}(\lambda_n(\mu))\setminus
\Delta_n (X))\neq \emptyset$. Moreover, there exists
$(x_i^m)_1^n\in \prod_{i= 1}^n \pi^{-1} V_i^m\cap E_n^\mu (X, G)$
by Theorem \ref{etfm-thm-4}. We may assume
$(x_1^m,\cdots,x_n^m)\rightarrow (x_1,\cdots,x_n)$ (if necessity
we take a sub-sequence). Clearly, $x_i\in
\pi^{-1}(y_i),i=1,\cdots,n$ and $(x_1,\cdots, x_n)\in E_n^\mu (X,
G)$ by Proposition \ref{thm110}~(2). This finishes the proof of
the theorem.
\end{proof}

\subsection{A variational relation of entropy tuples}

Now we are to show the variational relation of topological and
measure-theoretic entropy tuples.

\begin{thm} \label{vrtm-thm-1}
Let $(X,G)$ be a $G$-system. Then
\begin{description}

\item[1] for each $\mu\in \mathcal{M} (X, G)$ and each $n\ge 2$,
$E_n(X,G)\supseteq E_n^{\mu}(X,G) =\text{supp}
(\lambda_n(\mu))\setminus \Delta_n(X)$.

\item[2] there exists $\mu\in \mathcal{M} (X, G)$ such that $E_n(X,G)=
E_n^{\mu}(X,G)$ for each $n\ge 2$.
\end{description}
\end{thm}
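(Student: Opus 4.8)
The plan is to treat the two assertions separately, feeding on the machinery already assembled: the support characterization (Theorem~\ref{etfm-thm-4}), the positivity criterion (Corollary~\ref{etfm-cor-6}), the equivalence $h_\nu=h_\nu^-$ (Theorem~\ref{equivalence}), the local variational principle (Theorem~\ref{avpin}), and the affinity of $h^-_{\{\cdot\}}(G,\mathcal{U})$ (Theorem~\ref{en-decom}). The first statement is essentially a repackaging; the genuine work is the construction of a single universal measure in the second.

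For (1), the equality $E_n^\mu(X,G)=\mathrm{supp}(\lambda_n(\mu))\setminus\Delta_n(X)$ is exactly Theorem~\ref{etfm-thm-4}, so only the inclusion $E_n^\mu(X,G)\subseteq E_n(X,G)$ remains. I would take $(x_i)_1^n\in E_n^\mu(X,G)$ and, invoking Remark~\ref{thm108}, reduce checking membership in $E_n(X,G)$ to verifying $h_{\text{top}}(G,\mathcal{U})>0$ only for the special covers $\mathcal{U}=\{U_1,\dots,U_n\}$ with each $U_i^c$ a neighbourhood of $x_i$. For such a cover $\prod_{i=1}^n U_i^c$ is a neighbourhood of $(x_i)_1^n$, and since $(x_i)_1^n\in\mathrm{supp}(\lambda_n(\mu))$ we get $\lambda_n(\mu)(\prod_{i=1}^n U_i^c)>0$; then Corollary~\ref{etfm-cor-6} yields $h_\mu^-(G,\mathcal{U})>0$, and Lemma~\ref{basic}~(1) gives $h_{\text{top}}(G,\mathcal{U})\ge h_\mu^-(G,\mathcal{U})>0$.

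For (2), by (1) it suffices to manufacture one $\mu$ with $E_n(X,G)\subseteq E_n^\mu(X,G)$ for every $n$. I would fix a countable base $\mathcal{O}$ of $X$ and form the (countable) family $\mathcal{F}$ of covers $\{O_1^c,\dots,O_n^c\}$ with $n\ge 2$, each $O_i\in\mathcal{O}$, $\bigcap_i O_i=\emptyset$, and $h_{\text{top}}(G,\{O_1^c,\dots,O_n^c\})>0$ (if $\mathcal{F}=\emptyset$ then every $E_n(X,G)$ is empty and any invariant measure works). For each $\mathcal{U}\in\mathcal{F}$, Theorem~\ref{avpin} provides $\mu_{\mathcal{U}}\in\mathcal{M}(X,G)$ with $h_{\mu_{\mathcal{U}}}(G,\mathcal{U})>0$, whence $h_{\mu_{\mathcal{U}}}^-(G,\mathcal{U})>0$ by Theorem~\ref{equivalence}. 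Enumerating $\mathcal{F}=\{\mathcal{U}_j\}_j$ I would set $\mu=\sum_j 2^{-j}\mu_{\mathcal{U}_j}\in\mathcal{M}(X,G)$; affinity of $h^-_{\{\cdot\}}(G,\mathcal{U})$, extended to countable convex combinations through the ergodic decomposition formula \eqref{lab-eq}, gives $h_\mu^-(G,\mathcal{U}_j)\ge 2^{-j}h_{\mu_{\mathcal{U}_j}}^-(G,\mathcal{U}_j)>0$ for every $j$. Then, given $(x_i)_1^n\in E_n(X,G)$ and any neighbourhood $\prod_i W_i$ of it, since the tuple is off-diagonal I can pick basic $O_i\in\mathcal{O}$ with $x_i\in O_i\subseteq W_i$ and $\bigcap_i O_i=\emptyset$; the cover $\{O_1^c,\dots,O_n^c\}$ is admissible w.r.t. $(x_i)_1^n$, so it lies in $\mathcal{F}$, giving $h_\mu^-(G,\{O_i^c\})>0$ and hence $\lambda_n(\mu)(\prod_i O_i)>0$ by Corollary~\ref{etfm-cor-6}, so $\lambda_n(\mu)(\prod_i W_i)>0$. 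Thus $(x_i)_1^n\in\mathrm{supp}(\lambda_n(\mu))\setminus\Delta_n(X)=E_n^\mu(X,G)$.

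The main obstacle is precisely this combining step in (2): passing from countably many witnessing measures, one per positive-entropy cover, to a single invariant measure that simultaneously detects all topological entropy tuples of all orders. What makes it work is that positivity of $\lambda_n(\mu)(\prod_i O_i)$ is equivalent to positivity of the \emph{affine} quantity $h_\mu^-(G,\{O_i^c\})$, so countable averaging preserves every positivity condition at once; the technical care lies in establishing affinity for countable (not merely finite) convex combinations, which I would obtain by decomposing each summand ergodically and applying \eqref{lab-eq} together with monotone convergence.
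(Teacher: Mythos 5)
Your proposal is correct and follows essentially the same route as the paper: part (1) rests on the support characterization $E_n^{\mu}(X,G)=\text{supp}(\lambda_n(\mu))\setminus \Delta_n(X)$ (Theorem \ref{etfm-thm-4}) together with Theorem \ref{etfm-thm-5}/Corollary \ref{etfm-cor-6} and $h_{\text{top}}(G,\mathcal{U})\ge h_\mu^-(G,\mathcal{U})$, and part (2) produces witnessing measures from the local variational principle (Theorem \ref{avpin}) and combines countably many of them with geometric weights, using affinity via \eqref{lab-eq} to keep every positivity condition. The only cosmetic difference is the bookkeeping in (2): you index the witnessing measures by a countable family of covers built from a countable base and check support membership of each tuple directly on basic neighborhoods, whereas the paper indexes them by a countable dense sequence of tuples in each $E_n(X,G)$ and finishes with a closure argument; both versions are sound, and your countable-combination step can even be reduced to finite affinity plus non-negativity by splitting $\mu=2^{-j}\mu_{\mathcal{U}_j}+(1-2^{-j})\eta$.
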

\begin{proof}
1. Let ${(x_i)}_{i=1}^n \in E_n^{\mu}(X,G)$ and $\mathcal{U}\in
\mathcal{C}_X^{o}$ admissible w.r.t. ${(x_i)}_{i=1}^n$. Then if
$\alpha\in \mathcal{P}_X$ is finer than $\mathcal{U}$ then it is
also admissible w.r.t. ${(x_i)}_{i=1}^n$, and so
$h_{\mu}(G,\alpha)>0$ (as $(x_i)_1^n\in E_n^{\mu}(X,G)$), thus
$h^-_\mu(G,\mathcal{U})>0$ by Theorem \ref{etfm-thm-5}. Moreover,
$h_{\text{top}}(G,\mathcal{U})\ge h_\mu^-(G,\mathcal{U})>0$. That
is, ${(x_i)}_{i=1}^n \in E_n(X,G)$, as $\mathcal{U}$ is arbitrary.

2. Let $n\ge 2$. First we have

\medskip

\noindent{\bf Claim.} If $(x_i)_1^n\in E_n(X,G)$ and $\prod_{i= 1}^n
U_i$ is a neighborhood of $(x_i)_1^n$ in $X^{(n)}$ then
$E_n^{\nu}(X,G) \cap \prod_{i= 1}^n U_i\not=\emptyset$ for some
$\nu\in \mathcal{M} (X,G)$.

\begin{proof}[Proof of Claim] With no loss of generality we
assume that $U_i$ is a closed neighborhood of $x_i$, $1\le i\le n$
such that $U_i\cap U_j=\emptyset$ if $x_i\neq x_j$ and $U_i=U_j$
if $x_i=x_j$, $1\le i <j \le n$. Let $\mathcal{U}=\{
U_1^c,\cdots,U_n^c \}$. Then $h_{\text {top}}(G,\mathcal{U})>0$
(as $(x_i)_1^n\in E_n(X,G)$). By Theorem \ref{avpin}, there exists
$\nu\in M(X,G)$ such that $h_{\nu}(G,\mathcal{U})= h_{\text
{top}}(G,\mathcal{U})$, then $\lambda_n(\nu)(\prod_{i=1}^n U_i)>0$
by Corollary \ref{etfm-cor-6}, i.e.
$\text{supp}(\lambda_n(\nu))\cap \prod_{i=1}^n U_i\not=\emptyset$.
As $\prod_{i=1}^n U_i\cap \Delta_n(X)=\emptyset$, one has
$E_n^{\nu}(X,G) \cap \prod_{i=1}^n U_i\not=\emptyset$ by Theorem
\ref{etfm-thm-4}. This ends the proof.
\end{proof}

By claim, for each $n\ge 2$, we can choose a dense sequence of
points $\{(x_1^m,\cdots,x_n^m)\}_{m\in \N}\subseteq E_n(X,G)$ with
$(x_1^m,\cdots,x_n^m )\in E_n^{\nu_n^m}(X,G)$ for some $\nu_n^m \in
\mathcal{M}(X,G)$. Let
\begin{equation*}
\mu=\sum_{n\ge 2} \frac{1}{2^{n-1}}\left(\sum_{m\ge 1}
\frac{1}{2^m}\nu_n^m\right).
\end{equation*}
As if $\alpha\in \mathcal{P}_X$ then
\begin{equation*}
h_{\mu}(G,\alpha)\ge \frac{1}{2^{m+n-1}}h_{\nu_n^m}(G,\alpha)\
(\text{using \eqref{lab-eq}})
\end{equation*}
and so $E_n^{\nu_n^m}(X,G)\subseteq E_n^\mu(X,G)$ for all $n\ge 2$
and $m\in \N$. Thus $(x_1^m,\cdots,x_n^m )\in E_n^\mu(X,G)$. Hence
$$
E_n^\mu(X,G)\supseteq \overline{\{ (x_1^m,\cdots,x_n^m ): m\in \N\}}
\setminus \Delta_n(X) =E_n(X,G),
$$
moreover, $E_n^\mu(X,G)=E_n(X,G)$ (using 1) for each $n\ge 2$.
\end{proof}

\subsection{Entropy tuples of a finite production}

At the end of this section, we shall provide a result about
topological entropy tuples of a finite product.

We say that $G$-measure preserving system $(X, \mathcal{B}, \mu, G)$
is {\it free}, if $g= e_G$ when $g\in G$ satisfies $g x= x$ for
$\mu$-a.e. $x\in X$, equivalently, for $\mu$-a.e. $x\in X$, the
mapping $G\rightarrow G x, g\mapsto g x$ is one-to-one. The
following is proved in \cite[Theorem 4]{GTW}.

\begin{lem} \label{product-Pinsker-factor}
Let $(X, \mathcal{B}, \mu, G)$ and $(Y, \mathcal{D}, \nu, G)$ both
be a free ergodic $G$-measure preserving system with a Lebesgue
space as its base space, with $P_\mu$ and $P_\nu$ Pinsker
$\sigma$-algebras, respectively. Then $P_\mu\times P_\nu$ is the
Pinsker $\sigma$-algebra of the product $G$-measure preserving
system $(X\times Y, \mathcal{B}\times \mathcal{D}, \mu\times \nu,
G)$.
\end{lem}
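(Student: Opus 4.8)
The plan is to prove the two inclusions $P_\mu \times P_\nu \subseteq P_{\mu \times \nu}$ and $P_{\mu \times \nu} \subseteq P_\mu \times P_\nu$ separately, the first being routine and the second carrying essentially all of the content. Throughout I write $\mathcal{B} = \mathcal{B}_X$ and $\mathcal{D} = \mathcal{B}_Y$, and recall that $P_\mu = \{A \in \mathcal{B}_X^\mu : h_\mu(G, \{A, A^c\}) = 0\}$ is the largest zero-entropy factor of $(X, \mathcal{B}, \mu, G)$, and similarly for $P_\nu$ and $P_{\mu\times\nu}$.

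For the inclusion $P_\mu \times P_\nu \subseteq P_{\mu \times \nu}$, I would first establish additivity of entropy over products. For $\alpha \in \mathcal{P}_X$ and $\beta \in \mathcal{P}_Y$ one has $(\alpha \times \beta)_F = \alpha_F \times \beta_F$, and since the two partitions are $\mu\times\nu$-independent, $H_{\mu \times \nu}((\alpha \times \beta)_F) = H_\mu(\alpha_F) + H_\nu(\beta_F)$ for every $F \in F(G)$. Dividing by $|F_n|$ and passing to the limit along a F\o lner sequence (Lemma \ref{convergent}) yields $h_{\mu \times \nu}(G, \alpha \times \beta) = h_\mu(G, \alpha) + h_\nu(G, \beta)$. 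Hence the factor $(X \times Y, P_\mu \times P_\nu, \mu \times \nu, G)$, being isomorphic to the product of the two Pinsker factors $(X, P_\mu)$ and $(Y, P_\nu)$, both of zero entropy, itself has zero entropy; so every $P_\mu \times P_\nu$-measurable set lies in $P_{\mu \times \nu}$.

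The reverse inclusion is the heart of the matter, and my strategy is to show that the product extension $(X \times Y, \mathcal{B}\times\mathcal{D}) \to (X \times Y, P_\mu \times P_\nu)$ has relatively completely positive entropy (is relatively $K$): that is, any $\gamma \in \mathcal{P}_{X\times Y}$ with $h_{\mu\times\nu}(G, \gamma \mid P_\mu \times P_\nu) = 0$ is $P_\mu \times P_\nu$-measurable mod $\mu\times\nu$. Granting this, the conclusion follows from the factorization of entropy through a factor, $h_{\mu\times\nu}(G, \mathcal{G}) = h_{\mu\times\nu}(G, P_\mu\times P_\nu) + h_{\mu\times\nu}(G, \mathcal{G} \mid P_\mu\times P_\nu)$, valid for $\mathcal{G} \supseteq P_\mu\times P_\nu$: since $P_{\mu\times\nu}$ is a zero-entropy factor containing $P_\mu \times P_\nu$ by the first inclusion, its relative entropy over $P_\mu \times P_\nu$ vanishes, whence relative $K$ forces $P_{\mu\times\nu} = P_\mu \times P_\nu$. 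The natural input is that, directly from the definition of the Pinsker algebra, $(X,\mathcal{B}) \to (X, P_\mu)$ and $(Y, \mathcal{D}) \to (Y, P_\nu)$ are themselves relatively $K$ extensions.

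The main obstacle is therefore the statement that a product of two relatively $K$ extensions of amenable group actions is again relatively $K$. For $\mathbb{Z}$-actions this is part of Thouvenot's relative theory, built on the Rokhlin--Sinai remote past; since an amenable $G$ has no canonical past, I would instead invoke the theorem of Rudolph and Weiss \cite{RW}, which is precisely where the freeness and ergodicity hypotheses enter (to pass through the orbit-equivalence machinery, in the spirit of Danilenko's approach of Section 4): a relatively $K$ action of a countable discrete amenable group is relatively mixing of all orders, and in particular is relatively disjoint over its base from any relative zero-entropy extension. Applying this to $X \to X/P_\mu$ and $Y \to Y/P_\nu$ and running a relatively independent joining argument over $P_\mu \times P_\nu$, any $\gamma$ of zero relative entropy is forced to be $P_\mu \times P_\nu$-measurable. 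I expect the most delicate part of the write-up to be the bookkeeping that reduces this relative, possibly non-ergodic, fibered assertion to the absolute Rudolph--Weiss theorem, via the disintegration of $\mu \times \nu$ over $P_\mu \times P_\nu$ together with the additivity of relative entropy.
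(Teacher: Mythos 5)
First, a point of comparison: the paper contains no proof of this lemma at all --- it is imported wholesale as \cite[Theorem 4]{GTW} (``Entropy theory without a past''), so your attempt has to stand on its own against that proof (or Danilenko's orbital reproof in \cite{D}), not against anything in this paper. Your easy inclusion $P_\mu\times P_\nu\subseteq P_{\mu\times\nu}$ is fine, and reducing the hard inclusion to the assertion that the extension $(X\times Y,\mathcal{B}\times\mathcal{D},\mu\times\nu,G)\to (X\times Y,P_\mu\times P_\nu,\mu\times\nu,G)$ is relatively $K$ is the right way to organize the argument; in fact the Abramov--Rokhlin factorization you invoke at that point is unnecessary, since for any $\gamma\subseteq P_{\mu\times\nu}$ one has $h_{\mu\times\nu}(G,\gamma\,|\,P_\mu\times P_\nu)\le h_{\mu\times\nu}(G,\gamma)=0$ by monotonicity of conditional entropy, so relative $K$ alone finishes it.

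The genuine gap is the step that is supposed to deliver relative $K$ of the product extension. You attribute to Rudolph--Weiss \cite{RW} the statement that ``a relatively $K$ action of a countable discrete amenable group is relatively mixing of all orders, and in particular is relatively disjoint over its base from any relative zero-entropy extension.'' No such relative theorem is in \cite{RW}: what Rudolph and Weiss prove is the absolute statement that a free ergodic c.p.e.\ action is uniformly mixing, together with the orbit-transfer invariance of conditional entropy underlying it. The relative statement you need --- equivalently, that a product of relatively $K$ extensions is relatively $K$ --- is, for amenable groups, a theorem of essentially the same depth as the lemma itself: the classical $\Z$-proof runs through the Rokhlin--Sinai remote-past characterization of the Pinsker algebra, which has no analogue for a general amenable group (hence the title of \cite{GTW}), and supplying a substitute is precisely the content of \cite{GTW} (via their uniform-mixing characterization of c.p.e.) and of \cite{D}. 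So what you describe as ``bookkeeping that reduces this relative, possibly non-ergodic, fibered assertion to the absolute Rudolph--Weiss theorem'' is not bookkeeping; it is the entire proof, and it is missing. Two concrete symptoms: (a) $\mu\times\nu$ need not be ergodic even though $\mu$ and $\nu$ are, so the absolute theorem of \cite{RW} cannot be applied to the product system without an ergodic-decomposition argument compatible with the fibered base $P_\mu\times P_\nu$; (b) even your input claim that $X\to X/P_\mu$ is relatively $K$ is not ``direct from the definition of the Pinsker algebra'' --- it already requires the inequality $h_\mu(G,\alpha)\le h_\mu(G,\alpha\,|\,P_\mu)+\sup\{h_\mu(G,\beta):\beta\subseteq P_\mu\ \text{finite}\}$, i.e.\ half of the Abramov--Rokhlin formula for amenable actions \cite{WZ}, which is itself one of the nontrivial results this circle of orbital/quasi-tiling machinery was built to prove.
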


We say that $(X, G)$ is {\it free} if $g= e_G$ when $g\in G$
satisfies $g x= x$ for each $x\in X$. Let $n\ge 2$. Denote by
$\text{supp} (X, G)$ the {\it support of $(X, G)$}, i.e.
$\text{supp} (X, G)= \bigcup_{\mu\in \mathcal{M} (X, G)} \text{supp}
(\mu)$. $(X, G)$ is called {\it fully supported} if there is an
invariant measure $\mu\in \mathcal{M} (X, G)$ with full support
(i.e. $\text{supp} (\mu)= X$), equivalently, $\text{supp} (X, G)=
X$. Set $\Delta_n^S (X)= \Delta_n (X)\cap (\text{supp} (X,
G))^{(n)}$. Then

\begin{thm} \label{product-entropy-tuple}
Let $(X_i, G), i= 1, 2$ be two $G$-systems and $n\ge 2$. Then
\begin{equation} \label{05120102} E_n (X_1\times X_2, G)= E_n
(X_1, G)\times (E_n (X_2, G)\cup \Delta_n^S (X_2))\cup \Delta_n^S
(X_1)\times E_n (X_2, G).
\end{equation}
\end{thm}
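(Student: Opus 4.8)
The plan is to route everything through the measure-theoretic description $E_n^\mu(X,G)=\mathrm{supp}(\lambda_n(\mu))\setminus\Delta_n(X)$ of Theorem \ref{etfm-thm-4} together with the product structure of $\lambda_n$. First I would record the precise shape of the full support: for any $\mu\in\mathcal M(X,G)$,
$$\mathrm{supp}(\lambda_n(\mu))=E_n^\mu(X,G)\cup\Delta_n(\mathrm{supp}(\mu)).$$
The off-diagonal part is $E_n^\mu(X,G)$ by Theorem \ref{etfm-thm-4}; a diagonal point $(x,\dots,x)$ lies in $\mathrm{supp}(\lambda_n(\mu))$ iff $\int_X\E(1_U\mid P_\mu)^n\,d\mu>0$ for every neighbourhood $U$ of $x$, which happens iff $\mu(U)=\int_X\E(1_U\mid P_\mu)\,d\mu>0$ for all such $U$, i.e. iff $x\in\mathrm{supp}(\mu)$. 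Throughout I use the canonical identification $(X_1\times X_2)^{(n)}\cong X_1^{(n)}\times X_2^{(n)}$, under which $\Delta_n(X_1\times X_2)$ corresponds to $\Delta_n(X_1)\times\Delta_n(X_2)$, so that deleting the product diagonal means deleting the tuples whose \emph{both} coordinate-blocks are diagonal.

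Next I would prove the product formula for $\lambda_n$. If $\mu_1\in\mathcal M(X_1,G)$ and $\mu_2\in\mathcal M(X_2,G)$ are free and ergodic, then Lemma \ref{product-Pinsker-factor} gives $P_{\mu_1\times\mu_2}=P_{\mu_1}\times P_{\mu_2}$, so $\E(1_{A\times B}\mid P_{\mu_1\times\mu_2})(x,y)=\E(1_A\mid P_{\mu_1})(x)\,\E(1_B\mid P_{\mu_2})(y)$; integrating products of such conditional expectations against $\mu_1\times\mu_2$ and splitting the integral yields $\lambda_n(\mu_1\times\mu_2)=\lambda_n(\mu_1)\times\lambda_n(\mu_2)$ on rectangles, hence everywhere. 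Taking supports, abbreviating $E_i=E_n^{\mu_i}(X_i,G)$ and $D_i=\Delta_n(\mathrm{supp}(\mu_i))$ (disjoint by the above), and removing the product diagonal $\Delta_n(X_1)\times\Delta_n(X_2)$ from $(E_1\cup D_1)\times(E_2\cup D_2)$, I obtain
$$E_n^{\mu_1\times\mu_2}(X_1\times X_2,G)=E_1\times(E_2\cup D_2)\cup D_1\times E_2 .$$

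For the inclusion $\supseteq$ I would realise every point of the right-hand side of \eqref{05120102} by such a product measure. Since $E_n^{\mu_i}\subseteq E_n(X_i,G)$ (Theorem \ref{vrtm-thm-1}(1)) and $D_i\subseteq\Delta_n^S(X_i)$, each $E_n^{\mu_1\times\mu_2}$ is contained in the claimed set; conversely, choosing by Theorem \ref{vrtm-thm-1}(2) measures with $E_n^{\mu_i}(X_i,G)=E_n(X_i,G)$, passing to their ergodic components (Theorem \ref{et-decom}, whose components' entropy tuples are dense in $E_n(X_i,G)$), and using $\mathrm{supp}(X_i,G)=\bigcup_\mu\mathrm{supp}(\mu)$ to generate the diagonal-in-support points, every point of the right-hand side is approximated by points of some $E_n^{\mu_1\times\mu_2}$; as these avoid $\Delta_n(X_1\times X_2)$ and $E_n(X_1\times X_2,G)\cup\Delta_n(X_1\times X_2)$ is closed (Theorem \ref{thm110}(2)), the inclusion follows. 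For $\subseteq$, pick by Theorem \ref{vrtm-thm-1}(2) a measure $\mu$ on $X_1\times X_2$ with $E_n(X_1\times X_2,G)=\mathrm{supp}(\lambda_n(\mu))\setminus\Delta_n$; writing a tuple as $(\bar a,\bar b)\in X_1^{(n)}\times X_2^{(n)}$, the projections $\pi_1,\pi_2$ and Theorem \ref{thm110}(3) force $\bar a\in E_n(X_1,G)\cup\Delta_n(X_1)$ and $\bar b\in E_n(X_2,G)\cup\Delta_n(X_2)$, while avoiding $\Delta_n(X_1\times X_2)$ means $\bar a,\bar b$ are not both diagonal; and whenever $\bar a$ (resp. $\bar b$) is diagonal its common coordinate lies in the support of the corresponding marginal of $\mu$, hence in $\mathrm{supp}(X_1,G)$ (resp. $\mathrm{supp}(X_2,G)$), placing it in $\Delta_n^S$. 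This is exactly the right-hand side.

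The main obstacle is the freeness hypothesis in Lemma \ref{product-Pinsker-factor}: the ergodic measures supplied by the variational machinery need not be free when $(X_i,G)$ is not free, yet the product formula for $\lambda_n$ rests entirely on the Pinsker factorisation. I expect to resolve this by a reduction to free systems — tensoring each $(X_i,G)$ with a fixed free, zero-topological-entropy, fully supported $G$-system, so that the extension is free and its entropy tuples carry the same data through the zero-entropy coordinate, and then transporting the conclusion back along the resulting factor maps via Theorem \ref{thm110}(3). The remaining measurability and closure bookkeeping in the $\supseteq$ step is routine given Theorems \ref{et-decom} and \ref{thm110}(2), so the freeness reduction is the one place demanding genuine care.
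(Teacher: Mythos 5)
Your proposal is correct and follows essentially the same route as the paper's own proof: the same Pinsker-factorization lemma (Lemma \ref{product-Pinsker-factor}) for free ergodic products yielding $\lambda_n(\mu_1\times\mu_2)=\lambda_n(\mu_1)\times\lambda_n(\mu_2)$, the same use of Theorems \ref{etfm-thm-4}, \ref{vrtm-thm-1} and \ref{et-decom} to realize points of the right-hand side by entropy tuples of product measures, the same appeal to Proposition \ref{thm110}(2)(3) for closure and projection, and the same reduction of the non-free case by tensoring with a free $G$-system and transporting back along factor maps. The only deviation is cosmetic: you require the auxiliary free system to have zero topological entropy and full support, properties the transport via Theorem \ref{thm110}(3) never uses --- any free $G$-system suffices, exactly as in the paper.
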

\begin{proof}
Obviously, $E_n (X_1\times X_2, G)\subseteq (\text{supp} (X_1,
G)\times \text{supp}  (X_2, G))^{(n)}$ by Theorem
\ref{vrtm-thm-1}~(2), and so the inclusion of "$\subseteq$"
follows directly from Proposition \ref{thm110}~(3). Now let's turn
to the proof of "$\supseteq$".

First we claim this direction if the actions are both free. Let
\begin{equation*}
((x_i^1, x_i^2))_1^n\in E_n (X_1, G)\times (E_n (X_2, G)\cup
\Delta_n^S (X_2))\cup \Delta_n^S (X_1)\times E_n (X_2, G)
\end{equation*}
 and let
$U_1$ (resp. $U_2$) be any open neighborhood of $(x_i^1)_1^n$ in
$X_1^{(n)}$ (resp. $(x_i^2)_1^n$ in $X_2^{(n)}$). With no loss of generality we assume
$(x_i^1)_1^n\in E_n (X_1, G)$ and $U_1\cap \Delta_n (X_1)=
\emptyset$. Note that $\text{supp} (\lambda_n (\mu))\supseteq
(\text{supp} (\mu))^{(n)}\cap \Delta_n (X_2)$ for each $\mu\in
\mathcal{M} (X_2, G)$, by Theorems \ref{etfm-thm-4} and
\ref{et-decom} we cam choose $\mu_i\in \mathcal{M}^e (X_i, G)$ such
that $U_i\cap (\text{supp} (\mu_i))^{(n)}\neq \emptyset$, $i= 1, 2$.
As the actions are both free, we have

\medskip

\noindent{\bf Claim.} $U_1\times U_2\cap E_n^{\mu_1\times \mu_2}
(X_1\times X_2, G)\neq \emptyset$, and so $U_1\times U_2\cap E_n
(X_1\times X_2, G)\neq \emptyset$, which implies $((x_i^1,
x_i^2))_1^n\in E_n (X_1\times X_2, G)$ from the arbitrariness of
$U_1$ and $U_2$ (using Proposition \ref{thm110} (2)).

\begin{proof}[Proof of Claim]
Let $P_{\mu_i}$ be the Pinsker $\sigma$-algebra of $(X_i,
\mathcal{B}_{X_i}, \mu_i, G), i= 1, 2$. Then $P_{\mu_1}\times
P_{\mu_2}$ forms the Pinsker $\sigma$-algebra of $(X_1\times X_2,
\mathcal{B}_{X_1}\times \mathcal{B}_{X_2}, \mu_1\times \mu_2, G)$
by Lemma \ref{product-Pinsker-factor}. Say $\mu_i= \int_{X_i}
\mu_{i, x_i} d \mu_i (x)$ to be the disintegration of $\mu_i$ over
$P_{\mu_i}$, $i= 1, 2$. Then the disintegration of $\mu_1\times
\mu_2$ over $P_{\mu_1}\times P_{\mu_2}$ is
\begin{equation*}
\mu_1\times \mu_2= \int_{X_1\times X_2} \mu_{1, x_1}\times \mu_{2,
x_2} d \mu_1\times \mu_2(x_1,x_2)
\end{equation*}
Moreover, $\lambda_n (\mu_i)= \int_{X_i} \mu_{i, x_i}^{(n)} d
\mu_i (x_i)$, $i= 1, 2$, which implies
\begin{equation*}
\lambda_n (\mu_1\times \mu_2)= \int_{X_1\times X_2} \mu_{1,
x_1}^{(n)}\times \mu_{2, x_2}^{(n)} d \mu_1\times \mu_2 (x_1,x_2)=
\lambda_n (\mu_1)\times \lambda_n (\mu_2).
\end{equation*}
Then $\text{supp} (\lambda_n (\mu_1\times \mu_2))= \text{supp}
(\lambda_n (\mu_1))\times \text{supp} (\lambda_n (\mu_2))$. So
$U_1\times U_2\cap \text{supp} (\lambda_n (\mu_1\times \mu_2))\neq
\emptyset$ and $U_1\times U_2\cap E_n^{\mu_1\times \mu_2} (X_1\times
X_2, G)\neq \emptyset$ (as $U_1\cap \Delta_n (X_1)= \emptyset$).
This ends the proof of the claim.
\end{proof}

Now let's turn to the proof of general case. Let $(Z, G)$ be any
free $G$-system. Then $G$-systems $(X_i', G)\doteq (X_i\times Z,
G)$, $i= 1, 2$ are both free. Applying the first part to $(X_i',
G)$, $i= 1, 2$ we obtain
\begin{equation} \label{05120101}
E_n (X_1'\times X_2', G)= E_n (X_1', G)\times (E_n (X_2', G)\cup
\Delta_n^S (X_2'))\cup \Delta_n^S (X_1')\times E_n (X_2', G).
\end{equation}
Then applying Proposition \ref{thm110} (3) to the projection
factor maps $(X_1'\times X_2', G)\rightarrow (X_1\times X_2, G)$,
$(X_1', G)\rightarrow (X_1, G)$ and $(X_2', G)\rightarrow (X_2,
G)$ respectively we claim the relation \eqref{05120102}.
\end{proof}

\section{An amenable group action with u.p.e. and c.p.e.}

In this section, we discuss two special classes of an amenable group
action with u.p.e. and c.p.e. We will show that both u.p.e. and c.p.e. are preserved under a
finite product; u.p.e. implies c.p.e. and actions with c.p.e. are fully supported; u.p.e. implies mild mixing; minimal topological
$K$ implies strong mixing if the group considered is commutative.

Let $(X,G)$ be a $G$-system and $\alpha\in \mathcal{P}_X$. We say
that $\alpha$ is {\it topological non-trivial} if
$\overline{A}\subsetneq X$ for each $A\in \alpha$. It is easy to obtain

\begin{lem} \label{chtpk} Let $n\ge 2$
and $\mu\in \mathcal{M}(X,G)$. Then $E_n^{\mu}(X,G)=X^{(n)}\setminus
\Delta_n(X)$ iff $h_{\mu}(G,\alpha)> 0$ for any topological
non-trivial $\alpha= \{A_1, \cdots, A_n\}\in \mathcal{P}_X$.
\end{lem}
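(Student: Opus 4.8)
The plan is to prove both implications directly from the definition of a measure-theoretic entropy $n$-tuple (Definition \ref{etfm-de-1}) together with monotonicity of $h_\mu(G,\cdot)$ under refinement. Since every entropy $n$-tuple lies in $X^{(n)}\setminus\Delta_n(X)$ by definition, the inclusion $E_n^\mu(X,G)\subseteq X^{(n)}\setminus\Delta_n(X)$ holds for free, so the real content is the equivalence between ``all non-diagonal tuples are entropy tuples'' and ``all topological non-trivial $n$-set partitions have positive $\mu$-entropy''. The only nontrivial mismatch between the two sides is that admissibility allows partitions with more than $n$ blocks, whereas the hypothesis speaks only of $n$-set partitions, so a coarsening step is needed in one direction.

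For the direction assuming $h_\mu(G,\beta)>0$ for every topological non-trivial $\beta=\{A_1,\dots,A_n\}$, I want to show every $(x_i)_1^n\in X^{(n)}\setminus\Delta_n(X)$ is an entropy tuple; so I fix an arbitrary $\alpha\in\mathcal{P}_X$ admissible w.r.t. $(x_i)_1^n$ and must produce $h_\mu(G,\alpha)>0$. The key step is a coarsening: writing $\alpha=\{C_1,\dots,C_m\}$ with possibly $m>n$, admissibility gives for each $k$ an index with $x_i\notin\overline{C_k}$, so I set $\sigma(C_k)=\min\{i:x_i\notin\overline{C_k}\}$ and define $A_i=\bigcup_{\sigma(C_k)=i}C_k$. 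Then $\beta=\{A_1,\dots,A_n\}\in\mathcal{P}_X$ is coarser than $\alpha$, and since $\alpha$ is finite, $\overline{A_i}=\bigcup_{\sigma(C_k)=i}\overline{C_k}$ avoids $x_i$, whence $\overline{A_i}\subsetneq X$ and $\beta$ is topological non-trivial. The hypothesis gives $h_\mu(G,\beta)>0$, and monotonicity (from $\alpha\succeq\beta$, cf. Lemma \ref{basic}~(3)) forces $h_\mu(G,\alpha)\ge h_\mu(G,\beta)>0$.

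For the converse, I assume $E_n^\mu(X,G)=X^{(n)}\setminus\Delta_n(X)$ and take a topological non-trivial $\beta=\{A_1,\dots,A_n\}$, so that each $O_i:=X\setminus\overline{A_i}$ is a nonempty open set. I would choose a non-diagonal tuple $(x_i)_1^n$ with $x_i\in O_i$ for all $i$; then $x_j\notin\overline{A_j}$ for every $j$ shows $\beta$ is admissible w.r.t. $(x_i)_1^n$, and membership $(x_i)_1^n\in E_n^\mu(X,G)$ yields $h_\mu(G,\beta)>0$. The one point needing care is that a non-diagonal choice always exists: if it did not, then $\prod_i O_i\subseteq\Delta_n(X)$, which forces every $O_i$ to be one common singleton $\{p\}$; but then $p\notin\overline{A_i}\supseteq A_i$ for each $i$, contradicting $p\in X=\bigcup_i A_i$. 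Hence a non-diagonal tuple exists and the converse follows.

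The routine ingredients are the finiteness of the partitions (so closures commute with the finite unions involved) and monotonicity of $h_\mu(G,\cdot)$ under refinement; the only substantive step is the coarsening construction $\sigma(\cdot)$ in the first direction, which turns an arbitrary admissible partition into a topological non-trivial one with exactly $n$ labelled blocks while remaining finer, so that the $n$-set hypothesis can be invoked. I expect this reduction, rather than any entropy estimate, to be the crux of the argument.
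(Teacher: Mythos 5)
Your proof is correct and follows essentially the same route as the paper's: both directions rest on the correspondence between topological non-trivial $n$-set partitions and partitions admissible w.r.t.\ non-diagonal tuples, exactly as in the paper. Your coarsening map $\sigma$ in fact makes explicit a step the paper's converse direction leaves implicit (reducing an arbitrary admissible partition, possibly with more than $n$ blocks, to a coarser $n$-block topological non-trivial one and invoking monotonicity of $h_\mu(G,\cdot)$ under refinement), so your write-up is, if anything, the more complete of the two.
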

\begin{proof} First assume $E_n^{\mu}(X,G)=X^{(n)}\setminus\Delta_n(X)$.
If $\alpha=\{ A_1,\cdots,A_n \}\in \mathcal{P}_X$ is topological
non-trivial, we choose $x_i\in X\setminus \overline{A_i},
i=1,\cdots,n$, then $(x_i)_1^n\in X^{(n)}\setminus \Delta_n(X)$ and
$\alpha$ is admissible w.r.t. $(x_i)_1^n$. Thus
$h_{\mu}(G,\alpha)>0$.

Conversely, we assume $h_{\mu}(G,\alpha)>0$ for any topological
non-trivial $\alpha= \{A_1, \cdots, A_n\}\in \mathcal{P}_X$. Let
$(x_i)_1^n\in X^{(n)}\setminus \Delta_n(X)$. If $\alpha=\{
A_1,\cdots,A_n \}\in \mathcal{P}_X$ is admissible w.r.t.
$(x_i)_1^n$, then it is topological non-trivial and so
$h_{\mu}(G,\alpha)>0$. Thus $(x_i)_1^n\in E_n^{\mu}(X,G)$. This
completes the proof.
\end{proof}

As a direct consequence of Theorem \ref{vrtm-thm-1} and Lemma
\ref{chtpk} one has

\begin{thm} \label{u.p.e.-characterization}
Let $n\ge 2$. Then
\begin{description}

\item[1] $(X,G)$ has u.p.e. of order $n$ iff there exists
$\mu\in \mathcal{M} (X, G)$ such that $h_{\mu}(G,\alpha)> 0$ for any
topological non-trivial $\alpha= \{A_1, \cdots, A_n\}\in
\mathcal{P}_X$.

\item[2] $(X,G)$ has topological $K$ iff there is
$\mu\in \mathcal{M} (X, G)$ such that $h_{\mu}(G,\alpha)>0$ for any
topological non-trivial $\alpha\in \mathcal{P}_X$.
\end{description}
\end{thm}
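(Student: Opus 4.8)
The plan is to read off both equivalences from the two ingredients already in hand: the structural identity relating topological and measure-theoretic entropy tuples in Theorem \ref{vrtm-thm-1}, and the measure-theoretic reformulation of the condition ``$E_n^\mu(X,G)$ exhausts everything off the diagonal'' supplied by Lemma \ref{chtpk}. Recall that by definition $(X,G)$ has u.p.e. of order $n$ precisely when $E_n(X,G)=X^{(n)}\setminus\Delta_n(X)$, and that one always has the trivial inclusion $E_n(X,G)\subseteq X^{(n)}\setminus\Delta_n(X)$. So the whole proof is a matter of threading these statements together.

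For part 1, I would first handle the backward direction. Given $\mu\in\mathcal{M}(X,G)$ with $h_\mu(G,\alpha)>0$ for every topological non-trivial $n$-set partition $\alpha=\{A_1,\cdots,A_n\}$, Lemma \ref{chtpk} yields $E_n^\mu(X,G)=X^{(n)}\setminus\Delta_n(X)$; then Theorem \ref{vrtm-thm-1}~(1) gives $E_n(X,G)\supseteq E_n^\mu(X,G)=X^{(n)}\setminus\Delta_n(X)$, and combined with the trivial inclusion this forces equality, i.e. u.p.e. of order $n$. For the forward direction, u.p.e. of order $n$ says $E_n(X,G)=X^{(n)}\setminus\Delta_n(X)$; I would apply Theorem \ref{vrtm-thm-1}~(2) to obtain a measure $\mu$ with $E_n^\mu(X,G)=E_n(X,G)=X^{(n)}\setminus\Delta_n(X)$, and then Lemma \ref{chtpk} returns $h_\mu(G,\alpha)>0$ for every topological non-trivial $n$-set partition.

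For part 2, the point I would stress is that topological $K$ means u.p.e. of order $m$ for all $m\ge2$ simultaneously, and that the measure produced by Theorem \ref{vrtm-thm-1}~(2) is a single $\mu$ satisfying $E_m^\mu(X,G)=E_m(X,G)$ for all $m\ge2$ at once. For the forward direction I would take this one $\mu$ and, given any topological non-trivial $\alpha\in\mathcal{P}_X$, observe that $\alpha$ necessarily has some number $m\ge2$ of cells (a one-cell partition is $\{X\}$, whose closure is $X$, hence not topological non-trivial); applying Lemma \ref{chtpk} with $n=m$ to this same $\mu$ then gives $h_\mu(G,\alpha)>0$. Conversely, a $\mu$ with $h_\mu(G,\alpha)>0$ for all topological non-trivial partitions satisfies in particular the hypothesis of part 1 for every $n\ge2$, so by part 1 the system has u.p.e. of order $n$ for all $n$, which is exactly topological $K$.

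The argument is essentially bookkeeping, so I do not expect a genuine obstacle; the one point that needs care is precisely the simultaneity in part 2: one must invoke the ``for each $n\ge2$'' clause of Theorem \ref{vrtm-thm-1}~(2) to guarantee that a single invariant measure handles topological non-trivial partitions of all cardinalities at once, rather than manufacturing a different measure for each order. I would also record the elementary observation that topological non-trivial partitions have at least two cells, so that the reduction to the $n$-set case of Lemma \ref{chtpk} is legitimate.
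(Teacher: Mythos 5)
Your proposal is correct and follows exactly the route the paper intends: the paper states this theorem as a direct consequence of Theorem \ref{vrtm-thm-1} and Lemma \ref{chtpk}, and your argument is precisely the bookkeeping that makes this explicit, including the key point that the single measure from Theorem \ref{vrtm-thm-1}~(2) works for all orders $n\ge 2$ simultaneously in part 2.
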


\begin{de}
We say that $(X, G)$ has {\it c.p.e.} if any non-trivial topological
factor of $(X, G)$ has positive topological entropy.
\end{de}

Blanchard proved that any c.p.e. TDS is fully supported
\cite[Corollary 7]{B1}. As an application of Proposition
\ref{thm110} (3) and Theorem \ref{vrtm-thm-1} we have a similar
result.

\begin{prop} \label{0911152016}
$(X, G)$ has c.p.e. iff $X^{(2)}$ is the closed invariant
equivalence relation generated by $E_2 (X, G)$. Moreover, each
c.p.e. $G$-system is fully supported and each u.p.e. $G$-system has c.p.e. (hence is also fully supported).
\end{prop}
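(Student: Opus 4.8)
The plan is to treat the three assertions in turn, using the entropy-pair machinery together with the variational relation of Theorem \ref{vrtm-thm-1} and the lifting property of entropy pairs in Proposition \ref{thm110}~(3). A preliminary fact I would record is that a $G$-system has positive topological entropy if and only if it admits an entropy pair: one implication is Proposition \ref{thm110}~(2), and the converse holds because an entropy pair $(y_1,y_2)$ with $y_1\neq y_2$ admits a two-element admissible open cover $\mathcal{U}$ with $h_{\text{top}}(G,\mathcal{U})>0$, whence $h_{\text{top}}(G,Y)>0$.

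For the equivalence, write $R$ for the smallest closed invariant equivalence relation containing $E_2(X,G)$. For the ``if'' direction, assume $R=X^{(2)}$ and let $\pi:(X,G)\to(Y,G)$ be a non-trivial factor, so that $R_\pi=\{(x,x'):\pi(x)=\pi(x')\}$ is a closed invariant equivalence relation strictly smaller than $X^{(2)}$; since $R=X^{(2)}\not\subseteq R_\pi$ we must have $E_2(X,G)\not\subseteq R_\pi$, so some entropy pair $(x_1,x_2)$ satisfies $\pi(x_1)\neq\pi(x_2)$, and Proposition \ref{thm110}~(3) forces $(\pi(x_1),\pi(x_2))\in E_2(Y,G)$, giving $h_{\text{top}}(G,Y)>0$. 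Conversely, if $R\subsetneq X^{(2)}$ I would pass to the quotient $Y=X/R$, a genuine factor $G$-system since $R$ is closed and invariant, which is non-trivial; c.p.e. then yields an entropy pair in $E_2(Y,G)$, and Proposition \ref{thm110}~(3) lifts it to an entropy pair $(x_1,x_2)\in E_2(X,G)\subseteq R=R_\pi$ with $\pi(x_1)\neq\pi(x_2)$, a contradiction.

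For full support I would invoke Theorem \ref{vrtm-thm-1}~(2) to fix $\mu\in\mathcal{M}(X,G)$ with $E_2(X,G)=E_2^\mu(X,G)=\text{supp}(\lambda_2(\mu))\setminus\Delta_2(X)$. Setting $W=\text{supp}(\mu)$, the defining formula for $\lambda_2(\mu)$ shows $\text{supp}(\lambda_2(\mu))\subseteq W\times W$, hence $E_2(X,G)\subseteq W\times W$. One checks directly that $(W\times W)\cup\Delta_2(X)$ is a closed invariant equivalence relation containing $E_2(X,G)$, so it contains $R$; but the equivalence just proved gives $R=X^{(2)}$ for a c.p.e. system, forcing $X^{(2)}\subseteq(W\times W)\cup\Delta_2(X)$. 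If $W\neq X$, picking $x_0\in X\setminus W$ and $x_1\in W$ produces a pair off the diagonal and outside $W\times W$, a contradiction; thus $W=X$ and $\mu$ has full support.

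Finally, u.p.e. means exactly $E_2(X,G)=X^{(2)}\setminus\Delta_2(X)$; since the generated relation $R$ is reflexive it contains $\Delta_2(X)$ as well, so $R=X^{(2)}$, and the first part yields c.p.e., whence full support by the previous paragraph. The main obstacle I anticipate lies entirely in the equivalence: one must verify that the quotient $X/R$ is a bona fide compact metric $G$-system so that Proposition \ref{thm110}~(3) is applicable, and that the ``positive entropy $\Leftrightarrow$ existence of an entropy pair'' lemma is firmly in place; once these are secured the remaining steps are short bookkeeping with the generated equivalence relation.
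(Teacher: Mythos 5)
Your proposal is correct and follows essentially the same route as the paper: the paper's own proof dismisses the equivalence as easy, handles full support by noting that $(\text{supp}(X,G))^{(2)}\cup\Delta_2(X)$ is a closed invariant equivalence relation containing $E_2(X,G)$ (via Theorem \ref{vrtm-thm-1}), and derives c.p.e. from u.p.e. exactly as you do. Your write-up merely supplies the details the paper omits — the quotient $X/R$ together with the lifting property of Proposition \ref{thm110}~(3) and the fact that positive entropy is equivalent to the existence of an entropy pair — and these are precisely the intended arguments, with your single full-support measure from Theorem \ref{vrtm-thm-1}~(2) being a harmless sharpening of the paper's use of $\text{supp}(X,G)$.
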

\begin{proof}
It is easy to complete the proof of the first part. Moreover, note
that $(\text{supp} (X, G))^{(2)}\cup \Delta_2 (X)$ is a closed
invariant equivalence relation containing $E_2 (X, G)$ (Theorem
\ref{vrtm-thm-1}). In particular, if $(X, G)$ has c.p.e. then it is
fully supported. Now assume that $(X, G)$ has u.p.e., thus $E_2 (X, G)= X^{(2)}\setminus \Delta_2 (X)$ and so  $X^{(2)}$ is the closed invariant
equivalence relation generated by $E_2 (X, G)$, particularly, $(X, G)$ has c.p.e. This finishes our proof.
\end{proof}

The following lemma is well known, in the case of $\Z$ see for
example \cite[Lemma 1]{PS}.

\begin{lem} \label{generated relation}
Let $(X_i, G)$ be a $G$-system and $\Delta_2 (X_i)\subseteq
A_i\subseteq X_i\times X_i$ with $<A_i>$ the closed invariant
equivalence relation generated by $A_i$, $i= 1, 2$. Then
$<A_1>\times <A_2>$ is the closed invariant equivalence relation
generated by $A_1\times A_2$.
\end{lem}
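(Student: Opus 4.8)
The plan is to prove the lemma by reducing the problem to a characterization of the generated closed invariant equivalence relation in terms of invariant measures, or alternatively to argue directly on the level of equivalence relations using the product structure. Let me denote by $R = <A_1 \times A_2>$ the closed invariant equivalence relation on $(X_1 \times X_2)\times(X_1\times X_2)$ generated by $A_1 \times A_2$, and let $S = <A_1>\times <A_2>$. The goal is to show $R = S$.

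First I would establish the easy inclusion $R \subseteq S$. Observe that $S$ is a closed subset of $(X_1\times X_2)^{(2)}$, is $G$-invariant (being a product of two $G$-invariant sets), contains $\Delta_2(X_1\times X_2)$ since each $<A_i>$ contains $\Delta_2(X_i)$, and is an equivalence relation (the coordinatewise product of two equivalence relations on each factor is again an equivalence relation on the product, after identifying $(X_1\times X_2)^{(2)}$ with $(X_1^{(2)})\times(X_2^{(2)})$ via the flip $((x_1,x_2),(x_1',x_2'))\mapsto((x_1,x_1'),(x_2,x_2'))$). Finally $S \supseteq A_1\times A_2$ because $<A_i>\supseteq A_i$. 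Since $R$ is by definition the smallest closed invariant equivalence relation containing $A_1\times A_2$, we get $R\subseteq S$ immediately.

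The harder inclusion is $S\subseteq R$, and this is where the main obstacle lies. The difficulty is that a pair in $S$ has the form $((x_1,x_2),(x_1',x_2'))$ with $(x_1,x_1')\in <A_1>$ and $(x_2,x_2')\in <A_2>$, and I must connect it to $A_1\times A_2$ through the operations (closure, $G$-action, reflexivity/symmetry/transitivity) that build $R$. The plan is first to handle a generating step: I would show $<A_1>\times \Delta_2(X_2)\subseteq R$ and $\Delta_2(X_1)\times <A_2>\subseteq R$. For the former, note that $A_1\times \Delta_2(X_2) \subseteq \overline{A_1\times A_2}$ fails in general, so instead I would use transitivity through the diagonal: a pair $((x_1,x_2),(x_1',x_2))$ with fixed second coordinate can be reached by composing $A_1\times A_2$ moves with diagonal moves, exploiting that $R$, being an equivalence relation containing $A_1\times A_2$ and the full diagonal, is closed under the relevant compositions. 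Concretely, I would argue that the projection-type relations $<A_1>\times\Delta_2(X_2)$ and $\Delta_2(X_1)\times<A_2>$ are each contained in $R$ by checking that $R$ restricted appropriately induces on each factor a closed invariant equivalence relation containing $A_i$, hence containing $<A_i>$.

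Once both ``slices'' $<A_1>\times\Delta_2(X_2)$ and $\Delta_2(X_1)\times<A_2>$ are shown to lie in $R$, the final step is a single transitivity composition: for $((x_1,x_2),(x_1',x_2'))\in S$ we have $((x_1,x_2),(x_1',x_2))\in <A_1>\times\Delta_2(X_2)\subseteq R$ and $((x_1',x_2),(x_1',x_2'))\in \Delta_2(X_1)\times<A_2>\subseteq R$, so by transitivity of $R$ the pair $((x_1,x_2),(x_1',x_2'))$ lies in $R$, giving $S\subseteq R$. I expect the genuinely delicate point to be justifying that $<A_1>\times\Delta_2(X_2)\subseteq R$: one must argue that the ``first-coordinate trace'' of $R$ on the invariant closed set $X_1^{(2)}\times\Delta_2(X_2)$ is a closed invariant equivalence relation on $X_1$ containing $A_1$. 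This uses that $A_2\supseteq\Delta_2(X_2)$ so that $A_1\times\Delta_2(X_2)\subseteq A_1\times A_2\subseteq R$ already, and that closure, symmetry, transitivity and the $G$-action all preserve the slice $\{x_2'=x_2\}$; by minimality of $<A_1>$ the trace then contains $<A_1>$, which is exactly the claim.
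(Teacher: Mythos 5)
The paper itself offers no proof of this lemma (it is stated as well known, with a citation to \cite[Lemma 1]{PS}), so your proposal can only be judged on its own merits. Your skeleton is the natural one: the inclusion $<A_1\times A_2>\subseteq <A_1>\times <A_2>$ by minimality, and the reverse inclusion by first establishing $<A_1>\times\Delta_2(X_2)\subseteq R$ and $\Delta_2(X_1)\times <A_2>\subseteq R$ and then composing through the intermediate point $(x_1',x_2)$. The easy inclusion and the final transitivity step are correct as written. The gap is in the justification of the key containment $<A_1>\times\Delta_2(X_2)\subseteq R$. Your stated reason, that ``closure, symmetry, transitivity and the $G$-action all preserve the slice $\{x_2'=x_2\}$,'' fails under either reading. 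If you mean the slice relation for a \emph{fixed} $x_2$, namely $R_{x_2}=\{(x_1,x_1')\in X_1^{(2)}:((x_1,x_2),(x_1',x_2))\in R\}$, this is indeed a closed equivalence relation containing $A_1$, but it is \emph{not} $G$-invariant: applying $g$ to $((x_1,x_2),(x_1',x_2))\in R$ produces a point of the slice over $gx_2$, not over $x_2$, so minimality of $<A_1>$ cannot be invoked. If instead you mean the literal first-coordinate trace, i.e.\ the projection $\bigcup_{x_2}R_{x_2}$ of $R\cap (X_1^{(2)}\times\Delta_2(X_2))$ to $X_1^{(2)}$, then this set is closed, $G$-invariant, reflexive, symmetric and contains $A_1$, but transitivity is unjustified: from $((x_1,a),(y_1,a))\in R$ and $((y_1,b),(z_1,b))\in R$ with $a\neq b$ there is no common middle point through which to compose. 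Worse, even granting transitivity, the resulting conclusion would only say that for $(x_1,x_1')\in <A_1>$ there is \emph{some} $x_2$ with $((x_1,x_2),(x_1',x_2))\in R$, whereas your final composition needs this for the \emph{specific} $x_2$ attached to the pair in $S$.

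The repair is to replace the single slice (or the union of slices) by the \emph{intersection} of all slices: set $T_1=\bigcap_{x_2\in X_2}R_{x_2}=\{(x_1,x_1')\in X_1^{(2)}:((x_1,x_2),(x_1',x_2))\in R\ \text{for every}\ x_2\in X_2\}$. Then $T_1$ is closed (an intersection of closed sets); it contains $A_1$ because $A_1\times\Delta_2(X_2)\subseteq A_1\times A_2\subseteq R$; it is an equivalence relation because each $R_{x_2}$ is one and an intersection of equivalence relations is again one; and it is $G$-invariant because $g$ maps $R_{x_2}$ into $R_{gx_2}$ and $x_2\mapsto gx_2$ is onto $X_2$, so the universal quantifier over $x_2$ is preserved. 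Minimality of $<A_1>$ now gives $<A_1>\subseteq T_1$, which is precisely $<A_1>\times\Delta_2(X_2)\subseteq R$; the symmetric argument gives $\Delta_2(X_1)\times <A_2>\subseteq R$, and your concluding transitivity step then finishes the proof. With this one correction your argument is complete.
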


Thus we have

\begin{cor} \label{easy-2}
Let $(X_1, G)$ and $(X_2, G)$ be two $G$-systems and $n\ge 2$.
\begin{enumerate}
\item If $(X_1,G)$ and $X_2,G)$ both have u.p.e. of order $n$ then so does $(X_1\times X_2, G)$.

\item If $(X_1,G)$ and $(X_2,G)$ both have topological $K$
then so does $(X_1\times X_2,G)$.

\item If $(X_1,G)$ and $(X_2,G)$ both have c.p.e.
then so does $(X_1\times X_2,G)$.
\end{enumerate}
\end{cor}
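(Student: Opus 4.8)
The plan is to reduce all three statements to the product formula for topological entropy tuples, Theorem \ref{product-entropy-tuple}, combined with the entropy-tuple characterizations of the two properties: $(X,G)$ has u.p.e. of order $n$ exactly when $E_n(X,G)=X^{(n)}\setminus\Delta_n(X)$, and by Proposition \ref{0911152016} $(X,G)$ has c.p.e. exactly when $X^{(2)}$ is the closed invariant equivalence relation generated by $E_2(X,G)$. Throughout I will use the canonical $G$-equivariant homeomorphism $(X_1\times X_2)^{(n)}\cong X_1^{(n)}\times X_2^{(n)}$ (reshuffling coordinates), under which $\Delta_n(X_1\times X_2)$ corresponds to $\Delta_n(X_1)\times\Delta_n(X_2)$.

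For (1), I would first record that u.p.e. of order $n$ forces full support. As observed in the proof of Theorem \ref{product-entropy-tuple}, $E_n(X_i,G)\subseteq(\text{supp}(X_i,G))^{(n)}$; for any $x\in X_i$, choosing some $y\neq x$ (if $X_i$ is a singleton then $\text{supp}(X_i,G)=X_i$ already), the non-diagonal tuple $(x,y,\dots,y)$ lies in $E_n(X_i,G)=X_i^{(n)}\setminus\Delta_n(X_i)$, hence in $(\text{supp}(X_i,G))^{(n)}$, so $x\in\text{supp}(X_i,G)$. Thus $\text{supp}(X_i,G)=X_i$ and $\Delta_n^S(X_i)=\Delta_n(X_i)$. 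Substituting $E_n(X_i,G)=X_i^{(n)}\setminus\Delta_n(X_i)$ and $\Delta_n^S(X_i)=\Delta_n(X_i)$ into Theorem \ref{product-entropy-tuple}, a direct set computation (writing $A=X_1^{(n)}\setminus\Delta_n(X_1)$, etc.) shows the right-hand side equals $(X_1^{(n)}\setminus\Delta_n(X_1))\times X_2^{(n)}\cup X_1^{(n)}\times(X_2^{(n)}\setminus\Delta_n(X_2))$, which under the identification above is precisely $(X_1\times X_2)^{(n)}\setminus\Delta_n(X_1\times X_2)$. Hence $(X_1\times X_2,G)$ has u.p.e. of order $n$. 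Part (2) is then immediate: topological $K$ means u.p.e. of order $m$ for every $m\ge2$, so applying (1) for each $m$ gives that the product has u.p.e. of all orders.

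For (3), set $A_i=E_2(X_i,G)\cup\Delta_2(X_i)$; c.p.e. of $(X_i,G)$ yields $<A_i>=<E_2(X_i,G)>=X_i^{(2)}$ and, by Proposition \ref{0911152016}, full support, so again $\Delta_2^S(X_i)=\Delta_2(X_i)$ and Theorem \ref{product-entropy-tuple} specializes to $E_2(X_1\times X_2,G)=E_2(X_1,G)\times A_2\cup\Delta_2(X_1)\times E_2(X_2,G)$. The goal is $<E_2(X_1\times X_2,G)>=(X_1\times X_2)^{(2)}$. Since $E_2(X_1\times X_2,G)\subseteq A_1\times A_2$, Lemma \ref{generated relation} gives $<E_2(X_1\times X_2,G)>\subseteq<A_1\times A_2>=<A_1>\times<A_2>=(X_1\times X_2)^{(2)}$. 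For the reverse I would prove $A_1\times A_2\subseteq<E_2(X_1\times X_2,G)>$ by a transitivity argument through the intermediate point $(a',b)$: given $(a,a')\in A_1$ and $(b,b')\in A_2$, the pair $((a,b),(a',b))$ reshuffles to $((a,a'),(b,b))\in E_2(X_1,G)\times A_2$ (or is trivial when $a=a'$), and $((a',b),(a',b'))$ reshuffles to $((a',a'),(b,b'))\in\Delta_2(X_1)\times E_2(X_2,G)$ (or is trivial when $b=b'$); transitivity of the generated equivalence relation then gives $(a,b)\sim(a',b')$. Hence equality holds and $(X_1\times X_2,G)$ has c.p.e.

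The main obstacle, and the only genuinely non-formal step, is the transitivity argument in (3): one must use that $<E_2(X_1\times X_2,G)>$ is a bona fide equivalence relation (so the chain $(a,b)\sim(a',b)\sim(a',b')$ is legitimate) and check that, after the coordinate reshuffling, the two links really are instances of the two pieces of $E_2(X_1\times X_2,G)$ given by Theorem \ref{product-entropy-tuple}. By comparison, the full-support reductions and the set computation in (1) are routine bookkeeping.
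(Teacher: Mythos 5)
Your proposal is correct and follows essentially the same route as the paper's proof, which likewise combines full support of u.p.e./c.p.e. systems (Proposition \ref{0911152016}), substitution into the product formula of Theorem \ref{product-entropy-tuple}, and Lemma \ref{generated relation} for part (3). The only stylistic difference is that your transitivity chain in (3) is avoidable: with your $A_i=E_2(X_i,G)\cup \Delta_2(X_i)$, Theorem \ref{product-entropy-tuple} already gives, under the coordinate reshuffle, the exact identity $E_2(X_1\times X_2,G)\cup \Delta_2(X_1\times X_2)=A_1\times A_2$, whence $<E_2(X_1\times X_2,G)>=<A_1\times A_2>=<A_1>\times <A_2>=(X_1\times X_2)^{(2)}$ immediately.
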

\begin{proof}
By Proposition \ref{0911152016}, any
$G$-system having u.p.e. is full supported, then (1) and (2) follow from Theorem \ref{product-entropy-tuple} directly. Using Theorem
\ref{product-entropy-tuple} and Lemma \ref{generated relation}, we can obtain (3) similarly.
\end{proof}

In the following several sub-sections, we shall discuss more properties of an amenable group action with u.p.e.

\subsection{U.p.e. implies weak mixing of all orders}

Following the idea of the proof of \cite[Proposition 2]{B1}, it is easy to obtain
the following result.

\begin{lem} \label{another observation}
Let $\{U_1^c, U_2^c\}\in \mathcal{C}_X$. If
\begin{equation} \label{assumption}
\limsup_{n\rightarrow +\infty} \frac{1}{n} \log N \left(\bigvee_{i=
1}^n g_i^{- 1} \{U_1^c, U_2^c\}\right)> 0
\end{equation}
for some sequence $\{g_i: i\in \mathbb{N}\}\subseteq G$ then there
exist $1\le j_1< j_2$ with $U_1\cap g_{j_1} g_{j_2}^{- 1} U_2\neq
\emptyset$.
\end{lem}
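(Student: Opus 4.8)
The plan is to argue by contraposition: I would assume that the conclusion fails, i.e. that $U_1 \cap g_{j_1}g_{j_2}^{-1}U_2 = \emptyset$ for \emph{all} $1 \le j_1 < j_2$, and then show that the growth rate appearing in \eqref{assumption} is nonpositive, contradicting the hypothesis. The first move is to rewrite the assumed emptiness in a translation-invariant form: applying the homeomorphism $g_{j_1}^{-1}$ shows that $U_1 \cap g_{j_1}g_{j_2}^{-1}U_2 = \emptyset$ is equivalent to $g_{j_1}^{-1}U_1 \cap g_{j_2}^{-1}U_2 = \emptyset$. So under the contrapositive hypothesis, no point $x$ can satisfy $g_{j_1}x \in U_1$ and $g_{j_2}x \in U_2$ simultaneously whenever $j_1 < j_2$.

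Next I would track, for each $x \in X$, the two index sets $T_1(x) = \{1 \le i \le n : g_i x \in U_1\}$ and $T_2(x) = \{1 \le i \le n : g_i x \in U_2\}$. Since $\{U_1^c, U_2^c\}$ is a cover we have $U_1 \cap U_2 = \emptyset$, so $T_1(x)$ and $T_2(x)$ are disjoint; and the contrapositive hypothesis forces every index in $T_1(x)$ to exceed every index in $T_2(x)$, i.e. $\max T_2(x) < \min T_1(x)$ (with the conventions $\max \emptyset = 0$, $\min \emptyset = n+1$). Thus, reading along the orbit segment $g_1 x, \ldots, g_n x$, each point's visits to $U_1$ and $U_2$ are arranged as ``all $U_2$-visits first, then all $U_1$-visits'', separated by a threshold.

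The heart of the argument is then to exploit this threshold structure to build an explicit subcover of $\bigvee_{i=1}^n g_i^{-1}\{U_1^c, U_2^c\}$ of cardinality only $n+1$. For each $m \in \{0, 1, \ldots, n\}$ I would set $A_m = \bigcap_{i \le m} g_i^{-1}U_1^c \cap \bigcap_{i > m} g_i^{-1}U_2^c$, which is one atom of the join (it chooses $U_1^c$ in the first $m$ coordinates and $U_2^c$ in the remaining ones). Given $x$, the choice $m = \max T_2(x)$ lies in $\{0,\ldots,n\}$ and satisfies $\max T_2(x) \le m < \min T_1(x)$ by the previous step; this forces $g_i x \notin U_1$ for $i \le m$ and $g_i x \notin U_2$ for $i > m$, that is, $x \in A_m$. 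Hence $\{A_0, \ldots, A_n\}$ covers $X$, giving $N\bigl(\bigvee_{i=1}^n g_i^{-1}\{U_1^c, U_2^c\}\bigr) \le n+1$. Consequently $\frac{1}{n}\log N(\cdots) \le \frac{\log(n+1)}{n} \to 0$, so the $\limsup$ in \eqref{assumption} is nonpositive, which is the desired contradiction.

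I expect the only genuine subtlety to be the middle step, namely recognizing that the ``no $U_1$-visit after a $U_2$-visit'' constraint packages every point of $X$ into one of the $n+1$ threshold atoms $A_m$; once this is seen, the cardinality bound is immediate. The remaining ingredients, such as the empty-set conventions for $T_1, T_2$ and the final passage to the exponential growth rate, are routine bookkeeping.
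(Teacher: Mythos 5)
Your proof is correct and is essentially the paper's own argument: the paper also contraposes, notes $g_{j_1}^{-1}U_1\subseteq g_{j_2}^{-1}U_2^c$ for $j_1<j_2$, and exhibits exactly the same $n+1$ threshold sets $\bigcap_{s\le m} g_s^{-1}U_1^c\cap \bigcap_{t>m} g_t^{-1}U_2^c$, $m=0,\ldots,n$, as a subcover of the join, killing the exponential growth rate. The only cosmetic difference is that the paper assigns each point to its atom via the \emph{first} visit to $U_1$ rather than your \emph{last} visit to $U_2$, which yields the identical family of sets.
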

\begin{proof}
Assume the contrary that for each $1\le j_1< j_2$, $U_1\cap
g_{j_1} g_{j_2}^{- 1} U_2= \emptyset$ and so $g_{j_1}^{- 1}
U_1\subseteq g_{j_2}^{- 1} U_2^c$. That is, for each $i\in
\mathbb{N}$ one has $g_i^{- 1} U_1\subseteq \bigcap_{j\ge i}
g_j^{- 1} U_2^c$.

Let $n\in \mathbb{N}$. Now for each $x\in X$ consider the first
$i\in \{1, \cdots, n\}$ such that $g_i x\in U_1$, when there exists
such an $i$. We get that the Borel cover $\bigvee_{j= 1}^n g_j^{- 1}
\{U_1^c, U_2^c\}$ admits a sub-cover
\begin{eqnarray*}
\left\{\bigcap_{s= 1}^{i- 1} g_s^{- 1} U_1^c\cap \bigcap_{t= i}^n
g_t^{- 1} U_2^c: i= 1, \cdots, n\right\}\cup \left\{\bigcap_{s=
1}^{n} g_s^{- 1} U_1^c\right\}.
\end{eqnarray*}
Moreover, $N(\bigvee_{j= 1}^n g_j^{- 1} \{U_1^c, U_2^c\})\le n+1$, a
contradiction with the assumption.
\end{proof}

We say that $(X, G)$ is {\it transitive} if for each non-empty
open subsets $U$ and $V$, the {\it return time set}, $N (U,
V)\doteq \{g\in G: U\cap g^{- 1} V\neq \emptyset\}$, is non-empty.
It is not hard to see that if $X$ has no isolated point then the
transitivity of $(X, G)$ is equivalent to that $N (U, V)$ is
infinite for each non-empty open subsets $U$ and $V$. Let $n\ge
2$. We say that $(X, G)$ is {\it weakly mixing of order $n$} if
the product $G$-system $(X^{(n)}, G)$ is transitive; if $n= 2$ we
call it simply {\it weakly mixing}. We say that $(X, G)$ is called
{\it weakly mixing of all orders} if for each $n\ge 2$ it is
weakly mixing of order $n$, equivalently, the product $G$-system
$(X^{\N},G)$ is transitive. It's well known that for
$\mathbb{Z}$-actions u.p.e. implies weakly mixing of all orders
\cite{B1}. In fact, this result holds for a general countable
discrete amenable group action by applying Corollary \ref{easy-2}
and Lemma \ref{another observation} to a u.p.e. $G$-system as many times as required.

\begin{thm} \label{weak-mixing}
Each u.p.e. $G$-system is weakly mixing of all orders.
\end{thm}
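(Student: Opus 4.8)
The plan is to reduce the statement to two facts already available: the combinatorial Lemma \ref{another observation}, which turns positive topological entropy of a two-element cover into a meeting relation $U_1\cap gU_2\neq\emptyset$ along the group, and the product stability of u.p.e. of order $2$ recorded in Corollary \ref{easy-2}~(1). The overall strategy is a bootstrap: first establish the base statement that \emph{any} $G$-system with u.p.e. is topologically transitive, and then feed the finite powers $X^{(n)}$ into this base statement, their u.p.e. of order $2$ being supplied by Corollary \ref{easy-2}.

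First I would prove the base claim: if $(Z,G)$ has u.p.e. then $(Z,G)$ is transitive. Given non-empty open $U,V\subseteq Z$, if $U\cap V\neq\emptyset$ then $e_G\in N(U,V)$ and nothing is to prove; otherwise I choose disjoint closed sets $W_1\subseteq U$ and $W_2\subseteq V$ each with non-empty interior. Then $\{W_1^c,W_2^c\}$ is a cover of $Z$ by two non-dense open sets, so $h_{\text{top}}(G,\{W_1^c,W_2^c\})>0$ by u.p.e. (of order $2$). Fixing a nested Følner sequence $e_G\in F_1\subseteq F_2\subseteq\cdots$ and enumerating the group as $\{g_i\}_{i\in\N}$ by listing the elements of $F_1$ first, then those of $F_2\setminus F_1$, and so on (so that $\{g_1,\dots,g_{|F_n|}\}=F_n$ for every $n$), the identity $\mathcal{U}_F=\bigvee_{g\in F}g^{-1}\mathcal{U}$ gives $\bigvee_{i=1}^{|F_n|} g_i^{-1}\{W_1^c,W_2^c\}=\{W_1^c,W_2^c\}_{F_n}$, whence
$$\limsup_{m\to+\infty}\frac{1}{m}\log N\Big(\bigvee_{i=1}^m g_i^{-1}\{W_1^c,W_2^c\}\Big)\ge \lim_{n\to+\infty}\frac{1}{|F_n|}\log N(\{W_1^c,W_2^c\}_{F_n})=h_{\text{top}}(G,\{W_1^c,W_2^c\})>0.$$
Lemma \ref{another observation} then yields $j_1<j_2$ with $W_1\cap g_{j_1}g_{j_2}^{-1}W_2\neq\emptyset$, so setting $g=g_{j_1}g_{j_2}^{-1}$ we obtain $U\cap gV\supseteq W_1\cap gW_2\neq\emptyset$, i.e. $N(U,V)\neq\emptyset$, proving transitivity of $(Z,G)$.

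With the base claim available, I would fix $n\ge2$ and apply Corollary \ref{easy-2}~(1) inductively: since $(X,G)$ has u.p.e. of order $2$, so does $(X\times X,G)$, and iterating $n-1$ times, so does $(X^{(n)},G)$. The base claim applied to $Z=X^{(n)}$ then shows that $(X^{(n)},G)$ is transitive, which is by definition the assertion that $(X,G)$ is weakly mixing of order $n$. As $n\ge2$ is arbitrary, $(X,G)$ is weakly mixing of all orders, completing the proof.

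I do not anticipate a genuine obstacle, since all the analytic content is already carried by Lemma \ref{another observation} and Corollary \ref{easy-2}; the only points requiring care are the clean realization of the cover's entropy by a single sequence $\{g_i\}$ (handled by the nested Følner sequence together with the identity $\mathcal{U}_F=\bigvee_{g\in F}g^{-1}\mathcal{U}$) and the observation that u.p.e. of order $2$ alone is enough at every stage, because each cover invoked is the two-element cover $\{W_1^c,W_2^c\}$.
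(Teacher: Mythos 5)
Your proof is correct and follows essentially the same route as the paper, whose argument for Theorem \ref{weak-mixing} consists precisely of combining Corollary \ref{easy-2} (u.p.e. of order $2$ passes to finite products) with Lemma \ref{another observation} applied to a two-element cover, "as many times as required." Your write-up simply makes explicit the details the paper leaves implicit: the reduction of transitivity to disjoint closed sets with non-empty interior, and the enumeration of $G$ along a nested F\o lner sequence so that positive topological entropy of $\{W_1^c,W_2^c\}$ yields the $\limsup$ hypothesis of Lemma \ref{another observation}.
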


\subsection{U.p.e. implies mild mixing}

We say that $(X, G)$ is {\it mildly mixing} if the product
$G$-system $(X\times Y, G)$ is transitive for each transitive
$G$-system $(Y, G)$ containing no isolated points. We shall prove
that each u.p.e. $G$-system is mildly mixing. Note that similar to
the proof of Lemma \ref{another observation}, it is easy to show
that each non-trivial u.p.e. $G$-system contains no any isolated
point, thus the result in this sub-section strengthens Theorem
\ref{weak-mixing}. Before proceeding first we need

\begin{lem} \label{easy observation}
Let $\mu\in \mathcal{M} (X, G)$, $\mathcal{U} =\{U_1,\cdots, U_n
\}\in \mathcal{C}_X^{o}$, $\alpha\in \mathcal{P}_X$ and
$\{g_i\}_{i\in \mathbb{N}}\subseteq G$ be a sequence of pairwise
distinct elements. Then
\begin{description}

\item[1] $\limsup_{n\rightarrow
+\infty} \frac{1}{n} \log N(\bigvee_{i= 1}^n g_i^{- 1} \alpha)\ge
h_\mu (G, \alpha)$.

\item[2] if
$h_{\text{top}} (G, \mathcal{U})> 0$ then $\limsup_{n\rightarrow
+\infty} \frac{1}{n} \log N(\bigvee_{i= 1}^n g_i^{- 1} \mathcal{U})>
0$.
\end{description}
\end{lem}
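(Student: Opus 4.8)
The plan is to reduce both parts to measure-theoretic entropy through two elementary inequalities: for every cover $\mathcal{V}$ and every $\mu\in\mathcal{M}(X)$ one has $H_\mu(\mathcal{V})\le\log N(\mathcal{V})$ (disjointify a minimal subcover $\{C_1,\dots,C_\ell\}$, $\ell=N(\mathcal{V})$, into a partition $\beta=\{C_1,C_2\setminus C_1,\dots\}\succeq\mathcal{V}$ with $\#\beta\le\ell$, so $H_\mu(\mathcal{V})\le H_\mu(\beta)\le\log\#\beta\le\log N(\mathcal{V})$), together with the inf-formula $h_\mu(G,\beta)=\inf_{B\in F(G)}\frac{H_\mu(\beta_B)}{|B|}$ of Lemma \ref{lem-436}~(4).

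For (1) I would write $E_n=\{g_1,\dots,g_n\}$, so that $\bigvee_{i=1}^n g_i^{-1}\alpha=\alpha_{E_n}$ and $|E_n|=n$ precisely because the $g_i$ are pairwise distinct. Since $\alpha_{E_n}$ is a partition, $N(\alpha_{E_n})$ equals its number of non-empty atoms, whence $\log N(\alpha_{E_n})\ge H_\mu(\alpha_{E_n})$. Dividing by $|E_n|=n$ and applying Lemma \ref{lem-436}~(4) gives $\frac1n\log N(\alpha_{E_n})\ge \frac{H_\mu(\alpha_{E_n})}{|E_n|}\ge\inf_{B\in F(G)}\frac{H_\mu(\alpha_B)}{|B|}=h_\mu(G,\alpha)$ for every $n$, so the $\limsup$ (indeed the $\liminf$) is at least $h_\mu(G,\alpha)$. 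This part is clean and requires no further work.

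For (2) the plan is first to apply the local variational principle (Theorem \ref{avpin}) to choose $\mu\in\mathcal{M}(X,G)$ with $h_\mu(G,\mathcal{U})=h_{\text{top}}(G,\mathcal{U})>0$; by the equivalence theorem (Theorem \ref{equivalence}) one then also has $h_\mu^-(G,\mathcal{U})=h_\mu(G,\mathcal{U})>0$. Applying $H_\mu(\mathcal{V})\le\log N(\mathcal{V})$ to $\mathcal{V}=\mathcal{U}_{E_n}$ yields $\frac1n\log N(\mathcal{U}_{E_n})\ge\frac1n H_\mu(\mathcal{U}_{E_n})$, so it suffices to bound $\frac1n H_\mu(\mathcal{U}_{E_n})$ below by a positive constant. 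The natural mechanism is the cover analogue of Lemma \ref{lem-436}~(4), namely $h_\mu^-(G,\mathcal{U})=\inf_{B\in F(G)}\frac{H_\mu(\mathcal{U}_B)}{|B|}$: granting this, the choice $B=E_n$ gives $\frac1n H_\mu(\mathcal{U}_{E_n})\ge h_\mu^-(G,\mathcal{U})>0$ for all $n$, and (2) follows immediately.

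The hard part will be exactly this inf-formula for the cover entropy. In the partition case Lemma \ref{lem-436}~(4) rests on the strong subadditivity of $F\mapsto H_\mu(\alpha_F)$ (Lemma \ref{lem-436}~(1)) and the fractional-subcover estimate (2); for $F\mapsto H_\mu(\mathcal{U}_F)$, which is itself an infimum over refining partitions, strong subadditivity is not immediate, and the obstruction is genuine, since $E_n$ need not be F\o lner and Lemma \ref{in-est} only controls sufficiently invariant $F$. I would attack it by passing to the finite family $P(\mathcal{U}_F)$ of partitions between $\mathcal{U}_F$ and its generated partition and using the representation $H_\mu(\mathcal{U}_F)=\min_{\beta\in P(\mathcal{U}_F)}H_\mu(\beta)$ coming from \eqref{keyeq1}, in order to transfer strong subadditivity from partitions to covers; alternatively, I would reduce to ergodic $\mu$ via Theorem \ref{en-decom} and exploit the orbital (virtual entropy) machinery of Section 4, where the cover quantity is handled through type-I subrelations rather than F\o lner averages. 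Establishing $\frac{H_\mu(\mathcal{U}_B)}{|B|}\ge h_\mu^-(G,\mathcal{U})$ for all $B\in F(G)$ is the crux on which (2) ultimately hinges.
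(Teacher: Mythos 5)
Your part (1) is correct and is exactly the paper's argument: with $E_n=\{g_1,\dots,g_n\}$ one has $\log N(\alpha_{E_n})\ge H_\mu(\alpha_{E_n})\ge n\,h_\mu(G,\alpha)$ by Lemma \ref{lem-436}~(4), and nothing more is needed. Part (2), however, has a genuine gap, and it sits precisely at the point you yourself call the crux: the formula $h_\mu^-(G,\mathcal{U})=\inf_{B\in F(G)}\frac{H_\mu(\mathcal{U}_B)}{|B|}$, i.e.\ $H_\mu(\mathcal{U}_B)\ge |B|\,h_\mu^-(G,\mathcal{U})$ for \emph{every} finite $B$, is not proved in the paper and is not a routine extension of the partition case. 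For partitions, Lemma \ref{lem-436}~(4) is powered by strong subadditivity (Lemma \ref{lem-436}~(1)), whose proof uses conditional-entropy identities (the chain rule) that have no counterpart for the cover quantity $H_\mu(\mathcal{U}_F)=\inf_{\beta\succeq \mathcal{U}_F}H_\mu(\beta)$; for covers the paper establishes only plain subadditivity (Lemma \ref{lem-2-2}~(3)), which suffices for convergence along F\o lner sequences but gives no control over arbitrary finite sets such as $E_n=\{g_1,\dots,g_n\}$. Neither of your two fallback strategies closes this. The representation $H_\mu(\mathcal{U}_F)=\min_{\beta\in P(\mathcal{U}_F)}H_\mu(\beta)$ from \eqref{keyeq1} does not transfer strong subadditivity, because the minimizers attached to $E$, $F$, $E\cup F$ and $E\cap F$ are unrelated partitions; in particular the minimizer for $\mathcal{U}_{E\cup F}$ need not have the form $\alpha_{E\cup F}$ for any single $\alpha\succeq\mathcal{U}$, so the partition inequality cannot be invoked. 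The orbital machinery of Section 4 computes cover entropies as infima over type-I subrelations, which is again a F\o lner-like averaging device and does not produce lower bounds for $H_\mu(\mathcal{U}_B)$ at an arbitrary prescribed $B$.

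The paper's actual proof of (2) is built to avoid exactly this obstruction. After choosing an ergodic $\mu$ with $h_\mu(G,\mathcal{U})=h_{\text{top}}(G,\mathcal{U})>0$ (Theorem \ref{avpin}), it works relative to the Pinsker $\sigma$-algebra $P_\mu$: the argument of Theorem \ref{etfm-thm-5} produces $\alpha\in\mathcal{P}_X$ and $\epsilon>0$ such that $H_\mu(\alpha\,|\,\beta\vee P_\mu)\le H_\mu(\alpha\,|\,P_\mu)-\epsilon$ for every $\mu$-measurable $\beta\succeq\mathcal{U}$, and Danilenko's theorem (Theorem \ref{Danil}) supplies $K\in F(G)$ such that $|\frac{1}{|F|}H_\mu(\alpha_F\,|\,P_\mu)-H_\mu(\alpha\,|\,P_\mu)|<\frac{\epsilon}{2}$ for every finite $F$ that is merely $K$-separated, meaning $(FF^{-1}\setminus\{e_G\})\cap K=\emptyset$ --- no invariance of $F$ is required, which is the whole point. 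Since any sequence of pairwise distinct elements contains a $K$-separated subsequence $\{g_{s_i}\}$ with $\frac{i}{s_i}\ge\frac{1}{2|K|+1}$, combining the two facts gives $H_\mu(\bigvee_{i=1}^{n}g_{s_i}^{-1}\mathcal{U})\ge \frac{n\epsilon}{2}$ and hence a limsup of at least $\frac{\epsilon}{2(2|K|+1)}>0$ along the original sequence. Note that this yields only \emph{some} positive constant, not $h_{\text{top}}(G,\mathcal{U})$ itself; had your inf-formula been available, the stronger bound would follow instantly, which is indirect evidence that no such formula was within the authors' reach. To repair your proof you would either have to prove that formula (a substantial new result) or replace the second half of your argument by the Pinsker/Danilenko mechanism above.
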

\begin{proof}
1 follows directly from Lemma \ref{lem-436}~(4). Now let's turn to
the proof of 2.

By Theorem \ref{avpin} there exists $\mu\in \mathcal{M}^e (X, G)$
such that $h_\mu (G, \mathcal{U})= h_{\text{top}} (G,
\mathcal{U})> 0$. Let $P_\mu$ be the Pinsker $\sigma$-algebra of
$(X, \mathcal{B}_X^\mu, \mu, G)$. As $\lambda_n(\mu)(\prod_{i=1}^n
U_i^c)= \int_X \prod_{i=1}^n \E(1_{U_i^c}|P_{\mu}) d\mu >0$ (see
Corollary \ref{etfm-cor-6}), repeating the same procedure of the
proof of Theorem \ref{etfm-thm-5} we can obtain some $M\in \N,
D\in P_\mu$ and $\alpha\in \mathcal{P}_X$ such that $\mu(D)>0$ and
if $\beta\in \mathcal{P}_X^\mu$ is finer than $\mathcal{U}$ then
$H_{\mu}(\alpha|\beta \vee P_{\mu}) \le
H_{\mu}(\alpha|P_{\mu})-\epsilon$, here $\epsilon=\frac
{\mu(D)}{M} \log (\frac {n}{n-1})>0$. Note that there exists $K\in
F (G)$ such that if $F\in F (G)$ satisfies $(F F^{-1}\setminus
\{e_G\})\cap K= \emptyset$ then
$|\frac{1}{|F|}H_\mu(\alpha_F|P_\mu)-
H_\mu(\alpha|P_\mu)|<\frac{\epsilon}{2}$ (see Theorem
\ref{Danil}). Obviously, there exists a sub-sequence $\{s_1< s_2<
\cdots\}\subseteq \mathbb{N}$ such that $\frac{i}{s_i}\ge
\frac{1}{2 |K|+1}$ for each $i\in \mathbb{N}$ and $g_{s_i}
g_{s_j}^{- 1}\notin K$ when $i\neq j$. Then for each $n\in
\mathbb{N}$ one has
\begin{align}\label{guanj-eq-1}
|\frac{1}{n} H_\mu \left(\bigvee_{i= 1}^n g_{s_i}^{-1} \alpha
|P_\mu\right)- H_\mu (\alpha |P_\mu)|< \frac{\epsilon}{2}.
\end{align}
Now let $n\in \mathbb{N}$. If $\beta_n\in \mathcal{P}_X^\mu$ is
finer than $\bigvee_{i=1}^n g_{s_i}^{-1} \mathcal{U}$, then
$g_{s_i}\beta_n\succeq \mathcal{U}$ for each $i=1,\cdots,n$, and so
\begin{eqnarray*}
H_{\mu}(\beta_n)&\ge & H_\mu\left(\beta_n \vee \bigvee_{i=1}^n
g_{s_i}^{-1}\alpha|P_\mu\right)-
H_\mu\left(\bigvee_{i=1}^n g_{s_i}^{-1}\alpha|\beta_n \vee P_\mu\right) \\
&\ge & H_\mu\left(\bigvee_{i=1}^n
g_{s_i}^{-1}\alpha|P_\mu\right)-\sum_{i=1}^n
H_\mu(\alpha|g_{s_i} \beta_n \vee P_\mu) \\
&\ge & H_\mu\left(\bigvee_{i=1}^n g_{s_i}^{-1}\alpha|P_\mu\right)-
n(H_\mu(\alpha|P_\mu)-\epsilon) \\
&\ge &
n\left(H_\mu(\alpha|P_\mu)-\frac{\epsilon}{2}\right)-n(H_\mu(\alpha|P_\mu)-\epsilon)
\ \ \ \  \text{(by \eqref{guanj-eq-1})}\\
&= & \frac{n \epsilon}{2}.
\end{eqnarray*}
Hence, $\frac{1}{n}H_\mu(\bigvee_{i=1}^n g_{s_i}^{-1}\mathcal{U})\ge
\frac{\epsilon}{2}$. Which implies
\begin{eqnarray*}
 \limsup_{n\rightarrow +\infty} \frac{1}{n} \log
N \left(\bigvee_{i= 1}^n g_i^{- 1} \mathcal{U}\right)&\ge&
\limsup_{m\rightarrow +\infty}
\frac{1}{s_m} H_\mu \left(\bigvee_{i= 1}^{s_m} g_i^{-1} \mathcal{U}\right)\\
&\ge &\limsup_{m\rightarrow +\infty} \frac{m}{s_m}\cdot
\frac{1}{m} H_\mu \left(\bigvee_{i= 1}^{m} g_{s_i}^{-1}
\mathcal{U}\right)\ge \frac{\epsilon}{2 (2|K|+ 1)}> 0.
\end{eqnarray*}
This ends the proof of the lemma.
\end{proof}

%Let $(X, G)$ be a $G$-system and $x\in X$. We call $x$ a {\it
%transitive point} of $(X, G)$ if $\{g x: g\in G\}$ is dense in
%$X$. Denote by the $Tran (X ,G)$ the set of all transitive points
%of $(X, G)$. Obviously, $x\in Tran (X ,G)$ iff for each non-empty
%open subset $U$ of $X$ there exists $g\in G$ with $g x\in U$, iff
%for each non-empty open subset $U$ of $X$ there exist infinitely
%many $\{g_n: n\in \mathbb{N}\}\subseteq G$ with $g_n x\in U$ for
%each $n\in \mathbb{N}$. It's not hard to obtain that $(X, G)$ is
%transitive if and only if $Tran (X, G)$ forms a dense $G_\delta$
%subset of $X$. Before claiming that u.p.e. implies mild mixing we
%also need

%\begin{lem}
%Let $(X, G)$ be a transitive $G$-system, $x\in Tran (X, G)$ and
%$U$ be an open neighborhood of $x$. Then there exists a sequence
%$\{g_n\}_{n\in \mathbb{N}}\subseteq G$ of pairwise distinct elements
%such that if $1\le i\le j$ then $g_j g_{j- 1} \cdots g_i x\in U$.
%\end{lem}
%\begin{proof}
%For the proof we shall follow the idea of Theorem 2.17 in
%\cite{F1}.

%As $x\in Tran (X, G)$, let $g_1\in G$ with $g_1 x\in U$. Find an
%open neighborhood $U_1$ of $x$ with $g_1 U_1\subseteq U$. For this
%$U_1$, by similar reasoning find $g_2\in G\setminus \{g_1\}$ and
%an open neighborhood $U_2$ of $x$ with $g_2 U_2\subseteq U_1$. In
%particular, $g_1 x,$
%\end{proof}

%Then we have

Now we claim that u.p.e. implies mild mixing.

\begin{thm}
Let $(X, G)$ be a u.p.e. $G$-system. Then $(X, G)$ is mildly mixing.
\end{thm}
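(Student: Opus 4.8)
The plan is to verify transitivity of the product system $(X\times Y,G)$ for an arbitrary transitive $(Y,G)$ without isolated points. Fixing basic nonempty open sets $U_1\times V_1,\,U_2\times V_2\subseteq X\times Y$, transitivity amounts to producing a single $g\in G$ lying in both return-time sets $N(U_1,U_2)=\{g: gU_1\cap U_2\neq\emptyset\}$ and $N(V_1,V_2)=\{g: gV_1\cap V_2\neq\emptyset\}$. Since a nontrivial u.p.e. system has no isolated points (as remarked before the theorem), $X$ is perfect, so I may shrink $U_1,U_2$ to assume $\overline{U_1}\cap\overline{U_2}=\emptyset$; then $\{U_1^c,U_2^c\}\in\mathcal{C}_X^o$ is a cover of $X$ by two non-dense open sets, and u.p.e. gives $h_{\text{top}}(G,\{U_1^c,U_2^c\})>0$.

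The entropy input enters through the two preparatory lemmas. Given any sequence $\{a_i\}_{i\in\mathbb{N}}$ of pairwise distinct elements of $G$, Lemma \ref{easy observation}(2) applied to $\mathcal{U}=\{U_1^c,U_2^c\}$ yields $\limsup_n \frac1n\log N(\bigvee_{i=1}^n a_i^{-1}\{U_1^c,U_2^c\})>0$, and then Lemma \ref{another observation} produces indices $j_1<j_2$ with $U_1\cap a_{j_1}a_{j_2}^{-1}U_2\neq\emptyset$, i.e. $a_{j_1}a_{j_2}^{-1}\in N(U_1,U_2)$. In other words, $N(U_1,U_2)$ meets the difference set $\{a_{j_1}a_{j_2}^{-1}: j_1<j_2\}$ of \emph{every} infinite sequence; this is the $\Delta^\ast$/IP$^\ast$-type property of return sets in positive-entropy directions.

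The complementary input is to manufacture, from the dynamics of $Y$, a single infinite sequence whose difference set is swallowed by $N(V_1,V_2)$. The plan is to produce an IP-set $\mathrm{FP}\{p_i\}=\{p_{i_1}\cdots p_{i_k}: i_1<\cdots<i_k\}$ contained in $N(V_1,V_2)$, and then set $a_i=(p_1\cdots p_i)^{-1}$; these are distinct and, by the telescoping identity $a_{j_1}a_{j_2}^{-1}=(p_1\cdots p_{j_1})^{-1}(p_1\cdots p_{j_2})=p_{j_1+1}\cdots p_{j_2}\in\mathrm{FP}\{p_i\}$, the whole difference set of $\{a_i\}$ lands in $N(V_1,V_2)$. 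Feeding this $\{a_i\}$ into the previous paragraph then gives $j_1<j_2$ with $a_{j_1}a_{j_2}^{-1}\in N(U_1,U_2)$, and the same element lies in $N(V_1,V_2)$; it is the sought common return time, so $(X\times Y,G)$ is transitive.

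The main obstacle is exactly the $Y$-side construction: showing $N(V_1,V_2)$ contains an honest IP-set. The natural source is recurrence — since $Y$ is perfect and transitive, every transitive point is recurrent (it is a limit of its own orbit), and recurrence of a point yields an IP-set of return times by the Furstenberg/Hindman IP-recurrence theorem. The delicate point is that recurrence returns a point to a neighborhood of \emph{itself}, whereas here I must IP-transit between two distinct open sets $V_1$ and $V_2$; carrying out this transition while keeping a genuine IP-set (rather than a one-sided translate of one, which would fail to be a difference set) and reconciling the left/right conventions so that the produced element lies simultaneously in $N(U_1,U_2)$ and $N(V_1,V_2)$ is where the real work lies. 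Everything else is formal, resting on the two entropy lemmas and the perfectness of $X$.
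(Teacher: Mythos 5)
Your skeleton is the same as the paper's: by Lemmas \ref{easy observation} and \ref{another observation}, whenever $\overline{U_1}\cap\overline{U_2}=\emptyset$ the positive-entropy cover $\{U_1^c,U_2^c\}$ forces the return set $N(U_1,U_2)$ to meet the difference set $\{a_{j_1}a_{j_2}^{-1}:j_1<j_2\}$ of \emph{every} sequence of pairwise distinct group elements, and the whole problem becomes producing one sequence whose difference set sits inside $N(V_1,V_2)$. But the step you flag as ``the real work'' is a genuine gap, and worse, the intermediate goal you set yourself --- an honest IP-set (equivalently a full difference set) inside $N(V_1,V_2)$ --- is unattainable in general, so no amount of technique will fill the gap along that route. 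Take $G=\mathbb{Z}$ and $Y$ an irrational rotation by $\alpha$ (transitive, minimal, no isolated points), with $V_1,V_2$ short arcs chosen so that the arc $\{v_2-v_1: v_i\in V_i\}$ is bounded away from $0$ modulo $1$. Then $N(V_1,V_2)=\{n: n\alpha\in V_2-V_1\}$, while for any infinite sequence $\{a_i\}\subseteq\mathbb{Z}$ the pigeonhole principle gives $i<j$ with $(a_j-a_i)\alpha$ arbitrarily close to $0$ modulo $1$, hence outside $V_2-V_1$. So no infinite sequence has its difference set inside $N(V_1,V_2)$, and a fortiori $N(V_1,V_2)$ contains no IP-set; your worry that a ``one-sided translate of an IP-set'' is all one can get is exactly right, and it cannot be upgraded.

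The paper resolves this by a translation trick rather than by strengthening the $Y$-side recurrence. Use transitivity of $(Y,G)$ once to pick $g\in G$ with $W_Y= U_Y\cap g^{-1}V_Y\neq\emptyset$; then $N(\overline{U_X}\times U_Y,\overline{V_X}\times V_Y)\supseteq g\,N(\overline{U_X}\times W_Y,\overline{g^{-1}V_X}\times W_Y)$, which reduces everything to \emph{self}-returns of the single set $W_Y$. Self-returns do carry difference-set structure, and by an elementary induction (no Hindman or Furstenberg IP theorem needed): using transitivity and the absence of isolated points one chooses $g_1',g_2',\dots$ with $W_Y\cap\bigcap_{1\le i\le j\le n}(g_j'\cdots g_i')W_Y\neq\emptyset$ for all $n$, so that for $g_n=g_n'\cdots g_1'$ (pairwise distinct by the exclusions made at each step) every difference $g_ig_j^{-1}$ is a block product $(g_j'\cdots g_{i+1}')^{\pm 1}$ lying in the self-return times of $W_Y$. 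The price of the translation is then paid on the $X$-side, not the $Y$-side: u.p.e. is applied to the pair $(\overline{U_X},\overline{g^{-1}V_X})$, i.e.\ to the cover $\{\overline{U_X}^c,\overline{g^{-1}V_X}^c\}$, so Lemmas \ref{easy observation} and \ref{another observation} produce $i<j$ with $g_j'\cdots g_{i+1}'\in N(\overline{U_X},\overline{g^{-1}V_X})\cap N(W_Y,W_Y)$, and left-multiplying by $g$ gives the common return time for the original pairs. In short: instead of forcing a difference set into $N(V_1,V_2)$ (impossible), one translates the $X$-target by $g^{-1}$ so that the translated difference set coming from $W_Y$ is exactly what the entropy lemmas can intersect.
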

\begin{proof}
Let $(Y, G)$ be any transitive $G$-system containing no isolated
points and $(U_Y, V_Y)$ any pair of non-empty open subsets of $Y$.
It remains to show that $N (\overline{U_X}\times U_Y,
\overline{V_X}\times V_Y)\neq \emptyset$ for each pair of non-empty
open subsets $(U_X, V_X)$ of $X$. As $(Y, G)$ is transitive, there
is $g\in G$ with $U_Y\cap g^{- 1} V_Y\neq \emptyset$. Set $W_Y=
U_Y\cap g^{- 1} V_Y$. Then
\begin{equation*}
N(\overline{U_X}\times U_Y, \overline{V_X}\times V_Y)\supseteq g
N(\overline{U_X}\times W_Y, \overline{g^{- 1} V_X}\times W_Y).
\end{equation*}
Now it suffices to show that $N(\overline{U_X}\times W_Y,
\overline{g^{- 1} V_X}\times W_Y)\neq \emptyset$.

If $\overline{U_X}\cap \overline{g^{- 1} V_X}\neq \emptyset$ then
the proof is finished, so we assume $\overline{U_X}\cap
\overline{g^{- 1} V_X}= \emptyset$. As $(Y, G)$ is a transitive
$G$-system containing no isolated points, there exists $g_1'\in
G\setminus \{e_G\}$ with $g_1' W_Y\cap W_Y\neq \emptyset$. Now find
$g_2'\in G\setminus \{e_G, (g_1')^{- 1}\}$ with $g_2' (g_1' W_Y\cap
W_Y)\cap (g_1' W_Y\cap W_Y)\neq \emptyset$. By induction, similarly
there exists a sequence $\{g_n'\}_{n\ge 1}\subseteq G$ such that for
each $j\ge 1$ one has $g_j'\in G\setminus \{e_G, (g_{j- 1}')^{- 1},
(g_{j- 1}' g_{j- 2}')^{- 1}, \cdots, (g_{j- 1}' g_{j- 2}'\cdots
g_1')^{- 1}\}$ and for each $n\in \mathbb{N}$ it holds that
\begin{eqnarray} \label{Ip-set}
W_Y\cap \bigcap_{1\le i\le j\le n} (g_j' g_{j- 1}'\cdots g_i'
W_Y)\neq \emptyset.
\end{eqnarray}
Set $g_n= g'_n g'_{n- 1}\cdots g'_1$ for each $n\in \mathbb{N}$.
Then $g_i\neq g_j$ if $1\le i\neq j$. Note that $\overline{U_X}\cap
\overline{g^{- 1} V_X}= \emptyset$ and $(X, G)$ is u.p.e., then
$h_{\text{top}} (G, \{\overline{U_X}^c, \overline{g^{- 1} V_X}^c\})>
0$ and so by Lemma \ref{easy observation} one has
\begin{equation*}
\limsup_{n\rightarrow +\infty} \frac{1}{n} \log N \left(\bigvee_{i=
1}^n g_i^{- 1} \{\overline{U_X}^c, \overline{g^{- 1}
V_X}^c\}\right)> 0.
\end{equation*}
Then by Lemma \ref{another observation}, there exists $1\le i< j$
such that
\begin{eqnarray*}
\emptyset\neq \overline{U_X}\cap g_{i} g_j^{- 1} \overline{g^{- 1}
V_X}= \overline{U_X}\cap (g_j' g_{j- 1}'\cdots g_{i+ 1}')^{- 1}
\overline{g^{- 1} V_X},
\end{eqnarray*}
which implies (using \eqref{Ip-set})
\begin{equation*}
g_j' g_{j- 1}'\cdots g_{i+ 1}'\in N (\overline{U_X}, \overline{g^{-
1} V_X})\cap N (W_Y, W_Y)= N (\overline{U_X}\times W_Y,
\overline{g^{- 1} V_X}\times W_Y)\neq \emptyset.
\end{equation*}
This finishes the proof of the theorem.
\end{proof}

\subsection{Minimal topological $K$-actions of an amenable group}

We say that $(X, G)$ is {\it strongly mixing} if $N (U, V)$ is
cofinite (i.e. $G\setminus N (U, V)$ is finite) for each pair of
non-empty open subsets $(U, V)$ of $X$. It's proved in \cite{HSY}
that any topological $K$ minimal  $\Z$-system is strongly mixing.
In fact, this result holds again in general case of considering a
commutative countable discrete amenable group. In the remaining part
of this sub-section we are to show it.

Denote by $\mathcal{F}_{inf} (G)$ the family of all infinite
subsets of $G$. Let $d$ be the compatible metric on $(X, G)$, $S=
\{g_1, g_2, \cdots\}\in \mathcal{F}_{inf} (G)$ and $n\ge 2$.
$RP_S^n (X, G)\subseteq X^{(n)}$ is defined by $(x_i)_1^n\in
RP_S^n (X, G)$ iff for each neighborhood $U_{x_i}$ of $x_i, 1\le
i\le n$ and $\epsilon> 0$ there exists $x_i'\in U_{x_i}, 1\le i\le
n$ and $m\in \mathbb{N}$ with $\max_{1\le k, l\le n} d (g_m^{- 1}
x_k', g_m^{- 1} x_l')\le \epsilon$. Obviously, the definition of
$RP_S^n (X, G)$ is independent of the selection of compatible
metrics. As a direct corollary of Lemma \ref{easy observation} we
have

\begin{lem} \label{media lemma}
Let $n\ge 2$ and $S\in \mathcal{F}_{inf} (G)$. If $(X, G)$ is u.p.e.
of order $n$ then $RP_S^n (X, G)= X^{(n)}$.
\end{lem}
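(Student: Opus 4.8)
The plan is to argue by contradiction, converting the \emph{failure} of a single-time $\epsilon$-return into an a priori bound on the covering numbers $N\bigl(\bigvee_{i=1}^k g_i^{-1}\mathcal{W}\bigr)$, which will contradict the exponential growth forced by u.p.e. of order $n$ through Lemma \ref{easy observation}. Fix $(x_i)_1^n\in X^{(n)}$; if $(x_i)_1^n\in\Delta_n(X)$ the membership in $RP_S^n(X,G)$ is trivial (take $x_i'=x_1$ and any $m$), so assume the tuple is off the diagonal. Fix arbitrary neighborhoods $U_i=U_{x_i}$ and $\epsilon>0$, and suppose for contradiction that there is \emph{no} $m\in\mathbb{N}$ and no $x_i'\in U_i$ with $\max_{k,l}d(g_m^{-1}x_k',g_m^{-1}x_l')\le\epsilon$; equivalently, for every $m$ and every choice $y_i\in g_m^{-1}U_i$ one has $\max_{k,l}d(y_k,y_l)>\epsilon$. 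I would then choose, using that $X$ is compact metric, open sets $V_i\subseteq\overline{V_i}\subseteq V_i'\subseteq\overline{V_i'}\subseteq U_i$ containing $x_i$, arranged so that $V_i=V_j$ when $x_i=x_j$ and $\overline{V_i}\cap\overline{V_j}=\emptyset$ when $x_i\ne x_j$. Set $W_i=X\setminus\overline{V_i}$. Each $W_i$ is open and non-dense (since $V_i\ne\emptyset$ gives $\operatorname{int}\overline{V_i}\ne\emptyset$), and $\bigcup_iW_i=X\setminus\bigcap_i\overline{V_i}=X$ because the tuple is off the diagonal, so $\mathcal{W}=\{W_1,\dots,W_n\}\in\mathcal{C}_X^o$ is a cover of $X$ by $n$ non-dense open sets. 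By u.p.e. of order $n$, $h_{\text{top}}(G,\mathcal{W})>0$, and since the enumeration $g_1,g_2,\dots$ of $S$ consists of pairwise distinct elements, Lemma \ref{easy observation}~(2) yields $\limsup_{k\to+\infty}\frac1k\log N\bigl(\bigvee_{i=1}^k g_i^{-1}\mathcal{W}\bigr)>0$.

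The crux is to show the contradictory upper bound. I would take a finite open cover $\{O_1,\dots,O_r\}$ of $X$ with $\operatorname{diam}O_s<\epsilon$ for every $s$. The no-return assumption, applied to the points of $V_i'\subseteq U_i$, says that for every $m$ no single $O_s$ can meet all of $g_m^{-1}V_1',\dots,g_m^{-1}V_n'$ simultaneously (otherwise picking $y_i\in O_s\cap g_m^{-1}V_i'$ would give points with all pairwise distances $<\epsilon$). Hence for each $m$ and each $s$ there is an index $i=i(s,m)$ with $O_s\cap g_m^{-1}V_i'=\emptyset$; as $\overline{V_i}\subseteq V_i'$, this gives $O_s\cap g_m^{-1}\overline{V_i}=\emptyset$, i.e. $O_s\subseteq g_m^{-1}W_{i(s,m)}$. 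Consequently, for every $k$,
\[
O_s\subseteq\bigcap_{m=1}^k g_m^{-1}W_{i(s,m)}\in\bigvee_{m=1}^k g_m^{-1}\mathcal{W},
\]
so the $r$ atoms on the right-hand side (one for each $s$) already cover $X$, whence $N\bigl(\bigvee_{m=1}^k g_m^{-1}\mathcal{W}\bigr)\le r$ for all $k$ and thus $\limsup_{k}\frac1k\log N\bigl(\bigvee_{m=1}^k g_m^{-1}\mathcal{W}\bigr)=0$. This contradicts the previous paragraph. Therefore the required $m$ and $x_i'$ exist, and since $U_i$ and $\epsilon$ were arbitrary, $(x_i)_1^n\in RP_S^n(X,G)$; as the tuple was arbitrary, $RP_S^n(X,G)=X^{(n)}$.

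The main obstacle, and the heart of the argument, is precisely the passage from the entropy statement (growth of covering numbers along $S$) to a \emph{single-time} $\epsilon$-clustering of the sets $g_m^{-1}U_i$; the covering-containment observation $O_s\subseteq g_m^{-1}W_{i(s,m)}$, which caps every join-cover by $r$, is what makes this bridge work and mirrors the combinatorial counting used in Lemma \ref{another observation}. The remaining points are routine technical bookkeeping: the triple nesting $V_i\subseteq\overline{V_i}\subseteq V_i'$ is only needed to convert the open-set emptiness $O_s\cap g_m^{-1}V_i'=\emptyset$ into the closed-set inclusion $O_s\subseteq g_m^{-1}W_i$, and the convention $V_i=V_j$ for equal coordinates is what lets a cover by $n$ non-dense open sets (with possible repetitions) be formed even when the tuple has repeated entries.
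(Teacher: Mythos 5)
Your proof is correct and follows essentially the same route as the paper's: negate the defining condition of $RP_S^n(X,G)$ for some off-diagonal tuple, use u.p.e.\ of order $n$ together with Lemma \ref{easy observation}~(2) to get exponential growth of $N\bigl(\bigvee_{i=1}^k g_i^{-1}\mathcal{W}\bigr)$ along $S$, and contradict this by showing that the no-return hypothesis forces every member of a fixed finite small-diameter cover of $X$ into a single element of $g_m^{-1}\mathcal{W}$ for each $m$, so all the joins have covering number bounded by a constant. The only difference is cosmetic bookkeeping: the paper uses one closed cover $\{C_1,\dots,C_k\}$ of diameter at most $\epsilon$ whose first $n$ members are neighborhoods of the $x_i$ inside the $U_i$, while you split these two roles between the nested neighborhoods $V_i\subseteq V_i'\subseteq U_i$ and an auxiliary small-diameter cover $\{O_1,\dots,O_r\}$.
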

\begin{proof} Assume the contrary that there is
$S= \{g_1, g_2, \cdots\}\in \mathcal{F}_{inf} (G)$ such that
$RP_S^n (X, G)\subsetneq X^{(n)}$. Fix such an $S$ and take
$(x_i)_1^n\in X^{(n)}\setminus RP_S^n (X, G)$. Then we can find a
closed neighborhood $U_i$ of $x_i, 1\le i\le n$ and $\epsilon> 0$
such that if $x_i'\in U_i, 1\le i\le n$ and $m\in \mathbb{N}$ then
$\max_{1\le k, l\le n} d (g_m^{- 1} x_k', g_m^{- 1} x_l')>
\epsilon$. Now let $\{C_1, \cdots, C_k\}$ ($k\ge n$) be a closed
cover of $X$ such that the diameter of each $C_i$, $1\le i\le k$,
is at most $\epsilon$ and if $i\in \{ 1,\cdots, n\}$ then $x_i\in
(C_i)^{\text{o}}\subseteq C_i\subseteq U_i$. Clearly
$(x_i)_1^n\notin \Delta_n (X)$, we may assume that $\{C_1^c,
\cdots, C_n^c\}$ forms an admissible open cover of $X$ w.r.t.
$(x_i)_1^n$, and so $h_{\text{top}} (G, \{C_1^c, \cdots, C_n^c\})>
0$. Moreover,
\begin{equation} \label{later estimate2}
\limsup_{m\rightarrow +\infty} \frac{1}{m} \log N\left(\bigvee_{i= 1}^m
g_i^{- 1} \{C_1^c, \cdots, C_n^c\}\right)> 0\ (\text{by Lemma \ref{easy
observation}}).
\end{equation}
Whereas, it's not hard to claim that for each $i\in \{1, \cdots,
k\}$ and $m\in \mathbb{N}$ there exists $j_i^m\in \{1, \cdots,
n\}$ such that $g_m C_i\cap C_{j_i^m}= \emptyset$. Otherwise, for
some $i_0\in \{1, \cdots, k\}$ and $m_0\in \N$, it holds that for
each $i\in \{1,\cdots,n\}$, $g_{m_0} C_{i_0}\cap C_i\neq
\emptyset$, let $y_i\in g_{m_0} C_{i_0}\cap C_i\subseteq U_i$.
Thus $\max_{1\le k, l\le n} d (g_{m_0}^{- 1} y_k, g_{m_0}^{- 1}
y_l)$ is at most the diameter of $C_{i_0}$, which is at most
$\epsilon$, a contradiction with the selection of $y_1, \cdots,
y_n$. Therefore, $C_i\subseteq \bigcap_{m\in \mathbb{N}} g_m^{- 1}
C_{j_i^m}^c$ for each $i\in \{ 1,\cdots,k\}$, which implies
$N(\bigvee_{i= 1}^m g_i^{- 1} \{C_1^c, \cdots, C_n^c\})\le k$ for
each $m\in \mathbb{N}$, a contradiction with \eqref{later
estimate2}. This finishes the proof of the lemma.
\end{proof}

Then we have

\begin{thm}
Let $U$ and $V$ be non-empty open subsets of $X$. If $(X, G)$ is
minimal and topological $K$ then there exists $g_1, \cdots, g_l\in
G$ ($l\in \N$) such that $\bigcup_{i= 1}^l g_i N (U, V) g_i^{-
1}\subseteq G$ is cofinite. In particular, if $G$ is commutative
then $(X, G)$ is strongly mixing.
\end{thm}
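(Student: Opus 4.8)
The plan is to argue by contradiction, after recording two preliminary observations. First, a direct computation gives $N(gU,gV)=gN(U,V)g^{-1}$ for every $g\in G$: indeed $g'\in N(gU,gV)$ iff $gU\cap g'^{-1}gV\neq\emptyset$ iff $g^{-1}g'g\in N(U,V)$. Hence the set $\bigcup_{i=1}^l g_iN(U,V)g_i^{-1}$ that must be controlled equals $\bigcup_{i=1}^l N(g_iU,g_iV)$. Second, since $(X,G)$ is minimal every orbit is dense, so $\{gU:g\in G\}$ covers $X$; by compactness there are $g_1,\dots,g_l\in G$ with $X=\bigcup_{i=1}^l g_iU$ (one may assume $l\ge 2$ by enlarging the family, the case $U=X$ being trivial since then $N(U,V)=G$). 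These $g_1,\dots,g_l$ will be the required elements.

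Suppose then that $\bigcup_{i=1}^l N(g_iU,g_iV)$ is not cofinite. Writing $W=G\setminus N(U,V)$, one has $G\setminus\bigcup_{i=1}^l N(g_iU,g_iV)=\bigcap_{i=1}^l g_iWg_i^{-1}$, which is therefore infinite, and every element $s$ of it satisfies $g_i^{-1}sg_i\in W$ for each $i$, i.e.\ $sg_iU\cap g_iV=\emptyset$, i.e.\ $s\notin N(g_iU,g_iV)$. I would fix an infinite set $S=\{s_1,s_2,\dots\}$ inside this intersection, so that $S\cap N(g_iU,g_iV)=\emptyset$ \emph{uniformly} in $i=1,\dots,l$. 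On the other hand $(X,G)$ is topological $K$, hence u.p.e.\ of order $l$, so Lemma \ref{media lemma} yields $RP_S^l(X,G)=X^{(l)}$.

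Next I would feed into this the tuple $(x_1,\dots,x_l)$ with $x_i\in g_iV$ (each $g_iV$ is nonempty open). Applying $RP_S^l=X^{(l)}$ with neighborhoods $O_i\subseteq g_iV$ and $\epsilon=\tfrac1k$ produces $x_i^{(k)}\in O_i\subseteq g_iV$ and an index $m_k$ such that the points $p_i^{(k)}:=s_{m_k}^{-1}x_i^{(k)}$ all lie within $\tfrac1k$ of one another, while $s_{m_k}p_i^{(k)}=x_i^{(k)}\in g_iV$. Passing to a subsequence I may assume $p_1^{(k)}\to a_*\in X$, and since the diameters tend to $0$ every $p_i^{(k)}\to a_*$ as well. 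Choosing $i_0$ with $a_*\in g_{i_0}U$ (possible because the open sets $g_iU$ cover $X$), for all large $k$ one gets $p_{i_0}^{(k)}\in g_{i_0}U$ together with $s_{m_k}p_{i_0}^{(k)}\in g_{i_0}V$, whence $s_{m_k}\in N(g_{i_0}U,g_{i_0}V)$; as $s_{m_k}\in S$ this contradicts $S\cap N(g_{i_0}U,g_{i_0}V)=\emptyset$, proving the first assertion. Finally, if $G$ is commutative then $g_iN(U,V)g_i^{-1}=N(U,V)$, so the first part says $N(U,V)$ itself is cofinite for every pair of nonempty open $U,V$, which is precisely strong mixing.

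The step I expect to be the crux is the design of this tuple and its translates: the whole idea is to make the $s_{m_k}$-images realize all the targets $g_1V,\dots,g_lV$ \emph{simultaneously} near one location $a_*$, so that whichever covering set $g_{i_0}U$ contains $a_*$, the matching target $g_{i_0}V$ is automatically hit by the $i_0$-th coordinate. Arranging the $U$-side translate and the $V$-side translate to coincide is exactly what forces the contradiction, and is the reason one must invoke u.p.e.\ of order $l$ (not merely order $2$) with $l$ the size of the covering family; the only technical nuisance, that $a_*$ might a priori lie on the boundary of $g_{i_0}U$, is absorbed by the limiting argument in $\epsilon=1/k$.
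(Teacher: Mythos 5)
Your proof is correct and follows essentially the same route as the paper: minimality yields a finite cover $X=\bigcup_{i=1}^l g_iU$, topological $K$ together with Lemma \ref{media lemma} makes $RP_S^l(X,G)$ all of $X^{(l)}$ for every infinite $S\subseteq G$, and the clustering of the pulled-back points $s_{m_k}^{-1}x_i^{(k)}$ (with $x_i^{(k)}\in g_iV$) inside a single member $g_{i_0}U$ of the cover forces $s_{m_k}\in N(g_{i_0}U,g_{i_0}V)=g_{i_0}N(U,V)g_{i_0}^{-1}$. The only cosmetic differences are that the paper argues directly (its auxiliary set $B$ meets every infinite subset of $G$, hence is cofinite, and is contained in the union) and locates the cluster via a Lebesgue number of the cover, whereas you argue by contradiction and locate it via a subsequential limit point.
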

\begin{proof}
As $(X, G)$ is a minimal $G$-system, there exist distinct elements
$g_1, \cdots, g_N\in G$ such that $\bigcup_{i= 1}^N g_i U= X$. Let
$\delta> 0$ be a Lebesgue number of $\{g_1 U, \cdots, g_N U\}\in
\mathcal{C}_X^{o}$ and set
\begin{equation*}
B= \left\{g\in G: \exists x_i\in g_i V (1\le i\le N)\ \text{s.t.}\
\max_{1\le k, l\le N} d (g^{- 1} x_k, g^{- 1} x_l)<
\frac{\delta}{2}\right\}.
\end{equation*}
As $(X, G)$ is topological $K$, $(g_i x)_1^N\in RP_S^n (X, G)$ for
each $S\in \mathcal{F}_{inf} (G)$ and $x\in X$ by Lemma \ref{media
lemma}. This implies $B\cap S\neq \emptyset$ for each $S\in
\mathcal{F}_{inf} (G)$. Hence, $G\setminus B$ is a finite subset,
i.e. $B\subseteq G$ is cofinite. Now if $g\in B$ then for each
$i\in \{1,\cdots, N\}$ there exists $x_i\in g_i V$ such that
$\max_{1\le k, l\le N} d (g^{- 1} x_k, g^{- 1} x_l)<
\frac{\delta}{2}$. Moreover, the diameter of
$\{g^{_1}x_1,\cdots,g^{-1}x_N\}$ is less than $\delta$. So by
the selection of $\delta$, for some $1\le k\le N$, $g^{- 1} x_1,
\cdots, g^{- 1} x_N\in g_k U$, in particular, $g_k U\cap g^{- 1}
g_k V\neq \emptyset$. That is, for each $g\in B$ there exists
$k\in \{1, \cdots, N\}$ such that $g_k^{- 1} g g_k\in N (U, V)$,
i.e. $B\subseteq \bigcup_{k= 1}^N g_k N (U, V) g_k^{- 1}$.
\end{proof}

\section*{Acknowledgements}

The first author is supported by NSFC (10911120388, 11071231), Fok Ying Tung Education Foundation and the Fundamental Research Funds for the Central Universities, the second author is supported by NSFC (11071231) and the third author is supported by NSFC (10801035), FANEDD (201018) and a grant from Chinese Ministry of Education (200802461004).

\vskip 6pt


\begin{thebibliography}{99}

\bibitem{B1} F. Blanchard, Fully positive topological entropy and
topological mixing, {\it Symbolic Dynamics and its Applications},
{\it AMS Contemporary Mathematics}, {\bf 135} (1992),  95--105.

\bibitem{B2} F. Blanchard, A disjointness theorem involving
topological entropy, {\it Bull. de la Soc. Math. de France}, {\bf
121} (1993), 465--478.

\bibitem{BGH} F. Blanchard, E. Glasner and B. Host, A variation
on the variational principle and applications to entropy pairs, {\it
Ergod. Th. $\&$ Dynam. Sys.}, {\bf 17} (1997), no. 1, 29--43.

\bibitem{BHMMR} F. Blanchard, B. Host, A. Maass, S. Mart$\acute{i}$nez and
D. Rudolph, Entropy pairs for a measure, {\it Ergod. Th. $\&$ Dynam.
Sys.}, {\bf 15} (1995), no. 4, 621--632.

\bibitem{BL} F. Blanchard and Y. Lacroix, Zero entropy factors of
topological flows, {\it Proc. Amer. Math. Soc.}, {\bf 119} (1993),
no. 3, 985--992.

\bibitem{CFW} A. Connes, J. Feldman and B. Weiss, An amenable equivalence
relation is generated by a single transformation, {\it Ergod. Th.
$\&$ Dynam. Sys.}, {\bf 1} (1981), no. 4, 431--450.

\bibitem{D} A. I. Danilenko, Entropy theory from the orbital point
of view, {\it Monatsh. Math.}, {\bf 134} (2001), no. 2, 121--141.

\bibitem{DGS} M. Denker, C. Grillenberger and K. Sigmund,
 \emph{Ergodic Theory on Compact Spaces}, Lecture Notes in Mathematics {\bf 527}, Springer, New York, 1976.

\bibitem{DG(2002)} A. H. Dooley and V. Ya. Golodets,
The spectrum of completely positive entropy actions of countable
amenable groups, {\it J. Funct. Anal.}, {\bf 196} (2002), no. 1,
1--18.

\bibitem{DZ} A. H. Dooley and G. H. Zhang, Co-induction in dynamical systems, {\it Ergod. Th.
$\&$ Dynam. Sys.}, to appear.

\bibitem{DYZ} D. Dou, X. Ye and G. H. Zhang, Entropy sequences
and maximal entropy sets, {\it Nonlinearity}, {\bf 19} (2006), no.
1, 53--74.

\bibitem{Dye} H. A. Dye, On groups of measure preserving transformation. I,
{\it Amer. J. Math.}, {\bf 81} (1959), no. 1, 119--159.

\bibitem{E(1974)} W. R. Emerson, The pointwise ergodic
theorem for amenable groups, {\it Amer. J. Math.}, {\bf 96} (1974),
no. 3, 472--278.

\bibitem{FM} J. Feldman and C. C. Moore, Ergodic equivalence relations,
cohomology, and von Neumann algebras. I, {\it Trans. Amer. Math.
Soc.}, {\bf 234} (1977), no. 2, 289--324.

\bibitem{F} H. Furstenberg, Disjointness in ergodic theory,
minimal sets, and a problem in Diophantine approximation, {\it Math.
Systems Theory}, {\bf 1} (1967), 1--49.

\bibitem{G} E. Glasner, A simple characterization of the set of
$\mu$-entropy pairs and applications, {\it Israel J. Math.}, {\bf
102} (1997), 13--27.

\bibitem{G1} E. Glasner, {\it Ergodic Theory via Joinings}, Mathematical Surveys
and Monographs {\bf 101}, American Mathematical Society, 2003.

\bibitem{GTW} E. Glasner, J. P. Thouvenot and B. Weiss, Entropy
theory without a past, {\it Ergod. Th. $\&$ Dynam. Sys.}, {\bf 20}
(2000), no. 5, 1355--1370.

\bibitem{GW4} E. Glasner and B. Weiss, On the interplay
between measurable and topological dynamics, {\it Handbook of
Dynamical Systems}, Vol. {\bf 1B}, Elsevier B. V., Amsterdam, 2006,
597--648.

\bibitem{GY} E. Glasner and X. Ye, Local entropy theory, {\it Ergod. Th. $\&$ Dynam. Sys.},
 {\bf 29} (2009), no. 2, 321--356.

\bibitem{HMRY} W. Huang, A. Maass, P. P. Romagnoli and X. Ye,
Entropy pairs and a local Abramov formula for a measure theoretical
entropy of open covers, {\it Ergod. Th. $\&$ Dynam. Sys.}, {\bf 24}
(2004), no. 4, 1127--1153.

\bibitem{HSY} W. Huang, S. Shao and X. Ye, Mixing via sequence
entropy, {\it Algebraic and Topological Dynamics}, {\it AMS
Contemporary Mathematics}, {\bf 385} (2005), 101--122.

\bibitem{HY} W. Huang and X. Ye, A local variational
relation and applications, {\it Israel J. Math.}, {\bf 151} (2006),
237--279.

\bibitem{HYZ1}
W. Huang, X. Ye and G. H. Zhang, A local variational principle for
conditional entropy, {\it Ergod. Th. $\&$ Dynam. Sys.}, {\bf 26}
(2006), no. 1, 219--245.

\bibitem{HYZ2} W. Huang, X. Ye and G. H. Zhang, Relative entropy
tuples, relative u.p.e. and c.p.e. extensions, {\it
Israel J. Math.}, {\bf 158} (2007), 249--283.

\bibitem{Kat(1980.b)} A. B. Katok, Lyapunov exponents, entropy
and periodic points for diffeomorphisms, {\it Publ. Math. I.H.E.S.},
{\bf 51} (1980), 137--173.

\bibitem{KL} D. Kerr and H. Li,
Independence in topological and $C^*$-dynamics, {\it
Math. Ann.}, {\bf 338} (2007), no. 4, 869--926.

\bibitem{KL1} D. Kerr and H. Li,
Combinatorial independence in measurable dynamics, {\it J. Funct.
Anal.}, {\bf 256} (2009), no. 5, 1341-1386.

\bibitem{Ki(1975)} J. Kieffer, A generalized
Shannon-McMillan theorem for the action of an amenable group on a
probability space, {\it Ann. Prob.}, {\bf 3} (1975), no. 6,
1031--1037.

\bibitem{LW} E. Lindenstrauss and B. Weiss,
Mean Topological Dimension, {\it Israel J. of Math.}, {\bf 115} (2000), 1--24.


\bibitem{L} E. Lindenstrauss, Pointwise theorems for amenable groups,
{\it Invent. Math.}, {\bf 146} (2001), no. 2, 259--295.

\bibitem{O} J. M. Ollagnier, {\it Ergodic Theory and Statistical
Mechanics}, Lecture Notes in Math. {\bf 1115}, Springer-Verlag,
Berlin, 1985.

\bibitem{OP1} J. M. Ollagnier and D. Pinchon, Groupes pavables
et principe variationnel, {\it Z. Wahrsch. Verw. Gebiete}, {\bf 48}
(1979), no. 1, 71--79. [In French]

\bibitem{OP2} J. M. Ollagnier and D. Pinchon, The variational principle,
{\it Studia Math.}, {\bf 72} (1982), no. 2, 151--159.

\bibitem{OW} D. S. Ornstein and B. Weiss,
Entropy and isomorphism theorems for actions of amenable groups,
{\it J. d'Anal. Math.}, {\bf 48} (1987), 1--141.

\bibitem{OW(1992)} D. S. Ornstein and B. Weiss, Subsequence
ergodic theorems for amenable groups, {\it Israel J. Math.}, {\bf
79} (1992), no. 1, 113--127.

\bibitem{PS} K. K. Park and A. Siemaszko, Relative topological
Pinsker factors and entropy pairs, {\it Monatsh. Math.}, {\bf
134} (2001), no. 1, 67--79.

\bibitem{Rohlin} V. A. Rohlin, On the fundamental ideas of measure
theory, {\it Math. Sb. (N. S.)}, {\bf 25} (67) (1949), 107--150.
Engl. Transl., {\it Amer. Math. Soc. Translations (1)}, {\bf 10}
(1962), 1--54.

\bibitem{RS} V. A. Rohlin and Yakov G. Sinai, Construction and
properties of invariant measurable partitions, {\it Dokl. Akad. Nauk
SSSR}, {\bf 141} (1967), 1038-1041. [In Russian]

\bibitem{R} P. P. Romagnoli, A local variational principle for the
topological entropy, {\it Ergod. Th. $\&$ Dynam. Sys.}, {\bf 23}
(2003), no. 5, 1601--1610.

\bibitem{RW} D. J. Rudolph and B. Weiss, Entropy and mixing for amenable
group actions, {\it Ann. Math. (2)}, {\bf 151} (2000), no. 3,
1119--1150.

\bibitem{ST} A. M. Stepin and A. T. Tagi-Zade,
Variational characterization of topological pressure of the amenable
groups of transformations, {\it Dokl. Akad. Nauk SSSR}, {\bf 254}
(1980), no. 3, 545--549. [In Russian]

\bibitem{W} P. Walters, {\it An Introduction to Ergodic Theory}, Graduate
Texts in Mathematics {\bf 79}, Springer-Verlag, New York-Berlin,
1982.

\bibitem{WZ} T. Ward and Q. Zhang,
The Abramov-Rokhlin entropy addition formula for amenable group
actions, {\it Monatsh. Math.}, {\bf 114} (1992), no. 3-4, 317--329.

\bibitem{We} B. Weiss, Actions of amenable groups, {\it Topics in
Dynamics and Ergodic Theory}, London Math. Soc. Lecture Note Ser.
{\bf 310}, Cambridge Univ. Press, Cambridge, 2003, 226--262.

\bibitem{YZ} X. Ye and G. H. Zhang, Entropy points and
applications, {\it Trans. Amer. Math. Soc.}, {\bf 359} (2007), no. 12, 6167--6186.

\bibitem{Zim} R. J. Zimmer, Hyperfinite factors and amenable ergodic actions,
{\it Invent. Math.}, {\bf 41} (1977), no. 1, 23--31.

\end{thebibliography}
\end{document}